\newcommand{\cmark}{\ding{51}}
\newcommand{\xmark}{\ding{55}}
\numberwithin{equation}{section}
\newtheorem{theorem}{Theorem}[section]
\newtheorem{lemma}[theorem]{Lemma}
\newtheorem{proposition}[theorem]{Proposition}
\newtheorem{corollary}[theorem]{Corollary}
\newtheorem{remark}[theorem]{Remark}
\newtheorem{definition}[theorem]{Definition}
\newcommand{\eremk}{\hbox{}\hfill\rule{0.8ex}{0.8ex}}
\definecolor{myblue}{rgb}{0,0,0.6}
\newcommand{\Norm}[2]{\|#1\|_{#2}}
\newcommand{\Seminorm}[2]{|#1|_{#2}}
\newcommand{\R}{\mathbb{R}}
\newcommand{\N}{\mathbb{N}}
\newcommand{\QT}{Q_T}
\newcommand{\HH}{\mathcal{H}}
\newcommand{\WW}{\mathbb{W}}
\newcommand{\dpt}{\partial_t}
\newcommand{\dptau}{\partial_{\tau}} 
\newcommand{\dps}{\partial_s}
\newcommand{\dptt}{\partial_{tt}}
\DeclareMathOperator{\Span}{span}
\newcommand{\esssup}{\operatorname*{ess\,sup}}
\newcommand{\diam}{\operatorname*{diam}}
\newcommand{\ST}{\Sigma_T}
\newcommand{\SO}{\Sigma_0}
\newcommand{\Sn}{\Sigma_n}
\newcommand{\Snmo}{\Sigma_{n-1}}
\newcommand{\dd}{\mathrm{d}}
\newcommand{\dt}{\,\mathrm{d}t}
\newcommand{\ds}{\,\mathrm{d}s}
\newcommand{\dx}{\,\mathrm{d}\mathbf{x}}
\newcommand{\dS}{\,\mathrm{d}S}
\newcommand{\calD}{\mathcal{D}}
\newcommand{\Th}{\mathcal{T}_h}
\newcommand{\Tt}{\mathcal{T}_{\tau}}
\newcommand{\Nt}{N}
\newcommand{\tnmo}{t_{n-1}}
\newcommand{\tn}{t_n}
\newcommand{\In}{I_n}
\newcommand{\Qn}{Q_n}
\newcommand{\Qm}{Q_m}
\newcommand{\jump}[1]{[\![#1]\!]}
\newcommand{\Pp}[2]{\mathbb{P}^{#1}(#2)}
\newcommand{\Pcont}[2]{\mathbb{P}^{\mathsf{cont}(#1)}(#2)}
\newcommand{\oVhp}{\mathring{\mathcal{V}}_h^p}
\newcommand{\oVdisc}[1]{\mathring{\mathcal{V}}_{h\tau}^{\mathsf{disc}(#1)}}
\newcommand{\oVcont}[1]{\mathring{\mathcal{V}}_{h\tau}^{\mathsf{cont}(#1)}}
\newcommand{\uht}{u_{h\tau}}
\newcommand{\vh}{v_h}
\newcommand{\vht}{v_{h\tau}}
\newcommand{\wht}{w_{h\tau}}
\newcommand{\zht}{z_{h\tau}}
\newcommand{\hLag}{\widehat{\mathcal{L}}}
\newcommand{\Lag}{{\mathcal{L}}}
\newcommand{\Leg}{{\mathbb{L}}}
\newcommand{\Pih}{\Pi_h}
\newcommand{\Rt}{\mathcal{R}_{\tau}^{(q+1)}}
\newcommand{\Rtq}{\mathcal{R}_{\tau}^{(q)}}
\newcommand{\Rh}{\mathcal{R}_h}
\newcommand{\Piht}{\Pi_{h\tau}}
\newcommand{\Id}{\mathsf{Id}}
\newcommand{\Pt}{\mathbb{P}_{\tau}}
\newcommand{\PtTh}{\mathcal{P}_{\tau}^{\mathsf{Th}}}
\newcommand{\PtThqmo}{\mathcal{P}_{\tau}^{\mathsf{Th}(q-1)}}
\newcommand{\PtThL}{\widetilde{\mathcal{P}}_{\tau}^{\mathsf{Th}}}
\newcommand{\tPtTh}{\widetilde{\mathcal{P}}_{\tau}^{\mathsf{Th}}}
\newcommand{\PtW}{\mathcal{P}_{\tau}^{\mathsf{Wa}}}
\newcommand{\PtAM}{\mathcal{P}_{\tau}^{\mathsf{AM}}}
\newcommand{\Ptdqmo}{\widetilde{\mathcal{P}}_{\tau}^{\mathsf{Th}(q-1)}}
\newcommand{\Cinv}{C_{\mathsf{inv}}}
\newcommand{\ItR}{\mathcal{I}_{\tau}^{\mathsf{R}}}
\newcommand{\CSI}{C_S^{\mathcal{I}}}
\newcommand{\CS}{C_{\Pi^t}}
\newcommand{\CCFL}{C_{\mathrm{CFL}}}
\newcommand{\Bht}{\mathcal{B}_{h\tau}}
\newcommand{\ev}{e_v}
\newcommand{\eu}{e_u}
\newcommand{\tvarphi}{\widetilde{\varphi}}
\newcommand{\tlambda}{\widetilde{\lambda}}
\title{Galerkin-type time discretizations for parabolic and hyperbolic problems: stability and \emph{a priori} error analysis}
\author{
Sergio G\'omez\thanks{Department of Mathematics and Applications, University of Milano-Bicocca, Via Cozzi 55, 20125 Milan, Italy
\href{mailto:sergio.gomezmacias@unimib.it}{sergio.gomezmacias@unimib.it}
}\ \thanks{IMATI-CNR ``E. Magenes", Via Ferrata 5, 27100 Pavia, Italy}\ \orcidlink{0000-0001-9156-5135}
}
\date{}
\begin{document}

\maketitle

\begin{abstract}
\noindent 
We present a unified framework for the analysis of space--time methods based on Galerkin-type time discretizations for parabolic and hyperbolic problems. Crucially, the stability analysis relies on a suitable choice of test functions to establish the continuous dependence of the discrete solution on the data in~$L^{\infty}(0, T; X)$ norms, which is then used to derive \emph{a priori} error estimates. This approach closes the gap in the analysis of some methods in this class caused by the limitation of standard energy arguments, and is  characterized by the absence of Gr\"onwall estimates, applicability to arbitrary approximation degrees, reduced regularity assumptions, and robustness with respect to the model parameters. 
\end{abstract}

\paragraph{Keywords.} Discontinuous Galerkin time discretization, continuous Galerkin time discretization, unconditional stability, \emph{a priori} error estimates

\paragraph{Mathematics Subject Classification.} 65M12, 65M60, 65M15, 35K05, 35L05

\tableofcontents

% --------------------------------------------------------------------
%                       INTRODUCTION
% --------------------------------------------------------------------
\section{Introduction}
Galerkin-type time discretizations are formulated in a variational setting, with test and trial spaces consisting of functions that are polynomials in time. Several discontinuous and continuous Galerkin time discretizations of time-dependent partial differential equations (PDEs) were introduced in the last century and have remained of interest to this day. Such interest, as evidenced by the extensive literature on the subject, is due to the many theoretical and computational advantages of this class of methods, specifically:
\begin{itemize}
    \item they provide high-order approximations in time;
    \item they are typically unconditionally stable;
    \item their variational setting endows these methods with a structure closer to that of the continuous problem, which allows for reproducing several of its properties at the discrete level in a more natural way; and
    \item \emph{a priori} error estimates require less regularity than most standard time-stepping schemes. 
\end{itemize}
Nonetheless, although substantial numerical evidence of their performance can be found in the literature, there remain gaps in the stability and convergence analysis of some of these methods, especially for wave propagation problems. The main reason is that standard energy arguments are insufficient to provide bounds on the discrete solution at arbitrary time instances for high-order approximations. The approach adopted in this work is inspired by that of Walkington~\cite{Walkington:2014} for the analysis of a discontinuous--continuous Galerkin (DG--CG) time discretization of the wave equation, which relies on nonstandard test functions to obtain continuous dependence on the data in~$L^{\infty}(0, T; X)$ norms.
We show how this approach can be extended to other Galerkin-type time discretizations, thereby closing most of the aforementioned theoretical gaps.

The main advancement of this work is the presentation of a systematic and unified variational approach to the stability and convergence analysis in~$L^{\infty}(0, T; X)$ norms of Galerkin-type time discretizations for parabolic and hyperbolic problems. In particular, we focus on the space--time methods resulting from combining conforming finite element discretizations in space with
\begin{itemize}
    \item the discontinuous Galerkin (DG) time discretization for the heat equation by Jamet in~\cite{Jamet:1978} (Section~\ref{sec:Jamet-heat});
    \item the continuous Galerkin (CG) time discretization for the heat equation by Aziz and Monk in~\cite{Aziz_Monk:1989} (Section~\ref{sec:Aziz-Monk-heat});
    \item the \emph{``plain vanilla"} time discretization for the second-order formulation of the wave equation introduced by Hughes and Hulbert in~\cite{Hughes_Hulbert:1988} (Section~\ref{sec:plain-vanilla-wave}), as well as the variants in~\cite{Hughes_Hulbert:1988,Hulbert_Hughes:1990,French:1993,Antonietti_Mazzieri_Migliorini:2020};
    \item the CG time discretization for the Hamiltonian formulation of the wave equation by French and Peterson in~\cite{French_Peterson:1991} (Section~\ref{sec:French-Peterson-wave}); 
    \item the DG time discretization for the Hamiltonian formulation of the wave equation by Johnson in~\cite{Johnson:1993} (Section~\ref{sec:Johnson-wave}); or
    \item the DG--CG time discretization for the second-order formulation of the wave equation by Walkington in~\cite{Walkington:2014} (Section~\ref{sec:Walkington-wave}).
\end{itemize}

Although this is not intended to be a full review of the literature, which is too wide to be addressed in a synthetic way, we aim to highlight the main contributions to the design and analysis of such methods while providing a self-contained presentation for the reader.

\paragraph{Notation.} We shall use standard notation for~$L^p$, Sobolev, and Bochner--Sobolev spaces. In particular, given an open set~$\calD \subset \R^d$ ($d \in \{1, 2, 3\}$) with Lipschitz boundary~$\partial \calD$, $p \in [1, \infty]$, and~$s \geq 0$, we denote by~$W^{s, p}(\calD)$ the corresponding Sobolev space with seminorm~$\Seminorm{\cdot}{W^{s, p}(\calD)}$ and norm~$\Norm{\cdot}{W^{s, p}(\calD)}$. For~$p = 2$, we set~$H^s(\calD) := W^{s, 2}(\calD)$ with seminorm~$\Seminorm{\cdot}{H^s(\calD)}$ and norm~$\Norm{\cdot}{H^s(\calD)}$, and we denote by~$H_0^1(\calD)$ the space of functions in~$H^1(\calD)$ with zero trace on~$\partial \calD$ and by~$H^{-1}(\calD)$ its dual space. For~$s = 0$, we write~$L^p(\calD) := W^{0, p}(\calD)$, where~$L^2(\calD)$ is the space of Lebesgue square integrable functions in~$\calD$ with inner product~$(\cdot, \cdot)_{\calD}$ and norm~$\Norm{\cdot}{L^2(\calD)}$. 
The superscript~$d$ will be used to indicate spaces of~$d$-vector-valued functions, as well as their seminorms and norms. 

Given a degree~$\ell \in \N$, we denote by~$\Pp{\ell}{\calD}$ the space of polynomials of degree at most~$\ell$ defined on~$\calD$. Moreover, the algebraic tensor product~$V\otimes W$ of two vector spaces~$V$ and~$W$ is given by
\begin{equation*}
    V \otimes W := \Span \{vw \ : \ v \in V \text{ and } w \in W\}.
\end{equation*}

In addition, given a nonempty interval~$(a, b) \subset \R$, a Banach space~$(Z, \Norm{\cdot}{Z})$, and~$1 \le p \le \infty$, we consider the corresponding Bochner--Sobolev space
\begin{equation*}
L^p(a, b; Z) := \Big\{v: (a, b) \to Z \ : \  \Norm{v}{L^p(a, b; Z)} < \infty \Big\},
\end{equation*}
where
\begin{equation*}
\Norm{v}{L^p(a, b; Z)} := \begin{cases}
\displaystyle \Big(\int_a^b \Norm{v(t)}{Z}^p \dt\Big)^{1/p} & \text{ if } 1 \le p < \infty,
\\
\esssup_{t \in (a, b)} \Norm{v(t)}{Z} & \text{ if } p = \infty.
\end{cases}
\end{equation*}
Analogously, if~$\Seminorm{\cdot}{Z}$ is a specific seminorm in~$Z$, we denote by~$\Seminorm{\cdot}{L^p(a, b; Z)}$ the corresponding seminorm in~$L^p(a, b; Z)$. Finally, for an integer~$m \geq 0$, we define
\begin{equation*}
    W^{m, p}(a, b; Z) := \Big\{v: (a, b) \to Z \ : \ \dpt^{(i)} v \in L^p(a, b; Z), \text{ for~$i = 0, \ldots, m$} \Big\},
\end{equation*}
where~$\dpt^{(i)}v$ denotes the~$i$th time derivative of~$v$. The characteristic function on~$(a, b)$ is denoted by~$\chi_{(a, b)}$.

\paragraph{Outline.} Section~\ref{sec:model-problems} introduces the continuous setting for the heat and the wave equations, which serve as our model problems. Section~\ref{sec:preliminaries} establishes the notation, discrete spaces, and operators required for the analysis. In particular, Section~\ref{sec:structure-theory} outlines the core strategy used for the stability and convergence proofs. Since Section~\ref{sec:preliminaries} provides the necessary mathematical foundation, the subsequent sections are self-contained and can be read independently.
% ------------------------------------------------------------------------------------------------------
%                               MODEL PROBLEMS
% ------------------------------------------------------------------------------------------------------
\section{Model problems} 
\label{sec:model-problems}
To keep the presentation simple, we consider two prototypical model problems, namely the heat equation and the second-order wave equation; however, extensions to more complex parabolic and hyperbolic problems can also be obtained. For instance, this approach has been used to study space--time methods for the incompressible Navier--Stokes equations in~\cite{Beirao_Gomez_Dassi:2025} and for the Westervelt quasilinear wave equation in~\cite{Gomez-Nikolic:2025}, where a careful analysis evidenced the robustness of the schemes with respect to the model parameters.

In what follows, $\Omega \subset \R^d$ ($d \in \{1, 2, 3\}$) will be a polytopic domain with boundary~$\partial \Omega$. Moreover, for a prescribed final time~$T > 0$, we define the space--time cylinder~$\QT := \Omega \times (0, T)$, and the surfaces~$\SO := \Omega \times \{0\}$ and~$\ST := \Omega \times \{T\}$.

% --------------------------------------
\subsection{The heat equation}
Given a positive diffusion coefficient~$\nu > 0$, a source term~$f : \QT \to \R$, and an initial datum~$u_0 : \Omega \to \R$, we consider the following initial and boundary value problem (IBVP): find  $u : \QT \to \R$ such that
\begin{subequations}
\label{eq:heat-equation}
\begin{alignat}{3}
\dpt u - \nu \Delta u & = f & & \qquad \text{ in } \QT, \\
u & = 0 & & \qquad \text{ on } \partial \Omega \times (0, T), \\
u & = u_0 & & \qquad \text{ on } \SO.
\end{alignat}
\end{subequations}

Since we are interested in analyzing the DG and CG time discretizations of~\eqref{eq:heat-equation}, with stability and \emph{a priori} error constants that do not degenerate for small diffusion coefficients~(i.e., for~$0 < \nu \ll 1$), we make the following assumptions on the data of problem~\eqref{eq:heat-equation}:
\begin{equation}
\label{eq:regularity-data-heat}
f \in L^2(\QT) \quad \text{ and } \quad u_0 \in L^2(\Omega).
\end{equation}

Defining the Bochner--Sobolev spaces
\begin{alignat*}{3}
X & := \{u \in L^2(0, T; H_0^1(\Omega)) \ : \ \dpt u \in L^2(0, T; H^{-1}(\Omega)) \} \quad \text{ and } \quad 
Y & := L^2(0, T; H_0^1(\Omega)),
\end{alignat*}
we consider the following standard Petrov--Galerkin weak formulation of~\eqref{eq:heat-equation}: find~$u \in X$ such that
\begin{subequations}
\label{eq:weak-formulation-heat}
\begin{alignat}{3}
\label{eq:weak-formulation-heat-1}
b(u, v) := \langle \dpt u, v \rangle_{Y', Y} + \nu (\nabla u, \nabla v)_{\QT} & =  (f, v)_{\QT}  & & \quad \forall v \in Y, \\
\label{eq:weak-formulation-heat-2}
u(\cdot, 0) & = u_0(\cdot) & & \quad \text{in~$L^2(\Omega)$}, 
\end{alignat}
\end{subequations}
where~$\langle \cdot, \cdot\rangle_{Y', Y}$ denotes the dual pairing between~$Y'$ and~$Y$, and the continuous embedding~$X \hookrightarrow C^0([0, T]; L^2(\Omega))$ (see, e.g., \cite[Thm.~3(i) in \S5.9.2]{Evans:2010}) guarantees that~\eqref{eq:weak-formulation-heat-2} makes sense.

The idea at the core of this analysis is that, for~$f \in L^2(\QT)$, choosing~$v = \chi_{(0, t)} u$ in the weak formulation~\eqref{eq:weak-formulation-heat-1} gives
\begin{alignat}{3}
\label{eq:aux-test}
\int_0^t \langle \dpt u, u \rangle_{H^{-1}(\Omega), H_0^1(\Omega)}\dt + \nu \int_0^t (\nabla u, \nabla u)_{\Omega} \dt = \int_0^t (f, u)_{\Omega} \dt,
\end{alignat}
which, combined with integration by parts in time, equation~\eqref{eq:weak-formulation-heat-2}, and the H\"older and the Young inequalities, leads to
\begin{alignat}{3}
\nonumber
\frac12 \Norm{u(\cdot, t)}{L^2(\Omega)}^2 + \nu \Norm{\nabla u}{L^2(0, t; L^2(\Omega)^d)}^2 & = \int_0^t (f, u)_{\Omega} \dt + \frac12 \Norm{u_0}{L^2(\Omega)}^2 \\
%%%
\nonumber
& \le \Norm{f}{L^1(0, t; L^2(\Omega))} \Norm{u}{L^{\infty}(0, t; L^2(\Omega))} + \frac12 \Norm{u_0}{L^2(\Omega)}^2.
\end{alignat}
Then, one can easily deduce the following continuous dependence of~$u$ on the data:
\begin{alignat}{3}
\label{eq:robust-continuous-dependence-heat}
\frac14 \Norm{u}{C^0([0, T]; L^2(\Omega))}^2 + \nu \Norm{\nabla u}{L^2(\QT)^d}^2  \le 2 \Norm{f}{L^1(0, T; L^2(\Omega))}^2 + \Norm{u_0}{L^2(\Omega)}^2.
\end{alignat}

\begin{remark}[Robust continuous dependence on the data]
Bound~\eqref{eq:robust-continuous-dependence-heat} has several important features.
\begin{itemize}
    \item It provides a uniform bound on~$u$ in the stronger norm~$\Norm{u}{C^0([0, T]; L^2(\Omega))}$.
    In fact, bound~\eqref{eq:robust-continuous-dependence-heat} holds even for pure homogeneous Neumann boundary conditions, for which the corresponding weak formulation is well posed (see~\cite[\S4.7.2 in Ch.~3]{Lions_Magenes-I:1972}).

    \item The stability constant is independent of the final time~$T$ and the diffusivity parameter~$\nu$. In contrast, using a Gr\"onwall estimate would lead to a constant that grows exponentially with respect to~$T$. 
    Some works in the literature of DG time discretizations employ Gr\"onwall estimates; however, this pessimistic artifact of the analysis can be avoided in many cases.

    \item An alternative approach is to bound the first term on the right-hand side of~\eqref{eq:aux-test} using the Cauchy--Schwarz inequality and the Poincar\'e--Steklov inequality in space (with constant~$C_P$) as follows:
    \begin{alignat}{3}
    \label{eq:ugly-continuous-dependence-heat}
    \int_0^t (f, u)_{\Omega} \dt & \le \Norm{f}{L^2(\QT)} \Norm{u}{L^2(\QT)}
    %%%
    & \le C_P\Norm{f}{L^2(\QT)} \Norm{\nabla u}{L^2(\QT)^d},
    \end{alignat}
    but this leads to a stability constant proportional to~$\nu^{-1}$. 
\end{itemize}
These points motivate the search for discrete analogues of bound~\eqref{eq:robust-continuous-dependence-heat}.
As discussed in Sections~\ref{sec:Jamet-heat} and~\ref{sec:Aziz-Monk-heat}, reproducing~\eqref{eq:robust-continuous-dependence-heat} at the discrete level requires a more delicate analysis, whereas obtaining a discrete analogue of~\eqref{eq:ugly-continuous-dependence-heat} is straightforward.
\eremk 
\end{remark}

%%%
\begin{remark}[Additional regularity on~$f$]
It is well known that the continuous weak formulation of~\eqref{eq:heat-equation} is well posed for~$f \in L^2(0, T; H^{-1}(\Omega))$, which can be proven using either the Faedo--Galerkin approach (see, e.g., \cite[\S65.2]{Ern_Guermond:2021}) or the inf--sup stability approach (see~\cite[Thm.~5.1]{Schwab_Stevenson:2009}, \cite[Thm.~2.1]{Steinbach:2015}, and~\cite[\S71.1]{Ern_Guermond:2021}). However, in many applications, the focus is on the robustness of the scheme for vanishing parameters rather than on requiring minimal regularity. This is the case, for instance, in convection-dominated problems, the incompressible Navier--Stokes equations at high Reynolds numbers, and other models in electromagnetism and fluid dynamics. Consequently, the additional regularity on~$f$ in~\eqref{eq:regularity-data-heat} is used to obtain stability bounds and a priori error estimates that are robust for small diffusion~$(0 < \nu \ll 1)$. 

We assume that~$f \in L^2(\QT)$, whereas the stability bound~\eqref{eq:robust-continuous-dependence-heat} only involves the~$L^1(0,T; L^2(\Omega))$ norm of~$f$. This property has a discrete counterpart, as the well-posedness of the DG and CG time discretizations of the heat equation only requires~$f \in L^1(0, T; L^2(\Omega))$. This also allows us to obtain a priori error estimates in terms of~$W^{r, 1}(0, T; X)$ norms of the continuous solution~$u$ rather than the standard~$H^r(0, T; X)$ norms. A similar situation holds for the wave equation (see~\eqref{eq:continuous-dependence-wave}).
\eremk
\end{remark}

\begin{remark}[Space--time methods]
The larger class of space--time methods for the heat equation includes conforming finite element~\cite{Steinbach:2015,Steinbach-Zank:2020}, wavelet~\cite{Schwab_Stevenson:2009}, minimal residual~\cite{Stevenson_Westerdiep:2021}, discontinuous Galerkin~\cite{Cangiani_Dong_Georgoulis:2017,Gomez_Perinati_Stocker:2026}, least-squares~\cite{Fuhrer_Karkulik:2021,Gantner_Stevenson:2024,Fuhrer_Gantner:2025}, virtual element~\cite{Gomez_Mascotto_Moiola_Perugia:2024,Gomez_Mascotto_Perugia:2024}, isogeometric~\cite{Langer_Moore_Neumuller:2016}, and discontinuous Petrov--Galerkin~\cite{Diening_Storn:2022} methods.
\eremk
\end{remark}

% ------------------------------------------------
\subsection{The wave equation}
Given a source term~$f \in L^2(\QT)$, initial data~$u_0 \in H_0^1(\Omega)$ and~$v_0 \in L^2(\Omega)$, and a positive propagation speed~$c > 0$, we consider the following second-order-in-time IBVP: find~$u : \QT \to \R$ such that
\begin{subequations}
\label{eq:second-order-wave}
\begin{alignat}{3}
\dptt u - c^2 \Delta u & = f & & \quad \text{in~$\QT$},\\
u & = 0 & & \quad \text{on~$\partial \Omega \times (0, T)$},\\
u = u_0 \ \text{ and } \ \dpt u & = v_0  & & \quad \text{on~$\SO$.}
\end{alignat}
\end{subequations}

Let~$\WW := L^2(0, T; H_0^1(\Omega)) \cap H^1(0, T; L^2(\Omega))$. 
We are interested in weak solutions to~\eqref{eq:second-order-wave} in the \emph{energy class} (see~\cite[\S4 in Ch.~4]{Ladyzhenskaya:1985}), i.e., a function~$u \in \WW \cap C^0([0, T]; H_0^1(\Omega))$ with~$\dpt u \in C^0([0, T]; L^2(\Omega))$ and~$\dptt u \in L^2(0, T; H^{-1}(\Omega))$ satisfying
\begin{subequations}
\label{eq:second-order-weak-formulation-wave}
\begin{alignat}{3}
\langle \dptt u, w \rangle_{Y', Y}  + c^2( \nabla u, \nabla w)_{\QT} & = (f, w)_{\QT} && \quad \forall w \in \WW, \\
u(\cdot, 0) & = u_0(\cdot) & & \quad \text{in~$H_0^1(\Omega)$, }\\
\dpt u(\cdot, 0) & = v_0(\cdot) & & \quad \text{in~$L^2(\Omega)$},
\end{alignat}
\end{subequations}
where~$\langle \cdot, \cdot \rangle_{Y', Y}$ is the same dual pairing as in~\eqref{eq:weak-formulation-heat}.

The following continuous dependence on the data can be deduced as in~\eqref{eq:robust-continuous-dependence-heat} for the heat equation:
\begin{equation}
\label{eq:continuous-dependence-wave}
\begin{split}
\frac14 \Big( \Norm{\dpt u}{C^0([0, T]; L^2(\Omega))}^2 & + c^2 \Norm{\nabla u}{C^0([0, T; L^2(\Omega)^d)}^2 \Big) \\
& \le 2 \Norm{f}{L^1(0, T; L^2(\Omega))}^2 + c^2 \Norm{\nabla u_0}{L^2(\Omega)^d}^2 + \Norm{v_0}{L^2(\Omega)}^2.
\end{split}
\end{equation}

The second-order formulation~\eqref{eq:second-order-wave} can also be rewritten as a first-order-in-time system as follows: find~$u : \QT \to \R$ and~$v : \QT \to \R$ such that
\begin{subequations}
\label{eq:first-order-wave}
\begin{alignat}{3}
v & = \dpt u & & \quad \text{in~$\QT$}, \\
\dpt v - c^2 \Delta u & = f & & \quad \text{in~$\QT$},\\
u & = 0 & & \quad \text{on~$\partial \Omega \times (0, T)$},\\
u = u_0 \ \text{ and } \ v & = v_0  & & \quad \text{on~$\SO$,}
\end{alignat}
\end{subequations}
which is commonly referred to as the \emph{Hamiltonian formulation} of the wave equation. 

We shall consider as a weak solution to~\eqref{eq:first-order-wave} the pair~$(u, v)$ with~$u$ being the weak solution to~\eqref{eq:second-order-weak-formulation-wave} and~$v = \dpt u \in C^0([0, T]; L^2(\Omega))$. 

\begin{remark}[Well-posedness of~\eqref{eq:second-order-weak-formulation-wave}]
The existence of a unique solution to~\eqref{eq:second-order-weak-formulation-wave} has been established in~\cite[Thms.~8.1 and 8.2 in Ch. 3]{Lions_Magenes-I:1972}, \cite[Thm.~29.1 in Ch. V]{Wloka:1987}, and~\cite[Thm.~24A in Ch. 24]{Zeidler:1990}.
The corresponding result for the weak formulation of~\eqref{eq:second-order-wave} with~$f \in L^1(0, T; L^2(\Omega))$ can be found in~\cite[Thm.~4.2 in Ch. IV]{Ladyzhenskaya:1985}  and~\cite[Thm.~2.2 in Part I]{Lasiecka_Triggiani:1994}.
\eremk
\end{remark}

\begin{remark}[Nonhomogeneous Dirichlet boundary conditions]
When~$H^1(\Omega)$-conforming approximations in space are combined with Galerkin-type time discretizations, as we do in this work, the strong imposition of nonhomogeneous Dirichlet boundary conditions in the fully discrete setting is nontrivial, but this issue is often neglected in the literature. 
In particular, to avoid a loss of accuracy, the discrete liftings of the Dirichlet datum must be compatible with the time discretization; see, e.g., \cite{Walkington:2014}, \cite[Ch.~3]{Voulis:2019}, and~\cite{Gomez:2025}. 
Further insight into this situation is given in Remark~\ref{rem:nonhomogeneous-Dirichlet-boundary} below.
\eremk
\end{remark}

\begin{remark}[Space--time methods]
The larger class of space--time methods for the wave equation includes 
collocation~\cite{Anselmann_Bause_Becher_Matthies:2020},
Morawetz-multiplier-based~\cite{Bignardi_Moiola:2025}, 
isogeometric~\cite{Ferrari_Perugia:2025,Ferrari_Fraschini:2026,Ferrari_Perugia_Zampa:2025,Ferrari_Fraschini_Loli_Perugia:2025}, discontinuous Galerkin~\cite{Shukla_Vegt:2022},
least-squares~\cite{Fuhrer_Gonzalez_Karkulik:2025}, mixed finite element~\cite{Bause_Radu_Kocher:2017},
Trefftz-like~\cite{Banjai_Georgoulis_Lijoka:2017,Imbert_Moiola_Stocker:2023,Moiola_Perugia:2018,Kretzschmar_Moiola_Perugia:2016}, 
and ultraweak~\cite{Henning_Palitta_Simoncini_Urban:2022} methods.
\eremk
\end{remark}
% --------------------------------------------------------------------
%                       PRELIMINARIES
% --------------------------------------------------------------------
\section{Preliminaries}
\label{sec:preliminaries}
In this section, we introduce the notation, tools, and main results used for the formulation and analysis of the methods considered. 
% ------------------------- NOTATION ------------------------
\subsection{Mesh notation and discrete spaces}
\label{sec:DG-notation}
% -------------------------------------------------------------
\paragraph{Space and time meshes.} Let~$\Th$ be a shape-regular, conforming simplicial partition of the spatial domain~$\Omega$, where~$h := \max_{K \in \Th} \diam(K)$ is the meshsize of~$\Th$.

Let also~$\Tt$ be a partition of the time interval~$(0, T)$ defined by the nodes
$$0 := t_0 < t_1 < \ldots < t_{\Nt} := T,$$
on which we do not assume any local or global quasi-uniformity.

For~$n = 1, \ldots, \Nt$, we define the time interval~$\In := (\tnmo, \tn)$, the time step~$\tau_n := \tn - \tnmo$, and the time slab~$\Qn := \Omega \times \In$. Moreover, for~$n = 1, \ldots, \Nt - 1$, we define the surface~$\Sn := \Omega \times \{\tn\}$. The maximum time step in~$\Tt$ is denoted by~$\tau$.

Henceforth, we denote by~$\Id$ the identity operator in space and time. We write~$a \lesssim b$ to indicate the existence of a positive constant~$C$ that is independent of the \textbf{meshsize~$h$}, the \textbf{maximum time step~$\tau$}, the \textbf{final time~$T$}, and the \textbf{model parameters}. Moreover, we write~$a \simeq b$ to denote that~$a \lesssim b$ and~$b \lesssim a$. 

% -------------------------------------------------------------
\paragraph{Discrete spaces.} Let~$p \in \N$ with~$p \geq 1$, and~$q \in \N$ be given degrees of approximation in space and time, respectively.  

We define the following discrete spaces:
\begin{alignat*}{3}
\oVhp & := \big\{\vh \in H_0^1(\Omega) \ : \ \vh{}_{|_{K}}  \in \Pp{p}{K} \text{ for all~$K \in \Th$}\big\}, \\
\oVdisc{q} & := \big \{\vht \in L^2(0, T; H_0^1(\Omega)) \ : \ \vht{}_{|_{\Qn}} \in \Pp{q}{\In} \otimes \oVhp, \text{ for~$n = 1, \ldots, N$}\big\}, \\
\oVcont{q} & := \big \{\vht \in C^0([0, T]; H_0^1(\Omega)) \ : \ \vht{}_{|_{\Qn}} \in \Pp{q}{\In} \otimes \oVhp, \text{ for~$n = 1, \ldots, N$} \big\}.
\end{alignat*}
The notation for~$\oVdisc{q}$ and~$\oVcont{q}$ highlights the regularity and degree of approximation in time, as the same $H_0^1(\Omega)$-conforming spatial discretization is used for the test and trial spaces of all methods considered.

We denote by~$\Pp{q}{\Tt}$ the broken space of polynomials of degree at most~$q$ on each~$\In \in \Tt$, and by~$\Pcont{q}{\Tt} := C^0[0, T] \cap \Pp{q}{\Tt}$. 

\paragraph{Time jumps.} 
For~$n = 1, \ldots, \Nt - 1$, we define the time jumps~$(\jump{\cdot}_n)$ of a piecewise smooth function in time~$v$ as
\begin{equation*}
\jump{v}_n := v(\tn^+) - v(\tn^-),
\end{equation*}
where
\begin{equation*}
v(\tn^+) := \lim_{\varepsilon \to 0^+} v(\tn + \varepsilon) \quad \text{ and } \quad v(\tn^-) := \lim_{\varepsilon \to 0^+} v(\tn - \varepsilon).
\end{equation*}

The following identities will be extensively used in the analysis of DG time discretizations:
\begin{subequations}
\begin{alignat}{3}
\label{eq:jumps-in-time-identity-1}
v(\tn^+) \jump{w}_n + w(\tn^-) \jump{v}_n & = \jump{vw}_n, \\
\label{eq:jumps-in-time-identity-2}
-\frac12 \jump{v^2}_n + v(\tn^+) \jump{v}_n & = \frac12 \jump{v}_n^2, \\
\label{eq:jumps-in-time-identity-3}
-\frac12 v(\tn^+)^2 + v(\tn^+) \jump{v}_{n}  & =  \frac12 \jump{v}_{n}^2 - \frac12 v( \tn^-)^2,
\end{alignat}
\end{subequations}
where~$\eqref{eq:jumps-in-time-identity-3}$ is just a rewriting of~\eqref{eq:jumps-in-time-identity-2}.

We introduce the following seminorm in~$\oVdisc{q}$ associated with the time jumps:
\begin{equation*}
\Seminorm{\vht}{\sf J}^2 := \Norm{\vht}{L^2(\ST)}^2 + \sum_{n = 1}^{\Nt - 1} \Norm{\jump{\vht}_n}{L^2(\Omega)}^2 + \Norm{\vht}{L^2(\SO)}^2.
\end{equation*}
% ------------------------------------------------------------------------------
\subsection{A polynomial inverse estimate}
Next lemma is an inverse estimate for functions that are polynomials in time, and is one of the main ingredients used to obtain stability bounds in~$L^{\infty}(0, T; X)$ norms. 
This result in Bochner spaces is an extension of the analogous result in one dimension, and was proven in~\cite[Lemma 3.1]{Beirao_Gomez_Dassi:2025}.

\begin{lemma}[Inverse estimate]
\label{lemma:L2-Linfty}
For any~$q \in \N$ and any Banach space~$(Z, \Norm{\cdot}{Z})$, there exists a positive constant~$\Cinv$ depending only on~$q$ such that, for~$n = 1, \ldots, N$, it holds
\begin{equation*}
\Norm{w_{\tau}}{L^{\infty}(\In; Z)}^2 \le \Cinv \tau_n^{-1} \Norm{w_{\tau}}{L^2(\In; Z)}^2 \qquad \forall w_{\tau} \in \Pp{q}{\Tt} \otimes Z.
\end{equation*}
\end{lemma}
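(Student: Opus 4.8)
The plan is to reduce the Bochner-valued inequality to its scalar counterpart on a reference interval. First I would rescale: by the affine change of variables $t \mapsto \hat t$ mapping $\In = (\tnmo, \tn)$ onto the reference interval $\hat I := (0,1)$, a function $w_\tau \in \Pp{q}{\In} \otimes Z$ corresponds to $\hat w \in \Pp{q}{\hat I} \otimes Z$, with $\Norm{w_\tau}{L^\infty(\In; Z)} = \Norm{\hat w}{L^\infty(\hat I; Z)}$ and $\Norm{w_\tau}{L^2(\In; Z)}^2 = \tau_n \Norm{\hat w}{L^2(\hat I; Z)}^2$. Hence it suffices to prove the estimate on $\hat I$ with $\tau_n = 1$, i.e., $\Norm{\hat w}{L^\infty(\hat I; Z)}^2 \le \Cinv \Norm{\hat w}{L^2(\hat I; Z)}^2$ for all $\hat w \in \Pp{q}{\hat I} \otimes Z$, which also makes the claimed dependence of $\Cinv$ on $q$ alone transparent.

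Next I would handle the reference estimate itself. The key observation is that any $\hat w \in \Pp{q}{\hat I} \otimes Z$ can be written as $\hat w(\hat t) = \sum_{j=0}^{q} \hat t^{\,j} z_j$ with coefficients $z_j \in Z$, or, more usefully, in a basis $\{L_0, \dots, L_q\}$ of $\Pp{q}{\hat I}$ (e.g. shifted Legendre polynomials) as $\hat w = \sum_{j=0}^q L_j\, z_j$. Evaluation at a point $\hat t$ is a linear functional on the finite-dimensional space $\Pp{q}{\hat I}$, so $\Norm{\hat w(\hat t)}{Z} \le \sum_j |L_j(\hat t)|\, \Norm{z_j}{Z}$, while $L^2$-orthogonality gives $\Norm{\hat w}{L^2(\hat I; Z)}^2 = \sum_j \Norm{L_j}{L^2(\hat I)}^2 \Norm{z_j}{Z}^2$ (this uses only the scalar orthogonality and the bilinearity of the Bochner inner product / norm via polarization — one should be slightly careful that $Z$ is merely Banach, so I would instead estimate $\Norm{z_j}{Z}$ directly from $\hat w$ by applying the dual extraction functional for the coefficient, i.e. $z_j = \int_{\hat I} \hat w(\hat t)\, \psi_j(\hat t)\,\mathrm{d}\hat t$ for the biorthogonal dual basis $\psi_j$, giving $\Norm{z_j}{Z} \le \Norm{\psi_j}{L^2(\hat I)} \Norm{\hat w}{L^2(\hat I; Z)}$). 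Combining the pointwise bound with the coefficient bound yields $\Norm{\hat w(\hat t)}{Z} \le \big(\sum_j |L_j(\hat t)| \Norm{\psi_j}{L^2(\hat I)}\big) \Norm{\hat w}{L^2(\hat I; Z)}$; taking the supremum over $\hat t \in \hat I$ produces a constant $\Cinv^{1/2}$ depending only on $q$ (through the fixed reference basis), and squaring gives the claim.

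Alternatively — and this is probably the cleanest route, which I would actually present — one reduces directly to the known scalar inverse estimate: for every fixed functional $\phi \in Z'$ with $\Norm{\phi}{Z'} \le 1$, the map $\hat t \mapsto \langle \phi, \hat w(\hat t)\rangle$ lies in $\Pp{q}{\hat I}$, so the one-dimensional result of \cite[Lemma 3.1]{Beirao_Gomez_Dassi:2025} gives $\Norm{\langle \phi, \hat w\rangle}{L^\infty(\hat I)}^2 \le \Cinv \Norm{\langle \phi, \hat w\rangle}{L^2(\hat I)}^2 \le \Cinv \Norm{\hat w}{L^2(\hat I; Z)}^2$. Taking the supremum over such $\phi$ and using $\Norm{\hat w(\hat t)}{Z} = \sup_{\Norm{\phi}{Z'}\le 1} \langle \phi, \hat w(\hat t)\rangle$ (Hahn--Banach), one concludes $\Norm{\hat w}{L^\infty(\hat I; Z)}^2 \le \Cinv \Norm{\hat w}{L^2(\hat I; Z)}^2$. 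One technical point to check is measurability/attainment of the supremum, i.e. that $\sup_\phi \Norm{\langle \phi, \hat w\rangle}{L^\infty(\hat I)} = \Norm{\hat w}{L^\infty(\hat I;Z)}$; since $\hat w$ is continuous on the compact interval $\overline{\hat I}$, the essential sup is an actual max attained at some $\hat t_*$, and Hahn--Banach at the point $\hat w(\hat t_*)$ closes the gap.

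The main obstacle is the last step just described: passing the scalar inverse estimate through the Bochner norm when $Z$ is only a Banach space rather than a Hilbert space. The duality/Hahn--Banach argument resolves it, but one must be careful that the same constant $\Cinv$ (the scalar one, depending only on $q$) survives the supremum over $\phi$ — which it does, since it is uniform in the scalar polynomial. Everything else (the affine rescaling, the reduction to the reference interval) is routine bookkeeping.
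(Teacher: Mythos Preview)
The paper does not actually prove this lemma; it simply states that the result ``was proven in \cite[Lemma~3.1]{Beirao_Gomez_Dassi:2025}'' and then, in the following remark, records the constant~$\Cinv=(q+1)^3$ obtained there and compares it with the classical scalar bound from~\cite[eq.~(3.6.4)]{Schwab:1998}. So there is no in-paper proof to compare against.

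Your argument is correct. Both routes you sketch (basis expansion with coefficient extraction via a biorthogonal dual basis, and the duality reduction to the scalar inverse estimate via Hahn--Banach) work for a general Banach space~$Z$, and the affine rescaling to the reference interval is exactly the right way to isolate the~$\tau_n^{-1}$ factor and make the~$q$-only dependence of~$\Cinv$ explicit. One small bibliographic slip: in your second approach you invoke ``the one-dimensional result of \cite[Lemma~3.1]{Beirao_Gomez_Dassi:2025}'', but that lemma is precisely the Bochner-space statement you are proving, not a purely scalar antecedent. The scalar inverse estimate you actually need is the classical one (e.g.\ \cite[eq.~(3.6.4)]{Schwab:1998}, as the paper itself notes in Remark~3.2); with that correction the duality argument goes through cleanly and yields the same~$\Cinv$ as the scalar case.
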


%%%
\begin{remark}[$q$-dependence of~$\Cinv$]
The proof in~\cite[Lemma 3.1]{Beirao_Gomez_Dassi:2025} yields~$\Cinv = (q+1)^3$ (the extra factor~$1/2$ therein is due to a small typo in the proof). On the other hand, the classical~$q$-explicit inverse estimate in one dimension from~\cite[eq.~(3.6.4)]{Schwab:1998} gives~$\Cinv = 32q^2$, which exhibits better asymptotic behavior but remains larger than~$(q+1)^3$ for~$q \le 28$. In any case, we focus on the~$(h,\tau)$-convergence of the method, as a~$q$-explicit analysis requires higher temporal regularity of the data to avoid further suboptimality in~$q$ (see~\cite[Rem.~1]{Dong-Mascotto-Wang:2025}).
\eremk
\end{remark}

% ------------------------------------------------------------------------------
\subsection{Auxiliary weight functions} 
\label{sec:auxiliary-weight}
The second main ingredient of the analysis is the use of some linear weight functions, which were implicitly introduced by~Walkington in~\cite[Cor.~4.4]{Walkington:2014}, and have been subsequently employed in~\cite{Dong-Mascotto-Wang:2025,Gomez-Nikolic:2025,Gomez:2025,Zhang_Dong_Yan:2025} to prove continuous dependence on the data in~$L^{\infty}(0, T; X)$ norms for linear and quasilinear wave problems. 

To the best of our knowledge, these auxiliary weight functions were used for the first time in the context of a parabolic-type problem in~\cite{Beirao_Gomez_Dassi:2025}, for the incompressible Navier--Stokes equations.

For~$n = 1, \ldots, N$, we define
\begin{equation}\label{lambdadef}
\varphi_n(t) := 1 - \lambda_n (t - \tnmo) \quad \text{ with } \lambda_n := \frac{1}{2\tau_n},
\end{equation}
which satisfies the following uniform bounds:
\begin{alignat*}{3}
\frac12 & \le \varphi_n(t)  \le 1 & & \qquad \forall t \in [\tnmo, \tn], \\
\varphi_n'(t) & = - \lambda_n & & \qquad \forall t \in [\tnmo, \tn].
\end{alignat*}
All the special test functions employed in the stability analysis in the subsequent sections involve these auxiliary weight functions; see Table~\ref{tab:special-test-functions} below.

The following identities will be extensively used.
%%%
\begin{lemma}[Identities involving~$\varphi_n$]
\label{lemma:identities-varphi-integration}
Let~$\HH$ be a Hilbert space with inner product~$(\cdot, \cdot)_{\HH}$. Then, for~$n = 1, \ldots, N$, the following identities hold:\ \footnote{\label{footnote:n=1}For~$n = 1$, identity~\eqref{eq:identities-varphi-integration-1} requires a minor modification. In particular, the term~$\jump{v_{\tau}}_{0}$ must be interpreted as~$v_{\tau}(0)-v_0$, for an ``initial datum"~$v_0$. The left-hand side of~\eqref{eq:identities-varphi-integration-1} is then larger than or equal to~$(1/4) \Norm{v_{\tau}(t_1^-)}{\HH}^2 + (1/4) \Norm{v_{\tau}(0)}{\HH}^2 - \Norm{v_0}{\HH}^2 + (\lambda_n/2) \Norm{v_{\tau}}{L^2(I_1; \HH)}^2$. This has no effect on the analysis. }
\begin{subequations}
\begin{alignat}{3}
\nonumber
\int_{\In} (\dpt v_{\tau}, & \varphi_n v_{\tau})_{\HH} \dt + (\jump{v_{\tau}}_{n-1}, \varphi_n(\tnmo) v_{\tau}(\tnmo^+))_{\HH} \\
\nonumber
& = \frac14 \Norm{v_{\tau}(\tn^-)}{{\HH}}^2 + \frac12 \Norm{\jump{v_{\tau}}_{n-1}}{{\HH}}^2 - \frac12 \Norm{v_{\tau}(\tnmo^-)}{{\HH}}^2 \\
\label{eq:identities-varphi-integration-1}
& \quad + \frac{\lambda_n}{2} \Norm{v_{\tau}}{L^2(\In; {\HH})}^2 & & \quad \forall v_{\tau} \in \Pp{q}{\Tt} \otimes Z, \\
%%%%
\nonumber
\int_{\In} (\dpt v_{\tau}, & \varphi_n v_{\tau})_{{\HH}} \dt \\
\label{eq:identities-varphi-integration-2}
& = \frac14 \Norm{v_{\tau}(\tn)}{{\HH}}^2 - \frac12 \Norm{v_{\tau}(\tnmo)}{{\HH}}^2 + \frac{\lambda_n}{2} \Norm{v_{\tau}}{L^2(\In; {\HH})}^2 & & \quad \forall v_{\tau} \in \Pcont{q}{\Tt} \otimes \HH.
\end{alignat}
\end{subequations}
%%%
\begin{proof}
We only prove~\eqref{eq:identities-varphi-integration-1}, as~\eqref{eq:identities-varphi-integration-2} follows analogously. 
We recall that~$\varphi_n(\tnmo) = 1$, $\varphi_n(\tn) = 1/2$, and~$\varphi_n'(t) = -\lambda_n$. Therefore, integrating by parts in time and using identity~\eqref{eq:jumps-in-time-identity-3},
we obtain
\begin{alignat*}{3}
\nonumber
\int_{\In} \big(\dpt v_{\tau}, & \varphi_n v_{\tau} \big)_{{\HH}}\dt  + \big(\jump{v_{\tau}}_{n-1}, v_{\tau}(\tnmo^+) \big)_{{\HH}} \\
%%%
\nonumber
& = \frac12 \int_{\In} \frac{\dd}{\dt} (\varphi_n v_{\tau}, v_{\tau})_{{\HH}} \dt  - \frac12 \int_{\In} \varphi_n' \Norm{v_{\tau}}{{\HH}}^2 \dt + \big(\jump{v_{\tau}}_{n-1}, v_{\tau}(\tnmo^+) \big)_{\Omega} \\
%%%
\nonumber
& = \frac14 \Norm{v_{\tau}(\tn^-)}{{\HH}}^2 - \Big(\frac12 v_{\tau}(\tnmo^+) - \jump{v_{\tau}}_{n - 1}, v_{\tau}(\cdot, \tnmo^+) \Big)_{{\HH}} + \frac{\lambda_n}{2} \Norm{v_{\tau}}{L^2(\In; {\HH})}^2 \\
%%%
\label{eq:mht-ItR}
& = \frac14 \Norm{v_{\tau}(\tn^-)}{{\HH}}^2 + \frac12 \Norm{\jump{v_{\tau}}_{n - 1}}{{\HH}}^2 - \frac12 \Norm{v_{\tau}(\tnmo^-)}{{\HH}}^2  + \frac{\lambda_n}{2} \Norm{v_{\tau}}{L^2(\In; {\HH})}^2,
\end{alignat*}
as desired. 
\end{proof}
\end{lemma}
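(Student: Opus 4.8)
The plan is to reduce both identities to an elementary computation via integration by parts in time on the single interval~$\In$, exploiting the three facts $\varphi_n(\tnmo) = 1$, $\varphi_n(\tn) = 1/2$, and $\varphi_n' \equiv -\lambda_n$, together with the pointwise jump identity~\eqref{eq:jumps-in-time-identity-3}. Since $v_{\tau}$ restricted to~$\In$ is a polynomial with values in~$\HH$, it is smooth there and the fundamental theorem of calculus applies without any regularity caveat; it therefore suffices to treat~\eqref{eq:identities-varphi-integration-1}, as~\eqref{eq:identities-varphi-integration-2} is the same argument with the jump terms removed.

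First I would rewrite the integrand using the symmetry of~$(\cdot,\cdot)_{\HH}$ and the Leibniz rule, $(\dpt v_{\tau}, \varphi_n v_{\tau})_{\HH} = \tfrac12 \tfrac{\dd}{\dt}\big(\varphi_n \Norm{v_{\tau}}{\HH}^2\big) - \tfrac12 \varphi_n' \Norm{v_{\tau}}{\HH}^2$. Integrating over~$\In$, the total-derivative term telescopes to $\tfrac14 \Norm{v_{\tau}(\tn^-)}{\HH}^2 - \tfrac12 \Norm{v_{\tau}(\tnmo^+)}{\HH}^2$, where the endpoint values~$1/2$ and~$1$ of~$\varphi_n$ enter, while $-\tfrac12\int_{\In}\varphi_n'\Norm{v_{\tau}}{\HH}^2\dt = \tfrac{\lambda_n}{2}\Norm{v_{\tau}}{L^2(\In;\HH)}^2$ since~$\varphi_n'$ is the constant~$-\lambda_n$. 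Together with the interface term $\big(\jump{v_{\tau}}_{n-1}, \varphi_n(\tnmo) v_{\tau}(\tnmo^+)\big)_{\HH} = \big(\jump{v_{\tau}}_{n-1}, v_{\tau}(\tnmo^+)\big)_{\HH}$ (using $\varphi_n(\tnmo) = 1$), the left-hand side of~\eqref{eq:identities-varphi-integration-1} becomes $\tfrac14\Norm{v_{\tau}(\tn^-)}{\HH}^2 - \tfrac12\Norm{v_{\tau}(\tnmo^+)}{\HH}^2 + \big(\jump{v_{\tau}}_{n-1}, v_{\tau}(\tnmo^+)\big)_{\HH} + \tfrac{\lambda_n}{2}\Norm{v_{\tau}}{L^2(\In;\HH)}^2$.

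The only manipulation left is the square-completion at the node~$\tnmo$: applying~\eqref{eq:jumps-in-time-identity-3} there, equivalently the Hilbert-space identity $-\tfrac12\Norm{a}{\HH}^2 + (a - b, a)_{\HH} = \tfrac12\Norm{a - b}{\HH}^2 - \tfrac12\Norm{b}{\HH}^2$ with $a = v_{\tau}(\tnmo^+)$ and $b = v_{\tau}(\tnmo^-)$, replaces the three boundary terms involving~$\tnmo$ by $\tfrac12\Norm{\jump{v_{\tau}}_{n-1}}{\HH}^2 - \tfrac12\Norm{v_{\tau}(\tnmo^-)}{\HH}^2$, which is exactly the right-hand side. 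For~\eqref{eq:identities-varphi-integration-2} one takes $v_{\tau}$ continuous in time, so $\jump{v_{\tau}}_{n-1} = 0$ and $v_{\tau}(\tnmo^+) = v_{\tau}(\tnmo) = v_{\tau}(\tnmo^-)$, and the boundary term is simply $-\tfrac12\Norm{v_{\tau}(\tnmo)}{\HH}^2$ with no square-completion needed.

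I do not expect a genuine obstacle: this is a bookkeeping identity, and the three structural properties of~$\varphi_n$ were tailored precisely so that the coefficients $1/4$, $1/2$, $\lambda_n/2$ come out as stated. The one point deserving a line of care is the degenerate case $n = 1$ flagged in the footnote, where $v_{\tau}(0^-)$ is undefined and $\jump{v_{\tau}}_0$ is to be read as $v_{\tau}(0) - v_0$ for a prescribed~$v_0$; there the square-completion step, after inserting $v_{\tau}(0^-) \rightsquigarrow v_0$ and using Young's inequality on the cross term $(v_0, v_{\tau}(0))_{\HH}$, downgrades to the lower bound $\tfrac14\Norm{v_{\tau}(t_1^-)}{\HH}^2 + \tfrac14\Norm{v_{\tau}(0)}{\HH}^2 - \Norm{v_0}{\HH}^2 + \tfrac{\lambda_1}{2}\Norm{v_{\tau}}{L^2(I_1;\HH)}^2$, and one need only verify that these are indeed the constants claimed in the footnote.
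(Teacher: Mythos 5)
Your proposal is correct and follows essentially the same route as the paper: rewrite the integrand via the Leibniz rule, use that $\varphi_n' \equiv -\lambda_n$ with the endpoint values $\varphi_n(\tnmo)=1$ and $\varphi_n(\tn)=1/2$, and then complete the square at~$\tnmo$ via identity~\eqref{eq:jumps-in-time-identity-3}. Your additional remarks on the continuous case and the $n=1$ modification are consistent with the footnote and add nothing that conflicts with the paper's argument.
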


%%%
\begin{remark}[Alternative tools in the literature]
Another approach for the analysis of the DG time discretization of parabolic problems in~$L^{\infty}(0, T; X)$ norms is the use of the ``discrete characteristic functions" introduced in~\cite[\S2.3]{Chrysafinos_Walkington:2006}, 
%; see also its application in~\cite{Balzsova-Feistauer-Vlasak:2018,Chrysafinos:2010,Chrysafinos_Karatzas:2012,Chrysafinos:2013,Kirk-Horvath-Rhebergen:2023,Gazca-Orozco_Kaltenbach:2025}. 
whose definition can be written in compact form using the left-sided Thom\'ee projection operator~$\tPtTh$ in Definition~\ref{def:Pt} below as follows: for a fixed~$t \in \In$, the discrete characteristic function of~$\phi\in \Pp{q}{\In}$ is given by~$\widetilde{\phi} = \tPtTh (\chi_{[\tnmo, t)} \phi)$. 

Such an approach was ``complemented" by the introduction of an exponential interpolant in~\cite[\S3.2]{Chrysafinos_Walkington:2010} for the analysis of the DG time discretization of the incompressible Stokes and Navier--Stokes equations. The exponential interpolant of a function~$\phi \in \Pp{q}{\In}$ can be defined in compact form as~$\overline{\phi} := \tPtTh(\exp(\frac{\tnmo - t}{\tau_n}) \phi )$.  

The auxiliary weight function~$\varphi_n$ in~\eqref{lambdadef} corresponds to the Taylor expansion of degree~$1$ around~$t = \tnmo$ of the exponential function
$$\exp\Big(\frac{\tnmo - t}{2\tau_n}\Big).$$
This evidences its close relation with the aforementioned exponential interpolant.
\eremk
\end{remark}

% ------------------------------------------------------------------------------
\subsection{The Lagrange interpolant~\texorpdfstring{$\ItR$}{ItR}}
For~$n = 1, \ldots, N$, we denote by~$\{(\omega_i^{(n)}, s_i^{(n)})\}_{i = 1}^{q + 1}$ the left-sided Gauss-Radau quadrature rule in the interval~$\In$ with nodes
$$\tnmo =: s_1^{(n)} < s_2^{(n)} < \ldots < s_{q + 1}^{(n)} < \tn,$$ 
and positive weights~$\{\omega_i^{(n)}\}_{i = 1}^{q + 1}$.
This quadrature rule is exact for polynomials of degree less than or equal to~$2q$. 

Let~$\{\hLag_{i}\}_{i = 1}^{q+1}$ be the Lagrange polynomials associated with the Gauss-Radau nodes in the reference element~$[-1, 1]$. For~$n = 1, \ldots, N$, we define the Lagrange polynomials~$\{\Lag_i^{(n)}\}_{i = 1}^{q+1}$ in the interval~$\In$ as follows:
\begin{equation*}
    \Lag_i^{(n)}(t) = \hLag_i(\hat{t}), \quad t = \frac{(1 - \hat{t})}{2} \tnmo + \frac{(1+\hat{t})}{2} \tn, \quad i = 1, \ldots, q+1.
\end{equation*}

For any Banach space~$(Z, \Norm{\cdot}{Z})$, we denote by~$\ItR : H^1(\Tt; Z) \to \Pp{q}{\Tt} \otimes Z$ the broken Lagrange interpolant at the Gauss-Radau nodes, i.e., for any~$v \in H^1(\Tt; Z)$, the interpolant~$\ItR v \in \Pp{q}{\Tt} \otimes Z$ is given by
\begin{equation*}
    \ItR v_{|_{\Qn}} (t) := \sum_{i = 1}^{q+1} v(s_i^{(n)}) \Lag_i^{(n)}(t), \quad \text{ for~$n = 1, \ldots, N.$}
\end{equation*}

Next lemma is an immediate consequence of the exactness of the Gauss-Radau quadrature rule. 
\begin{lemma}[Orthogonality of~$\ItR$]
Let~$u_{\tau} \in \Pp{q-1}{\Tt} \otimes L^2(\Omega)$ and~$w_{\tau} \in \Pp{q+1}{\Tt} \otimes L^2(\Omega)$. Then, the Lagrange interpolant~$\ItR$ satisfies the following identity:
\begin{equation}
\label{eq:ItR-exact}
\begin{split}
(\ItR u_{\tau}, w_{\tau})_{\Qn} & = (u_{\tau}, w_{\tau})_{\Qn}, \quad \text{ for~$n = 1, \ldots, N$}.
\end{split}
\end{equation}
\end{lemma}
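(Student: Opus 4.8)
The plan is to argue locally, one time slab at a time. First I would fix $n\in\{1,\dots,N\}$ and unfold the definition $(\cdot,\cdot)_{\Qn}=\int_{\In}(\cdot,\cdot)_{\Omega}\dt$, so that \eqref{eq:ItR-exact} reduces to an identity between two integrals over $\In$. The crucial remark is that $\ItR$ leaves $u_{\tau}$ unchanged: since $u_{\tau}|_{\In}$ belongs to $\Pp{q-1}{\In}\otimes L^2(\Omega)\subset\Pp{q}{\In}\otimes L^2(\Omega)$ and the $q+1$ left-sided Gauss-Radau nodes $\{s_i^{(n)}\}_{i=1}^{q+1}$ are distinct, $u_{\tau}|_{\In}$ is itself the unique element of $\Pp{q}{\In}\otimes L^2(\Omega)$ interpolating $u_{\tau}$ at those nodes, whence $\ItR u_{\tau}|_{\Qn}=u_{\tau}|_{\Qn}$ and \eqref{eq:ItR-exact} follows at once.

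I would also spell out the quadrature viewpoint, as it exposes why the degree hypotheses are what they are and is the formulation reused in the sequel. The integrand $t\mapsto\big(u_{\tau}(\cdot,t),w_{\tau}(\cdot,t)\big)_{\Omega}$ is a univariate polynomial of degree at most $(q-1)+(q+1)=2q$, i.e.\ exactly the exactness threshold of the $(q+1)$-point Gauss-Radau rule, so that $(u_{\tau},w_{\tau})_{\Qn}=\sum_{i=1}^{q+1}\omega_i^{(n)}\big(u_{\tau}(s_i^{(n)}),w_{\tau}(s_i^{(n)})\big)_{\Omega}$. Since by construction $\ItR u_{\tau}(s_i^{(n)})=u_{\tau}(s_i^{(n)})$, and since $\ItR u_{\tau}|_{\In}$ again has degree $\le q-1$, the integrand $t\mapsto\big(\ItR u_{\tau}(\cdot,t),w_{\tau}(\cdot,t)\big)_{\Omega}$ is likewise of degree $\le 2q$, so a second application of exactness in the reverse direction turns the quadrature sum back into $(\ItR u_{\tau},w_{\tau})_{\Qn}$, closing the chain.

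I do not expect any genuine obstacle: the statement is at bottom a reproduction-of-polynomials fact. The only point to watch is the degree bookkeeping — it is the assumption $\deg_t u_{\tau}\le q-1$ (not merely $\le q$), together with $\deg_t w_{\tau}\le q+1$, that keeps the product of degree $\le 2q$ and hence within the Gauss-Radau exactness range; the endpoint slab $n=1$ requires no special treatment here.
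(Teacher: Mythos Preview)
Your proof is correct. Your second paragraph is precisely the paper's argument, which states only that the lemma ``is an immediate consequence of the exactness of the Gauss--Radau quadrature rule.'' Your first paragraph takes a more elementary route: since $u_\tau|_{\In}\in\Pp{q-1}{\In}\otimes L^2(\Omega)\subset\Pp{q}{\In}\otimes L^2(\Omega)$ and $\ItR$ interpolates onto $\Pp{q}{\In}\otimes L^2(\Omega)$, one has $\ItR u_\tau=u_\tau$ outright and the identity is trivial. This exposes that the lemma, as literally written, is nearly tautological; the quadrature viewpoint is what actually matters in the applications, where $\ItR$ is applied to the degree-$(q{+}1)$ factor $\varphi_n\uht$ rather than to the degree-$(q{-}1)$ one, and it is the $2q$-exactness of the Gauss--Radau rule together with the nodal agreement $\ItR w(s_i^{(n)})=w(s_i^{(n)})$ that does the real work there.
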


Next lemma concerns some stability properties of~$\ItR$. 
%%%
\begin{lemma}[Stability of~$\ItR$]
\label{lemma:stab-ItR}
For any Banach space~$(Z, \Norm{\cdot}{Z})$, there exists a positive constant~$\CSI$ depending only on~$q$ such that, for~$n = 1, \ldots, N$, it holds
\begin{equation}
\label{eq:stab-ItR-Linfty}
\Norm{\ItR w}{L^{\infty}(\In; Z)} \le \CSI \Norm{w}{L^{\infty}(\In; Z)} \qquad \forall w \in H^1(\In; Z).
\end{equation}
Moreover, for all~$w_{\tau} \in \Pp{q}{\Tt} \otimes Z$,
\begin{equation}
\label{eq:stab-ItR-L2}
\Norm{\ItR w_{\tau}}{L^2(\In; Z)} \le \CSI \Cinv^{1/2} \Norm{w_{\tau}}{L^2(\In; Z)},
\end{equation}
where~$\Cinv = \Cinv(q)$ is the constant in Lemma~\ref{lemma:L2-Linfty}.
\end{lemma}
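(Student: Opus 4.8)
The plan is to prove the two bounds separately, both by reducing to the reference interval $[-1,1]$ and exploiting the finite dimensionality of $\Pp{q}{[-1,1]}$. For \eqref{eq:stab-ItR-Linfty}, I would first observe that for $w \in H^1(\In; Z)$ the pointwise values $w(s_i^{(n)})$ make sense by the embedding $H^1(\In;Z) \hookrightarrow C^0(\overline{\In};Z)$, so $\|w(s_i^{(n)})\|_Z \le \|w\|_{L^\infty(\In;Z)}$ for each Gauss--Radau node. Then, for $t \in \In$, write $\ItR w_{|_{\Qn}}(t) = \sum_{i=1}^{q+1} w(s_i^{(n)}) \Lag_i^{(n)}(t)$, take $Z$-norms, and use the triangle inequality together with the scaling invariance of the Lagrange basis: $\Lag_i^{(n)}(t) = \hLag_i(\hat t)$ with $\hat t \in [-1,1]$, so $\sup_{t \in \In} \sum_{i=1}^{q+1} |\Lag_i^{(n)}(t)| = \sup_{\hat t \in [-1,1]} \sum_{i=1}^{q+1} |\hLag_i(\hat t)| =: \CSI$, the Lebesgue constant of Gauss--Radau interpolation on the reference interval, which depends only on $q$. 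This gives \eqref{eq:stab-ItR-Linfty}.

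For \eqref{eq:stab-ItR-L2}, the point is that $\ItR$ need \emph{not} be $L^2$-stable uniformly (interpolation is not an $L^2$-bounded projection), so I would not try to bound it directly; instead I would chain the $L^\infty$ bound just proved with the two inverse-type estimates available. Concretely: for $w_\tau \in \Pp{q}{\Tt} \otimes Z$, note $\ItR w_\tau \in \Pp{q}{\Tt} \otimes Z$ as well, so
\begin{equation*}
\Norm{\ItR w_\tau}{L^2(\In; Z)}^2 \le \tau_n \Norm{\ItR w_\tau}{L^\infty(\In; Z)}^2 \le \tau_n \CSI^2 \Norm{w_\tau}{L^\infty(\In; Z)}^2 \le \CSI^2 \Cinv \Norm{w_\tau}{L^2(\In; Z)}^2,
\end{equation*}
where the first inequality is the trivial bound $\int_{\In}\|\cdot\|_Z^2 \le \tau_n \|\cdot\|_{L^\infty(\In;Z)}^2$, the second is \eqref{eq:stab-ItR-Linfty} (valid since $\Pp{q}{\Tt}\otimes Z \subset H^1(\In;Z)$), and the third is Lemma~\ref{lemma:L2-Linfty} applied to $w_\tau$. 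Taking square roots yields \eqref{eq:stab-ItR-L2} with the stated constant $\CSI \Cinv^{1/2}$.

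I do not anticipate a genuine obstacle here; the only subtlety worth stating carefully is that the Lagrange-basis argument must be carried out on the reference interval so that the constant $\CSI$ is manifestly $\tau_n$-independent (and $n$-independent), and that in \eqref{eq:stab-ItR-L2} one must use $\ItR w_\tau$ rather than $w_\tau$ as the argument of the polynomial inverse estimate of Lemma~\ref{lemma:L2-Linfty} — the whole point is that the $L^2 \to L^\infty$ inverse estimate costs a factor $\Cinv^{1/2}$, which is exactly where that constant enters. One should also note that the Gauss--Radau weights and nodes scale affinely with $\In$, so their combinatorial structure — and hence $\CSI$ and the exactness degree $2q$ used elsewhere — is inherited unchanged from the reference element.
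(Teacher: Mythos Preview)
Your proof is correct and matches the paper's approach exactly: the paper cites the Lebesgue-constant argument for~\eqref{eq:stab-ItR-Linfty} and then chains H\"older, \eqref{eq:stab-ItR-Linfty}, and Lemma~\ref{lemma:L2-Linfty} for~\eqref{eq:stab-ItR-L2}, just as you do. One minor slip in your commentary: the inverse estimate of Lemma~\ref{lemma:L2-Linfty} is applied to~$w_\tau$ (as you correctly state in the displayed chain), not to~$\ItR w_\tau$, so your closing remark has the roles reversed.
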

\begin{proof}
Bound~\eqref{eq:stab-ItR-Linfty} was proven in~\cite[Lemma 3.3]{Beirao_Gomez_Dassi:2025}. As for~\eqref{eq:stab-ItR-L2}, we use the H\"older inequality, bound~\eqref{eq:stab-ItR-Linfty}, and the inverse estimate from Lemma~\ref{lemma:L2-Linfty} to obtain
\begin{alignat*}{3}
\Norm{\ItR w_{\tau}}{L^2(\In; Z)} \le \tau_n^{1/2} \Norm{\ItR w_{\tau}}{L^{\infty}(\In; Z)} \le \CSI \tau_n^{1/2} \Norm{w_{\tau}}{L^{\infty}(\In; Z)} \le \CSI \Cinv^{1/2} \Norm{w_{\tau}}{L^2(\In; Z)}. 
\end{alignat*}
\end{proof}
The constant~$\CSI$ in~\eqref{eq:stab-ItR-Linfty} is the Lebesgue constant for the left-sided Radau nodes in~$[-1, 1]$, which grows as~$\mathcal{O}(\sqrt{q+1})$ according to~\cite[Thm.~5.1]{Hager_Hou_Rao:2017}.

We conclude this section with the following property from~\cite[Lemma 3.2]{Beirao_Gomez_Dassi:2025}, which involves the auxiliary weight functions in Section~\ref{sec:auxiliary-weight}.
%%%
\begin{lemma}
\label{lemma:bilinear-form-varphi}
Let~$a_h : \oVhp \times \oVhp \to \R$ be a coercive bilinear form on~$\oVhp$. Then, for~$n = 1, \ldots, N$, it holds
\begin{alignat*}{3}
\int_{\In} a_h\big(\vht, \ItR (\varphi_n \vht)\big) \dt  \geq \frac12\int_{\In} a_h(\vht, \vht) \dt \geq 0  \qquad \forall \vht \in \oVdisc{q}.
\end{alignat*}
\end{lemma}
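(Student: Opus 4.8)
The plan is to reduce the time integral to a left-sided Gauss--Radau quadrature sum and then exploit the pointwise lower bound on $\varphi_n$ together with the coercivity of $a_h$.

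First I would observe that for $\vht \in \oVdisc{q}$ the restriction $\vht_{|_{\Qn}}$ lies in $\Pp{q}{\In} \otimes \oVhp$, hence $\ItR(\varphi_n\vht)_{|_{\Qn}} \in \Pp{q}{\In} \otimes \oVhp$ as well, so that the scalar function $t \mapsto a_h\big(\vht(t), \ItR(\varphi_n\vht)(t)\big)$ is a polynomial of degree at most $2q$ on $\In$. Since the left-sided Gauss--Radau rule with $q+1$ nodes is exact on $\Pp{2q}{\In}$, this gives
\begin{equation*}
\int_{\In} a_h\big(\vht, \ItR(\varphi_n\vht)\big)\dt = \sum_{i=1}^{q+1} \omega_i^{(n)}\, a_h\big(\vht(s_i^{(n)}), \ItR(\varphi_n\vht)(s_i^{(n)})\big).
\end{equation*}
Because $\ItR$ interpolates at the Gauss--Radau nodes, $\ItR(\varphi_n\vht)(s_i^{(n)}) = \varphi_n(s_i^{(n)})\,\vht(s_i^{(n)})$, and bilinearity of $a_h$ then yields
\begin{equation*}
\int_{\In} a_h\big(\vht, \ItR(\varphi_n\vht)\big)\dt = \sum_{i=1}^{q+1} \omega_i^{(n)}\, \varphi_n(s_i^{(n)})\, a_h\big(\vht(s_i^{(n)}), \vht(s_i^{(n)})\big).
\end{equation*}

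Next I would use that $\omega_i^{(n)} > 0$, that $\varphi_n(s_i^{(n)}) \geq \tfrac12$ by the uniform lower bound on $\varphi_n$ in $[\tnmo, \tn]$, and that $a_h(w, w) \geq 0$ for every $w \in \oVhp$ by coercivity, to bound each summand from below by $\tfrac12\, \omega_i^{(n)}\, a_h(\vht(s_i^{(n)}), \vht(s_i^{(n)}))$. Applying exactness of the Gauss--Radau rule once more to the degree-$2q$ polynomial $t \mapsto a_h(\vht(t), \vht(t))$ gives $\sum_{i=1}^{q+1} \omega_i^{(n)}\, a_h(\vht(s_i^{(n)}), \vht(s_i^{(n)})) = \int_{\In} a_h(\vht, \vht)\dt$, and a final appeal to coercivity shows this quantity is nonnegative, which closes the chain of inequalities.

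The argument is essentially bookkeeping, so I do not anticipate a genuine obstacle; the only point that needs care is the degree count ensuring exactness of the quadrature in both places. Note in particular that the product $\varphi_n\vht$ itself has degree $q+1$ in time, so the Gauss--Radau rule would \emph{not} integrate $a_h(\vht, \varphi_n\vht)$ exactly — this is precisely why $\varphi_n\vht$ must first be replaced by its degree-$q$ interpolant $\ItR(\varphi_n\vht)$.
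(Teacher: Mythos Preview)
Your argument is correct and follows exactly the approach implicit in the paper (the result is quoted from \cite[Lemma~3.2]{Beirao_Gomez_Dassi:2025}): reduce the integral to the Gauss--Radau quadrature sum, use that $\ItR$ interpolates at those nodes so the weight $\varphi_n(s_i^{(n)})\geq\tfrac12$ factors out, and then convert back via exactness of the rule on $\Pp{2q}{\In}$. Your observation that the degree count is the only delicate point, and that this is precisely why $\ItR$ is needed, is spot on.
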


% ---------------------------------------------------------------------------------------
\subsection{The time reconstruction operator~\texorpdfstring{$\Rt$}{Rt}}
\label{sec:time-reconstruction}
A common tool in the \emph{a posteriori} error analysis of DG time discretizations is the time reconstruction operator~$\Rt : \oVdisc{q} \to \oVcont{q+1}$ introduced in~\cite[\S2.1]{Makridakis_Nochetto:2006}. There are several ways to define this operator~(see, e.g., \cite[Def. 69.5 and Rem. 69.9]{Ern_Guermond:2021} and~\cite[\S2.3]{Smears:2017}), and all of them are equivalent for discrete functions. It turns out that such an operator is also useful to simplify the presentation of DG time discretizations. 
%%%%
\begin{definition}[Time reconstruction~$\Rt$]
\label{def:time-reconstruction}
Let~$q \in \N$ and~${\HH}$ be a Hilbert space with inner product~$(\cdot, \cdot)_{{\HH}}$.
For any~$v \in H^1(\Tt; {\HH})$, the time reconstruction~$\Rt v \in \Pcont{q+1}{\Tt} \otimes {\HH}$ satisfies
\begin{alignat*}{3}
\Rt v(0) & = v(0) & & \quad \text{in~${\HH}$}, \\
\Rt v(\tn) & = v(\tn^-) & & \quad \text{in~${\HH}$, for~$n = 1, \ldots, N - 1$,} \\
%%%
\nonumber
\int_{0}^T  (\dpt \Rt v, w_{\tau})_{{\HH}} \dt  & = \sum_{n = 1}^{\Nt} \int_{\In} (\dpt v, w_{\tau})_{{\HH}} \dt  + \sum_{n = 1}^{\Nt -1} (\jump{v}_n, & & w_{\tau}(\tn^+))_{{\HH}}  \\
& \quad + (v(0), w_{\tau}(0))_{{\HH}} & & \quad \forall w_{\tau} \in \Pp{q-1}{\Tt} \otimes {\HH}.
\end{alignat*}
\end{definition}
This operator then represents the DG discretization of the first-order time derivative. 

\begin{lemma}[Properties of~$\Rt$]
\label{lemma:properties-Rt}
Let~${\HH}$ be a Hilbert space with inner product~$(\cdot,\cdot)_{{\HH}}$. 
The following identity holds for all~$v \in H^1(0, T; {\HH})$:
\begin{equation}
\label{eq:identity-Rt-v}
\int_0^T (\dpt \Rt v, w_{\tau})_{{\HH}} \dt = \int_0^T (\dpt v, w_{\tau})_{{\HH}} \dt + (v(0), w_{\tau}(0))_{{\HH}} \quad \forall w_{\tau} \in \Pp{q}{\Tt} \otimes {\HH}.
\end{equation}

Moreover, for~$n \in \{1, \ldots, N\}$, $v_{\tau} \in \Pp{q}{\Tt} \otimes {\HH}$, and~$w_{\tau} = \chi_{(0, \tn)} v_{\tau} \in \Pp{q}{\Tt} \otimes {\HH}$, it holds
\begin{alignat}{3}
\label{eq:Rt-vht-jump}
\int_0^T (\dpt \Rt v_{\tau}, w_{\tau})_{{\HH}} \dt = \frac12 \Big(\Norm{v_{\tau}(\tn^-)}{{\HH}}^2 + \sum_{m = 1}^{n-1} \Norm{\jump{v_{\tau}}_{m}}{{\HH}}^2 + \Norm{v_{\tau} (0)}{{\HH}}^2\Big).
\end{alignat}
\end{lemma}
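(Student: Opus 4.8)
The plan is to obtain both identities from the defining relation of~$\Rt$ in Definition~\ref{def:time-reconstruction}, after upgrading it from test functions in~$\Pp{q-1}{\Tt}\otimes\HH$ to all of~$\Pp{q}{\Tt}\otimes\HH$, combined with the continuity of~$\Rt v$ and its nodal-matching properties. For~\eqref{eq:identity-Rt-v} the input~$v$ lies in~$H^1(0,T;\HH)\hookrightarrow C^0([0,T];\HH)$, so all of its time jumps~$\jump{v}_n$ vanish and the right-hand side of the (upgraded) relation collapses to the stated form; for~\eqref{eq:Rt-vht-jump} the input is the discrete~$v_\tau$, whose jumps are generically nonzero, and the jump identities~\eqref{eq:jumps-in-time-identity-1}--\eqref{eq:jumps-in-time-identity-3} are then used to recast the quadratic expression.

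First I would establish the degree-$q$ upgrade. Since~$\dpt\Rt v$ is piecewise of degree~$q$, this is exactly the extra information needed. On each slab one has the~$L^2(I_n)$-orthogonal splitting~$\Pp{q}{I_n}=\Pp{q-1}{I_n}\oplus\Span\{L_q^{(n)}\}$, where~$L_q^{(n)}$ is the degree-$q$ Legendre polynomial rescaled to~$I_n$; hence, by linearity, it suffices to test the relation against~$w_\tau=\chi_{I_n}L_q^{(n)}z$ for each~$n$ and each~$z\in\HH$. Integrating~$\int_{I_n}(\dpt\Rt v,L_q^{(n)}z)_{\HH}\dt$ and~$\int_{I_n}(\dpt v,L_q^{(n)}z)_{\HH}\dt$ by parts in time, the interior integrals combine into a term of the form~$\int_{I_n}(\Rt v-v,\dpt L_q^{(n)}z)_{\HH}\dt$, which vanishes because~$\dpt L_q^{(n)}\in\Pp{q-1}{I_n}$ and~$\Rt v-v$ is~$L^2(I_n)$-orthogonal to~$\Pp{q-1}{I_n}\otimes\HH$; the remaining endpoint terms are evaluated using the continuity of~$\Rt v$, the nodal relations of Definition~\ref{def:time-reconstruction} (which give~$\Rt v(\tn^-)=v(\tn^-)$ and~$\Rt v(\tnmo^+)=v(\tnmo^-)$, the latter being the prescribed value at~$t=0$ when~$n=1$), and the values~$L_q^{(n)}(\tn)=1$ and~$L_q^{(n)}(\tnmo)=(-1)^q$; they reproduce exactly the term~$(\jump{v}_{n-1},w_\tau(\tnmo^+))_{\HH}$, with~$(v(0),w_\tau(0))_{\HH}$ taking the place of the jump term when~$n=1$. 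This proves that the defining relation holds verbatim for all~$w_\tau\in\Pp{q}{\Tt}\otimes\HH$.

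Identity~\eqref{eq:identity-Rt-v} is then immediate: for~$v\in H^1(0,T;\HH)$ the jumps vanish, so~$\sum_{n=1}^{N}\int_{I_n}(\dpt v,w_\tau)_{\HH}\dt=\int_0^T(\dpt v,w_\tau)_{\HH}\dt$ and the jump sum disappears from the upgraded relation, which is precisely~\eqref{eq:identity-Rt-v}. For~\eqref{eq:Rt-vht-jump} I would take~$v=v_\tau\in\Pp{q}{\Tt}\otimes\HH$ and~$w_\tau=\chi_{(0,\tn)}v_\tau\in\Pp{q}{\Tt}\otimes\HH$ in the upgraded relation. Since~$w_\tau$ coincides with~$v_\tau$ on~$I_1,\ldots,I_n$ and vanishes on the remaining slabs, while~$w_\tau(t_m^+)=v_\tau(t_m^+)$ for~$m\le n-1$ and~$w_\tau(t_m^+)=0$ for~$m\ge n$, the right-hand side reduces to~$\sum_{m=1}^{n}\int_{I_m}(\dpt v_\tau,v_\tau)_{\HH}\dt+\sum_{m=1}^{n-1}(\jump{v_\tau}_m,v_\tau(t_m^+))_{\HH}+\Norm{v_\tau(0)}{\HH}^2$. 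Inserting~$\int_{I_m}(\dpt v_\tau,v_\tau)_{\HH}\dt=\tfrac12\big(\Norm{v_\tau(t_m^-)}{\HH}^2-\Norm{v_\tau(t_{m-1}^+)}{\HH}^2\big)$, rewriting each cross term~$(\jump{v_\tau}_m,v_\tau(t_m^+))_{\HH}$ via the~$\HH$-norm version of~\eqref{eq:jumps-in-time-identity-3}, and telescoping the endpoint squares produces the right-hand side of~\eqref{eq:Rt-vht-jump}.

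The main obstacle is the degree-$q$ upgrade: one must carefully extract the~$L^2(I_n)$-orthogonality of~$\Rt v-v$ to~$\Pp{q-1}{I_n}\otimes\HH$ from Definition~\ref{def:time-reconstruction}, keep track of the~$(-1)^q$ weights produced by~$L_q^{(n)}$ at the slab endpoints, and treat the first slab (where the initial term replaces the jump term) separately. Once this is available, the specialization~$v\in H^1$ giving~\eqref{eq:identity-Rt-v} and the telescoping computation giving~\eqref{eq:Rt-vht-jump} are routine.
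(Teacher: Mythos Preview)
Your proof is correct and takes essentially the same route as the paper: both identities are obtained by applying the defining relation of~$\Rt$ with test functions of degree~$q$, then using the continuity of~$v$ to kill the jump terms (for~\eqref{eq:identity-Rt-v}) and integration by parts together with the jump identity~\eqref{eq:jumps-in-time-identity-2}/\eqref{eq:jumps-in-time-identity-3} (for~\eqref{eq:Rt-vht-jump}). The only difference is that you make the degree-$q$ upgrade explicit, whereas the paper simply applies the defining relation with~$w_\tau\in\Pp{q}{\Tt}\otimes\HH$ without comment; your upgrade argument rests on the orthogonality~$\Rt v - v\perp\Pp{q-1}{I_n}\otimes\HH$, which is one of the equivalent local characterizations of~$\Rt$ referenced after Definition~\ref{def:time-reconstruction}, so invoking it is legitimate even though integration by parts on the moment condition in Definition~\ref{def:time-reconstruction} alone only yields orthogonality to~$\Pp{q-2}{I_n}\otimes\HH$.
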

%%%
\begin{proof}
Identity~\eqref{eq:identity-Rt-v} follows from the definition of~$\Rt$ and the continuous embedding $H^1(0, T; {\HH}) \hookrightarrow C^0([0, T]; {\HH})$, which makes all the terms involving the time jump vanish. 

Without loss of generality, we prove~\eqref{eq:Rt-vht-jump} only for~$n = N$. We use integration by parts in time and identity~\eqref{eq:jumps-in-time-identity-2} to obtain
\begin{alignat*}{3}
\int_{0}^{T} (\dpt \Rt v_{\tau}, w_{\tau})_{{\HH}} \dt  & = \int_0^T (\dpt \Rt v_{\tau}, v_{\tau})_{{\HH}} \dt  \\
& = \sum_{n = 1}^{\Nt} \int_{\In} (\dpt v_{\tau}, v_{\tau})_{{\HH}} \dt + \sum_{n = 1}^{\Nt - 1} (\jump{v_{\tau}}_n, v_{\tau}(\tn^+))_{{\HH}} + \Norm{v_{\tau}(0)}{{\HH}}^2 \\
%%%
& = \frac12\big( \Norm{v_{\tau}(T)}{{\HH}}^2 + \Norm{v_{\tau}(0)}{{\HH}}^2 \big) + \sum_{n = 1}^{N-1} \Big[ \big(\jump{v_{\tau}}_n, v_{\tau}(\tn^+)\big)_{{\HH}} - \frac12 \jump{\Norm{v_{\tau}}{{\HH}}^2}_n \Big] \\
& = \frac12 \Big(\Norm{v_{\tau}(T)}{{\HH}}^2 + \sum_{n = 1}^{\Nt - 1} \Norm{\jump{v_{\tau}}_n}{{\HH}}^2 + \Norm{v_{\tau}(0)}{{\HH}}^2\Big),
\end{alignat*}
which completes the proof. 
\end{proof}

% ------------------------------------------------------------------------------
\subsection{A roadmap of projections in time}
\label{sec:projections-time}
We now present the definition and properties of the projection operators in time involved in the analysis of Galerkin-type time discretizations. In particular, we show how different equivalent definitions of such operators lead to additional properties. 

Next lemma states that, if a projection operator onto a space~$\Pp{q}{\In} \otimes Z$ is stable in the~$L^{\infty}(\In; Z)$ norm for~$n = 1, \ldots, N$, it also has optimal local convergence properties in time. As a consequence, showing the stability in such norms of the projection operators in this section plays a crucial role in the convergence analysis. 

%%%
\begin{lemma}[Estimates for a generic projector~$\mathbb{P}_{\tau}$] 
\label{lemma:estimates-Ptau}
Let~$r \in [1, \infty]$, $(Z, \Norm{\cdot}{Z})$ be a Banach space, and~$\mathbb{P}_{\tau} : W^{1,1}(0, T; Z) \to \Pp{q}{\Tt} \otimes Z$ be a projection operator that is stable in the~$L^{\infty}(\In; Z)$ norm for~$n = 1, \ldots, N$. Then, the following estimates hold:
\begin{subequations}
\begin{alignat}{3}
\label{eq:estimate-ItR}
\Norm{(\Id - \mathbb{P}_{\tau}) v}{L^r(\In; Z)} & \lesssim \tau_n^{s} \Norm{\dpt^{(s)} v}{L^r(\In; Z)} & & \quad \forall v \in W^{s, r}(\In; Z),\ 1 \le s \le q + 1, \\
\label{eq:estimate-generic-Ptau-2}
\Norm{\dpt (\Id - \mathbb{P}_{\tau}) v}{L^r(\In; Z)} & \lesssim \tau_n^{s-1} \Norm{\dpt^{(s)} v}{L^r(\In; Z)} & & \quad \forall v \in W^{s, r}(\In; Z),\ 1 \le s \le q + 1.
\end{alignat}
\end{subequations}
\end{lemma}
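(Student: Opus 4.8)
The plan is to reduce both estimates to the single quantitative fact that $\mathbb{P}_\tau$ reproduces polynomials of degree $\le q$ in time (being a projection onto $\Pp{q}{\Tt}\otimes Z$), together with its assumed $L^\infty(\In;Z)$-stability, and then to invoke a Bramble--Hilbert / averaged-Taylor argument on each interval $\In$ separately. First I would fix $n$ and recall that by the projection property $\mathbb{P}_\tau w_\tau = w_\tau$ for every $w_\tau\in\Pp{q}{\In}\otimes Z$; in particular, writing $T_\tau v$ for the degree-$(s-1)$ truncated (averaged) Taylor polynomial of $v$ on $\In$ with values in $Z$ (well defined for $v\in W^{s,r}(\In;Z)$ since $s\ge 1$ and $s-1\le q$), one has $(\Id-\mathbb{P}_\tau)T_\tau v=0$, hence
\begin{equation*}
(\Id-\mathbb{P}_\tau)v=(\Id-\mathbb{P}_\tau)(v-T_\tau v)\quad\text{on }\In.
\end{equation*}

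Next I would estimate the right-hand side. By the triangle inequality,
\begin{equation*}
\Norm{(\Id-\mathbb{P}_\tau)v}{L^r(\In;Z)}\le \Norm{v-T_\tau v}{L^r(\In;Z)}+\Norm{\mathbb{P}_\tau(v-T_\tau v)}{L^r(\In;Z)}.
\end{equation*}
The second term is controlled using the $L^\infty(\In;Z)$-stability of $\mathbb{P}_\tau$ followed by $\Norm{\cdot}{L^r(\In;Z)}\le\tau_n^{1/r}\Norm{\cdot}{L^\infty(\In;Z)}$ and the embedding $W^{1,1}(\In;Z)\hookrightarrow L^\infty(\In;Z)$ (with an $\In$-scaled constant, or directly the $L^r$-version of the Taylor remainder bound composed with a one-dimensional inverse/Sobolev inequality), so that it is dominated by $\Norm{v-T_\tau v}{L^r(\In;Z)}+\tau_n\Norm{\dpt(v-T_\tau v)}{L^r(\In;Z)}$ up to a $q$-dependent constant. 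The standard averaged-Taylor (Bramble--Hilbert) remainder estimate on an interval of length $\tau_n$ then gives $\Norm{\dpt^{(j)}(v-T_\tau v)}{L^r(\In;Z)}\lesssim\tau_n^{s-j}\Norm{\dpt^{(s)}v}{L^r(\In;Z)}$ for $0\le j\le s$, and plugging $j=0$ and $j=1$ yields \eqref{eq:estimate-ItR}. For \eqref{eq:estimate-generic-Ptau-2} I would argue identically but first apply $\dpt$: since $\dpt\mathbb{P}_\tau(v-T_\tau v)\in\Pp{q-1}{\In}\otimes Z$, a one-dimensional inverse estimate in time on $\In$ gives $\Norm{\dpt\mathbb{P}_\tau(v-T_\tau v)}{L^r(\In;Z)}\lesssim\tau_n^{-1}\Norm{\mathbb{P}_\tau(v-T_\tau v)}{L^r(\In;Z)}$, and then the same stability-plus-Taylor chain as above closes the estimate, the extra $\tau_n^{-1}$ accounting for the shift from $\tau_n^{s}$ to $\tau_n^{s-1}$.

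The main obstacle is purely the bookkeeping of constants: one must check that every auxiliary inequality used on $\In$ — the $L^r$ Sobolev embedding $W^{1,1}(\In;Z)\hookrightarrow L^\infty(\In;Z)$, the inverse estimate for the time derivative of a degree-$\le q$ polynomial, and the averaged-Taylor remainder bound — scales correctly in $\tau_n$ and has a constant depending only on $q$ (and, for the embedding, trivially on the interval, which the scaling absorbs), so that the final constants are uniform in $h$, $\tau$, $T$, and the model parameters, consistent with the $\lesssim$ convention fixed in Section~\ref{sec:DG-notation}. A minor point to handle cleanly is the case $s=q+1$, where $T_\tau v$ has degree exactly $q$ and is still reproduced by $\mathbb{P}_\tau$, so no degradation occurs; and the fact that $v\in W^{1,1}(0,T;Z)$ only (the hypothesis on the domain of $\mathbb{P}_\tau$) is enough for $\mathbb{P}_\tau v$ to be defined, while the stronger assumption $v\in W^{s,r}(\In;Z)$ is used solely on the right-hand side.
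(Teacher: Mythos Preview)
Your proposal is correct and follows essentially the same route as the paper's proof: subtract an averaged Taylor polynomial reproduced by~$\mathbb{P}_\tau$, split by the triangle inequality, pass through the assumed $L^\infty(\In;Z)$-stability, and close with Bramble--Hilbert/Taylor remainder estimates (plus a polynomial inverse estimate in time for~\eqref{eq:estimate-generic-Ptau-2}). The only cosmetic difference is that the paper invokes the direct bound $\Norm{v-v_\tau}{L^\infty(\In;Z)}\lesssim\tau_n^{s-1}\Norm{\dpt^{(s)}v}{L^1(\In;Z)}$ for the averaged Taylor polynomial (from~\cite{Diening_Storn_Tscherpel:2023}) and then H\"older, whereas you reach the same conclusion via the scaled embedding $W^{1,1}(\In;Z)\hookrightarrow L^\infty(\In;Z)$ combined with the $L^r$ Taylor remainder for $j=0,1$; both orderings give the same power of~$\tau_n$.
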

\begin{proof}
We first focus on~\eqref{eq:estimate-ItR} and consider an integer~$s$ with~$1 \le s \le q+1$, as the case of fractional~$s$ follows by interpolation. Using the triangle and the H\"older inequalities, and the stability of~$\mathbb{P}_{\tau}$, for any~$v_{\tau} \in \Pp{q}{\In} \otimes Z$, we get
\begin{alignat}{3}
\nonumber
\Norm{(\Id - \mathbb{P}_{\tau}) v}{L^r(\In; Z)} 
& \le \Norm{v - v_{\tau}}{L^r(\In; Z)} + \Norm{\mathbb{P}_{\tau} (v - v_{\tau})}{L^r(\In; Z)} \\
%%%
\nonumber
& \lesssim \Norm{v - v_{\tau}}{L^r(\In; Z)} + \tau_n^{1/r} \Norm{v - v_{\tau}}{L^{\infty}(\In; Z)}.
\end{alignat}
Choosing~$v_{\tau}$ as the averaged Taylor polynomial of~$v$ of degree~$s$ defined in~\cite[App.~A]{Diening_Storn_Tscherpel:2023}, we have
\begin{alignat*}{3}
\Norm{v - v_{\tau}}{L^{\infty}(\In; Z)}  \lesssim \tau_n^{s-1} \Norm{\dpt^{(s)} v}{L^1(\In; Z)}, 
\end{alignat*}
which, together with the approximation properties of~$\Pp{q}{\In} \otimes Z$ from~\cite[Lemma~A.1(e)]{Diening_Storn_Tscherpel:2023} and the H\"older inequality (with~$1/r + 1/r' = 1$), yields
\begin{alignat*}{3}
\Norm{(\Id - \Pt) v}{L^r(\In; Z)} 
& \lesssim \tau_n^{s} \Norm{\dpt^{(s)} v}{L^r(\In; Z)} + \tau_n^{1/r + s - 1} \Norm{\dpt^{(s)} v}{L^{1}(\In; Z)} \lesssim \tau_n^{s} \Norm{\dpt^{(s)} v}{L^r(\In; Z)}.
\end{alignat*}
This completes the proof of~\eqref{eq:estimate-ItR}. As for~\eqref{eq:estimate-generic-Ptau-2}, we can use a polynomial inverse estimate and proceed as above to obtain
\begin{alignat*}{3}
\Norm{\dpt (\Id - \Pt) v}{L^r(\In; Z)} & \lesssim \Norm{\dpt(v - v_{\tau})}{L^r(\In; Z)} + \tau_n^{1/r-1} \Norm{\Pt (v - v_{\tau})}{L^{\infty}(\In; Z)} \\
&\lesssim \tau_n^{s - 1} \Norm{\dpt^{(s)} v}{L^r(\In; Z)} + \tau_n^{1/r+s-2} \Norm{\dpt^{(s)} v}{L^1(\In; Z)} \lesssim \tau_n^{s-1} \Norm{\dpt^{(s)} v}{L^r(\In; Z)}. 
\end{alignat*}
\end{proof}

% ---------------------------------------------------------------------------------------------
\subsubsection{The~\texorpdfstring{$L^2(0, T)$}{L2(0,T)}-orthogonal projection~\texorpdfstring{$\Pi_r^t$}{Pirt}}
Let~$\{\widehat{\Leg}_i\}_{i \in \N}$ be the Legendre polynomials defined on the reference element~$[-1, 1]$. For~$n = 1, \ldots, N$, we define the Legendre polynomials~$\{\Leg_i^{(n)}\}_{i \in \N}$ in the interval~$\In$ as follows:
\begin{equation*}
    \Leg_i^{(n)}(t) = \widehat{\Leg}_i (\hat{t}), \quad t = \frac{(1 - \hat{t})}{2} \tnmo + \frac{(1 + \hat{t})}{2} \tn, \quad i \in \N. 
\end{equation*}
The following orthogonality property is then obtained:
\begin{equation}
\label{eq:orthogonality-Legendre}
    \int_{\In} \Leg^{(n)}_i(t) \Leg_j^{(n)} (t) \dt  = \frac{\tau_n}{2i+1} \delta_{ij} \qquad \forall i,j \in \N,
\end{equation}
where~$\delta_{ij}$ is the Kronecker delta function. 
The superscript~$(n)$ will be omitted whenever the dependence on the interval~$n$ is clear from the context. 
%%%
\begin{definition}[$L^2(0, T)$-orthogonal projection]
\label{def:L2-orthogonal}
For~$q \in \N$, and a Hilbert space~${\HH}$ with inner product~$(\cdot, \cdot)_{{\HH}}$, we define the~$L^2(0, T)$-orthogonal projection~$\Pi_q^t : L^2(0, T; {\HH}) \to \Pp{q}{\Tt} \otimes {\HH}$ as follows: for~$v \in L^2(0, T; {\HH})$ and~$n = 1, \ldots, N$,
\begin{alignat*}{3}
\int_{\In} \big((\Id - \Pi_q^t) v, \phi_q\big)_{{\HH}} \dt = 0 \qquad \forall \phi_q \in \Pp{q}{\Tt} \otimes {\HH}.
\end{alignat*}
\end{definition}

From this definition, the commutativity of~$(\cdot, \cdot)_{\HH}$ and the integral in time (see~\cite[Cor.~64.14]{Ern_Guermond:2021}), and the orthogonality in~\eqref{eq:orthogonality-Legendre} of the Legendre polynomials, we can obtain
\begin{equation*}
\begin{split}
\Big(\int_{\In} v(t) \Leg_j(t) \dt, z\Big)_{{\HH}} & = \int_{\In} (v(t), z \Leg_j(t))_{{\HH}} \dt \\
& = \int_{\In} (\Pi_q^t v (t), z \Leg_j(t))_{{\HH}} \dt = \frac{\tau_n}{2j+1} (v_j, z)_{{\HH}}, \quad \text{ for~$j = 0, \ldots, q$,}
\end{split}
\end{equation*}
where~$v_j \in {\HH}$ denotes the~$j$th coefficient of~$\Pi_q^t v$ in the Legendre basis.
This induces the following equivalent expression using the Legendre expansion:
\begin{equation}
\label{eq:Legendre-expansion-Piqt}
\Pi_{q}^t v (t) = \sum_{i = 0}^q v_i \Leg_i(t), \ \text{with } v_i = \frac{2i+1}{\tau_n} \int_{\In} v(t) \Leg_i(t) \dt \qquad \forall t \in \In,
\end{equation}
which can be used to extend the definition of~$\Pi_{q}^t$ to functions in~$L^1(0, T; {\HH})$, as~$\Leg_i \in L^{\infty}(\In)$ for all~$i \in \N$.  

The natural stability of~$\Pi_q^t$ in the~$L^2(0, T; {\HH})$ norm follows directly from Definition~\ref{def:L2-orthogonal} and the Cauchy--Schwarz inequality. We now prove that the projection~$\Pi_q^t $ is stable in the~$L^r(0, T; {\HH})$ norm for~$r \in [1, \infty]$. 

\begin{lemma}[Stability of~$\Pi_r^t$]
\label{lemma:stab-pi-time}
Let~$q \in \N$, $r \in [1, \infty]$, and~${\HH}$ be a Hilbert space with inner product~$(\cdot, \cdot)_{\HH}$. There exists a positive constant~$\CS$ depending only on~$q$ such that
\begin{equation*}
\Norm{\Pi_q^t v}{L^r(\In; {\HH})} \le \CS \Norm{v}{L^r(\In; {\HH})} \qquad \forall v \in L^r(\In; {\HH}).
\end{equation*}
\end{lemma}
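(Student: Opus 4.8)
The plan is to reduce the $L^r$ stability of $\Pi_q^t$ to a finite-dimensional estimate on the reference interval $[-1,1]$, exploiting the equivalence of norms on the finite-dimensional space $\mathbb{P}^q([-1,1])$. First I would fix $n$ and rescale to the reference interval: if $\hat{v}(\hat t) := v(t(\hat t))$, then by the change of variables $t = \frac{1-\hat t}{2}\tnmo + \frac{1+\hat t}{2}\tn$ one has $\Norm{v}{L^r(\In;\HH)}^r = \frac{\tau_n}{2}\Norm{\hat v}{L^r(-1,1;\HH)}^r$ (with the obvious modification for $r=\infty$), and $\widehat{\Pi_q^t v} = \Pi_q^{\mathrm{ref}}\hat v$, the $L^2(-1,1)$-orthogonal projection onto $\mathbb{P}^q([-1,1])\otimes\HH$; this uses the Legendre-expansion formula~\eqref{eq:Legendre-expansion-Piqt} and the fact that the coefficients $v_i$ transform without any $\tau_n$-dependent constant once the projection itself is written intrinsically. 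Hence it suffices to prove the estimate with $\In$ replaced by $(-1,1)$ and with a constant $\CS$ that depends only on $q$.

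On the reference interval I would argue as follows. From the Legendre-coefficient formula, $\Pi_q^{\mathrm{ref}}\hat v(\hat t) = \sum_{i=0}^q \hat v_i \widehat{\Leg}_i(\hat t)$ with $\hat v_i = \frac{2i+1}{2}\int_{-1}^1 \hat v(\hat s)\widehat{\Leg}_i(\hat s)\ds$, so for any unit vector one estimates, by the Hölder inequality in the pairing against $\widehat{\Leg}_i$,
\begin{equation*}
\Norm{\hat v_i}{\HH} \le \frac{2i+1}{2}\Norm{\widehat{\Leg}_i}{L^{r'}(-1,1)}\Norm{\hat v}{L^r(-1,1;\HH)},
\end{equation*}
where $1/r+1/r'=1$. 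Summing and using the triangle inequality,
\begin{equation*}
\Norm{\Pi_q^{\mathrm{ref}}\hat v}{L^r(-1,1;\HH)} \le \sum_{i=0}^q \Norm{\hat v_i}{\HH}\Norm{\widehat{\Leg}_i}{L^r(-1,1)} \le \Big(\sum_{i=0}^q \tfrac{2i+1}{2}\Norm{\widehat{\Leg}_i}{L^{r'}(-1,1)}\Norm{\widehat{\Leg}_i}{L^r(-1,1)}\Big)\Norm{\hat v}{L^r(-1,1;\HH)}.
\end{equation*}
Since each $\widehat{\Leg}_i$ is a fixed polynomial bounded by $1$ on $[-1,1]$, the parenthesized quantity is bounded by a constant $\CS$ depending only on $q$ (uniformly in $r\in[1,\infty]$, since $\Norm{\widehat{\Leg}_i}{L^s(-1,1)}\le 2^{1/s}\le 2$ for all $s$). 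Rescaling back to $\In$ gives the claim.

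The main obstacle is purely bookkeeping: one must check that the Legendre-coefficient representation~\eqref{eq:Legendre-expansion-Piqt} is genuinely intrinsic, i.e., that the projection commutes with the affine change of variables with no residual constant — this is where the normalization $v_i = \frac{2i+1}{\tau_n}\int_{\In}v\Leg_i\dt$ and the scaling $\int_{\In}\Leg_i^{(n)}\Leg_j^{(n)}\dt = \frac{\tau_n}{2i+1}\delta_{ij}$ must be tracked carefully. A slightly slicker alternative, which avoids writing out the Legendre sum, is to invoke directly the $L^r$–$L^\infty$ and $L^1$–$L^\infty$ norm-equivalence on the finite-dimensional space $\mathbb{P}^q([-1,1])$ (Lemma~\ref{lemma:L2-Linfty} being the $L^2$–$L^\infty$ instance): writing $\Pi_q^{\mathrm{ref}}\hat v$ against an arbitrary $\phi_q\in\mathbb{P}^q$ and testing with $\phi_q = \widehat{\Leg}_i$ one recovers the same bound, but the explicit coefficient route above is the most transparent and is the one I would write up.
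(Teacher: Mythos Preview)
Your proof is correct and follows essentially the same approach as the paper: both use the Legendre expansion~\eqref{eq:Legendre-expansion-Piqt}, bound the coefficients via the H\"older inequality, and exploit $\Norm{\Leg_i}{L^\infty}=1$ to obtain the constant $\CS=(q+1)^2$. The only cosmetic difference is that you rescale to the reference interval first, whereas the paper works directly on~$\In$ and observes that the factor $\tau_n^{1/r+1/r'-1}=1$ cancels---your rescaling step is thus correct but unnecessary.
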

\begin{proof}
We consider the expression in~\eqref{eq:Legendre-expansion-Piqt}. Using the Jensen inequality and the fact that all norms in~${\HH}$ are convex functions, for all~$t \in \In$, we get
\begin{equation*}
    \Norm{\Pi_q^t v(t)}{{\HH}} \le \sum_{i = 0}^q |\Leg_i(t)| \Norm{v_i}{{\HH}} \le \sum_{i = 0}^q \frac{2i+1}{\tau_n}|\Leg_i(t)| \int_{\In} \Norm{v(t)}{{\HH}} |\Leg_i(t)| \dt,
\end{equation*}
which, using the triangle inequality, the H\"older inequality (with~$1/r+1/r' = 1$), and the uniform bound~$\Norm{\Leg_i}{L^{\infty}(\In)} = 1$ for all~$i \in \N$, leads to
\begin{equation*}
    \begin{split}
    \Norm{\Pi_q^t v(t)}{L^r(\In; {\HH})} & \le \Norm{v}{L^r(\In; {\HH})}\sum_{i = 0}^q \frac{2i+1}{\tau_n} \Norm{\Leg_i}{L^{r}(\In)}  \Norm{\Leg_i}{L^{r'}(\In)} \\
    & \le \Norm{v}{L^r(\In; {\HH})} \sum_{i = 0}^q (2i+1) \tau_n^{1/r + 1/r' - 1}  =  (q+1)^2 \Norm{v}{L^r(\In; {\HH})},
    \end{split}
    \end{equation*}
    which completes the proof. 
\end{proof}

The following lemma is key to avoiding artificial constraints between the time step~$\tau$ and the mesh size~$h$ for CG time discretizations when deriving stability bounds in~$L^{\infty}(0, T; X)$ norms.
%%%
\begin{lemma}[Bound involving~$\varphi_n$]
\label{lemma:bound-pi-time-varphi_n}
Let~$q \in \N$ with~$q \geq 1$, and~${\HH}$ be a Hilbert space with inner product~$(\cdot, \cdot)_{\HH}$. Then, the  following bound holds for all $u_{\tau} \in \Pp{q}{\Tt} \otimes {\HH}$ and~$n = 1, \ldots, N$:
\begin{equation*}
-\int_{\In} (\varphi_n \dpt u_{\tau}, (\Id - \Pi_{q-1}^t) u_{\tau})_{{\HH}} \dt  \geq 0.
\end{equation*}
\end{lemma}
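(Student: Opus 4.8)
The plan is to decompose $u_\tau$ on each interval $I_n$ using its Legendre expansion and isolate the piece $(\Id-\Pi_{q-1}^t)u_\tau$, which is simply the top-degree Legendre mode. Writing $u_\tau|_{I_n}(t)=\sum_{i=0}^q u_i\Leg_i^{(n)}(t)$ with $u_i\in\HH$, we have $(\Id-\Pi_{q-1}^t)u_\tau|_{I_n}=u_q\Leg_q^{(n)}$, and $\dpt u_\tau$ is a polynomial of degree $q-1$ in time. The key structural fact is that $\dpt u_\tau$ lies in $\Pp{q-1}{\In}\otimes\HH$, hence is $L^2(\In)$-orthogonal to $\Leg_q^{(n)}$; therefore $\int_{\In}(\dpt u_\tau, u_q\Leg_q^{(n)})_{\HH}\dt = 0$. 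This means the integrand we must bound, $-\varphi_n(\dpt u_\tau, u_q\Leg_q^{(n)})_{\HH}$, can have $\varphi_n$ replaced by $\varphi_n$ minus any constant without changing the integral; subtracting the average value of $\varphi_n$ on $\In$, which is $3/4$, we reduce to showing
\[
-\int_{\In}\big((\varphi_n-\tfrac34)\,\dpt u_\tau,\; u_q\Leg_q^{(n)}\big)_{\HH}\dt \;\ge\; 0 .
\]

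Next I would exploit the explicit form of the weight. Since $\varphi_n(t)=1-\lambda_n(t-\tnmo)$ with $\lambda_n=1/(2\tau_n)$, the centered weight $\varphi_n(t)-\tfrac34$ is, up to the positive factor $\lambda_n$, exactly $-\big(t-\tfrac{\tnmo+\tn}{2}\big)$, i.e. a negative multiple of the degree-one Legendre polynomial $\Leg_1^{(n)}$ on $\In$. Concretely, $\varphi_n(t)-\tfrac34 = -\tfrac14\Leg_1^{(n)}(t)$. Substituting, the inequality to prove becomes
\[
\tfrac14\int_{\In}\big(\Leg_1^{(n)}\,\dpt u_\tau,\; u_q\Leg_q^{(n)}\big)_{\HH}\dt \;\ge\; 0 ,
\]
and by commuting the inner product with the time integral (as in \cite[Cor.~64.14]{Ern_Guermond:2021}) this equals $\tfrac14\big(\int_{\In}\Leg_1^{(n)}\dpt u_\tau\dt,\ u_q\big)_{\HH}$. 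Now $\Leg_1^{(n)}\dpt u_\tau$ has degree $\le q$ in time, so its $\Leg_q^{(n)}$-component against $u_q$ is what survives; writing $\dpt u_\tau=\sum_{i=0}^{q-1}(\dpt u_\tau)_i\Leg_i^{(n)}$ and using the three-term recurrence $t\,\Leg_{q-1}\sim \Leg_q+\text{lower}$ together with the orthogonality \eqref{eq:orthogonality-Legendre}, I expect $\int_{\In}\Leg_1^{(n)}\dpt u_\tau\dt$ to reduce to a positive multiple of the leading coefficient $(\dpt u_\tau)_{q-1}$.

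The final step is to relate $(\dpt u_\tau)_{q-1}$ — the top Legendre coefficient of the time derivative — to $u_q$, the top Legendre coefficient of $u_\tau$ itself. Differentiating the Legendre expansion and matching leading terms (using that $\dpt\Leg_q^{(n)}$ has degree $q-1$ with a known positive leading coefficient relative to $\Leg_{q-1}^{(n)}$, scaled by $1/\tau_n$), one finds $(\dpt u_\tau)_{q-1}$ is a positive scalar multiple of $u_q$. Hence the quantity reduces to a positive constant times $\Norm{u_q}{\HH}^2\ge 0$, which closes the argument. \textbf{The main obstacle} is bookkeeping the exact constants in the Legendre recurrence and in $\dpt\Leg_q=\,$(positive)$\,\Leg_{q-1}+\dots$ carefully enough to be sure the surviving coefficient is genuinely nonnegative (and to handle the normalization by $\tau_n$); once the sign of that single scalar is pinned down, nonnegativity is automatic. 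An alternative that sidesteps some of this is to note that $-\int_{\In}(\varphi_n\dpt u_\tau,(\Id-\Pi_{q-1}^t)u_\tau)_{\HH}\dt$ is a quadratic form in the coefficients $(u_0,\dots,u_q)$ that, by the orthogonality argument above, depends only on $u_q$ and on $(\dpt u_\tau)_{q-1}$, and then to evaluate it on the scalar test function $\Leg_q^{(n)}$ to read off that it is a positive multiple of $\Norm{u_q}{\HH}^2$; this only requires checking one scalar integral, $\int_{-1}^1 \widehat{\Leg}_1\,\widehat{\Leg}_q'\,\widehat{\Leg}_q\,\dd\hat t>0$ (equivalently $\int_{-1}^1 \hat t\,\widehat{\Leg}_q'\widehat{\Leg}_q\,\dd\hat t = \int_{-1}^1 \tfrac{\hat t}{2}(\widehat{\Leg}_q^2)'\,\dd\hat t = 1 - \tfrac12\int_{-1}^1\widehat{\Leg}_q^2\,\dd\hat t = 1-\tfrac1{2q+1}>0$), which is exactly the kind of routine computation I would leave to the reader.
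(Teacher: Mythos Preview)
Your proposal is correct, and the ``alternative'' you sketch at the end is essentially the paper's proof. The paper writes $\dpt u_\tau = \dpt(\Pi_{q-1}^t u_\tau) + \alpha\,\Leg_q'$ with $\alpha=u_q$, observes that $\varphi_n\,\dpt(\Pi_{q-1}^t u_\tau)$ has degree at most $q-1$ and is therefore orthogonal to $\alpha\Leg_q$, and reduces directly to the scalar integral $\lambda_n\Norm{\alpha}{\HH}^2\int_{\In}(t-\tnmo)\Leg_q\Leg_q'\dt$, which it evaluates by the same integration by parts you give (yielding $\tfrac{q}{2(2q+1)}\Norm{\alpha}{\HH}^2\ge 0$). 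Your ``main'' route---subtracting the mean $3/4$ of $\varphi_n$, identifying the remainder with $-\tfrac14\Leg_1^{(n)}$, and then invoking the three-term recurrence together with the relation between $(\dpt u_\tau)_{q-1}$ and $u_q$---is valid but circuitous; the decomposition $\dpt u_\tau = \dpt(\Pi_{q-1}^t u_\tau) + \alpha\Leg_q'$ bypasses the need to track the Legendre coefficients of $\dpt u_\tau$ separately and lands on the scalar integral in one step.
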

\begin{proof}
Since~$u_{\tau}{}_{|_{\Qn}} \in \Pp{q}{\In} \otimes {\HH}$, there exists a function~$\alpha \in {\HH}$ such that
\begin{alignat*}{3}
(\Id - \Pi_{q-1}^t) u_{\tau}(t) & = \alpha \Leg_{q-1}(t), \\
\dpt u_{\tau} (t) & = \dpt \Pi_{q-1}^t u_{\tau}(t) + \alpha \Leg_{q-1}'(t).
\end{alignat*}
Moreover, since~$\dpt \Pi_{q-1}^t u_{\tau} \in \Pp{q-2}{\In} \otimes {\HH}$ (for~$q \geq 2$) and~$\dpt \Pi_{q-1}^t u_{\tau} = 0$  (for~$q = 1$), we can use the orthogonality properties of~$\Pi_{q-1}^t$ and the definition of~$\varphi_n$ in~\eqref{lambdadef} to obtain
\begin{alignat*}{3}
\nonumber
- \int_{\In} (\varphi_n \dpt u_{\tau}, (\Id - \Pi_{q-1}^t) u_{\tau})_{{\HH}}  \dt 
& = - \int_{\In} \big((1 - \lambda_n(t - \tnmo)) (\dpt \Pi_{q-1}^t u_{\tau} + \alpha \Leg_{q-1}'), \alpha \Leg_{q-1}\big)_{{\HH}} \dt \\
%%%
\nonumber
& = \lambda_n\Norm{\alpha}{{\HH}}^2 \int_{\In} (t - \tnmo) \Leg_{q-1}' \Leg_{q-1} \dt  \\
%%%
& = \frac{\tau_n (q - 1)}{2q - 1} \Norm{\alpha}{{\HH}}^2 \geq 0. 
\end{alignat*}
where, in the last line, we have used the identity
\begin{equation*}
\begin{split}
\int_{I_n} (t - \tnmo) \Leg_{q-1}(t) \Leg_{q-1}'(t) \dt & = \frac12 \int_{\In} (t - \tnmo) (\Leg_{q-1}^2(t))' \dt \\
%%%
& = \frac12 \int_{\In} ((t - \tnmo) \Leg_{q-1}^2(t))' \dt - \frac12 \Norm{\Leg_{q-1}}{L^2(\In)}^2  \\
& = \frac{\tau_n}{2} - \frac{\tau_n}{2(2q-1)}
= \frac{\tau_n (q-1)}{2q - 1}.
\end{split}
\end{equation*}
\end{proof}

% ---------------------------------------------------------------------------------------------
\subsubsection{The Thom\'ee projection operator~\texorpdfstring{$\PtTh$}{PtTh}}
We now recall the definition of the Thom\'ee projection operator in~\cite[eq.~(12.9) in Ch.~12]{Thomee_book:2006}, which is a classical tool for the convergence analysis of DG time discretizations.
%%%
\begin{definition}[Projection~$\PtTh$]
\label{def:Pt}
For~$q \in \N$ and a Hilbert space~${\HH}$ with inner product~$(\cdot, \cdot)_{{\HH}}$, the projection operator~$\PtTh: H^1(\Tt; {\HH}) \rightarrow \Pp{q}{\Tt} \otimes {\HH}$ is defined for any~$v \in H^1(\Tt; {\HH})$ as follows:
\begin{subequations}
\begin{alignat}{3}
\label{eq:Pt-1}
\PtTh v (\tn^-) & = v(\tn^-) & & \quad \text{in~${\HH}$}, \\
\label{eq:Pt-2}
\int_{\In} \big( (\Id - \PtTh) v, \phi_{q - 1} \big)_{{\HH}} \dt  & = 0 & & \quad \forall \phi_{q - 1} \in \Pp{q - 1}{\In} \otimes {\HH},
\end{alignat}
\end{subequations}
for~$n = 1, \ldots, N$. If~$q = 0$, condition~\eqref{eq:Pt-2} is omitted. We further denote by~$\PtThL$ the left-sided projection operator obtained by replacing condition~\eqref{eq:Pt-1} by~$\PtThL v(\tnmo^+) = v(\tnmo^+)$ in~${\HH}$. 
\end{definition}

From~\eqref{eq:Pt-2}, one can deduce that~$\Pi_{q-1}^t \PtTh v = \Pi_{q-1}^t v$, which leads to the identity
\begin{equation}
\label{eq:PtTh-alternative}
    \PtTh v(t) = \Pi_{q-1}^t v (t) + \alpha \Leg_q(t) \quad \forall t \in \In,
\end{equation}
where~$\alpha \in {\HH}$ is given by~$\alpha = (v - \Pi_{q-1}^t v)(\tn)$ due to~\eqref{eq:Pt-1} and the fact that~$\Leg_i(\tn) = 1$ for all~$i \in \N$. This corresponds to the explicit expression of~$\PtTh v$ in~\cite[Rem. 69.17]{Ern_Guermond:2021}, and allows us to extend the definition of~$\PtTh$ to functions in~$W^{1, 1}(\Tt; {\HH})$. 

Next lemma is then a consequence of~$\eqref{eq:PtTh-alternative}$,  the stability of~$\Pi_{q-1}^t$ in the~$L^{\infty}(\In; {\HH})$ norm, and the continuous embedding~$W^{1,1}(\In; {\HH}) \hookrightarrow C^0(\overline{\In}; {\HH})$.  
%%%
\begin{lemma}[Stability of~$\PtTh$]
\label{lemma:stab-Pt}
Let~${\HH}$ be a Hilbert space with inner product~$(\cdot, \cdot)_{{\HH}}$. For all~$v \in W^{1, 1}(\In; {\HH})$ and~$n \in \{1, \ldots, N\}$, it holds
\begin{equation*}
\Norm{\PtTh v}{L^{\infty}(\In; {\HH})} \lesssim \Norm{v}{L^{\infty}(\In; {\HH})}.
\end{equation*}
\end{lemma}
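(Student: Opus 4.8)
The plan is to run the argument exactly along the lines indicated after~\eqref{eq:PtTh-alternative}: use the explicit representation $\PtTh v|_{\In}(t) = \Pi_{q-1}^t v(t) + \alpha\,\Leg_q(t)$ with $\alpha = (v - \Pi_{q-1}^t v)(\tn)$, and bound each summand in the $L^{\infty}(\In;\HH)$ norm. First I would record that $v \in W^{1,1}(\In;\HH) \hookrightarrow C^0(\overline{\In};\HH)$, so $v$ has a continuous representative on $\overline{\In}$ with $\Norm{v}{C^0(\overline{\In};\HH)} = \Norm{v}{L^{\infty}(\In;\HH)}$; in particular the point values at $\tn$ appearing in Definition~\ref{def:Pt} are legitimate and satisfy $\Norm{v(\tn^-)}{\HH} \le \Norm{v}{L^{\infty}(\In;\HH)}$. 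I would also invoke Lemma~\ref{lemma:stab-pi-time} with $r = \infty$ to get $\Norm{\Pi_{q-1}^t v}{L^{\infty}(\In;\HH)} \le \CS\,\Norm{v}{L^{\infty}(\In;\HH)}$, and recall the uniform bound $\Norm{\Leg_q}{L^{\infty}(\In)} = 1$ already used in the proof of Lemma~\ref{lemma:stab-pi-time}, together with $\Leg_q(\tn) = 1$.

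The first summand is handled directly by the $L^{\infty}$-stability of $\Pi_{q-1}^t$. For the second, I would estimate the coefficient by the triangle inequality, $\Norm{\alpha}{\HH} \le \Norm{v(\tn)}{\HH} + \Norm{(\Pi_{q-1}^t v)(\tn)}{\HH}$; the first term is controlled by the Sobolev embedding above, and the second by noting that $\Pi_{q-1}^t v \in \Pp{q-1}{\In}\otimes\HH$ is a polynomial, hence continuous on $\overline{\In}$, so $\Norm{(\Pi_{q-1}^t v)(\tn)}{\HH} \le \Norm{\Pi_{q-1}^t v}{L^{\infty}(\In;\HH)} \le \CS\,\Norm{v}{L^{\infty}(\In;\HH)}$. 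Combining with $\Norm{\Leg_q}{L^{\infty}(\In)} = 1$ gives $\Norm{\alpha\,\Leg_q}{L^{\infty}(\In;\HH)} \le (1 + \CS)\Norm{v}{L^{\infty}(\In;\HH)}$, and a final triangle inequality yields $\Norm{\PtTh v}{L^{\infty}(\In;\HH)} \le (1 + 2\CS)\Norm{v}{L^{\infty}(\In;\HH)} \lesssim \Norm{v}{L^{\infty}(\In;\HH)}$, which is the claim.

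The only point needing a word of care is the degenerate case $q = 0$, for which condition~\eqref{eq:Pt-2}, the projector $\Pi_{q-1}^t$, and the representation~\eqref{eq:PtTh-alternative} are not available. In that case $\PtTh v|_{\In}$ is, by condition~\eqref{eq:Pt-1}, the constant function equal to $v(\tn^-)$, so the estimate reduces immediately to $\Norm{\PtTh v}{L^{\infty}(\In;\HH)} = \Norm{v(\tn^-)}{\HH} \le \Norm{v}{L^{\infty}(\In;\HH)}$ via the continuous embedding. I do not expect a genuine obstacle here: the argument is essentially bookkeeping once~\eqref{eq:PtTh-alternative} and the $L^{\infty}$-stability of $\Pi_{q-1}^t$ are in hand, the mildest subtlety being to justify the point evaluations at $\tn$, which is guaranteed by the Sobolev embedding for $v$ and by polynomiality for $\Pi_{q-1}^t v$.
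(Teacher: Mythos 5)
Your proposal is correct and follows exactly the route the paper indicates (the lemma is stated there without a written proof, as a consequence of the representation~\eqref{eq:PtTh-alternative}, the $L^{\infty}$-stability of~$\Pi_{q-1}^t$ from Lemma~\ref{lemma:stab-pi-time}, and the embedding~$W^{1,1}(\In;\HH)\hookrightarrow C^0(\overline{\In};\HH)$). Your explicit constant $(1+2\CS)$ and the separate treatment of the degenerate case $q=0$ are both sound additions to what the paper leaves implicit.
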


The use of this projection operator in the analysis of DG time discretizations is motivated by the following orthogonality condition, which allows us to avoid coupling conditions between the meshsize~$h$ and the time step~$\tau$. The absence of jump terms on the right-hand side of~\eqref{eq:orthogonality-PtTh} is crucial for this, highlighting the relevance of choosing ``the right'' time projection.
%%%
\begin{lemma}[Orthogonality of~$\PtTh$]
\label{lemma:orthogonality-DG-PtTh}
Let~${\HH}$ be a Hilbert space with inner product~$(\cdot, \cdot)_{{\HH}}$. For all~$v \in W^{1,1}(0, T; {\HH})$, it holds
\begin{equation}
\label{eq:orthogonality-PtTh}
\int_0^T (\dpt \Rt \PtTh v, w_{\tau})_{{\HH}} \dt  = \int_0^T (\dpt v, w_{\tau})_{{\HH}} \dt + (v(0), w_{\tau}(0))_{{\HH}}  \qquad \forall w_{\tau} \in \Pp{q}{\Tt} \otimes {\HH}.
\end{equation}
\end{lemma}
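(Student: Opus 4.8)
The plan is to reduce the claimed identity~\eqref{eq:orthogonality-PtTh} to the two tools already at hand: the reconstruction identity~\eqref{eq:identity-Rt-v} from Lemma~\ref{lemma:properties-Rt}, and the defining orthogonality~\eqref{eq:Pt-2} of the Thom\'ee projection together with its endpoint condition~\eqref{eq:Pt-1}. First I would apply~\eqref{eq:identity-Rt-v} with~$v$ replaced by~$\PtTh v$, which is legitimate since~$\PtTh v \in \Pcont{q+1}{\Tt}\otimes{\HH}\subset H^1(0,T;{\HH})$; this turns the left-hand side of~\eqref{eq:orthogonality-PtTh} into
\begin{equation*}
\int_0^T (\dpt \Rt \PtTh v, w_{\tau})_{{\HH}} \dt = \int_0^T (\dpt \PtTh v, w_{\tau})_{{\HH}} \dt + (\PtTh v(0), w_{\tau}(0))_{{\HH}},
\end{equation*}
valid for all~$w_{\tau}\in\Pp{q}{\Tt}\otimes{\HH}$. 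It then remains to show that the right-hand side here equals the right-hand side of~\eqref{eq:orthogonality-PtTh}, i.e.\ that~$\int_0^T(\dpt(\PtTh v - v),w_{\tau})_{\HH}\dt + (\PtTh v(0) - v(0), w_{\tau}(0))_{\HH} = 0$ for all such~$w_{\tau}$.

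The second step is to establish this remaining identity interval by interval. On each~$\In$, integration by parts in time gives
\begin{equation*}
\int_{\In}(\dpt(\PtTh v - v), w_{\tau})_{\HH}\dt = \big((\PtTh v - v)(\tn^-), w_{\tau}(\tn^-)\big)_{\HH} - \big((\PtTh v - v)(\tnmo^+), w_{\tau}(\tnmo^+)\big)_{\HH} - \int_{\In}(\PtTh v - v, \dpt w_{\tau})_{\HH}\dt.
\end{equation*}
The last integral vanishes by~\eqref{eq:Pt-2}, because~$\dpt w_{\tau}{}_{|_{\In}}\in\Pp{q-1}{\In}\otimes{\HH}$. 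The first boundary term vanishes by~\eqref{eq:Pt-1}. Summing over~$n=1,\ldots,N$, the interior boundary terms telescope: the contribution at~$\tn$ from~$\In$ is already zero, and what survives is only the endpoint term at~$t_0=0$, namely~$-((\PtTh v - v)(0),w_{\tau}(0))_{\HH}$ (from the~$n=1$ interval), which exactly cancels the~$(\PtTh v(0) - v(0), w_{\tau}(0))_{\HH}$ term from the reconstruction identity. Hence the difference of the two right-hand sides is zero, proving~\eqref{eq:orthogonality-PtTh}.

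**Main obstacle.** There is no deep difficulty here; the one point requiring care is the bookkeeping of the boundary terms at interior nodes~$\tn$, $1\le n\le N-1$. One must be careful that~\eqref{eq:Pt-1} pins down only the left limit~$\PtTh v(\tn^-)=v(\tn^-)$, so the cancellation of the boundary term coming from~$\In$ uses~\eqref{eq:Pt-1} on~$\In$, while the boundary term coming from~$\In[n+1]$ at the same node~$\tn$ involves the right limit~$(\PtTh v - v)(\tn^+)$ — which need not vanish — but this term is the \emph{lower} endpoint contribution from~$\In[n+1]$ and is therefore handled by~\eqref{eq:Pt-1} applied on~$\In[n+1]$ only after noting it is attached to~$w_{\tau}(\tn^+)$, not~$w_{\tau}(\tn^-)$; since~$w_{\tau}$ is merely in~$\Pp{q}{\Tt}\otimes{\HH}$ and may jump, the two node contributions do \emph{not} combine via the jump identities and each must be killed separately. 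In fact the cleanest route is to observe that on \emph{each}~$\In$ both boundary terms vanish individually by~\eqref{eq:Pt-1} (the upper one directly; the lower one, at~$\tnmo$, because~$\PtThL$ is \emph{not} what appears — rather we use that~\eqref{eq:Pt-1} on~$\In[n-1]$ gives~$(\PtTh v)(\tnmo^-)=v(\tnmo^-)$, so only the~$n=1$ lower term, at~$t_0$, can be nonzero). This is precisely the term that the reconstruction identity supplies with the opposite sign, so everything cancels. I would present this as a single short displayed computation rather than a case analysis.
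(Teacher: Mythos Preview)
Your first step contains a genuine error: you claim that~$\PtTh v \in \Pcont{q+1}{\Tt}\otimes\HH \subset H^1(0,T;\HH)$, but this is false. By Definition~\ref{def:Pt}, $\PtTh v$ lies in the \emph{discontinuous} space~$\Pp{q}{\Tt}\otimes\HH$; condition~\eqref{eq:Pt-1} fixes only the left limit~$\PtTh v(\tn^-)$ on each~$\In$, and nothing forces~$\PtTh v(\tn^+)=\PtTh v(\tn^-)$. Hence identity~\eqref{eq:identity-Rt-v} does not apply, and you have illegitimately dropped the jump terms~$\sum_{n=1}^{N-1}(\jump{\PtTh v}_n, w_\tau(\tn^+))_\HH$ that appear in Definition~\ref{def:time-reconstruction} of~$\Rt$.

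Your second step compounds the confusion. After integration by parts on~$\In$, the lower boundary term is~$-((\PtTh v - v)(\tnmo^+), w_\tau(\tnmo^+))_\HH$, which involves the \emph{right} limit~$\PtTh v(\tnmo^+)$. You argue this vanishes because ``\eqref{eq:Pt-1} on~$I_{n-1}$ gives~$(\PtTh v)(\tnmo^-)=v(\tnmo^-)$'', but that is the \emph{left} limit, a different object. In fact, since~$v$ is continuous, $(\PtTh v - v)(\tnmo^+) = \PtTh v(\tnmo^+) - \PtTh v(\tnmo^-) = \jump{\PtTh v}_{n-1}$, which is generically nonzero. The salvage is immediate: these lower boundary terms are \emph{exactly} the jump terms you dropped in step one, with the opposite sign, so keeping both would give the correct cancellation. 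The paper's proof avoids this bookkeeping by integrating by parts to move~$\dpt$ onto~$w_\tau$ first, then using~\eqref{eq:Pt-1}--\eqref{eq:Pt-2} and the jump identity~\eqref{eq:jumps-in-time-identity-1} to replace~$\PtTh v$ by~$v$ directly, and finally integrating back.
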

\begin{proof} From Definition~\ref{def:time-reconstruction} of the time reconstruction operator~$\Rt$, integration by parts in time, identity~\eqref{eq:jumps-in-time-identity-1}, and the properties of~$\PtTh$, we get
\begin{alignat*}{3}
\int_0^T (\dpt \Rt \PtTh v, & w_{\tau})_{{\HH}} \dt \\
& = \sum_{n = 1}^N \int_{\In} (\dpt \PtTh v, w_{\tau})_{{\HH}} \dt + \sum_{n = 1}^{N-1} (\jump{\PtTh v}_n, w_{\tau}(\tn^+))_{{\HH}}  + (\PtTh v(0), w_{\tau}(0))_{{\HH}} \\ 
& = - \sum_{n = 1}^N \int_{\In} (\PtTh v, \dpt w_{\tau})_{{\HH}} \dt + \sum_{n = 1}^{N-1} \Big[ (\jump{\PtTh v}_n, w_{\tau}(\tn^+))_{{\HH}} - \jump{(\PtTh v, w_{\tau})_{{\HH}}}_n\Big] \\
& \quad + (\PtTh v(T), w_{\tau}(T))_{{\HH}} \\
& = - \sum_{n = 1}^N \int_{\In} (\PtTh v, \dpt w_{\tau})_{{\HH}} \dt - \sum_{n = 1}^{N-1} (\PtTh v(\tn^-), \jump{w_{\tau}}_{n})_{{\HH}} + (\PtTh v(T), w_{\tau}(T))_{{\HH}} \\
& = - \sum_{n = 1}^N \int_{\In} (v, \dpt w_{\tau})_{{\HH}} \dt - \sum_{n = 1}^{N-1} (v(\tn), \jump{w_{\tau}}_{n})_{{\HH}} + (v(T), w_{\tau}(T))_{{\HH}} \\
& = \int_0^T (\dpt v, w_{\tau})_{{\HH}} \dt + (v(0), w_{\tau}(0))_{{\HH}},
\end{alignat*}
where, in the last step, we have integrated by parts back in time.
\end{proof}

%%%
\begin{lemma}[Bound involving~$\Ptdqmo$]
\label{lemma:bound-Ptdqmo}
Let~$q \in \N$ with~$q \geq 2$, and~${\HH}$ be a Hilbert space with inner product~$(\cdot, \cdot)_{\HH}$. For all~$\wht \in \oVdisc{q-1}$, it holds
\begin{equation*}
- \int_{\In} (\wht, (\Id - \Ptdqmo) (\varphi_n \wht))_{{\HH}} \dt  \geq 0. 
\end{equation*}
\end{lemma}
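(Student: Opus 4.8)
The plan is to follow the template of the proof of Lemma~\ref{lemma:bound-pi-time-varphi_n}: fix a slab index $n$, work entirely on $\In$, pass to the Legendre basis, and reduce the claim to an elementary computation. Write $\wht = \sum_{i=0}^{q-1} w_i \Leg_i$ on $\In$, with $w_i \in \oVhp \subset \HH$ (the superscript $(n)$ on the Legendre polynomials being omitted). Since $\varphi_n$ is affine in $t$, the product $v := \varphi_n \wht$ lies in $\Pp{q}{\In}\otimes\HH$; let $v_q \in \HH$ be its $\Leg_q$-coefficient.

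The first step is to identify $(\Id - \Ptdqmo) v$ explicitly on $\In$. Because $q \ge 2$, the left-sided Thom\'ee projection $\Ptdqmo$ onto $\Pp{q-1}{\In}\otimes\HH$ still carries the orthogonality condition~\eqref{eq:Pt-2} (with $q$ replaced by $q-1$), so $(\Id - \Ptdqmo) v$ is $L^2(\In)$-orthogonal to $\Pp{q-2}{\In}\otimes\HH$; as $v$ has degree at most $q$ and $\Ptdqmo v$ degree at most $q-1$, this forces
\[
(\Id - \Ptdqmo)(\varphi_n\wht) = \beta\,\Leg_{q-1} + v_q\,\Leg_q \quad \text{on } \In,
\]
for some $\beta \in \HH$. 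Evaluating at $t = \tnmo$, using $\Leg_i(\tnmo) = (-1)^i$ and the defining endpoint condition $\Ptdqmo v(\tnmo^+) = v(\tnmo^+)$ gives $\beta = v_q$, so $(\Id - \Ptdqmo)(\varphi_n\wht) = v_q(\Leg_{q-1} + \Leg_q)$ on $\In$.

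It then remains to compute $v_q$ and pair with $\wht$. Since $\Leg_1(t) = \tfrac{2(t-\tnmo)}{\tau_n} - 1$ and $\lambda_n = 1/(2\tau_n)$, one has $\varphi_n = \tfrac34 \Leg_0 - \tfrac14 \Leg_1$, so the only degree-$q$ contribution to $\varphi_n\wht = (\tfrac34\Leg_0 - \tfrac14\Leg_1)\sum_{i=0}^{q-1} w_i \Leg_i$ is $-\tfrac14 w_{q-1}\Leg_1\Leg_{q-1}$; Bonnet's recurrence $\Leg_1\Leg_{q-1} = \tfrac{q}{2q-1}\Leg_q + \tfrac{q-1}{2q-1}\Leg_{q-2}$ then yields $v_q = -\tfrac{q}{4(2q-1)}\,w_{q-1}$. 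Finally, using that $\wht$ has degree $q-1$, the Legendre orthogonality~\eqref{eq:orthogonality-Legendre}, and the commutativity of $(\cdot,\cdot)_\HH$ with integration in time (cf.\ \cite[Cor.~64.14]{Ern_Guermond:2021}), only the $\Leg_{q-1}$-term survives and
\[
-\int_{\In} \big(\wht,\, (\Id - \Ptdqmo)(\varphi_n\wht)\big)_\HH \dt = -\Big(\tfrac{\tau_n}{2q-1}\, w_{q-1},\; v_q\Big)_\HH = \frac{\tau_n\, q}{4(2q-1)^2}\,\Norm{w_{q-1}}{\HH}^2 \ge 0,
\]
which is the assertion.

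The main obstacle is the first step: correctly extracting $(\Id - \Ptdqmo)(\varphi_n\wht)$ from the two defining properties of the left-sided Thom\'ee projection — in particular, recognizing that the $L^2$-orthogonality to $\Pp{q-2}{\In}$ confines the residual to $\Span\{\Leg_{q-1},\Leg_q\}\otimes\HH$ and that the left endpoint interpolation condition forces its two Legendre coefficients to coincide. Once this is in hand, the rest is the same Legendre-recurrence bookkeeping as in Lemma~\ref{lemma:bound-pi-time-varphi_n}; note that the hypothesis $q \ge 2$ is precisely what guarantees that $\Ptdqmo$ retains the orthogonality condition~\eqref{eq:Pt-2}.
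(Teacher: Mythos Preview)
Your proof is correct and follows essentially the same approach as the paper: both reduce the inequality to a scalar Legendre computation by isolating the top Legendre coefficient of~$\wht$ on~$\In$. Your explicit identification $(\Id - \Ptdqmo)(\varphi_n\wht) = v_q(\Leg_{q-1}+\Leg_q)$ is a slightly more streamlined packaging of the paper's ingredients (the paper instead splits off $\Pi_{q-2}^t\wht$, uses $\Ptdqmo\Leg_q = -\Leg_{q-1}$, and expands $(t-\tnmo)\Leg_{q-1}$ in the Legendre basis), but the two routes yield the identical constant $\tfrac{\tau_n q}{4(2q-1)^2}$ and are equivalent in substance.
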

\begin{proof}
We can write
\begin{equation*}
    \wht = \Pi_{q-2}^t \wht + \alpha \Leg_{q-1} \quad \text{ and } \quad \varphi_n \wht = \varphi_n \Pi_{q-2}^t \wht + \alpha \varphi_n \Leg_{q-1},
\end{equation*}
for some~$\alpha \in {\HH}$. Therefore, using the orthogonality properties of~$\Ptdqmo$ and the definition in~\eqref{lambdadef} of~$\varphi_n$, we get
\begin{alignat*}{3}
- \int_{\In} (\wht, (\Id - \Ptdqmo)(\varphi_n \wht))_{{\HH}} \dt  & = -\int_{\tnmo}^{\tn} ((\Id - \Pi_{q-2}^t) \wht, (\Id - \Ptdqmo)(\varphi_n \wht))_{{\HH}} \dt \\
& = \lambda_n \int_{\tnmo}^{\tn} (\alpha \Leg_{q-1}, \alpha (\Id - \Ptdqmo)((t - \tnmo) \Leg_{q-1}))_{{\HH}} \dt \\
& = \lambda_n \Norm{\alpha}{{\HH}}^2 \int_{\tnmo}^{\tn} \Leg_{q-1} (\Id - \Ptdqmo)((t - \tnmo)\Leg_{q-1}) \dt.
\end{alignat*}

It only remains to compute the time integral. Using the Legendre expansion, we have
\begin{equation*}
(t - \tnmo) \Leg_{q-1}(t) = \sum_{i = 0}^{q} \gamma_i \Leg_i(t) \quad\text{ with } \gamma_i = \frac{2i+1}{\tau_n} \int_{\tnmo}^{\tn} (t - \tnmo) \Leg_{q-1} \Leg_i \ds.
\end{equation*}
Moreover, one can easily prove that~$\Ptdqmo \Leg_q = - \Leg_{q-1}$. Therefore, 
\begin{equation*}
(\Id - \Ptdqmo)((t - \tnmo) \Leg_{q-1}) = \gamma_q (\Leg_q + \Leg_{q-1}),
\end{equation*}
which, together with the properties of the Legendre polynomials, yields
\begin{equation*}
\int_{\tnmo}^{\tn} \Leg_{q-1} (\Id - \Ptdqmo)((t - \tnmo)\Leg_{q-1}) \dt =  \frac{\tau_n \gamma_q}{2q-1} = \frac{\tau_n^2 q}{2(2 q - 1)^2} \geq 0,
\end{equation*}
which completes the proof.
\end{proof}

% ------------------------------------------------------------------------------------------
\subsubsection{The Aziz--Monk projection operator~\texorpdfstring{$\PtAM$}{PtAM}}
We now focus on the auxiliary projection in~\cite[eq.~(2.9)]{Aziz_Monk:1989} and its stability properties.
\begin{definition}[Projection~$\PtAM$]
\label{def:Pt-AM}
Let~$q \in \N$ with~$q \geq 1$, and~${\HH}$ be a Hilbert space with inner product~$(\cdot, \cdot)_{{\HH}}$. The projection operator~$\PtAM : H^1(0, T; {\HH}) \to \Pcont{q}{\Tt} \otimes {\HH}$ is defined for any~$v \in H^1(0, T; {\HH})$ as follows:
\begin{subequations}
\begin{alignat}{3}
\label{def:Pt-1}
\PtAM w(0) & = w(0) & & \quad \text{in~${\HH}$},\\
\label{def:Pt-2}
\int_{\In} (\dpt (\Id - \PtAM) v, \phi_{q-1})_{{\HH}} \dt  & = 0 & & \quad \forall \phi_{q-1} \in \Pp{q-1}{\In} \otimes {\HH}, \text{ for } n = 1, \ldots, N.
\end{alignat}
\end{subequations}
\end{definition}

We now derive several equivalent (local) definitions of~$\PtAM$, which serve to different purposes.
%%%
\paragraph{Alternative definition~I.}  Taking~$\phi_{q-1} = \chi_{(0, t_1)} z$ with~$z \in {\HH}$ in~\eqref{def:Pt-2}, and using~\eqref{def:Pt-1}, it can be deduced that~$\PtAM v(t_1) = v(t_1)$ in~${\HH}$. Recursively, one can also show that
\begin{equation}
\label{eq:pointwise-identity-Pt}
\PtAM v (\tn) = v(\tn) \ \text{in~${\HH}$,} \quad \text{ for }  n = 0, \ldots, N.
\end{equation}
Moreover, using~\eqref{eq:pointwise-identity-Pt} and integrating by parts in time in~\eqref{def:Pt-2}, we have
\begin{equation*}
\begin{split}
0 & = \int_{\In} (\dpt(\Id - \PtAM) v, \phi_{q-1})_{{\HH}} \dt \\
& = ((\Id - \PtAM)v(\tn), \phi_{q-1}(\tn))_{{\HH}}  - ((\Id - \PtAM)v (\tnmo), \phi_{q-1}(\tnmo))_{{\HH}}\\
& \quad  - \int_{\In} ((\Id - \PtAM)v, \dpt \phi_{q-1})_{{\HH}} \dt \\
%%%
& = - \int_{\In} ((\Id - \PtAM) v, \dpt \phi_{q-1})_{{\HH}} \dt  \qquad\qquad \forall \phi_{q-1} \in \Pp{q-1}{\In} \otimes {\HH},
\end{split}
\end{equation*}
whence, the projection~$\PtAM v$ can be equivalent defined (locally) on each~$\In \in \Tt$ as follows:
\begin{subequations}
\label{def:local-Pt}
\begin{alignat}{3}
\label{def:local-Pt-1}
\PtAM v (\tnmo) & = v(\tnmo) & & \quad \text{in~${\HH}$}, \\
\label{def:local-Pt-2}
\PtAM v (\tn) & = v(\tn) & & \quad \text{in~${\HH}$},\\
\label{def:local-Pt-3}
\int_{\In} ((\Id - \PtAM) v, \phi_{q-2})_{{\HH}} \dt & = 0 & & \quad \forall \phi_{q-2} \in \Pp{q-2}{\In} \otimes {\HH},
\end{alignat}
\end{subequations}
where, for~$q = 1$, identity~\eqref{def:local-Pt-3} is omitted and~$\PtAM v$ reduces to the linear interpolant in time with nodes~$\tnmo$ and~$\tn$. 

%%%
\paragraph{Alternative definition~II.}
From identity~\eqref{def:local-Pt-3}, it is clear that, for~$q \geq 2$, ~$\Pi_{q-2}^t \PtTh v = \Pi_{q-2}^t v$. Therefore, 
\begin{equation*}
\PtAM v(t) = \Pi_{q-2} v(t) + \alpha \Leg_{q-1}(t) + \beta \Leg_q(t) \quad \forall t \in \In,
\end{equation*}
for some~$\alpha$ and~$\beta$ in~${\HH}$. Conditions~\eqref{def:local-Pt-1} and~\eqref{def:local-Pt-2}, together with the identities~$\Leg_i(\tn) = 1$ and~$\Leg_i(\tnmo) = (-1)^i$ for all~$i \in \N$, imply
\begin{equation*}
\alpha = \frac{1}{2} ( \mu_n - (-1)^q \mu_{n - 1}) \quad \text{ and } \quad \beta = \frac{1}{2 } (\mu_n + (-1)^q \mu_{n - 1}),
\end{equation*}
where~$\mu_{n-1} = (\Id - \Pi_{q-2}^t)v (\tnmo)$ and~$\mu_n = (\Id - \Pi_{q-2}^t) v(\tn)$. 

This expression can be used to extend the definition of~$\PtAM$ to functions in~$W^{1, 1}(\Tt; {\HH})$. Next lemma then follows by using the~$L^{\infty}(\In; {\HH})$ stability in Lemma~\ref{lemma:stab-pi-time} of~$\Pi_{q-2}^t$ (for~$q \geq 2$), and it holds trivially for~$q = 1$. 
%%%
\begin{lemma}[Stability of~$\PtAM$]
\label{lemma:stab-PtAM}
Let~${\HH}$ be a Hilbert space with inner product~$(\cdot, \cdot)_{{\HH}}$. For all~$v \in W^{1, 1}(\Tt; {\HH})$ and~$n \in \{1, \ldots, N\}$, it holds
\begin{equation*}
\Norm{\PtAM v}{L^{\infty}(\In; {\HH})} \lesssim \Norm{v}{L^{\infty}(\In; {\HH})}.
\end{equation*}
\end{lemma}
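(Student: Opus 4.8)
The plan is to use the explicit expression for $\PtAM v$ derived in Alternative definition~II, namely
\[
\PtAM v(t) = \Pi_{q-2}^t v(t) + \alpha \Leg_{q-1}(t) + \beta \Leg_q(t), \qquad t \in \In,
\]
with $\alpha = \tfrac12(\mu_n - (-1)^q \mu_{n-1})$, $\beta = \tfrac12(\mu_n + (-1)^q \mu_{n-1})$, and $\mu_{n-1} = (\Id - \Pi_{q-2}^t)v(\tnmo)$, $\mu_n = (\Id - \Pi_{q-2}^t)v(\tn)$. First I would dispose of the trivial case $q = 1$, where $\PtAM v$ is just the linear interpolant in time at the nodes $\tnmo$ and $\tn$, so $\Norm{\PtAM v}{L^{\infty}(\In;{\HH})} \le \max\{\Norm{v(\tnmo)}{\HH}, \Norm{v(\tn)}{\HH}\} \le \Norm{v}{L^{\infty}(\In;{\HH})}$ by convexity of the norm (using the continuous embedding $W^{1,1}(\In;{\HH}) \hookrightarrow C^0(\overline{\In};{\HH})$ to make the pointwise values meaningful).

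For $q \ge 2$, I would bound each of the three terms separately using the triangle inequality and the uniform bound $\Norm{\Leg_i}{L^{\infty}(\In)} = 1$ for all $i \in \N$. The first term is controlled by Lemma~\ref{lemma:stab-pi-time} (stability of $\Pi_{q-2}^t$ in the $L^{\infty}(\In;{\HH})$ norm, with constant depending only on $q$): $\Norm{\Pi_{q-2}^t v}{L^{\infty}(\In;{\HH})} \lesssim \Norm{v}{L^{\infty}(\In;{\HH})}$. For the coefficients $\alpha$ and $\beta$, note $\Norm{\alpha}{\HH}, \Norm{\beta}{\HH} \le \tfrac12(\Norm{\mu_n}{\HH} + \Norm{\mu_{n-1}}{\HH})$, and each $\Norm{\mu_j}{\HH} = \Norm{(\Id - \Pi_{q-2}^t)v(t_j)}{\HH} \le \Norm{v(t_j)}{\HH} + \Norm{\Pi_{q-2}^t v}{L^{\infty}(\In;{\HH})} \lesssim \Norm{v}{L^{\infty}(\In;{\HH})}$, again invoking the $L^\infty$-stability of $\Pi_{q-2}^t$ and the embedding $W^{1,1}(\In;{\HH}) \hookrightarrow C^0(\overline{\In};{\HH})$ to evaluate $v$ and its projection at $\tnmo$ and $\tn$. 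Combining these bounds yields $\Norm{\PtAM v}{L^{\infty}(\In;{\HH})} \lesssim \Norm{v}{L^{\infty}(\In;{\HH})}$ with an implied constant depending only on $q$, as claimed.

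There is really no serious obstacle here; the only point requiring a little care is that the pointwise values $v(\tnmo)$, $v(\tn)$ (appearing in $\mu_{n-1}$, $\mu_n$) and the pointwise values of $\Pi_{q-2}^t v$ are well defined for $v \in W^{1,1}(\In;{\HH})$ — this is exactly the role of the continuous embedding $W^{1,1}(\In;{\HH}) \hookrightarrow C^0(\overline{\In};{\HH})$, and it is also what justifies extending $\PtAM$ from $H^1$ to $W^{1,1}$ in the first place. Since the paper explicitly remarks that the lemma ``follows by using the $L^{\infty}(\In;{\HH})$ stability in Lemma~\ref{lemma:stab-pi-time} of $\Pi_{q-2}^t$ (for $q \geq 2$), and it holds trivially for $q = 1$,'' the proof is essentially a routine assembly of these ingredients, and I would present it succinctly along the lines above.
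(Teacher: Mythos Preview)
Your proposal is correct and follows exactly the approach the paper indicates: use Alternative definition~II together with the $L^\infty$-stability of $\Pi_{q-2}^t$ from Lemma~\ref{lemma:stab-pi-time} for $q\ge 2$, and handle $q=1$ directly via the linear interpolant. The paper itself does not spell out the details beyond that one-line remark, so your write-up is in fact a faithful expansion of what the paper leaves implicit.
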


%%%
\paragraph{Alternative definition~III.}
The following alternative definition can be obtained combining~\eqref{def:local-Pt-1} and the fact that identity~\eqref{def:Pt-2} implies~$\Pi_{q-1}^t \dpt \PtAM v = \Pi_{q-1}^t \dpt v$:
\begin{equation*}
    \PtAM v(t) = v(\tnmo) + \int_{\tnmo}^t  \Pi_{q-1}^t \dps v \ds \quad \forall t \in \In,
\end{equation*}
or, globally (see~\cite[Rem. 70.10]{Ern_Guermond:2021}), 
\begin{equation*}
    \PtAM v(t) = v(0) + \int_0^{t} \Pi_{q-1}^t \dps v \ds \quad \forall t \in [0, T].
\end{equation*}
% --------------------------------------------------------------------------------------
\subsubsection{The Walkington projection operator~\texorpdfstring{$\PtW$}{PtWa}}
Next projection operator in time was introduced by Walkington in~\cite[Def.~5.1]{Walkington:2014} for the analysis of the DG--CG time discretization for the second-order formulation of the wave equation.
\begin{definition}[Projection~$\PtW$]
\label{def:PtW}
Let~$q \in \N$ with~$q \geq 2$, and~${\HH}$ be a Hilbert space with inner product~$(\cdot, \cdot)_{\HH}$. The projection operator~$\PtW : H^2(\Tt; {\HH}) \cap H^1(0, T; {\HH}) \to \Pcont{q}{\Tt} \otimes {\HH}$ is defined for all~$v \in H^2(\Tt; {\HH}) \cap H^1(0, T; \HH)$ as follows:
\begin{subequations}
\label{eq:proj-Pt-def}
\begin{alignat}{3} 
\label{eq:proj-Pt-def-1}
\PtW v(0)  & = v(0) & & \quad \text{in~${\HH}$},\\
\label{eq:proj-Pt-def-2}
\dpt \PtW v (\tn^-) & = \dpt v(\tn^-) & & \quad \text{in~${\HH}$, for~$n = 1, \ldots, N$},\\
\label{eq:proj-Pt-def-3}
\int_{\In} \big( \dpt (\Id - \PtW) v, \, \phi_{q-2}\big)_{{\HH}} \dt & = 0 & & \quad \forall \phi_{q - 2} \in \Pp{q-2}{\In} \otimes {\HH}, \ \text{for~$n = 1, \ldots, N$}.
\end{alignat}
\end{subequations}
\end{definition}

\paragraph{Alternative definition I.} The following (local) equivalent definition was derived in~\cite[\S5.1]{Walkington:2014}:
\begin{subequations}
\begin{alignat}{3} 
\PtW v(\tn)  & = v(\tn) & & \qquad \text{in~${\HH}$},\\
\label{eq:alternative-Walkington-2}
\PtW v(\tnmo)  & = v(\tnmo) & & \qquad \text{in~${\HH}$},\\
\dpt \PtW v (\tn^-) & = \dpt v(\tn^-) & & \qquad \text{in~${\HH}$},\\
\int_{\In} \big( (\Id - \PtW) v, \, \phi_{q-3}\big)_{{\HH}} \dt & = 0 & & \qquad \forall \phi_{q - 3} \in \Pp{q-3}{\In} \otimes {\HH},
\end{alignat}
\end{subequations}
where the last condition is omitted if~$q = 2$. 

\paragraph{Alternative definition II.} As shown in~\cite[\S2.2]{Dong-Mascotto-Wang:2025}, conditions~\eqref{eq:proj-Pt-def-2} and~\eqref{eq:proj-Pt-def-3} imply that~$\dpt \PtW v = \PtThqmo \dpt v$, which, combined with identity~\eqref{eq:alternative-Walkington-2}, gives
\begin{equation}
\label{eq:alternative-def-PtW-II}
    \PtW v(t) = v(\tnmo) + \int_{\tnmo}^t \PtThqmo \dps v \ds,
\end{equation}
which can be used to extend the definition of~$\PtW$ to functions in~$W^{2, 1}(\Tt; {\HH})$.

Next lemma is then a consequence of the stability properties in Lemma~\ref{lemma:stab-Pt} of~$\PtThqmo$.
\begin{lemma}[Stability of~$\PtW$]
\label{lemma:stab-PtW}
Let~$q \in \N$ with~$q \geq 2$, and~${\HH}$ be a Hilbert space with inner product~$(\cdot, \cdot)_{\HH}$. 
For all~$v \in W^{2, 1}(\Tt; {\HH})$ and~$n \in \{1, \ldots, N\}$, it holds
\begin{alignat*}{3}
\Norm{\dpt \PtW v}{L^{\infty}(\In; {\HH})} & \lesssim \Norm{\dpt v}{L^{\infty}(\In; {\HH})}, \\
\Norm{\PtW v}{L^{\infty}(\In; {\HH})} & \lesssim \Norm{v}{L^{\infty}(\In; {\HH})} + \tau_n \Norm{\dpt v}{L^{\infty}(\In; {\HH})}. 
\end{alignat*}
\end{lemma}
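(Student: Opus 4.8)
The plan is to obtain both bounds directly from the integral representation~\eqref{eq:alternative-def-PtW-II} of $\PtW$, namely $\PtW v(t) = v(\tnmo) + \int_{\tnmo}^{t} \PtThqmo \dps v \ds$ on each $\In$, together with the identity $\dpt \PtW v = \PtThqmo \dpt v$ recalled just before~\eqref{eq:alternative-def-PtW-II}. In this way the whole statement is reduced to the $L^{\infty}(\In; \HH)$ stability of the Thom\'ee projection $\PtThqmo$ (of degree $q-1$) provided by Lemma~\ref{lemma:stab-Pt}.

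First I would prove the bound on $\dpt \PtW v$. Since $v \in W^{2,1}(\Tt; \HH)$, we have $\dpt v \in W^{1,1}(\Tt; \HH)$, so Lemma~\ref{lemma:stab-Pt} applies to $\dpt v$. Combining it with $\dpt \PtW v = \PtThqmo \dpt v$ yields immediately
\[
\Norm{\dpt \PtW v}{L^{\infty}(\In; \HH)} = \Norm{\PtThqmo \dpt v}{L^{\infty}(\In; \HH)} \lesssim \Norm{\dpt v}{L^{\infty}(\In; \HH)}.
\]

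Next I would turn to the bound on $\PtW v$ itself. Starting from~\eqref{eq:alternative-def-PtW-II}, for every $t \in \In$ the triangle inequality for the Bochner integral gives
\[
\Norm{\PtW v(t)}{\HH} \le \Norm{v(\tnmo)}{\HH} + \int_{\tnmo}^{t} \Norm{\PtThqmo \dps v}{\HH} \ds \le \Norm{v(\tnmo)}{\HH} + \tau_n \Norm{\PtThqmo \dpt v}{L^{\infty}(\In; \HH)}.
\]
The embedding $W^{1,1}(\In; \HH) \hookrightarrow C^0(\overline{\In}; \HH)$ makes the pointwise value $v(\tnmo)$ meaningful and bounded by $\esssup_{t \in \In}\Norm{v(t)}{\HH} = \Norm{v}{L^{\infty}(\In; \HH)}$; using the first bound on the last term and taking the supremum over $t \in \In$ then gives $\Norm{\PtW v}{L^{\infty}(\In; \HH)} \lesssim \Norm{v}{L^{\infty}(\In; \HH)} + \tau_n \Norm{\dpt v}{L^{\infty}(\In; \HH)}$.

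I do not expect a genuine obstacle: the content is entirely carried by the representation formula and by Lemma~\ref{lemma:stab-Pt}, both already available. The only point deserving a word of care is that the pointwise evaluations $v(\tnmo)$ and $\dpt v(\tn^-)$ in Definition~\ref{def:PtW} are legitimate for $v \in W^{2,1}$, and that the identity~\eqref{eq:alternative-def-PtW-II}, derived in the excerpt for $H^2(\Tt;\HH) \cap H^1(0,T;\HH)$ data, persists on the larger class $W^{2,1}(\Tt; \HH)$; this is exactly the extension asserted in the text right after~\eqref{eq:alternative-def-PtW-II}, so it can simply be invoked.
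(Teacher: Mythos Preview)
Your proposal is correct and matches the paper's approach exactly: the paper states that the lemma ``is then a consequence of the stability properties in Lemma~\ref{lemma:stab-Pt} of~$\PtThqmo$'', and you have supplied precisely those details via the identity $\dpt \PtW v = \PtThqmo \dpt v$ and the integral representation~\eqref{eq:alternative-def-PtW-II}.
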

\noindent Stability and approximation properties for~$\PtW$ with~$q$-explicit constants were derived in~\cite[Lemma 2.4 and Prop. 2.5]{Dong-Mascotto-Wang:2025}.

We conclude this section with the main identity used in the convergence analysis of the DG--CG time discretization of the wave equation.
%%%
\begin{lemma}[Orthogonality of~$\PtW$]
\label{lemma:orthogonality-DG-PtW}
Let~${\HH}$ be a Hilbert space with inner product~$(\cdot, \cdot)_{{\HH}}$. For all~$v \in W^{2,1}(0, T; {\HH})$, it holds
\begin{equation*}
\int_{0}^T (\dpt \Rtq (\dpt \PtW v), \phi_{q-1})_{{\HH}} \dt  = \int_0^T (\dptt v, \phi_{q-1})_{{\HH}} \dt  + (\dpt v(0), \phi_{q-1}(0))_{{\HH}} \quad \forall \phi_{q-1} \in \Pp{q-1}{\Tt} \otimes {\HH},
\end{equation*}
where~$\Rtq : \Pp{q-1}{\Tt} \otimes {\HH} \to \Pcont{q}{\Tt} \otimes {\HH}$ is the time reconstruction operator from Section~\ref{sec:time-reconstruction}. 
\end{lemma}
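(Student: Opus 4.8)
The plan is to reduce the claim to the orthogonality of the Thom\'ee projection, Lemma~\ref{lemma:orthogonality-DG-PtTh}, applied one polynomial degree lower. The starting point is the structural fact recorded in Alternative definition~II above: conditions~\eqref{eq:proj-Pt-def-2} and~\eqref{eq:proj-Pt-def-3} in Definition~\ref{def:PtW} imply that, on each time interval~$\In$, $\dpt \PtW v = \PtThqmo(\dpt v)$. Here~$\PtThqmo$ is precisely the Thom\'ee projection of Definition~\ref{def:Pt} with the temporal degree~$q$ replaced by~$q - 1$, mapping into~$\Pp{q-1}{\Tt} \otimes {\HH}$, and the reconstruction~$\Rtq : \Pp{q-1}{\Tt}\otimes{\HH} \to \Pcont{q}{\Tt}\otimes{\HH}$ in the statement is exactly the degree-$(q-1)$ instance of the operator~$\Rt$ from Definition~\ref{def:time-reconstruction}. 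In particular, since~$\PtW v \in \Pcont{q}{\Tt}\otimes{\HH}$, we have~$\dpt \PtW v \in \Pp{q-1}{\Tt}\otimes{\HH}$, so it lies in the domain of~$\Rtq$.

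Next I would set~$w := \dpt v$. Since~$v \in W^{2,1}(0, T; {\HH})$, we have~$w \in W^{1,1}(0, T; {\HH})$, so~$w$ lies in the domain of Lemma~\ref{lemma:orthogonality-DG-PtTh}. That lemma is stated for the fixed degree~$q$, but its proof only uses integration by parts in time, the jump identity~\eqref{eq:jumps-in-time-identity-1}, and the defining properties of~$\PtTh$ and~$\Rt$; hence it holds verbatim with~$q$ replaced by~$q - 1$ (which is admissible because~$q \geq 2$). Applying it to~$w$ at degree~$q-1$ gives, for all~$\phi_{q-1} \in \Pp{q-1}{\Tt}\otimes{\HH}$,
$$\int_0^T (\dpt \Rtq \PtThqmo w, \phi_{q-1})_{{\HH}} \dt = \int_0^T (\dpt w, \phi_{q-1})_{{\HH}} \dt + (w(0), \phi_{q-1}(0))_{{\HH}}.$$
Substituting~$\PtThqmo w = \dpt \PtW v$ on the left-hand side, and~$\dpt w = \dptt v$, $w(0) = \dpt v(0)$ on the right-hand side, yields exactly the asserted identity, which completes the proof.

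Since this is essentially a bookkeeping reduction, no genuine obstacle is expected. The only points that require care are (i) confirming that~$\PtThqmo$ and~$\Rtq$ are indeed the degree-$(q-1)$ incarnations of~$\PtTh$ and~$\Rt$, so that Lemma~\ref{lemma:orthogonality-DG-PtTh} may be invoked at that degree without modification, and (ii) checking that the regularity~$v \in W^{2,1}(0, T; {\HH})$ is precisely what is needed for~$w = \dpt v$ to lie in the domain~$W^{1,1}(0, T; {\HH})$ of that lemma. An alternative, self-contained route would expand~$\Rtq(\dpt \PtW v)$ directly via Definition~\ref{def:time-reconstruction}, integrate by parts, and invoke the nodal and orthogonality conditions of~$\PtW$; this works but merely re-proves Lemma~\ref{lemma:orthogonality-DG-PtTh} in disguise, so the reduction above is preferable.
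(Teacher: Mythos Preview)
Your proposal is correct and follows exactly the same approach as the paper: reduce to Lemma~\ref{lemma:orthogonality-DG-PtTh} at degree~$q-1$ via the identity~$\dpt \PtW v = \PtThqmo \dpt v$. The paper's proof is simply the one-line version of what you wrote.
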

\begin{proof}
    The result follows from the identity~$\dpt \PtW v = \PtThqmo \dpt v$ and Lemma~\ref{lemma:orthogonality-DG-PtTh} on the orthogonality properties of~$\PtThqmo$. 
\end{proof}

The projection operator~$\PtW$ is not stable in the~$L^{\infty}(\In; {\HH})$ norm, so we cannot apply Lemma~\ref{lemma:estimates-Ptau} to derive its approximation properties. Consequently, next lemma concerns the approximation properties of~$\PtW$. 
%%%
\begin{lemma}[Estimates for~$\PtW$]
\label{lemma:estimates-PtW}
Let~$r \in [1, \infty]$, and~${\HH}$ be a Hilbert space with inner product~$(\cdot, \cdot)_{\HH}$. Then, the following estimate holds for~$n = 1, \ldots, N$:
\begin{subequations}
\begin{alignat}{3}
\label{eq:estimate-Walkington-1}
    \Norm{(\Id - \PtW) v}{L^r(\In; {\HH})} & \lesssim \tau_n^s \Norm{\dpt^{(s)} v}{L^r(\In; {\HH})} & & \quad \forall v \in W^{s, r} (\In; {\HH}), \ 2 \le s \le q+1, \\
\label{eq:estimate-Walkington-2}
    \Norm{\dpt (\Id - \PtW) v}{L^r(\In; {\HH})} & \lesssim \tau_n^{s-1} \Norm{\dpt^{(s)} v}{L^r(\In; {\HH})} & & \quad \forall v \in W^{s, r} (\In; {\HH}), \ 2 \le s \le q+1,
\end{alignat}
\end{subequations}
\end{lemma}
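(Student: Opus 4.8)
The plan is to derive both estimates directly from the alternative representation of $\PtW$ obtained above, namely
\[
\PtW v(\tnmo) = v(\tnmo) \quad \text{and} \quad \dpt \PtW v = \PtThqmo \dpt v \quad \text{on } \In,
\]
which is exactly identity~\eqref{eq:alternative-def-PtW-II} and remains valid for $v \in W^{2,1}(\Tt; {\HH})$. In this way, the approximation properties of $\PtW$ are inherited from those of the Thom\'ee projector $\PtThqmo$, which projects onto $\Pp{q-1}{\Tt} \otimes {\HH}$ and is stable in the $L^{\infty}(\In; {\HH})$ norm by Lemma~\ref{lemma:stab-Pt}.

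First I would prove~\eqref{eq:estimate-Walkington-2}. Differentiating the representation gives $\dpt(\Id - \PtW)v = (\Id - \PtThqmo)\dpt v$ on each $\In$. Since $\PtThqmo$ meets the hypotheses of Lemma~\ref{lemma:estimates-Ptau} with the degree $q$ there replaced by $q-1$, that lemma yields $\Norm{(\Id - \PtThqmo) w}{L^r(\In; {\HH})} \lesssim \tau_n^{\sigma} \Norm{\dpt^{(\sigma)} w}{L^r(\In; {\HH})}$ for $1 \le \sigma \le q$. Taking $w = \dpt v$ and $\sigma = s - 1$, so that $2 \le s \le q+1$ and $\dpt^{(s-1)} \dpt v = \dpt^{(s)} v$, gives~\eqref{eq:estimate-Walkington-2}.

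Then I would prove~\eqref{eq:estimate-Walkington-1}. Because $(\Id - \PtW)v$ vanishes at $\tnmo$, the fundamental theorem of calculus gives, for every $t \in \In$,
\[
(\Id - \PtW)v(t) = \int_{\tnmo}^{t} (\Id - \PtThqmo) \dps v \ds,
\]
whence $\Norm{(\Id - \PtW)v(t)}{{\HH}} \le \Norm{(\Id - \PtThqmo)\dpt v}{L^1(\In; {\HH})}$ uniformly in $t \in \In$. Taking the $L^r(\In)$ norm in $t$ and applying the H\"older inequality in time ($\Norm{\cdot}{L^1(\In)} \le \tau_n^{1 - 1/r}\Norm{\cdot}{L^r(\In)}$) yields $\Norm{(\Id - \PtW)v}{L^r(\In; {\HH})} \le \tau_n \Norm{(\Id - \PtThqmo)\dpt v}{L^r(\In; {\HH})}$; combining this with the bound from the previous step (used again with $\sigma = s-1$) produces $\Norm{(\Id - \PtW)v}{L^r(\In; {\HH})} \lesssim \tau_n^{s} \Norm{\dpt^{(s)} v}{L^r(\In; {\HH})}$, which is~\eqref{eq:estimate-Walkington-1}.

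No step is genuinely difficult; the only point requiring care is the bookkeeping of the degree shift when invoking Lemma~\ref{lemma:estimates-Ptau} for $\PtThqmo$: its target degree is $q-1$, so the admissible differentiation order is capped at $\sigma \le q$, which is precisely what limits the regularity index to $s \le q+1$ in both estimates. One should also note that the $L^{\infty}(\In;{\HH})$-stability hypothesis of that lemma is supplied by Lemma~\ref{lemma:stab-Pt}, and that the boundary case $q = 2$ needs no special treatment.
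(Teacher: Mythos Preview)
Your proof is correct and follows essentially the same approach as the paper: both arguments rest on the identity $\dpt \PtW v = \PtThqmo \dpt v$ together with $\PtW v(\tnmo) = v(\tnmo)$, reduce~\eqref{eq:estimate-Walkington-2} directly to the approximation properties of~$\PtThqmo$ (Lemma~\ref{lemma:estimates-Ptau} combined with Lemma~\ref{lemma:stab-Pt}), and obtain~\eqref{eq:estimate-Walkington-1} by integrating the pointwise bound $\Norm{(\Id-\PtW)v(t)}{\HH}\le\int_{\tnmo}^t\Norm{(\Id-\PtThqmo)\dps v}{\HH}\ds$ and applying H\"older to pick up the factor~$\tau_n$. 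The only cosmetic difference is that the paper proves~\eqref{eq:estimate-Walkington-1} first and phrases the pointwise step via the Jensen inequality rather than the fundamental theorem of calculus.
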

\begin{proof}
From~\eqref{eq:alternative-def-PtW-II} and the Jensen inequality, we deduce the following bound:
\begin{alignat*}{3}
\Norm{(\Id - \PtW) v(t)}{{\HH}} & \le \int_{\tnmo}^t \Norm{(\Id - \PtThqmo) \dps v}{{\HH}} \ds, 
\end{alignat*}
which, combined with the H\"older inequality (with~$1/r + 1/r' = 1$), yields
\begin{equation*}
\Norm{(\Id - \PtW) v}{L^r(\In; {\HH})} \le \tau_n^{1/r'} \tau_n^{1/r} \Norm{(\Id - \PtThqmo) \dpt v}{L^r(\In; {\HH})} = \tau_n \Norm{(\Id - \PtThqmo) \dpt v}{L^r(\In; {\HH})}.
\end{equation*}
Estimate~\eqref{eq:estimate-Walkington-1} then follows from the approximation properties of~$\PtThqmo$. 

As for~\eqref{eq:estimate-Walkington-2}, it is enough to use the identity~$\dpt \PtW v = \PtThqmo \dpt v$ and the approximation properties of~$\PtThqmo$. 
\end{proof}
% -------------------------------------------------------------------------------------------
\subsection{The~\texorpdfstring{$L^2(\Omega)$}{L2(Omega)}-orthogonal projection~\texorpdfstring{$\Pi_h$}{Pih} and the Ritz projection operator~\texorpdfstring{$\Rh$}{Rh}}
We denote by~$\Pih : L^2(\Omega) \to \oVhp$ the~$L^2(\Omega)$-orthogonal projection in the space~$\oVhp$. 

Moreover, we denote by~$\Rh : H_0^1(\Omega) \to \oVhp$ the Ritz projection operator, defined for any~$u \in H_0^1(\Omega)$ as the solution to
\begin{equation*}
    (\nabla \Rh u, \nabla \vh)_{\Omega} = (\nabla u, \nabla \vh)_{\Omega} \qquad \forall \vh \in \oVhp,
\end{equation*}
and recall the following approximation properties of~$\Rh$ from~\cite[Thms.~5.4.4 and 5.4.8]{Brenner_Scott:2008}.
\begin{lemma}[Estimates for~$\Rh$]
\label{lemma:Estimates-Rh}
Let~$p \in \N$ with~$p \geq 1$. Then,
\begin{equation*}
    \Norm{\nabla (\Id - \Rh) u}{L^2(\Omega)^d} \lesssim h^r\Seminorm{u}{H^{r+1}(\Omega)} \qquad \forall u \in H^{r+1}(\Omega) \cap H_0^1(\Omega), \ 0 \le r \le p.
\end{equation*}
Moreover, if the spatial domain~$\Omega$ satisfies the following elliptic regularity condition:
\begin{equation}
\label{eq:elliptic-regularity-Omega}
\text{
$v \in H_0^1(\Omega)$ and~$\Delta v \in L^2(\Omega)$ $\Longrightarrow$~$v \in H^2(\Omega)$, 
}
\end{equation}
it also holds
\begin{equation}
\label{eq:err-Rh-L2}
\Norm{(\Id - \Rh) u}{L^2(\Omega)} \lesssim h^{r+1} \Seminorm{u}{H^{r+1}(\Omega)} \qquad \forall u \in H^{r+1}(\Omega) \cap H_0^1(\Omega), \ 0 \le r \le p. 
\end{equation}
\end{lemma}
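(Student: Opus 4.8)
The plan is to recall that both estimates are entirely classical, and to reconstruct them from the two standard pillars of finite element theory: \emph{Galerkin orthogonality} (C\'ea's lemma) combined with polynomial interpolation error estimates for the $H^1$-seminorm bound, and an \emph{Aubin--Nitsche duality argument} for the $L^2(\Omega)$ bound. First I would treat the $H^1$-seminorm estimate. By the defining relation of~$\Rh$, for every~$\vh \in \oVhp$ we have~$(\nabla(\Id - \Rh)u, \nabla(\Rh u - \vh))_{\Omega} = 0$, so~$\Rh u$ is the best approximation of~$u$ in the seminorm~$\Norm{\nabla\cdot}{L^2(\Omega)^d}$; hence
\[
\Norm{\nabla(\Id - \Rh)u}{L^2(\Omega)^d} \le \inf_{\vh \in \oVhp}\Norm{\nabla(u - \vh)}{L^2(\Omega)^d} \le \Norm{\nabla(u - I_h u)}{L^2(\Omega)^d},
\]
where~$I_h : H_0^1(\Omega) \to \oVhp$ is a quasi-interpolation operator (of Scott--Zhang type) that reproduces~$\Pp{p}{K}$ locally and is~$H^1$-stable. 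Summing the local interpolation error estimates over~$K \in \Th$ on the shape-regular mesh~$\Th$ gives~$\Norm{\nabla(u - I_h u)}{L^2(\Omega)^d} \lesssim h^r \Seminorm{u}{H^{r+1}(\Omega)}$ for~$0 \le r \le p$, which is the first claim.

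For the~$L^2(\Omega)$ estimate I would set~$e := (\Id - \Rh)u \in H_0^1(\Omega) \subset L^2(\Omega)$ and consider the dual problem: find~$w \in H_0^1(\Omega)$ with~$(\nabla w, \nabla \phi)_{\Omega} = (e, \phi)_{\Omega}$ for all~$\phi \in H_0^1(\Omega)$, i.e.\ $-\Delta w = e$ weakly. Under the elliptic regularity assumption~\eqref{eq:elliptic-regularity-Omega} (and since~$d \le 3$), we have~$w \in H^2(\Omega) \cap H_0^1(\Omega)$ with~$\Norm{w}{H^2(\Omega)} \lesssim \Norm{e}{L^2(\Omega)}$. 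Testing with~$\phi = e$ and then using Galerkin orthogonality to subtract an arbitrary~$\wh \in \oVhp$,
\[
\Norm{e}{L^2(\Omega)}^2 = (\nabla e, \nabla w)_{\Omega} = (\nabla e, \nabla(w - \wh))_{\Omega} \le \Norm{\nabla e}{L^2(\Omega)^d}\inf_{\wh \in \oVhp}\Norm{\nabla(w - \wh)}{L^2(\Omega)^d}.
\]
Bounding the infimum by the interpolation error~$\lesssim h\Seminorm{w}{H^2(\Omega)} \lesssim h\Norm{e}{L^2(\Omega)}$, cancelling one factor of~$\Norm{e}{L^2(\Omega)}$, and inserting the~$H^1$ bound just proven, I obtain~$\Norm{e}{L^2(\Omega)} \lesssim h\,\Norm{\nabla e}{L^2(\Omega)^d} \lesssim h^{r+1}\Seminorm{u}{H^{r+1}(\Omega)}$.

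Since the argument is textbook material, there is no genuine obstacle; the only points requiring a little care are (i) selecting an interpolation operator valid for~$H^{r+1}$ data all the way down to~$r = 0$ — where nodal Lagrange interpolation is unavailable for~$d \ge 2$, so a Scott--Zhang or Cl\'ement operator must be used — and (ii) guaranteeing that the dual solution genuinely belongs to~$H^2(\Omega)$, which is precisely where the hypothesis~\eqref{eq:elliptic-regularity-Omega} enters and explains why the~$L^2(\Omega)$ estimate is stated conditionally. For these reasons the cleanest course is simply to invoke~\cite[Thms.~5.4.4 and 5.4.8]{Brenner_Scott:2008}.
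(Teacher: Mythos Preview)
Your proposal is correct and coincides with the paper's treatment: the paper does not give a proof at all but simply cites~\cite[Thms.~5.4.4 and~5.4.8]{Brenner_Scott:2008}, exactly as you conclude. Your reconstruction of the C\'ea/Aubin--Nitsche argument is standard and accurate, and your remark on needing a Scott--Zhang or Cl\'ement operator for the endpoint~$r=0$ is apt.
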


%%%
\begin{remark}[Reduced elliptic regularity]
A priori error estimates for spatial domains that do not satisfy full elliptic regularity follow analogously, and the loss of regularity is reflected in~\eqref{eq:err-Rh-L2} through the elliptic regularity parameter~$s \in (0, 1)$. 
\eremk
\end{remark}

%%%
\begin{remark}[Discrete initial conditions in CG time discretizations]
In CG time discretizations, discrete approximations of the initial conditions must be imposed strongly. We use~$\Pih$ (resp.~$\Rh$) to approximate initial data in~$L^2(\Omega)$ (resp. in~$H_0^1(\Omega)$). 
In particular, the use of~$\Pih$ for initial data in~$L^2(\Omega)$ allows us to derive stability bounds with the minimal regularity assumed in Section~\ref{sec:model-problems}, whereas using~$\Rh$ for functions in~$H_0^1(\Omega)$ avoids additional assumptions on the spatial mesh~$\Th$ that would otherwise be required to ensure the stability of~$\Pih$ in the~$H^1(\Omega)$ norm (cf. \cite{Crouzeix_Thomee:1987,Bramble_Pasciak_Steinbach:2002,Castensen:2002,Diening_Storn_Tscherpel:2021}). 
\eremk
\end{remark}

% -------------------------------------------------------------------------------------------
\subsection{Structure of the stability and convergence analysis}
\label{sec:structure-theory}
All the results in the next sections concerning the stability and convergence analysis of the corresponding schemes follow the same structure, which we summarize below. 

\paragraph{Stability analysis.} We derive continuous dependence of any discrete solution on the data of the problem in~$L^{\infty}(0, T; X)$ norms, which also implies the existence and uniqueness of a discrete solution. Such stability bounds are derived in two steps:
\begin{enumerate}[label = \emph{\roman*)}, ref = \emph{\roman*)}]
    \item\label{step-i} We first employ standard energy arguments (with natural test functions) to obtain a \emph{weak partial bound} on the discrete solution. We call these bounds \emph{weak} because the left-hand side only controls the solution at discrete time instances, and \emph{partial} because the right-hand side still involves~$L^{\infty}(0, T; X)$ norms of the discrete solution.

    \item Then, we derive a local stability bound using test functions with support in a single time slab~$\Qn$, which involve the auxiliary weight functions~$\{\varphi_n\}_{n = 1}^N$ defined in Section~\ref{sec:auxiliary-weight}. We summarize the choice of such test functions in the fifth column of Table~\ref{tab:special-test-functions}.

    The resulting bounds provide control on~$L^2(\Qn)$ norms of the discrete solution weighted with~$\tau_n^{-1}$, which is then used in combination with the polynomial inverse estimate in Lemma~\ref{lemma:L2-Linfty} and step~\ref{step-i} to obtain control in~$L^{\infty}(0, T; X)$ norms. This also guarantees the existence of a unique solution to the scheme. 
\end{enumerate}

\begin{table}[!ht]
    \centering
    \caption{Choice of the special test functions used to derive stability bounds in~$L^{\infty}(0, T; X)$ norms, and the time projection~$\Pt$ used in the convergence analysis.}
    \label{tab:special-test-functions}
    \begin{tabular}{cccccc}
    \hline 
    Section & Model & Discrete solution & Test space & Test function & Time projection\\
    \hline\\[-0.1em]
    %%%
    \S\ref{sec:Jamet-heat} & Heat equation & $\uht \in \oVdisc{q}$ &  $\oVdisc{q}$ & $\ItR (\varphi_n \uht)$ & $\PtTh$ \\[0.7em]
    %%%
    \S\ref{sec:Aziz-Monk-heat} & Heat equation &  $\uht \in \oVcont{q}$ & $\oVdisc{q-1}$ & $\Pi_{q-1}^t(\varphi_n \Pi_{q-1}^t \uht)$ & $\PtAM$ \\[0.7em]
    %%%
    \S\ref{sec:plain-vanilla-wave} & 
    Wave equation & 
    $\uht \in \oVdisc{q}$ & $\oVdisc{q}$ 
    & eq.~\eqref{eq:test-function-plain-DG} & $\PtW$ \\[0.7em]
    %%%
    \multirow{2}{*}{\S\ref{sec:French-Peterson-wave}} & \multirow{2}{*}{Wave equation} & $\uht \in \oVcont{q} $ & $\oVdisc{q-1} $ & $\Pi_{q-1}^t (\varphi_n \Pi_{q-1}^t \uht)$ & $\PtAM$ \\
    & & $\vht \in \oVcont{q} $ & $\oVdisc{q-1} $ & $\Pi_{q-1}^t (\varphi_n \Pi_{q-1}^t \vht)$ & $\PtAM$ \\[0.7em]
    %%%
    \multirow{2}{*}{\S\ref{sec:Johnson-wave}} & \multirow{2}{*}{Wave equation} & $\uht \in \oVdisc{q} $ & $\oVdisc{q} $ & $\ItR(\varphi_n \uht)$ & $\PtTh $\\
    & & $\vht \in \oVdisc{q} $ & $\oVdisc{q} $ & $\ItR(\varphi_n \vht)$ & $\PtTh$ \\[0.7em]
    %%%
    \S\ref{sec:Walkington-wave} & Wave equation & $\uht \in \oVcont{q} $ & $\oVdisc{q-1} $ & $\Pi_{q-1}^t(\varphi_n \dpt \uht)$ & $\PtW$ \\[0.7em]
    \hline
    \hline
    \end{tabular}
    \end{table}

\paragraph{Convergence analysis.}
We derive \emph{a priori} error estimates in a standard way.
\begin{itemize}
\item We first define error functions of the form
\begin{equation*}
    e_u := u - \uht = (u - \Piht u) + \Piht e_u,
\end{equation*}
where~$\Piht$ is a space--time projection obtained by combining a suitable projection~$\Pt$ in time from Section~\ref{sec:projections-time} with the Ritz projection~$\Rh$ in space. The specific choice of~$\Pt$ used in each section is summarized in the last column of Table~\ref{tab:special-test-functions}.

\item Then, we use the consistency of the scheme and the orthogonality properties of~$\Piht$ to derive an equation satisfied by the discrete error function~$\Piht e_u$. 

\item \emph{A priori} error estimates are then obtained by simply applying the continuous dependence on the data derived in the stability analysis, and using the approximation properties of~$\Piht$. 
\end{itemize}

\begin{remark}[Discrete test spaces]
Since no continuity is imposed on the test spaces, the global linear system arising from these methods can be decomposed into a sequence of smaller problems on each time slab~$\Qn$. 
This structure is relevant not only from a computational point of view but also from a theoretical perspective.
Indeed, to derive stability bounds in~$L^{\infty}(0, T; X)$ norms, it is essential that the specific test functions chosen have local support on a single time slab~$\Qn$.
The absence of this property complicates the analysis of methods based on spline functions with higher temporal regularity, which require alternative approaches; see~\cite{Ferrari_Fraschini:2026,Ferrari_Fraschini_Loli_Perugia:2025,Ferrari_Gomez:2025,Ferrari_Perugia:2025,Ferrari_Perugia_Zampa:2025}.
\eremk
\end{remark}

\begin{remark}[Discrete approximation of Dirichlet data]
\label{rem:nonhomogeneous-Dirichlet-boundary}
Nonhomogeneous smooth Dirichlet data must be suitably approximated according to the time discretization. Previous works~\cite{Walkington:2014,Voulis:2019,Gomez:2025} suggest that the right choice is to approximate the Dirichlet data so that the discrete error~$\Piht e_u$ satisfies a problem with homogeneous boundary conditions, i.e., such that~$\uht = \Piht u$ on~$\partial \Omega \times \In$, for~$n = 1, \ldots, N$. This, in addition, allows one to apply the stability bounds presented here for the homogeneous Dirichlet case.
\eremk
\end{remark}

% -------------------------------------------------------------------------------------------
%               Linfty-stability
% -------------------------------------------------------------------------------------------
\section{DG time discretization for the heat equation (Jamet, 1978)}
\label{sec:Jamet-heat}
Building upon the DG time discretization of ordinary differential equations from~\cite{Delfour_Hager_Trochu:1981}, Jamet~\cite{Jamet:1978} proposed the following space--time finite element method (see also the extension to more general parabolic problems in~\cite{Eriksson_Johnson_Thomee:1985}): find~$\uht \in \oVdisc{q}$ such that
\begin{equation}
\label{eq:variational-DG-heat}
\begin{split}
\Bht(\uht, \vht) & := \ell(\vht) \qquad \forall \vht \in \oVdisc{q},
\end{split}
\end{equation}
where the bilinear for~$\Bht : \oVdisc{q} \times \oVdisc{q} \to \R$ and the linear functional~$\ell : \oVdisc{q} \to \R$ are given by
\begin{alignat*}{3}
\nonumber
\Bht(\uht, \vht) & := \sum_{n = 1}^{\Nt} (\dpt \uht, \vht)_{\Qn} + \sum_{n = 1}^{\Nt - 1} (\jump{\uht}_n, \vht(\cdot, \tn^+))_{\Omega} + (\uht, \vht)_{\SO} \\
& \quad + \nu (\nabla \uht, \nabla \vht)_{\QT}, \\
\ell(\vht) & := (f, \vht)_{\QT} + (u_0, \vht(\cdot, 0))_{\Omega}.
\end{alignat*}

Using the time reconstruction operator~$\Rt$ in Section~\ref{sec:time-reconstruction}, the bilinear form~$\Bht(\cdot, \cdot)$ can be written in compact form
\begin{equation*}
\Bht(\uht, \vht) = (\dpt \Rt \uht, \vht)_{\QT} + \nu (\nabla \uht, \nabla \vht)_{\QT} \qquad \forall \uht, \vht \in \oVdisc{q}. 
\end{equation*}

For the convergence analysis, it is useful to study the stability properties of the following perturbed space--time formulation: find~$\uht \in \oVdisc{q}$ such that
\begin{equation}
\label{eq:compact-variational-DG-heat}
\Bht(\uht, \vht) = (f, \vht)_{\QT} + \nu (\nabla \Upsilon, \nabla \vht)_{\QT} + (u_0, \vht(\cdot, 0))_{\Omega} \quad \forall \vht \in \oVdisc{q},
\end{equation}
where~$\Upsilon \in L^2(0, T; H_0^1(\Omega))$ is a perturbation function that will represent a projection error in the convergence analysis.

The forthcoming stability and convergence analysis follows the structure described in Section~\ref{sec:structure-theory}. 

% --------------------------------------------------------------------------------------------------
%                               STABILITY
% --------------------------------------------------------------------------------------------------
\subsection{Stability analysis}
%%%
\begin{lemma}[Weak partial bound on the discrete solution\label{lemma:weak-bound}]
Any solution to the discrete space--time formulation~\eqref{eq:compact-variational-DG-heat} satisfies the following bound for~$n = 1, \ldots, N$: 
\begin{alignat}{3}
\nonumber
\frac12 & \Norm{\uht(\cdot, \tn^{-})}{L^2(\Omega)}^2 + \frac{\nu}{2} \Norm{\nabla \uht}{L^2(0, \tn; L^2(\Omega)^d)}^2  + \frac12 \sum_{m = 1}^{n - 1} \Norm{\jump{\uht}_{m}}{L^2(\Omega)}^2 + \frac14 \Norm{\uht(\cdot, 0)}{L^2(\Omega)}^2  \\
\label{eq:weak-partial-bound-heat}
& \quad \le \Norm{f}{L^1(0, \tn; L^2(\Omega))} \Norm{\uht}{L^{\infty}(0, \tn; L^2(\Omega))} + \frac{\nu}{2} \Norm{\nabla \Upsilon}{L^2(0, \tn; L^2(\Omega)^d)}^2 + \Norm{u_0}{L^2(\Omega)}^2.
\end{alignat}
\end{lemma}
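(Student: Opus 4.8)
The plan is to carry out step~\ref{step-i} of the strategy of Section~\ref{sec:structure-theory}: test with the natural test function and run an energy estimate, being slightly careful with the Young-inequality constants so that they match the statement. Fix $n \in \{1, \ldots, N\}$. The key observation is that $\chi_{(0, \tn)} \uht \in \oVdisc{q}$: since $\tn$ is a node of~$\Tt$, the restriction of this function to each time slab is either~$\uht$ or~$0$, hence a polynomial of degree~$\le q$ in time with values in~$\oVhp$, and no inter-slab continuity is required of test functions. I therefore take $\vht = \chi_{(0, \tn)} \uht$ in the compact form of~\eqref{eq:compact-variational-DG-heat}, which gives
\[
(\dpt \Rt \uht, \chi_{(0, \tn)} \uht)_{\QT} + \nu \Norm{\nabla \uht}{L^2(0, \tn; L^2(\Omega)^d)}^2 = (f, \chi_{(0, \tn)} \uht)_{\QT} + \nu (\nabla \Upsilon, \nabla \uht)_{\Omega \times (0, \tn)} + (u_0, \uht(\cdot, 0))_{\Omega}.
\]

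For the first term on the left I would apply identity~\eqref{eq:Rt-vht-jump} of Lemma~\ref{lemma:properties-Rt} with~$\HH = L^2(\Omega)$, $v_{\tau} = \uht \in \Pp{q}{\Tt} \otimes L^2(\Omega)$, and $w_{\tau} = \chi_{(0, \tn)} \uht$, turning it into
\[
\tfrac12 \Norm{\uht(\cdot, \tn^-)}{L^2(\Omega)}^2 + \tfrac12 \sum_{m = 1}^{n - 1} \Norm{\jump{\uht}_m}{L^2(\Omega)}^2 + \tfrac12 \Norm{\uht(\cdot, 0)}{L^2(\Omega)}^2 .
\]
The three terms on the right are handled in the usual way: the source term by the H\"older inequality in time, $(f, \chi_{(0, \tn)} \uht)_{\QT} \le \Norm{f}{L^1(0, \tn; L^2(\Omega))} \Norm{\uht}{L^{\infty}(0, \tn; L^2(\Omega))}$; the perturbation term by the Cauchy--Schwarz and Young inequalities (weighted so that one absorbs half of the dissipation), $\nu (\nabla \Upsilon, \nabla \uht)_{\Omega \times (0, \tn)} \le \tfrac{\nu}{2} \Norm{\nabla \uht}{L^2(0, \tn; L^2(\Omega)^d)}^2 + \tfrac{\nu}{2} \Norm{\nabla \Upsilon}{L^2(0, \tn; L^2(\Omega)^d)}^2$; and the initial-data term by Cauchy--Schwarz and the weighted Young inequality $ab \le \tfrac14 b^2 + a^2$, $(u_0, \uht(\cdot, 0))_{\Omega} \le \tfrac14 \Norm{\uht(\cdot, 0)}{L^2(\Omega)}^2 + \Norm{u_0}{L^2(\Omega)}^2$.

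Collecting these estimates and absorbing $\tfrac{\nu}{2} \Norm{\nabla \uht}{L^2(0, \tn; L^2(\Omega)^d)}^2$ and $\tfrac14 \Norm{\uht(\cdot, 0)}{L^2(\Omega)}^2$ into the left-hand side yields exactly~\eqref{eq:weak-partial-bound-heat}. There is no genuine obstacle in this lemma; the only points deserving attention are the admissibility of $\chi_{(0, \tn)} \uht$ as a test function in the broken-in-time space and the choice of the Young weights so the constants come out as stated. Note that the term $\Norm{f}{L^1(0, \tn; L^2(\Omega))} \Norm{\uht}{L^{\infty}(0, \tn; L^2(\Omega))}$ is deliberately \emph{not} absorbed here: this is precisely the ``partial'' character of the bound, and closing it requires the polynomial inverse estimate of Lemma~\ref{lemma:L2-Linfty} together with the local estimate built from the weighted test functions~$\varphi_n$, which is the content of the subsequent step of the stability proof.
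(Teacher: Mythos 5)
Your proof is correct and follows essentially the same route as the paper: test with $\vht = \chi_{(0,\tn)}\uht$, apply identity~\eqref{eq:Rt-vht-jump} to the $\dpt\Rt$ term, use H\"older for the source term, and Young (with the same weights) for the perturbation and initial-data terms. The only addition is your explicit verification that $\chi_{(0,\tn)}\uht$ is an admissible test function, which the paper takes for granted.
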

\begin{proof}
Choosing~$\vht = \chi_{(0, \tn)} \uht$ in~\eqref{eq:compact-variational-DG-heat}, and using identity~\eqref{eq:Rt-vht-jump} and the H\"older inequality, we get
\begin{alignat*}{3}
\frac12 & \Big(\Norm{\uht(\cdot, \tn^-)}{L^2(\Omega)}^2  + \sum_{m = 1}^{n - 1} \Norm{\jump{\uht}_{m}}{L^2(\Omega)}^2 + \Norm{\uht}{L^2(\SO)}^2 \Big) + \nu \Norm{\nabla \uht}{L^2(0, \tn; L^2(\Omega)^d)}^2 \\
%%%
&\quad  \le \Norm{f}{L^1(0, \tn; L^2(\Omega))} \Norm{\uht}{L^{\infty}(0, \tn; L^2(\Omega))} + \nu \Norm{\nabla \Upsilon}{L^2(0, \tn; L^2(\Omega)^d)} \Norm{\nabla \uht}{L^2(0, \tn; L^2(\Omega)^d)} \\
& \qquad + \Norm{u_0}{L^2(\Omega)} \Norm{\uht}{L^2(\SO)}.
\end{alignat*}
The result then follows by using the Young inequality for the last two terms on the right-hand side.
\end{proof}

%%%
\begin{proposition}[Continuous dependence on the data]
\label{prop:continuous-dependence-DG-heat}
Any solution to the discrete space--time formulation~\eqref{eq:compact-variational-DG-heat} satisfies
\begin{alignat*}{3}
\Norm{\uht}{L^{\infty}(0, T; L^2(\Omega))}^2 + \nu \Norm{\nabla \uht}{L^2(\QT)^d}^2 + \Seminorm{\uht}{\sf J}^2 \lesssim \Norm{f}{L^1(0, T; L^2(\Omega))}^2  + \Norm{\nabla \Upsilon}{L^2(\QT)^d}^2 + \Norm{u_0}{L^2(\Omega)}^2,
\end{alignat*}
where the hidden constant depends only on~$q$.
\end{proposition}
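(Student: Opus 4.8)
I would follow the two-step template of Section~\ref{sec:structure-theory}. The \emph{weak partial bound} of step~\emph{i)} is already furnished by Lemma~\ref{lemma:weak-bound}, so only step~\emph{ii)} and the combination of the two bounds remain. For step~\emph{ii)}, fix~$n \in \{1,\dots,N\}$ and test~\eqref{eq:compact-variational-DG-heat} with the function supported on the single slab~$\Qn$,
\begin{equation*}
\vht := \chi_{\In}\, \ItR(\varphi_n \uht),
\end{equation*}
which lies in~$\oVdisc{q}$: indeed~$\varphi_n \uht \in \Pp{q+1}{\In}\otimes\oVhp$ and~$\ItR$ maps it onto~$\Pp{q}{\In}\otimes\oVhp$, while the lack of temporal continuity in the test space makes extension by zero admissible. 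With this choice only the~$n$th time integral and, for~$n\geq 2$, the jump at~$\tnmo$ survive on the left-hand side; since~$\tnmo$ is a Gauss--Radau node and~$\varphi_n(\tnmo)=1$, the test function equals~$\uht(\tnmo^+)$ at~$\tnmo$, so the jump term is~$(\jump{\uht}_{n-1}, \varphi_n(\tnmo)\uht(\tnmo^+))_{\Omega}$.

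\textbf{Local bound.} The key algebraic fact is~$(\dpt\uht, \ItR(\varphi_n\uht))_{\Qn} = (\dpt\uht, \varphi_n\uht)_{\Qn}$, which holds because the integrand has temporal degree~$\leq 2q$, so the~$(q+1)$-point Gauss--Radau rule integrates it exactly and~$\ItR$ may be dropped (equivalently, $\ItR(\varphi_n\uht) - \varphi_n\uht$ is a multiple of the nodal polynomial, which is~$L^2(\In)$-orthogonal to~$\Pp{q-1}{\In}\ni \dpt\uht$; cf.~\eqref{eq:ItR-exact}). Identity~\eqref{eq:identities-varphi-integration-1} with~$\HH = L^2(\Omega)$ then converts the surviving time terms into
\begin{equation*}
\frac14 \Norm{\uht(\tn^-)}{L^2(\Omega)}^2 + \frac12 \Norm{\jump{\uht}_{n-1}}{L^2(\Omega)}^2 - \frac12 \Norm{\uht(\tnmo^-)}{L^2(\Omega)}^2 + \frac{\lambda_n}{2} \Norm{\uht}{L^2(\In; L^2(\Omega))}^2,
\end{equation*}
(with the footnote modification of Lemma~\ref{lemma:identities-varphi-integration} for~$n=1$, the role of~$\uht(\tnmo^-)$ being played by~$u_0$), while Lemma~\ref{lemma:bilinear-form-varphi} applied to~$a_h := \nu(\nabla\cdot,\nabla\cdot)_{\Omega}$ gives~$\nu(\nabla\uht, \nabla\ItR(\varphi_n\uht))_{\Qn} \geq \frac{\nu}{2}\Norm{\nabla\uht}{L^2(\In; L^2(\Omega)^d)}^2 \geq 0$. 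On the right-hand side I would estimate~$(f, \ItR(\varphi_n\uht))_{\Qn}$ by~$\Norm{f}{L^1(\In; L^2(\Omega))}\Norm{\ItR(\varphi_n\uht)}{L^{\infty}(\In; L^2(\Omega))} \lesssim \Norm{f}{L^1(\In; L^2(\Omega))}\Norm{\uht}{L^{\infty}(\In; L^2(\Omega))}$ via~\eqref{eq:stab-ItR-Linfty} and~$\varphi_n\leq 1$, then use Lemma~\ref{lemma:L2-Linfty} and Young's inequality to absorb a~$\frac{1}{8\tau_n}\Norm{\uht}{L^2(\In; L^2(\Omega))}^2$-part into~$\frac{\lambda_n}{2}\Norm{\uht}{L^2(\In; L^2(\Omega))}^2 = \frac{1}{4\tau_n}\Norm{\uht}{L^2(\In; L^2(\Omega))}^2$; the perturbation term~$\nu(\nabla\Upsilon, \nabla\ItR(\varphi_n\uht))_{\Qn}$ is handled by commuting~$\ItR$ with~$\nabla$, applying the~$L^2$-stability~\eqref{eq:stab-ItR-L2}, and absorbing~$\frac{\nu}{4}\Norm{\nabla\uht}{L^2(\In; L^2(\Omega)^d)}^2$ into the positive contribution from Lemma~\ref{lemma:bilinear-form-varphi}. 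Applying Lemma~\ref{lemma:L2-Linfty} once more to the remaining~$\frac{1}{8\tau_n}\Norm{\uht}{L^2(\In; L^2(\Omega))}^2$ leaves a bound of the shape
\begin{equation*}
\frac{1}{8\Cinv}\Norm{\uht}{L^{\infty}(\In; L^2(\Omega))}^2 \lesssim \Norm{\uht(\tnmo^-)}{L^2(\Omega)}^2 + \Norm{f}{L^1(\In; L^2(\Omega))}^2 + \nu\Norm{\nabla\Upsilon}{L^2(\In; L^2(\Omega)^d)}^2,
\end{equation*}
with~$\Norm{\uht(\tnmo^-)}{L^2(\Omega)}^2$ replaced by~$\Norm{u_0}{L^2(\Omega)}^2$ when~$n = 1$.

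\textbf{Closing the estimate.} The local bounds cannot be summed directly, since~$\Norm{\uht(\tnmo^-)}{L^2(\Omega)}^2$ appears on the right with a coefficient larger than that of~$\Norm{\uht(\tn^-)}{L^2(\Omega)}^2$ on the left, so no favourable telescoping occurs. Instead I would bound~$\Norm{\uht(\tnmo^-)}{L^2(\Omega)}^2$ by re-using the weak partial bound of Lemma~\ref{lemma:weak-bound} at time~$\tnmo$, which controls it by~$\Norm{f}{L^1(0,T;L^2(\Omega))}\Norm{\uht}{L^{\infty}(0,\tnmo; L^2(\Omega))} + \nu\Norm{\nabla\Upsilon}{L^2(\QT)^d}^2 + \Norm{u_0}{L^2(\Omega)}^2$; writing~$D$ for the data-dependent quantity~$\Norm{f}{L^1(0,T;L^2(\Omega))}^2 + \Norm{\nabla\Upsilon}{L^2(\QT)^d}^2 + \Norm{u_0}{L^2(\Omega)}^2$ and using Young's inequality on the cross term, this yields
\begin{equation*}
\Norm{\uht}{L^{\infty}(\In; L^2(\Omega))}^2 \leq \frac12 \Norm{\uht}{L^{\infty}(0,\tnmo; L^2(\Omega))}^2 + \Lambda D
\end{equation*}
for some~$\Lambda = \Lambda(q)$. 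With~$E_n := \Norm{\uht}{L^{\infty}(0,\tn; L^2(\Omega))}^2 = \max\big(E_{n-1}, \Norm{\uht}{L^{\infty}(\In; L^2(\Omega))}^2\big)$, a short case distinction (if~$E_{n-1}\geq 2\Lambda D$ then the displayed inequality forces~$E_n = E_{n-1}$; otherwise~$E_n \leq 2\Lambda D$) gives~$E_n \leq \max(E_{n-1}, 2\Lambda D)$, and induction from~$E_1 \lesssim D$ (the~$n=1$ local bound) produces~$\Norm{\uht}{L^{\infty}(0,T;L^2(\Omega))}^2 \lesssim D$. Feeding this into the weak partial bound of Lemma~\ref{lemma:weak-bound} at~$\tn = T$ and applying Young's inequality once more controls~$\nu\Norm{\nabla\uht}{L^2(\QT)^d}^2$ and~$\Seminorm{\uht}{\sf J}^2 = \Norm{\uht(T^-)}{L^2(\Omega)}^2 + \sum_{n=1}^{N-1}\Norm{\jump{\uht}_n}{L^2(\Omega)}^2 + \Norm{\uht(0)}{L^2(\Omega)}^2$ by~$D$ as well, which is the assertion.

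\textbf{Main obstacle.} The one genuinely delicate step is this last combination: because the local estimate only sees a single time slab and returns the nodal value at~$\tnmo$ with an unfavourable constant, the Gr\"onwall-free closure must exploit the interplay between the local bound and the weak partial bound together with the running-maximum induction on~$E_n$. Everything else — the quadrature identity~$(\dpt\uht, \ItR(\varphi_n\uht))_{\Qn} = (\dpt\uht, \varphi_n\uht)_{\Qn}$ and the bookkeeping of which terms get absorbed where and in which norm — is routine, but must be carried out carefully so that all constants remain independent of~$h$, $\tau$, $T$, and~$\nu$.
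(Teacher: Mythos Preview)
Your proposal is correct and follows the paper's approach closely: the same test function $\chi_{\In}\ItR(\varphi_n\uht)$, the same use of~\eqref{eq:ItR-exact} and identity~\eqref{eq:identities-varphi-integration-1}, Lemma~\ref{lemma:bilinear-form-varphi} for the gradient term, and the weak partial bound of Lemma~\ref{lemma:weak-bound} to control $\Norm{\uht(\tnmo^-)}{L^2(\Omega)}^2$.

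The only substantive difference is the closing argument. You apply Young's inequality to the $f$-term already at the local level, which forces you into the running-maximum induction on $E_n$. The paper takes a shorter route: it never applies Young locally but keeps the product $\Norm{f}{L^1(0,\tn;L^2(\Omega))}\Norm{\uht}{L^{\infty}(0,\tn;L^2(\Omega))}$ intact. After inserting the weak partial bound at $\tnmo$ and extending both factors to $(0,T)$, the local estimate reads
\[
\Norm{\uht}{L^{\infty}(\In;L^2(\Omega))}^2 \lesssim \Norm{u_0}{L^2(\Omega)}^2 + \Norm{f}{L^1(0,T;L^2(\Omega))}\Norm{\uht}{L^{\infty}(0,T;L^2(\Omega))} + \nu\Norm{\nabla\Upsilon}{L^2(\QT)^d}^2,
\]
with the right-hand side now \emph{independent of~$n$}. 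One then picks the index $n$ where the left-hand side is maximal, so the left-hand side becomes $\Norm{\uht}{L^{\infty}(0,T;L^2(\Omega))}^2$, and a single Young inequality absorbs the product. Your induction is correct, but this ``delay Young, then take the max'' trick is both shorter and the standard device in the rest of the paper (Propositions~\ref{prop:continuous-dependence-Aziz-Monk}, \ref{prop:continuous-dependence-CG-wave}, \ref{prop:continuous-dependence-DG-Johnson}, \ref{prop:continuous-dependence-DG-Walkington}), so it is worth internalizing.
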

\begin{proof}
For~$n = 1, \ldots, N$, we choose the test function~$\vht \in \oVdisc{q}$ in~\eqref{eq:compact-variational-DG-heat} as
\begin{equation*}
\vht{}_{|_{\Qm}} := \begin{cases}
\ItR(\varphi_n \uht) & \text{ if } m = n, \\
0 & \text{ otherwise}.
\end{cases}
\end{equation*}
We focus on the case~$n > 1$, as the argument for~$n = 1$ requires only minor modifications. 

The following identity is obtained:
\begin{alignat}{3}
\nonumber
(\dpt \uht, \ItR(\varphi_n \uht))_{\Qn} + (\jump{\uht}_{n-1}, \ItR(\varphi_n \uht)(\cdot, \tnmo^+))_{\Omega} + \nu (\nabla \uht, \nabla \ItR(\varphi_n \uht))_{\Qn}  \\
\label{eq:aux-local-identity-varphi}
 = (f, \ItR(\varphi_n \uht))_{\Qn} + \nu (\nabla \Upsilon, \nabla \ItR (\varphi_n \uht))_{\Qn}.
\end{alignat}

Using property~\eqref{eq:ItR-exact} of the interpolant~$\ItR$, the identities~$\ItR w(\cdot, \tnmo^+) = w(\cdot, \tnmo^+)$ and $\varphi_n(\tnmo) = 1$, and identity~\eqref{eq:identities-varphi-integration-1} from Lemma~\ref{lemma:identities-varphi-integration}, we get
\begin{alignat}{3}
\nonumber
\big(\dpt \uht, & \, \ItR(\varphi_n \uht) \big)_{\Qn} + \big(\jump{\uht}_{n-1}, \ItR(\varphi_n \uht)(\cdot, \tnmo^+) \big)_{\Omega} \\
%%%%
\nonumber
& = \big(\dpt \uht, \varphi_n \uht \big)_{\Qn} + \big(\jump{\uht}_{n-1}, \uht(\cdot, \tnmo^+) \big)_{\Omega} \\
%%%
\nonumber
& = \frac14 \Norm{\uht(\cdot, \tn^-)}{L^2(\Omega)}^2 + \frac12 \Norm{\jump{\uht}_{n - 1}}{L^2(\Omega)}^2 - \frac12 \Norm{\uht(\cdot, \tnmo^-)}{L^2(\Omega)}^2  + \frac{\lambda_n}{2} \Norm{\uht}{L^2(\Qn)}^2.
\end{alignat}

The application of Lemma~\ref{lemma:bilinear-form-varphi} immediately gives
$$\nu \big(\nabla \uht, \nabla \ItR(\varphi_n \uht) \big)_{\Qn} \geq \frac{\nu}{2} \Norm{\nabla \uht}{L^2(\Qn)^d}^2.$$

As for the terms on the right-hand side of~\eqref{eq:aux-local-identity-varphi}, we use the H\"older inequality, the stability bound in~\eqref{eq:stab-ItR-L2} for~$\ItR$, and the Young inequality to get
\begin{alignat*}{3}
(f, \ItR( &\varphi_n \uht))_{\Qn}  + \nu (\nabla \Upsilon, \nabla \ItR (\varphi_n \uht))_{\Qn} \\
%%%
& \le \Norm{f}{L^1(\In; L^2(\Omega))} \Norm{\ItR(\varphi_n \uht)}{L^{\infty}(\In; L^2(\Omega))} + \nu \Norm{\nabla \Upsilon}{L^2(\Qn)^d} \Norm{\nabla \ItR (\varphi_n \uht)}{L^2(\Qn)^d} \\
& \le \CSI \Norm{f}{L^1(\In; L^2(\Omega))} \Norm{\uht}{L^{\infty}(\In; L^2(\Omega))} + \nu (\CSI)^2 \Cinv \Norm{\nabla \Upsilon}{L^2(\Qn)^d}^2 + \frac{\nu}{4} \Norm{\nabla \uht}{L^2(\Qn)^d}^2.
\end{alignat*}

Combining these three bounds with identity~\eqref{eq:aux-local-identity-varphi} and the weak partial bound in Lemma~\ref{lemma:weak-bound} for~$n - 1$ yields
\begin{alignat}{3}
\nonumber
\frac14 & \Norm{\uht(\cdot, \tn^-)}{L^2(\Omega)}^2  + \frac12 \Norm{\jump{\uht}_{n - 1}}{L^2(\Omega)}^2 + \frac{\nu}{4} \Norm{\nabla \uht}{L^2(\Qn)^d}^2 + \frac{\lambda_n}{2} \Norm{\uht}{L^2(\Qn)}^2 \\
%%%
\nonumber
& \quad \lesssim 
\frac12 \Norm{\uht(\cdot, \tnmo^-)}{L^2(\Omega)}^2 + \Norm{f}{L^1(\In; L^2(\Omega))} \Norm{\uht}{L^{\infty}(\In; L^2(\Omega))}  + \nu \Norm{\nabla \Upsilon}{L^2(\Qn)^d}^2 \\
%%%
\nonumber
& \quad \lesssim \Norm{u_0}{L^2(\Omega)}^2 + \Norm{f}{L^1(0, \tnmo; L^2(\Omega))} \Norm{\uht}{L^{\infty}(0, \tnmo; L^2(\Omega))} + \nu \Norm{\nabla \Upsilon}{L^2(0, \tnmo; L^2(\Omega)^d)}^2 \\
\nonumber
& \qquad + \Norm{f}{L^1(\In; L^2(\Omega))} \Norm{\uht}{L^{\infty}(\In; L^2(\Omega))} + \nu \Norm{\nabla \Upsilon}{L^2(\In; L^2(\Omega)^d)}^2\\
%%%
\label{eq:continuous-dependence-bound-tn}
& \quad \lesssim \Norm{u_0}{L^2(\Omega)}^2 + \Norm{f}{L^1(0, \tn; L^2(\Omega))} \Norm{\uht}{L^{\infty}(0, \tn; L^2(\Omega))} + \nu \Norm{\nabla \Upsilon}{L^2(0, \tn; L^2(\Omega)^d)}^2,
\end{alignat}
where it is crucial that the hidden constant is independent of the time interval~$\In$ and depends only on the degree of approximation~$q$.

From the inverse estimate in Lemma~\ref{lemma:L2-Linfty}, we deduce that
\begin{equation*}
\frac{1}{4 \Cinv} \Norm{\uht}{L^{\infty}(\In; L^2(\Omega))}^2 \le \frac{\lambda_n}{2} \Norm{\uht}{L^2(\Qn)}^2,
\end{equation*}
which, together with~\eqref{eq:continuous-dependence-bound-tn}, allows us to show that
\begin{alignat*}{3}
\Norm{\uht}{L^{\infty}(\In; L^2(\Omega))}^2 \lesssim \Norm{u_0}{L^2(\Omega)}^2 + \Norm{f}{L^1(0, T; L^2(\Omega))} \Norm{\uht}{L^{\infty}(0, T; L^2(\Omega))} + \nu \Norm{\nabla \Upsilon}{L^2(\QT)^d}^2,
\end{alignat*}
and a uniform bound on~$\Norm{\uht}{L^{\infty}(0, T; L^2(\Omega))}$ can be obtained by choosing the index~$n$ where the left-hand side achieves its maximum value. This step, together with Lemma~\ref{lemma:weak-bound}, completes the proof. 
\end{proof}
The existence of a unique solution to~\eqref{eq:variational-DG-heat} is an immediate consequence of Proposition~\ref{prop:continuous-dependence-DG-heat}, as it corresponds to a linear system with a nonsingular matrix.

\begin{remark}[Limitations of the energy argument]
A stability bound can be obtained from Lemma~\ref{lemma:weak-bound} only for the lowest order cases. This explains why many analyses restrict to that situation (see, e.g., \cite{Hansbo_Szepessy:1990,Beirao_Dassi_Vacca:2024,Vexler_Wagner:2024} for the incompressible Navier--Stokes equations).
The main issue is that the terms on the right-hand side of~\eqref{eq:weak-partial-bound-heat} cannot control the~$L^2(\Omega)$ norm of~$\uht$ at all times, which highlights the limitations of the standard energy argument. 

A continuous dependence on the data in~$L^{\infty}(0, T; X)$ norms for the DG time discretization of linear parabolic problems with~$q$-variable approximations was established in~\cite{Schmutz_Wihler:2019}.
\eremk
\end{remark}

% --------------------------------------------------------------------------------------------------
%                               CONVERGENCE
% --------------------------------------------------------------------------------------------------
\subsection{Convergence analysis}
Recalling Definition~\ref{def:Pt} of the Thom\'ee projection operator~$\PtTh$, we define the space--time projection~$\Piht = \PtTh \circ \Rh$, and the error functions
\begin{equation*}
\eu := u - \uht = (\Id - \Piht)u + \Piht \eu. 
\end{equation*}
%%%
\begin{theorem}[Estimates for the discrete error]
\label{thm:a-priori-Thomee}
Let~$u \in X$ be the solution to the continuous weak formulation~\eqref{eq:weak-formulation-heat}, and let~$\uht \in \oVdisc{q}$ be the solution to the discrete space--time formulation~\eqref{eq:variational-DG-heat}. Assume that~$\Omega$ satisfies the elliptic regularity assumption~\eqref{eq:elliptic-regularity-Omega}, and that the continuous solution~$u$ satisfies
\begin{equation*}
u  \in X \cap W^{1, 1}(0, T; H^{r+1}(\Omega)) \cap H^{s+1}(0, T; H^1(\Omega)),
\end{equation*}
for some~$0 \le r \le p$ and~$0 \le s \le q$. 
Then, the following estimate holds:
\begin{alignat}{3}
\nonumber
& \Norm{\Piht \eu}{L^{\infty}(0, T; L^2(\Omega))}   + \sqrt{\nu} \Norm{\nabla \Piht \eu}{L^2(\QT)^d} + \Seminorm{\Piht \eu}{\sf J} \\
\label{eq:discrete-error-DG-heat}
& \quad \lesssim h^{r+1} \Seminorm{\dpt u}{L^1(0, T; H^{r+1}(\Omega))} + \sqrt{\nu} \tau^{s+1} \Norm{\nabla \dpt^{(s+1)}u}{L^2(\QT)^d}+ h^{r+1} \Seminorm{u_0}{H^{r+1}(\Omega)}.
\end{alignat}
\end{theorem}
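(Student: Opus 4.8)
The plan is to follow the standard three-step procedure of Section~\ref{sec:structure-theory}: \emph{(i)} show that the discrete error $\Piht \eu \in \oVdisc{q}$ solves a perturbed formulation of the type~\eqref{eq:compact-variational-DG-heat}; \emph{(ii)} invoke the continuous dependence on the data of Proposition~\ref{prop:continuous-dependence-DG-heat}; and \emph{(iii)} bound the resulting data terms using the approximation properties of $\Rh$ (Lemma~\ref{lemma:Estimates-Rh}) and of $\PtTh$ (Lemma~\ref{lemma:estimates-Ptau}).

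For step \emph{(i)} I would write $\Piht = \PtTh \circ \Rh = \Rh \circ \PtTh$ and compute $\Bht(\Piht u, \vht)$ for an arbitrary $\vht \in \oVdisc{q}$, using the compact form $\Bht(z,\vht) = (\dpt \Rt z, \vht)_{\QT} + \nu(\nabla z, \nabla \vht)_{\QT}$. The orthogonality of the reconstructed Thom\'ee projection (Lemma~\ref{lemma:orthogonality-DG-PtTh} with $\HH = L^2(\Omega)$ and $v = \Rh u$, which lies in $W^{1,1}(0,T;L^2(\Omega))$), together with $\dpt \Rh u = \Rh \dpt u$, gives
\[
\begin{aligned}
(\dpt \Rt \Piht u, \vht)_{\QT}
&= (\dpt \Rh u, \vht)_{\QT} + (\Rh u_0, \vht(\cdot,0))_{\Omega} \\
&= (\dpt u, \vht)_{\QT} + (\dpt(\Rh - \Id)u, \vht)_{\QT} + (\Rh u_0, \vht(\cdot,0))_{\Omega},
\end{aligned}
\]
while the Ritz identity applied pointwise in time yields $\nu(\nabla \Piht u, \nabla \vht)_{\QT} = \nu(\nabla \PtTh u, \nabla \vht)_{\QT} = \nu(\nabla u, \nabla \vht)_{\QT} - \nu(\nabla(\Id - \PtTh)u, \nabla \vht)_{\QT}$. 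Subtracting the consistency relation obtained by testing~\eqref{eq:weak-formulation-heat-1} against $\vht \in \oVdisc{q} \subset Y$ (legitimate because the assumed temporal regularity reduces the duality pairing to an $L^2(\QT)$ inner product) and subtracting $\Bht(\uht,\vht) = (f,\vht)_{\QT} + (u_0,\vht(\cdot,0))_{\Omega}$, I get
\[
\begin{aligned}
\Bht(\Piht \eu, \vht) = {}& \big(\dpt(\Rh - \Id)u, \vht\big)_{\QT} - \nu\big(\nabla(\Id - \PtTh)u, \nabla \vht\big)_{\QT} \\
& {}+ \big((\Rh - \Id)u_0, \vht(\cdot,0)\big)_{\Omega},
\end{aligned}
\]
which is exactly~\eqref{eq:compact-variational-DG-heat} with source $(\Rh - \Id)\dpt u$, perturbation $\Upsilon = (\PtTh - \Id)u$, and initial datum $(\Rh - \Id)u_0$.

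For steps \emph{(ii)}--\emph{(iii)}, Proposition~\ref{prop:continuous-dependence-DG-heat} (with the $\Upsilon$-contribution weighted by $\nu$, as its proof shows) then gives
\[
\begin{aligned}
& \Norm{\Piht \eu}{L^{\infty}(0,T;L^2(\Omega))}^2 + \nu\Norm{\nabla\Piht\eu}{L^2(\QT)^d}^2 + \Seminorm{\Piht\eu}{\sf J}^2 \\
& \qquad \lesssim \Norm{(\Rh - \Id)\dpt u}{L^1(0,T;L^2(\Omega))}^2 + \nu \Norm{\nabla(\Id - \PtTh)u}{L^2(\QT)^d}^2 + \Norm{(\Rh - \Id)u_0}{L^2(\Omega)}^2.
\end{aligned}
\]
The first and third terms are controlled by the $L^2(\Omega)$ Ritz estimate~\eqref{eq:err-Rh-L2} (which needs the elliptic regularity assumption~\eqref{eq:elliptic-regularity-Omega}), applied to $\dpt u(t) \in H^{r+1}(\Omega)\cap H_0^1(\Omega)$ and integrated in time, and to $u_0 = u(0) \in H^{r+1}(\Omega)\cap H_0^1(\Omega)$, yielding $h^{r+1}\Seminorm{\dpt u}{L^1(0,T;H^{r+1}(\Omega))}$ and $h^{r+1}\Seminorm{u_0}{H^{r+1}(\Omega)}$. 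For the second term, since $\PtTh$ acts only in time it commutes with $\nabla$, so $\nabla(\Id - \PtTh)u = (\Id - \PtTh)\nabla u$; as $\PtTh$ is $L^{\infty}(\In;\cdot)$-stable (Lemma~\ref{lemma:stab-Pt}), Lemma~\ref{lemma:estimates-Ptau} gives $\Norm{(\Id - \PtTh)\nabla u}{L^2(\In;L^2(\Omega)^d)} \lesssim \tau_n^{s+1}\Norm{\nabla \dpt^{(s+1)}u}{L^2(\In;L^2(\Omega)^d)}$ for $0 \le s \le q$; squaring, summing over $n$, and using $\tau_n \le \tau$ produces $\sqrt{\nu}\,\tau^{s+1}\Norm{\nabla \dpt^{(s+1)}u}{L^2(\QT)^d}$. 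Taking square roots and using subadditivity of $\sqrt{\,\cdot\,}$ yields~\eqref{eq:discrete-error-DG-heat}.

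The main obstacle is step \emph{(i)}: the key is that the reconstruction--Thom\'ee orthogonality of Lemma~\ref{lemma:orthogonality-DG-PtTh} leaves \emph{no} jump terms on the right-hand side, so that $\Piht\eu$ satisfies an equation whose only perturbations are the spatial Ritz defect in the discrete time-derivative term and the elliptic defect $\Upsilon$, allowing Proposition~\ref{prop:continuous-dependence-DG-heat} to apply verbatim. The remaining work is to check the routine compatibility points: $\PtTh$ commutes with $\nabla$ and $\dpt$, $\Rh$ commutes with $\dpt$, the traces $u(\tn)$ and $\dpt u(t)$ lie in $H_0^1(\Omega)$ so that $\Rh$ is well defined there, and the assumed regularity of $u$ makes the pairing $\langle \dpt u, \vht\rangle_{Y',Y}$ coincide with $(\dpt u,\vht)_{\QT}$.
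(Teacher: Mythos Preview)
Your proposal is correct and follows essentially the same route as the paper's proof: derive the error equation for $\Piht\eu$ via consistency and the orthogonality of Lemma~\ref{lemma:orthogonality-DG-PtTh}, identify the data $\widetilde f = -(\Id-\Rh)\dpt u$, $\Upsilon = -(\Id-\PtTh)u$, $\widetilde u_0 = -(\Id-\Rh)u_0$, apply Proposition~\ref{prop:continuous-dependence-DG-heat}, and conclude with Lemmas~\ref{lemma:estimates-Ptau} and~\ref{lemma:Estimates-Rh}. The only cosmetic difference is that you compute $\Bht(\Piht u,\vht)$ and subtract, whereas the paper writes $\Bht(\Piht\eu,\vht) = -\Bht((\Id-\Piht)u,\vht)$ and simplifies the right-hand side; the resulting identities are identical, and your observation that the $\Upsilon$-term in Proposition~\ref{prop:continuous-dependence-DG-heat} carries a factor~$\nu$ (visible in its proof and in Lemma~\ref{lemma:weak-bound}) is accurate.
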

\begin{proof}
The space--time formulation~\eqref{eq:variational-DG-heat} is consistent due to the continuous embedding~$u \in X \hookrightarrow C^0([0, T]; L^2(\Omega))$, as it makes the jump terms vanish.
Therefore, we have the error equation
\begin{equation}
\label{eq:error-equation-Thomee}
\Bht(\Piht \eu, \vht) = -\Bht((\Id - \Piht) u, \vht) \qquad \forall \vht \in \oVdisc{q}.
\end{equation}

The right-hand side of~\eqref{eq:error-equation-Thomee} can be simplified by using the orthogonality properties of~$\PtTh$ from Lemma~\ref{lemma:orthogonality-DG-PtTh} and of~$\Rh$ as follows:
\begin{subequations}
\begin{alignat}{3}
(\dpt \Rt (\Id - \Piht u), \vht)_{\QT} 
%%%
& = (\dpt (\Id - \Rh) u, \vht)_{\QT} + ((\Id - \Rh) u, \vht)_{\SO}, \\
%%%
\label{eq:identity-nabla-nabla-heat}
\nu (\nabla (\Id - \Piht) u, \nabla \vht)_{\QT} & = \nu (\nabla (\Id - \PtTh) u, \nabla \vht)_{\Omega}.
\end{alignat}
\end{subequations}
Therefore, the discrete error~$\Piht \eu = \uht - \Piht u$ satisfies a problem of the form of~\eqref{eq:variational-DG-heat} with data
\begin{equation*}
\widetilde{f} = (\Id - \Rh)\dpt u, \quad \widetilde{\Upsilon} = (\Id - \PtTh) u, \quad \text{ and } \quad \widetilde{u}_0  = (\Id - \Rh) u_0. 
\end{equation*}

The continuous dependence on the data from Proposition~\ref{prop:continuous-dependence-DG-heat} then gives the \emph{a priori} error bound
\begin{alignat*}{3}
&\Norm{\Piht \eu}{L^{\infty}(0, T; L^2(\Omega))}^2   + \nu \Norm{\nabla \Piht\eu}{L^2(\QT)^d}^2 + \Seminorm{\Piht \eu}{\sf J}^2 \\
& \quad \lesssim \Norm{(\Id - \Rh) \dpt u}{L^1(0, T; L^2(\Omega))}^2 + \nu \Norm{(\Id - \PtTh) \nabla u}{L^2(\QT)^d}^2  + \Norm{(\Id - \Rh)u_0}{L^2(\Omega)}^2.
\end{alignat*}
Estimate~\eqref{eq:discrete-error-DG-heat} then follows by the approximation properties of~$\PtTh$ and~$\Rh$ from Lemmas~\ref{lemma:estimates-Ptau} (relying on~Lemma~\ref{lemma:stab-Pt}) and~\ref{lemma:Estimates-Rh}, respectively. 
\end{proof}

%%%
\begin{corollary}[\emph{A priori} error estimates]
Under the assumptions of Theorem~\ref{thm:a-priori-Thomee}, and assuming that the continuous solution~$u$ to~\eqref{eq:weak-formulation-heat} is sufficiently regular, there hold
\begin{alignat*}{3}
\Norm{\eu}{L^{\infty}(0, T; L^2(\Omega))} & \lesssim \tau^{q+1} \big(\Norm{\dpt^{(q+1)} u}{L^{\infty}(0, T; L^2(\Omega))} + \sqrt{\nu} \tau^{q+1} \Norm{\nabla \dpt^{(q+1)} u}{L^2(\QT)^d}\big)\\
& \quad + h^{p+1} \big(\Norm{u}{L^{\infty}(0, T; H^{p+1}(\Omega))} + \Seminorm{\dpt u}{L^1(0, T; H^{p+1}(\Omega))} + \Seminorm{u_0}{H^{p+1}(\Omega)} \big), \\
%%%
\sqrt{\nu} \Norm{\nabla \eu}{L^2(\QT)^d} & \lesssim \sqrt{\nu} \tau^{q+1} \Norm{\nabla \dpt^{(q+1)} u}{L^2(\QT)^d} \\
& \quad + h^{p} \big(\sqrt{\nu} \Seminorm{u}{L^2(0, T; H^p(\Omega))} + \Seminorm{\dpt u}{L^1(0, T; H^p(\Omega))} + \Seminorm{u_0}{H^p(\Omega)} \big).
\end{alignat*}
\end{corollary}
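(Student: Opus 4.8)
The plan is to derive the two \emph{a priori} error estimates from the bound on the discrete component~$\Piht\eu$ already obtained in Theorem~\ref{thm:a-priori-Thomee}, using the decomposition~$\eu = (\Id-\Piht)u + \Piht\eu$ and the triangle inequality. The term~$\Piht\eu$ is controlled directly by~\eqref{eq:discrete-error-DG-heat} upon choosing the regularity indices~$r=p$ and~$s=q$ therein (and assuming~$u$ regular enough for the corresponding norms to be finite); this bounds~$\Norm{\Piht\eu}{L^{\infty}(0,T;L^2(\Omega))}$ and~$\sqrt{\nu}\Norm{\nabla\Piht\eu}{L^2(\QT)^d}$ by terms of order~$\tau^{q+1}$ and~$h^{p+1}$.

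For the projection error, recall that~$\Piht=\PtTh\circ\Rh$, with~$\PtTh$ acting in time and~$\Rh$ in space, so that the two commute and
\begin{equation*}
(\Id-\Piht)u = (\Id-\PtTh)u + \PtTh(\Id-\Rh)u.
\end{equation*}
The first, purely temporal, contribution is estimated via the approximation properties of~$\PtTh$ from Lemma~\ref{lemma:estimates-Ptau} (applicable since~$\PtTh$ is~$L^{\infty}(\In;\cdot)$-stable by Lemma~\ref{lemma:stab-Pt}): taking~$Z=L^2(\Omega)$ gives the~$L^{\infty}(0,T;L^2(\Omega))$-bound of order~$\tau^{q+1}$ in terms of~$\Norm{\dpt^{(q+1)}u}{L^{\infty}(0,T;L^2(\Omega))}$, while taking~$Z=L^2(\Omega)^d$ and using~$\nabla(\Id-\PtTh)u=(\Id-\PtTh)\nabla u$ gives the~$L^2(\QT)^d$-bound of order~$\tau^{q+1}$ in terms of~$\Norm{\nabla\dpt^{(q+1)}u}{L^2(\QT)^d}$. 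The second, purely spatial, contribution is first reduced, by the~$L^{\infty}$-in-time stability of~$\PtTh$ for the~$L^{\infty}(0,T;L^2(\Omega))$ norm and its~$L^2$-in-time stability for the~$L^2(\QT)^d$ norm, to the Ritz projection errors~$\Norm{(\Id-\Rh)u}{L^{\infty}(0,T;L^2(\Omega))}$ and~$\sqrt{\nu}\,\Norm{\nabla(\Id-\Rh)u}{L^2(\QT)^d}$; these are bounded by Lemma~\ref{lemma:Estimates-Rh}, the~$L^2$ one relying on the elliptic regularity assumption~\eqref{eq:elliptic-regularity-Omega} inherited from Theorem~\ref{thm:a-priori-Thomee}.

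Collecting these contributions, and absorbing the higher-order~$h^{p+1}$ terms coming from~\eqref{eq:discrete-error-DG-heat} into the corresponding leading-order spatial terms, then yields the two claimed estimates; the~$\Seminorm{\cdot}{\sf J}$ part of Theorem~\ref{thm:a-priori-Thomee} plays no role here. The only delicate part is the bookkeeping: one must track which stability of~$\PtTh$ (in~$L^{\infty}$ or in~$L^2$ in time) is invoked for each norm, use correctly the commutations~$\PtTh\nabla=\nabla\PtTh$ and~$\Rh\PtTh=\PtTh\Rh$, and verify that every factor of the diffusivity~$\nu$ multiplies only a gradient-type quantity, so that the resulting constants are genuinely robust as~$\nu\to0^{+}$. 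No Gr\"onwall argument and no regularity beyond that already used in Theorem~\ref{thm:a-priori-Thomee} (together with the~$L^{\infty}$- and~$L^2$-in-time norms of~$u$ that appear on the right-hand sides) are needed.
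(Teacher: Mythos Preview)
The paper gives no separate proof of this corollary; it is meant to follow from Theorem~\ref{thm:a-priori-Thomee} by the triangle inequality and the approximation properties of~$\Piht=\PtTh\circ\Rh$, exactly as you outline.

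One technical caveat: the ``$L^2$-in-time stability'' of~$\PtTh$ that you invoke for the gradient estimate does not hold as stated, because the definition of~$\PtTh$ involves the point value at~$t_n$, which is not controlled by the $L^2(\In;\HH)$ norm. The clean remedy is to swap the roles of the two operators in the splitting when estimating the gradient, writing $(\Id-\Piht)u=(\Id-\Rh)u+\Rh(\Id-\PtTh)u$; then the $H^1(\Omega)$-stability of~$\Rh$ (immediate from its definition as the Ritz projection) replaces the missing $L^2$-stability of~$\PtTh$, and~$\Norm{\nabla\Rh(\Id-\PtTh)u}{L^2(\QT)^d}\le\Norm{(\Id-\PtTh)\nabla u}{L^2(\QT)^d}$ is handled by Lemma~\ref{lemma:estimates-Ptau}. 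A second, smaller point: to obtain the stated $h^p$ with $H^p$-seminorms in the second estimate, apply Theorem~\ref{thm:a-priori-Thomee} with~$r=p-1$ rather than~$r=p$; the ``absorption'' you describe would otherwise require bounding an $H^{p+1}$-seminorm by an $H^p$-seminorm.
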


We conclude this section with some remarks.
\begin{remark}[Equivalence with Runge--Kutta schemes]
The usual claim that DG time discretizations are equivalent to some Runge--Kutta time-stepping schemes can be misleading. Such an equivalence is only obtained if one interpolates the source term as~$\ItR f$, which requires that~$f \in C^0([0, T]; L^2(\Omega))$. 
As it is evident from our convergence analysis, the regularity required on~$f$ in the stability analysis has a direct impact on the regularity required on the continuous solution~$u$ in the error estimate. 
\eremk
\end{remark}

\begin{remark}[An alternative estimate]
Integrating by parts in space in~\eqref{eq:identity-nabla-nabla-heat}, one can obtain an alternative estimate of the form
    \begin{equation*}
    \begin{split}
        \nu (\nabla (\Id - \PtTh) u, \nabla \wht)_{\QT} & = - \nu ((\Id - \PtTh) \Delta u, \wht)_{\QT} \\
        & \lesssim \nu \tau^{s+1} \Seminorm{\Delta \dpt^{(s+1)}u}{L^1(0, T; L^2(\Omega))} \Norm{\wht}{L^{\infty}(0, T; L^2(\Omega))}.
    \end{split}
    \end{equation*}
    Thus requiring~$\Delta u \in W^{s+1,1}(0, T; L^2(\Omega))$ instead of~$u \in H^{s+1}(0, T; H^1(\Omega))$.
\eremk
\end{remark}

\begin{remark}[Inf--sup approach]
The classical inf--sup analysis for parabolic problems has been extended to the fully discrete setting with DG time discretizations in~\cite[\S71.3]{Ern_Guermond:2021} and~\cite{Smears:2017,Neumuller_Smears:2019,Saito:2021}.
\eremk
\end{remark}

\begin{remark}[$q$-explicit estimates]
Estimates with~$q$-explicit constants for the DG time discretization of parabolic problems were derived in~\cite{Schotzau_Schwab:2000}.
\eremk
\end{remark}
% -------------------------------------------------------
%                   CG-time stepping
% -------------------------------------------------------
\section{CG time discretization for the heat equation (Aziz--Monk, 1989)}
\label{sec:Aziz-Monk-heat}
Building upon the CG time discretization of ordinary differential equations in~\cite{Hulme:1972,Hulme:1972b}, Aziz and Monk proposed the following continuous space--time finite element method in~\cite{Aziz_Monk:1989}: find~$\uht \in \oVcont{q}$ ($q \geq 1$) with~$\uht(\cdot, 0) = \Pih u_0$ such that
\begin{alignat}{3}
\label{eq:Aziz-Monk}
\Bht(\uht, \vht) = \ell(\vht) \qquad \forall \vht \in \oVdisc{q-1},
\end{alignat}
where the bilinear form~$\Bht : \oVcont{q} \times \oVdisc{q-1} \to \R$ and the linear functional~$\ell : \oVdisc{q-1} \to \R$ are given by
\begin{alignat*}{3}
\Bht(\uht, \vht) & = (\dpt \uht, \vht)_{\QT} + \nu (\nabla \uht, \nabla \vht)_{\QT}, \\
\ell(\vht) & = (f, \vht)_{\QT}.
\end{alignat*}

As in the previous section, it is convenient to study the stability of the following perturbed space--time formulation: find~$\uht \in \oVcont{q}$ (with~$q \geq 1$) such that~$\uht(\cdot, 0) = \Pih u_0$ and
\begin{equation}
\label{eq:CG-heat-perturbed}
\Bht(\uht, \vht) = (f, \vht)_{\QT} + \nu (\nabla \Upsilon, \nabla \vht)_{\QT} \quad \forall \vht \in \oVdisc{q-1},
\end{equation}
where~$\Upsilon \in L^2(0, T; H^1(\Omega))$ is a perturbation function that will represent a projection error.

The forthcoming stability and convergence analysis follows the structure described in Section~\ref{sec:structure-theory}. 
% ----------------------------------------------------------------
%                   STABILITY
% ----------------------------------------------------------------
\subsection{Stability analysis}

%%%
\begin{lemma}[Weak partial bound on the discrete solution]
\label{lemma:weak-partial-bound-Aziz-Monk}
Any solution~$\uht \in \oVcont{q}$ to the discrete space--time formulation~\eqref{eq:CG-heat-perturbed} satisfies the following bound for~$n = 1, \ldots, N$:
\begin{alignat*}{3}
\frac12 \Norm{\uht}{L^2(\Sn)}^2 + \frac{\nu}{4} \Norm{\nabla \Pi_{q-1}^t \uht}{L^2(0, \tn; L^2(\Omega)^d)}^2 & \le \frac12 \Norm{u_0}{L^2(\Omega}^2 + \CS \Norm{f}{L^1(0, \tn; L^2(\Omega))} \Norm{\uht}{L^{\infty}(0, \tn; L^2(\Omega))} \\
& \quad + \nu \Norm{\nabla \Upsilon}{L^2(0, \tn; L^2(\Omega)^d)}^2,
\end{alignat*}
where~$\CS$ is the constant in~Lemma~\ref{lemma:stab-pi-time}, which depends only on~$q$.
\end{lemma}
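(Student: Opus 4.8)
The plan is to test the perturbed formulation~\eqref{eq:CG-heat-perturbed} with the nonstandard choice $\vht := \chi_{(0,\tn)}\,\Pi_{q-1}^t\uht$, the discrete analogue of testing the continuous problem with $\chi_{(0,t)}u$. This function is admissible: since $\uht|_{Q_m}\in\Pp{q}{I_m}\otimes\oVhp$, its $L^2(0,T)$-orthogonal projection in time onto polynomials of degree $q-1$ lies in $\Pp{q-1}{I_m}\otimes\oVhp$ for each $m$ (the Legendre coefficients in~\eqref{eq:Legendre-expansion-Piqt} are spatial functions in $\oVhp$), and multiplying by $\chi_{(0,\tn)}$ keeps it in $\oVdisc{q-1}$. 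Recall that $\uht$ itself is not an admissible test function here, as the test space has degree $q-1$; this is the whole reason for inserting $\Pi_{q-1}^t$.

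First I would treat the time-derivative term. On each slab $Q_m$ with $m\le n$, the function $\dpt\uht$ is already a polynomial of degree $q-1$ in time, so $\Pi_{q-1}^t\dpt\uht=\dpt\uht$; combined with the self-adjointness of $\Pi_{q-1}^t$ with respect to $(\cdot,\cdot)_{Q_m}$ this gives $(\dpt\uht,\Pi_{q-1}^t\uht)_{Q_m}=(\dpt\uht,\uht)_{Q_m}$. Summing over $m=1,\dots,n$ and using that $\uht\in C^0([0,T];H_0^1(\Omega))$ is piecewise polynomial in time, hence lies in $H^1(0,\tn;L^2(\Omega))$, the slab contributions telescope without any jump terms; together with the strongly imposed initial condition $\uht(\cdot,0)=\Pih u_0$ this yields $(\dpt\uht,\vht)_{\QT}=\tfrac12\Norm{\uht}{L^2(\Sn)}^2-\tfrac12\Norm{\Pih u_0}{L^2(\Omega)}^2$. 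For the diffusion term, the spatial gradient commutes with the temporal projection, so $\nabla\Pi_{q-1}^t\uht=\Pi_{q-1}^t\nabla\uht$, and self-adjointness together with idempotency of $\Pi_{q-1}^t$ give $\nu(\nabla\uht,\nabla\vht)_{\QT}=\nu\Norm{\nabla\Pi_{q-1}^t\uht}{L^2(0,\tn;L^2(\Omega)^d)}^2$.

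For the right-hand side I would bound the source term by the H\"older inequality in time followed by the $L^{\infty}(0,\tn;L^2(\Omega))$-stability of $\Pi_{q-1}^t$ from Lemma~\ref{lemma:stab-pi-time}, obtaining $(f,\Pi_{q-1}^t\uht)_{\QT}\le\CS\Norm{f}{L^1(0,\tn;L^2(\Omega))}\Norm{\uht}{L^{\infty}(0,\tn;L^2(\Omega))}$; this is exactly the step that lets $f$ be measured only in $L^1(0,T;L^2(\Omega))$. The perturbation term is handled by Cauchy--Schwarz and the Young inequality, with the weight chosen so that it produces a $\tfrac{\nu}{4}\Norm{\nabla\Pi_{q-1}^t\uht}{L^2(0,\tn;L^2(\Omega)^d)}^2$ term and a $\nu\Norm{\nabla\Upsilon}{L^2(0,\tn;L^2(\Omega)^d)}^2$ term. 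Absorbing the former into the diffusion term on the left (which still leaves $\tfrac{3\nu}{4}\Norm{\nabla\Pi_{q-1}^t\uht}{L^2(0,\tn;L^2(\Omega)^d)}^2$ on the left, of which we retain only the $\tfrac{\nu}{4}$ multiple stated) and using $\Norm{\Pih u_0}{L^2(\Omega)}\le\Norm{u_0}{L^2(\Omega)}$ gives precisely the asserted inequality.

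The computation is essentially routine; there is no genuine obstacle. The only thing requiring a little care is the bookkeeping with $\Pi_{q-1}^t$: one must exploit that it slides harmlessly off $\dpt\uht$ (already degree $q-1$), acts as an honest orthogonal projection on the elliptic term (so the control is on $\Norm{\nabla\Pi_{q-1}^t\uht}{}$, not on $\Norm{\nabla\uht}{}$ — which is why the bound is only ``partial'' for the gradient, as explained in Section~\ref{sec:structure-theory}), and is $L^\infty$-stable (which keeps the data term in $L^1(0,T;L^2(\Omega))$).
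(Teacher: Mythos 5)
Your proposal is correct and follows essentially the same route as the paper: test with $\chi_{(0,\tn)}\Pi_{q-1}^t\uht$, use the orthogonality of $\Pi_{q-1}^t$ to reduce the time-derivative term to $(\dpt\uht,\uht)_{\QT}$ and the diffusion term to $\nu\Norm{\nabla\Pi_{q-1}^t\uht}{L^2(0,\tn;L^2(\Omega)^d)}^2$, then conclude by H\"older, the $L^\infty$-stability of $\Pi_{q-1}^t$, Young, and the $L^2$-stability of $\Pih$. The only difference is cosmetic (the paper states the argument for $n=N$ ``without loss of generality'' rather than inserting the characteristic function explicitly), and your bookkeeping of the $\tfrac{3\nu}{4}$ versus $\tfrac{\nu}{4}$ constant matches the paper's.
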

\begin{proof}
Without loss of generality, we prove the result only for~$n = N$. 
Choosing~$\vht = \Pi_{q-1}^t \uht \in \oVdisc{q-1}$ in~\eqref{eq:CG-heat-perturbed}, we have
\begin{equation}
\label{eq:aux-identity-Aziz-Monk}
\Bht(\uht, \Pi_{q-1}^t \uht) = (f, \Pi_{q-1}^t \uht)_{\QT} + \nu (\nabla \Upsilon, \nabla \Pi_{q-1}^t \uht)_{\QT}.
\end{equation}

The orthogonality properties of~$\Pi_{q-1}^t$ and integration by parts in time yield
\begin{alignat*}{3}
\nonumber
\Bht(\uht, \Pi_{q-1}^t \uht) & = (\dpt \uht, \Pi_{q-1}^t \uht)_{\QT} + \nu (\nabla \uht, \nabla \Pi_{q-1}^t \uht)_{\QT} \\
\nonumber
& = (\dpt \uht, \uht)_{\QT} + \nu (\nabla \Pi_{q-1}^t \uht, \nabla \Pi_{q-1}^t \uht)_{\QT} \\
& = \frac12 \Norm{\uht}{L^2(\ST)}^2  - \frac12 \Norm{\Pih u_0}{L^2(\Omega)}^2 + \nu \Norm{\nabla \Pi_{q-1}^t \uht}{L^2(\QT)^d}^2.
\end{alignat*}

As for the right-hand side of~\eqref{eq:aux-identity-Aziz-Monk}, we use the H\"older inequality, Lemma~\ref{lemma:stab-pi-time} on the stability properties of~$\Pi_{q-1}^t$, and the Young inequality to obtain
\begin{alignat*}{3}
(f, \Pi_{q-1}^t \uht)_{\QT} + \nu (\nabla \Upsilon, \nabla \Pi_{q-1}^t \uht)_{\QT} & \le \Norm{f}{L^1(0, T; L^2(\Omega))} \Norm{\Pi_{q-1}^t \uht}{L^{\infty}(0, T; L^2(\Omega))} \\
& \quad + \nu \Norm{\nabla \Upsilon}{L^2(\QT)^d} \Norm{\nabla \Pi_{q-1}^t \uht}{L^2(\QT)^d} \\
%%%
& \le \CS \Norm{f}{L^1(0, T; L^2(\Omega))} \Norm{ \uht}{L^{\infty}(0, T; L^2(\Omega))} \\
& \quad + \nu \Norm{\nabla \Upsilon}{L^2(\QT)^d}^2 + \frac{\nu}{4}\Norm{\nabla \Pi_{q-1}^t \uht}{L^2(\QT)^d}^2.
\end{alignat*}
Therefore, the following bound can be obtained by using the stability properties of~$\Pih$:
\begin{alignat*}{3}
\frac12 \Norm{\uht}{L^2(\ST)}^2 + \frac{\nu}{4} \Norm{\nabla \Pi_{q-1}^t \uht}{L^{2}(\QT)^d}^2 & \le  \frac12 \Norm{u_0}{L^2(\Omega)}^2 + \Norm{f}{L^1(0, T; L^2(\Omega))} \Norm{\uht}{L^{\infty}(0, T; L^2(\Omega))} \\
& \quad + \nu \Norm{\nabla \Upsilon}{L^2(\QT)^d}^2,
\end{alignat*}
which completes the proof. 
\end{proof}

%%%
\begin{proposition}[Continuous dependence on the data]
\label{prop:continuous-dependence-Aziz-Monk}
Any solution~$\uht \in \oVcont{q}$ to the discrete space--time formulation~\eqref{eq:CG-heat-perturbed} satisfies
\begin{alignat*}{3}
\Norm{\uht}{C^0([0, T]; L^2(\Omega))}^2 + \nu \Norm{\nabla \Pi_{q-1}^t \uht}{L^2(\QT)^d}^2 \lesssim \Norm{u_0}{L^2(\Omega)}^2 + \Norm{f}{L^1(0, T; L^2(\Omega))}^2 + \nu \Norm{\nabla \Upsilon}{L^2(\QT)^d}^2,
\end{alignat*}
where the hidden constant depends only on~$q$.
\end{proposition}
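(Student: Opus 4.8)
The plan is to follow the two-step strategy of Section~\ref{sec:structure-theory}: the weak partial bound of Lemma~\ref{lemma:weak-partial-bound-Aziz-Monk} will be complemented by a local stability estimate obtained by testing the perturbed formulation~\eqref{eq:CG-heat-perturbed} with the weighted function $\Pi_{q-1}^t(\varphi_n \Pi_{q-1}^t \uht)$ from Table~\ref{tab:special-test-functions}. Since $\varphi_n$ is affine and $\Pi_{q-1}^t$ lowers the temporal degree back to $q-1$, this function indeed belongs to~$\oVdisc{q-1}$. So I would fix $n \in \{1,\dots,N\}$, take $\vht \in \oVdisc{q-1}$ supported on $\Qn$ and equal to $\Pi_{q-1}^t(\varphi_n \Pi_{q-1}^t \uht)$ there (the case $n=1$ needs only the minor modification described in the footnote to Lemma~\ref{lemma:identities-varphi-integration}, using $\uht(\cdot,0)=\Pih u_0$), and derive a local identity on $\Qn$.

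The crucial term is the time-derivative one. Since $\dpt\uht|_{\Qn}\in\Pp{q-1}{\In}\otimes\oVhp$, the $L^2(\In)$-self-adjointness of $\Pi_{q-1}^t$ collapses $(\dpt\uht,\Pi_{q-1}^t(\varphi_n\Pi_{q-1}^t\uht))_{\Qn}$ to $\int_{\In}(\varphi_n\dpt\uht,\Pi_{q-1}^t\uht)_{\Omega}\dt$. Then Lemma~\ref{lemma:bound-pi-time-varphi_n} (with $\HH=L^2(\Omega)$) discards the component $(\Id-\Pi_{q-1}^t)\uht$ with the inequality in the favorable direction, so this term is bounded below by $\int_{\In}(\dpt\uht,\varphi_n\uht)_{\Omega}\dt$, which identity~\eqref{eq:identities-varphi-integration-2} evaluates \emph{exactly} as $\tfrac14\Norm{\uht(\tn)}{L^2(\Omega)}^2-\tfrac12\Norm{\uht(\tnmo)}{L^2(\Omega)}^2+\tfrac{\lambda_n}{2}\Norm{\uht}{L^2(\Qn)}^2$. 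For the diffusion term, $\Pi_{q-1}^t$ commutes with $\nabla$ and is self-adjoint in time, so $\nu(\nabla\uht,\nabla\Pi_{q-1}^t(\varphi_n\Pi_{q-1}^t\uht))_{\Qn}=\nu\int_{\In}\varphi_n\Norm{\nabla\Pi_{q-1}^t\uht}{L^2(\Omega)^d}^2\dt\ge\tfrac{\nu}{2}\Norm{\nabla\Pi_{q-1}^t\uht}{L^2(\Qn)^d}^2$, using $\varphi_n\ge\tfrac12$.

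On the right-hand side of the local identity, I would bound the source term by $\Norm{f}{L^1(\In;L^2(\Omega))}$ times the $L^{\infty}(\In;L^2(\Omega))$ norm of the test function, then apply the $L^{\infty}$-stability of $\Pi_{q-1}^t$ from Lemma~\ref{lemma:stab-pi-time} twice together with $\Norm{\varphi_n}{L^{\infty}(\In)}\le1$, reaching $\CS^2\Norm{f}{L^1(\In;L^2(\Omega))}\Norm{\uht}{L^{\infty}(\In;L^2(\Omega))}$; the $\Upsilon$ term is handled by Cauchy--Schwarz in space--time, $L^2$-stability of $\Pi_{q-1}^t$, $\varphi_n\le1$, and Young's inequality, yielding $\nu\Norm{\nabla\Upsilon}{L^2(\Qn)^d}^2+\tfrac{\nu}{4}\Norm{\nabla\Pi_{q-1}^t\uht}{L^2(\Qn)^d}^2$, with the last summand absorbed by the previous lower bound. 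Combining, and bounding $\tfrac12\Norm{\uht(\tnmo)}{L^2(\Omega)}^2$ by Lemma~\ref{lemma:weak-partial-bound-Aziz-Monk} at time $\tnmo$, gives $\tfrac{\lambda_n}{2}\Norm{\uht}{L^2(\Qn)}^2\lesssim\Norm{u_0}{L^2(\Omega)}^2+\Norm{f}{L^1(0,\tn;L^2(\Omega))}\Norm{\uht}{L^{\infty}(0,\tn;L^2(\Omega))}+\nu\Norm{\nabla\Upsilon}{L^2(0,\tn;L^2(\Omega)^d)}^2$.

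Finally I would convert this into an $L^{\infty}$ bound through the inverse estimate of Lemma~\ref{lemma:L2-Linfty} (recalling $\tau_n^{-1}=2\lambda_n$), take the maximum over $n$ of the left-hand side --- which equals $\Norm{\uht}{L^{\infty}(0,T;L^2(\Omega))}$ by continuity of $\uht$ in time --- and absorb the mixed term $\Norm{f}{L^1}\Norm{\uht}{L^{\infty}}$ via Young's inequality, obtaining the asserted bound on $\Norm{\uht}{C^0([0,T];L^2(\Omega))}^2$; the term $\nu\Norm{\nabla\Pi_{q-1}^t\uht}{L^2(\QT)^d}^2$ then follows by inserting this bound into Lemma~\ref{lemma:weak-partial-bound-Aziz-Monk} at $t_N$. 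I expect the main obstacle to be exactly the time-derivative term: a direct integration by parts would leave $\Pi_{q-1}^t\uht$ where $\uht$ is needed, and the resolution --- which is also the reason no coupling between $h$ and $\tau$ is required --- is precisely the monotonicity inequality of Lemma~\ref{lemma:bound-pi-time-varphi_n}, which crucially exploits that the weight $\varphi_n$ is affine rather than a characteristic function.
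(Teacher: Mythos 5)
Your proposal is correct and follows essentially the same route as the paper's proof: the same test function $\Pi_{q-1}^t(\varphi_n\Pi_{q-1}^t\uht)$, the same reduction of the time-derivative term via the orthogonality of $\Pi_{q-1}^t$, the same split into the exactly-evaluated term (identity~\eqref{eq:identities-varphi-integration-2}) plus the nonnegative remainder (Lemma~\ref{lemma:bound-pi-time-varphi_n}), the same treatment of the right-hand side, and the same closing argument via the inverse estimate, maximization over $n$, and the weak partial bound. Your concluding observation about why the affine weight $\varphi_n$ is the key to avoiding an $h$--$\tau$ coupling accurately identifies the crux of the argument.
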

\begin{proof}
For~$n = 1, \ldots, N$, we choose the following test function in~\eqref{eq:CG-heat-perturbed}:
\begin{alignat*}{3}
\uht{}_{|_{Q_m}}:= \begin{cases}
\Pi_{q-1}^t (\varphi_n \Pi_{q-1}^t \uht) & \text{if } m = n,\\
0 & \text{otherwise},
\end{cases}
\end{alignat*}
and obtain the following identity:
\begin{equation}
\label{eq:identity-CG-heat-phin}
\Bht(\uht, \vht) = \big(f, \Pi_{q-1}^t (\varphi_n \Pi_{q-1}^t \uht)\big)_{\Qn} + \nu \big(\nabla \Upsilon, \nabla \Pi_{q-1}^t (\varphi_n \Pi_{q-1}^t \uht)\big)_{\Qn}.
\end{equation}

Using the orthogonality properties of~$\Pi_{q-1}^t$, we get
\begin{alignat*}{3}
\Bht(\uht, \vht) & = (\dpt \uht, \Pi_{q-1}^t (\varphi_n \Pi_{q-1}^t \uht))_{\Qn} + \nu (\nabla \uht, \nabla \Pi_{q-1}^t (\varphi_n \Pi_{q-1}^t \uht))_{\Qn}  \\
& = (\varphi_n \dpt \uht, \Pi_{q-1}^t \uht)_{\Qn}  + \nu (\varphi_n \nabla \Pi_{q-1}^t \uht, \nabla \Pi_{q-1}^t \uht)_{\Qn} \\
& = (\varphi_n \dpt \uht, \uht)_{\Qn} - (\varphi_n \dpt \uht, (\Id - \Pi_{q-1}^t) \uht)_{\Qn} + \nu (\varphi_n \nabla \Pi_{q-1}^t \uht, \nabla \Pi_{q-1}^t \uht)_{\Qn} \\
& =: J_1 + J_2 + J_3.
\end{alignat*}

Identity~\eqref{eq:identities-varphi-integration-2} from Lemma~\ref{lemma:identities-varphi-integration} gives
\begin{alignat}{3}
J_1 = (\varphi_n \dpt \uht, \uht)_{\Qn} 
%%%
\label{eq:J1-Aziz-Monk}
& = \frac14 \Norm{\uht}{L^2(\Sn)}^2 - \frac12 \Norm{\uht}{L^2(\Snmo)}^2 + \frac{\lambda_n}{2} \Norm{\uht}{L^2(\Qn)}^2. 
\end{alignat}

As for~$J_2$, it follows from Lemma~\ref{lemma:bound-pi-time-varphi_n} that
\begin{equation}
\label{eq:J2-Aziz-Monk}
J_2 = -(\varphi_n \dpt \uht, (\Id - \Pi_{q-1}^t) \uht)_{\Qn} \geq 0. 
\end{equation}

Moreover, since~$\varphi_n \geq 1/2$ in~$\In$, we have
\begin{equation}
\label{eq:J3-Aziz-Monk}
J_3 = \nu (\varphi_n \nabla \Pi_{q-1}^t \uht, \nabla \Pi_{q-1}^t \uht)_{\Qn} \geq \frac{\nu}{2} \Norm{\nabla \Pi_{q-1}^t \uht}{L^2(\Qn)^d}^2. 
\end{equation}

The terms on the right-hand side of~\eqref{eq:identity-CG-heat-phin} can be bounded using the H\"older inequality, the stability properties in Lemma~\ref{lemma:stab-pi-time} of~$\Pi_{q-1}^t$, and the Young inequality as follows:
\begin{alignat}{3}
\nonumber
\big(f, \Pi_{q-1}^t (\varphi_n \Pi_{q-1}^t \uht)\big)_{\Qn} & + \nu \big(\nabla \Upsilon, \nabla \Pi_{q-1}^t (\varphi_n \Pi_{q-1}^t \uht)\big)_{\Qn} \\
%%%
\nonumber
& \le \Norm{f}{L^1(\In; L^2(\Omega))} \Norm{\Pi_{q-1}^t (\varphi_n \Pi_{q-1}^t \uht)}{L^{\infty}(\In; L^2(\Omega))}  \\
\nonumber
& \quad + \nu \Norm{\nabla \Upsilon}{L^2(\Qn)^d}^2 + \frac{\nu}{4}\Norm{\Pi_{q-1}^t (\varphi_n \nabla \Pi_{q-1}^t \uht)}{L^2(\Qn)^d}^2 \\
%%%
\nonumber
& \le \CS^2 \Norm{f}{L^1(\In; L^2(\Omega))} \Norm{\uht}{L^{\infty}(\In; L^2(\Omega))}  \\
\label{eq:RHS-Aziz-Monk}
& \quad +  \nu \Norm{\nabla \Upsilon}{L^2(\Qn)^d}^2 + \frac{\nu}{4}\Norm{\nabla \Pi_{q-1}^t \uht}{L^2(\Qn)^d}^2.
\end{alignat}

Combining~\eqref{eq:J1-Aziz-Monk}, \eqref{eq:J2-Aziz-Monk}, \eqref{eq:J3-Aziz-Monk}, and~\eqref{eq:RHS-Aziz-Monk}, and using the weak partial bound from Lemma~\ref{lemma:weak-partial-bound-Aziz-Monk}, we obtain
\begin{alignat}{3}
\nonumber
\frac14 \Norm{\uht}{L^2(\Sn)}^2 & + \frac{\lambda_n}{2} \Norm{\uht}{L^2(\Qn)}^2 + \frac{\nu}{4} \Norm{\nabla \Pi_{q-1}^t \uht}{L^2(\Qn)^d}^2 \\
%%% 
\nonumber
& \le \frac12 \Norm{\uht}{L^2(\Snmo)}^2 + \Norm{f}{L^1(\In; L^2(\Omega))} \Norm{\Pi_{q-1}^t (\varphi_n \Pi_{q-1}^t \uht)}{L^{\infty}(\In; L^2(\Omega))}  + \nu \Norm{\nabla \Upsilon}{L^2(\Qn)^d}^2 \\
%%% 
\nonumber
& \le \frac12 \Norm{u_0}{L^2(\Omega)}^2 + \CS \Norm{f}{L^1(0, \tnmo; L^2(\Omega))} \Norm{\uht}{L^{\infty}(0, \tnmo; L^2(\Omega))} \\
\nonumber
& \quad + \CS^2  \Norm{f}{L^1(\In; L^2(\Omega))} \Norm{\uht}{L^{\infty}(\In; L^2(\Omega))} + \nu \Norm{\Upsilon}{L^2(0, \tn; L^2(\Omega)^d)}^2 \\
%%%
\label{eq:aux-bound-Aziz-Monk}
& \lesssim \Norm{u_0}{L^2(\Omega)}^2 + \Norm{f}{L^1(0, T; L^2(\Omega))} \Norm{\uht}{L^{\infty}(0, T; L^2(\Omega))}.
\end{alignat}

The inverse estimate in Lemma~\ref{lemma:L2-Linfty} gives
\begin{equation*}
\frac{1}{4 \Cinv} \Norm{\uht}{L^{\infty}(\In; L^2(\Omega))}^2 \le \frac{\lambda_n}{2} \Norm{\uht}{L^2(\Qn)}^2,
\end{equation*}
which, combined with~\eqref{eq:aux-bound-Aziz-Monk}, implies
\begin{alignat*}{3}
\Norm{\uht}{L^{\infty}(\In; L^2(\Omega))}^2 \lesssim \Norm{u_0}{L^2(\Omega)}^2 + \Norm{f}{L^1(0, T; L^2(\Omega))}^2 + \nu \Norm{\Upsilon}{L^2(\QT)^d}^2,
\end{alignat*}
where the hidden constant is independent of the time interval~$n$ and depends only on~$q$. A uniform bound on~$\Norm{\uht}{L^{\infty}(0, T; L^2(\Omega))}$ can then be obtained by taking~$n$ as the index where the left-hand side achieves its maximum value. This step, together with Lemma~\ref{lemma:weak-partial-bound-Aziz-Monk}, gives the desired result.
\end{proof}
The existence of a unique solution to~\eqref{eq:Aziz-Monk} is an immediate consequence of Proposition~\ref{prop:continuous-dependence-Aziz-Monk}, as it corresponds to a linear system with a nonsingular matrix.

% ----------------------------------------------------------------
%                   CONVERGENCE
% ----------------------------------------------------------------
\subsection{Convergence analysis}
Recalling Definition~\ref{def:Pt-AM} of the Aziz--Monk projection operator~$\PtAM$, we define the space--time projection~$\Piht = \PtAM \circ \Rh$, and the error functions
\begin{equation*}
\eu := u - \uht  = (\Id - \Piht) u + \Piht \eu. 
\end{equation*}

%%%
\begin{theorem}[Estimates for the discrete error]
\label{thm:a-priori-Aziz-Monk}
Let~$u \in X$ be the solution to the continuous weak formulation~\eqref{eq:weak-formulation-heat}, and let~$\uht \in \oVcont{q}$ be the solution to~\eqref{eq:Aziz-Monk}. Assume that~$\Omega$ satisfies the elliptic regularity assumption~\eqref{eq:elliptic-regularity-Omega}, and that the continuous solution~$u$ satisfies
\begin{equation*}
u \in X \cap W^{1, 1}(0, T; H^{r+1}(\Omega)) \cap W^{s+1}(0, T; H^1(\Omega)),
\end{equation*}
for some~$0 \le r \le p$ and~$0 \le s \le q$. 
Then, the following estimate holds:
\begin{equation*}
\begin{split}
&\Norm{\Piht \eu}{C^0([0, T]; L^2(\Omega))}  + \sqrt{\nu} \Norm{\nabla \Pi_{q-1}^t \Piht \eu}{L^2(\QT)} \\
& \qquad \lesssim h^{r+1} \Seminorm{\dpt u}{L^1(0, T; H^{r+1}(\Omega))} + \sqrt{\nu} \tau^{s+1} \Norm{\nabla \dpt^{(s+1)} u}{L^2(\QT)^d} + h^{r+1} \Seminorm{u_0}{H^{p+1}}.
\end{split}
\end{equation*}
\end{theorem}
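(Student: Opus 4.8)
The plan is to follow the same route as the proof of Theorem~\ref{thm:a-priori-Thomee} for the DG discretization: establish consistency of the scheme, derive an equation for the discrete error $\Piht\eu = \uht - \Piht u$, recognize it as the solution of a perturbed problem of the form~\eqref{eq:CG-heat-perturbed}, and then conclude by invoking the continuous dependence on the data from Proposition~\ref{prop:continuous-dependence-Aziz-Monk} together with the approximation properties of $\PtAM$ and $\Rh$. Since $\oVdisc{q-1}\subset Y=L^2(0,T;H_0^1(\Omega))$ and the bilinear form $\Bht$ of the Aziz--Monk scheme contains no jump or initial terms, testing the weak formulation~\eqref{eq:weak-formulation-heat} with $\vht\in\oVdisc{q-1}$ yields $\Bht(u,\vht)=(f,\vht)_{\QT}$. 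Subtracting~\eqref{eq:Aziz-Monk} and using the splitting $\eu=(\Id-\Piht)u+\Piht\eu$ gives the error equation
\begin{equation*}
\Bht(\Piht\eu,\vht) = -\Bht\big((\Id-\Piht)u,\vht\big) \qquad \forall\,\vht\in\oVdisc{q-1},
\end{equation*}
while the choice $\uht(\cdot,0)=\Pih u_0$ and the identity $\PtAM v(0)=v(0)$ give $\Piht\eu(\cdot,0)=\Pih u_0-\Rh u_0\in\oVhp$.

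The next step is to simplify the right-hand side. Writing $(\Id-\Piht)u=(\Id-\Rh)u+(\Id-\PtAM)\Rh u$ and using that $\Rh$ commutes with $\dpt$ and with $\nabla$: the term $\nu(\nabla(\Id-\Rh)u,\nabla\vht)_{\QT}$ vanishes by the defining property of the Ritz projection, tested pointwise in time against $\vht(\cdot,t)\in\oVhp$; the term $(\dpt(\Id-\PtAM)\Rh u,\vht)_{\QT}$ vanishes by the orthogonality~\eqref{def:Pt-2} of $\dpt\PtAM$ against $\Pp{q-1}{\In}\otimes L^2(\Omega)$; and, commuting $\PtAM$ and $\Rh$ past $\nabla$, one checks that $\nu(\nabla(\Id-\PtAM)\Rh u,\nabla\vht)_{\QT}=\nu(\nabla(\Id-\PtAM)u,\nabla\vht)_{\QT}$. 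Consequently, $\Piht\eu$ solves a problem of the form~\eqref{eq:CG-heat-perturbed} with data $\widetilde{f}=-(\Id-\Rh)\dpt u$, $\widetilde{\Upsilon}=-(\Id-\PtAM)u$, and $\widetilde{u}_0=\Pih u_0-\Rh u_0$ (admissible since $\Pih$ is the identity on $\oVhp$).

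Applying Proposition~\ref{prop:continuous-dependence-Aziz-Monk} to $\Piht\eu$ then bounds $\Norm{\Piht\eu}{C^0([0,T];L^2(\Omega))}^2+\nu\Norm{\nabla\Pi_{q-1}^t\Piht\eu}{L^2(\QT)^d}^2$ by a constant (depending only on $q$) times $\Norm{\Pih u_0-\Rh u_0}{L^2(\Omega)}^2+\Norm{(\Id-\Rh)\dpt u}{L^1(0,T;L^2(\Omega))}^2+\nu\Norm{\nabla(\Id-\PtAM)u}{L^2(\QT)^d}^2$. These three contributions are controlled, respectively, by $h^{r+1}\Seminorm{u_0}{H^{r+1}(\Omega)}$ (by Lemma~\ref{lemma:Estimates-Rh} together with the standard $L^2$-projection estimate for $\Pih$, using $\Pih u_0-\Rh u_0=(\Pih-\Id)u_0+(\Id-\Rh)u_0$), by $h^{r+1}\Seminorm{\dpt u}{L^1(0,T;H^{r+1}(\Omega))}$ (by Lemma~\ref{lemma:Estimates-Rh} integrated in time, using that $\Rh$ commutes with $\dpt$), and by $\sqrt{\nu}\,\tau^{s+1}\Norm{\nabla\dpt^{(s+1)}u}{L^2(\QT)^d}$ (by Lemma~\ref{lemma:estimates-Ptau}, applicable to $\PtAM$ thanks to its $L^\infty$-stability in Lemma~\ref{lemma:stab-PtAM}, and by $\sum_{n}\tau_n^{2(s+1)}\le\tau^{2(s+1)}$). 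Taking square roots yields the claimed estimate.

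The work here is essentially bookkeeping rather than a genuine obstacle: one must carefully use the orthogonality of $\dpt\PtAM$ against degree-$(q-1)$ polynomials (not the orthogonality of $\PtAM$ itself) and the commutativity of $\Rh$ with $\dpt$ and with $\PtAM$, and one must keep track of the mismatch between the two discrete initial values $\Pih u_0$ and $\Rh u_0$, which is harmless since both $(\Id-\Pih)u_0$ and $(\Id-\Rh)u_0$ are of order $h^{r+1}$ in $L^2(\Omega)$. As in the DG case, the statement implicitly requires $u_0=u(\cdot,0)\in H^{r+1}(\Omega)\cap H_0^1(\Omega)$, which follows from the assumed regularity of $u$.
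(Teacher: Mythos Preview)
Your proof is correct and follows essentially the same route as the paper's: consistency, error equation, identification with the perturbed problem~\eqref{eq:CG-heat-perturbed}, application of Proposition~\ref{prop:continuous-dependence-Aziz-Monk}, and approximation properties of~$\PtAM$ and~$\Rh$. Two cosmetic slips do not affect the argument: your sign convention $\Piht\eu=\uht-\Piht u$ is the negative of the paper's (harmless under norms), and the parenthetical inequality $\sum_n\tau_n^{2(s+1)}\le\tau^{2(s+1)}$ is false as written---what you actually need (and clearly intend) is $\tau_n^{2(s+1)}\le\tau^{2(s+1)}$ applied termwise inside the sum $\sum_n\|\cdot\|_{L^2(\In;\cdot)}^2$.
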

\begin{proof}
Since the discrete space--time formulation is consistent and~$\oVdisc{q-1} \subset Y$, we have
\begin{equation*}
    \Bht(\Piht \eu, \vht) = -\Bht((\Id - \Piht) u, \vht) \quad \forall \vht \in \oVdisc{q-1}.
\end{equation*}
The orthogonality properties of~$\Rh$ and~$\PtAM$ lead to
\begin{alignat*}{3}
\Bht(\Piht \eu, \vht)
& = -(\dpt (\Id - \Piht)u, \vht)_{\QT} - \nu (\nabla (\Id - \Piht) u, \nabla \vht)_{\QT} \\
%%%
& = - ((\Id - \Rh) \dpt u, \vht)_{\QT} - \nu ((\Id - \PtAM) \nabla u, \nabla \vht)_{\QT}.
\end{alignat*}
Therefore, the discrete error function~$\Piht \eu$ solves a problem of the form~\eqref{eq:CG-heat-perturbed} with data
\begin{alignat*}{3}
\widetilde{f} = - (\Id - \Rh) \dpt u, \  \Upsilon = -\nu (\Id - \PtAM) u, \ \text{ and } \ \Piht \eu(\cdot, 0) = \Rh u_0 - \Pih u_0 = - \Pih (\Id - \Rh u_0),
\end{alignat*}
and the continuous dependence on the data in Proposition~\ref{prop:continuous-dependence-Aziz-Monk} then gives
\begin{alignat*}{3}
& \Norm{\Piht \eu}{C^0([0, T]; L^2(\Omega))} + \sqrt{\nu} \Norm{\nabla \Pi_{q-1}^t \Piht \eu}{L^2(\QT)^d} \\
& \quad \lesssim \Norm{(\Id - \Rh) \dpt u}{L^1(0, T; L^2(\Omega))} + \sqrt{\nu} \Norm{(\Id - \PtAM) \nabla u}{L^2(\QT)^d} + \Norm{(\Id - \Rh) u_0}{L^2(\Omega)}.
\end{alignat*}
The desired estimate follows from the approximation properties in Lemmas~\ref{lemma:Estimates-Rh} (relying on Lemma~\ref{lemma:stab-PtAM}) and~\ref{lemma:estimates-Ptau} of~$\Rh$ and~$\PtAM$, respectively. 
\end{proof}

%%%
\begin{corollary}[\emph{A priori} error estimates]
Under the assumptions of Theorem~\ref{thm:a-priori-Aziz-Monk}, and assuming that the continuous solution~$u$ to~\eqref{eq:weak-formulation-heat} is sufficiently regular, there hold
\begin{alignat*}{3}
\Norm{\eu}{C^{0}([0, T]; L^2(\Omega))} & \lesssim \tau^{q+1} \big(\Norm{\dpt^{(q+1)} u}{C^0([0, T]; L^2(\Omega))} + \sqrt{\nu} \tau^{q+1} \Norm{\nabla \dpt^{(q+1)} u}{L^2(\QT)^d}\big)\\
& \quad + h^{p+1} \big(\Norm{u}{C^0([0, T]; H^{p+1}(\Omega))} + \Seminorm{\dpt u}{L^1(0, T; H^{p+1}(\Omega))} + \Seminorm{u_0}{H^{p+1}(\Omega)} \big), \\
%%%
\sqrt{\nu} \Norm{\nabla \Pi_{q-1}^t \eu}{L^2(\QT)^d} & \lesssim \sqrt{\nu} \tau^{q+1} \Norm{\nabla \dpt^{(q+1)} u}{L^2(\QT)^d} \\
& \quad + h^{p} \big(\sqrt{\nu} \Seminorm{u}{L^2(0, T; H^p(\Omega))} + \Seminorm{\dpt u}{L^1(0, T; H^p(\Omega))} + \Seminorm{u_0}{H^p(\Omega)} \big).
\end{alignat*}
\end{corollary}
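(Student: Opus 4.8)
The plan is to combine the triangle inequality $\eu = (\Id - \Piht)u + \Piht\eu$ with, on one side, the bound on the discrete error $\Piht\eu$ furnished by Theorem~\ref{thm:a-priori-Aziz-Monk} (applied with $r = p$ and $s = q$), and, on the other, the approximation properties of the space--time projector $\Piht = \PtAM \circ \Rh$ acting on the continuous solution $u$. For the interpolation error I would exploit the commutativity of $\PtAM$ (a purely temporal operator) and $\Rh$ (a purely spatial one) together with the splitting $\Id - \Piht = (\Id - \PtAM) + \PtAM(\Id - \Rh)$. The point of this particular arrangement is that $\Rh$ only ever acts directly on $u$, where Lemma~\ref{lemma:Estimates-Rh} (using the elliptic regularity of $\Omega$) gives clean pointwise-in-time $L^2(\Omega)$ and $H^1(\Omega)$ estimates, so that no $L^2(\Omega)$-boundedness of the Ritz projection is required; meanwhile $\PtAM$ only appears in contexts where its $L^\infty$-in-time stability (Lemma~\ref{lemma:stab-PtAM}), and hence the optimal convergence rates of Lemma~\ref{lemma:estimates-Ptau}, are available.

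For the $C^0([0,T];L^2(\Omega))$ estimate I would bound $\Norm{(\Id - \PtAM)u}{C^0([0,T];L^2(\Omega))}$ by $\tau^{q+1}\Norm{\dpt^{(q+1)}u}{C^0([0,T];L^2(\Omega))}$ via Lemma~\ref{lemma:estimates-Ptau} (with $r = \infty$, $Z = L^2(\Omega)$, $s = q+1$), and bound $\Norm{\PtAM(\Id - \Rh)u}{C^0([0,T];L^2(\Omega))}$ first by $\Norm{(\Id - \Rh)u}{C^0([0,T];L^2(\Omega))}$ using the $L^\infty$-in-time stability of $\PtAM$, and then by $h^{p+1}\Norm{u}{C^0([0,T];H^{p+1}(\Omega))}$ using~\eqref{eq:err-Rh-L2} pointwise in time. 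Adding the contribution of $\Piht\eu$ from Theorem~\ref{thm:a-priori-Aziz-Monk} and absorbing the $\sqrt{\nu}\,h^{p+1}$ term then gives the first estimate.

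For the second estimate I would use that $\Pi_{q-1}^t$ commutes with $\nabla$ and is $L^2(\QT)$-stable (Lemma~\ref{lemma:stab-pi-time} with $r=2$), so that $\Norm{\nabla\Pi_{q-1}^t(\Id - \Piht)u}{L^2(\QT)^d} \lesssim \Norm{\nabla(\Id - \Piht)u}{L^2(\QT)^d}$, and then apply the same splitting together with the fact that $\nabla$ commutes with $\PtAM$: the term $\Norm{(\Id - \PtAM)\nabla u}{L^2(\QT)^d}$ is controlled by $\tau^{q+1}\Norm{\nabla\dpt^{(q+1)}u}{L^2(\QT)^d}$ through Lemma~\ref{lemma:estimates-Ptau} (with $Z = L^2(\Omega)^d$, $r = 2$), while $\Norm{\PtAM\nabla(\Id - \Rh)u}{L^2(\QT)^d}$ is handled by writing $\PtAM w = w - (\Id - \PtAM)w$ for $w = \nabla(\Id - \Rh)u$, the leading piece being $\lesssim h^{p}\Seminorm{u}{L^2(0,T;H^{p+1}(\Omega))}$ by the pointwise gradient estimate for $\Rh$ and the remainder being higher order in $\tau$. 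Combining with the $\sqrt{\nu}\Norm{\nabla\Pi_{q-1}^t\Piht\eu}{L^2(\QT)}$ bound from Theorem~\ref{thm:a-priori-Aziz-Monk} and collecting terms (absorbing the $h^{p+1}$-contributions into the $h^p$-term) yields the claim.

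The main obstacle is the $L^2$-in-time bookkeeping in the gradient estimate: since $\PtAM$ is stated to be stable only in the $L^\infty$-in-time norm, one cannot directly bound $\Norm{\PtAM w}{L^2(\QT)^d}$ by $\Norm{w}{L^2(\QT)^d}$, and it is precisely the detour $\PtAM w = w - (\Id - \PtAM)w$ combined with the optimal $L^2$-approximation of $\Id - \PtAM$ from Lemma~\ref{lemma:estimates-Ptau} that makes the estimate go through; one also has to check that the intermediate functions $(\Id - \Rh)u$ and $\nabla(\Id - \Rh)u$ lie in $W^{1,1}(0,T;L^2(\Omega))$ so that $\PtAM$ may legitimately be applied to them, which is guaranteed once $u \in W^{1,1}(0,T;H^{p+1}(\Omega))$. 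Everything else reduces to routine use of the triangle inequality and the approximation results gathered in Section~\ref{sec:preliminaries}.
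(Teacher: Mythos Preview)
Your proposal is correct and matches the approach the paper leaves implicit: the corollary is stated without proof, and the intended argument is precisely the triangle inequality $\eu = (\Id - \Piht)u + \Piht\eu$, with the discrete error bounded by Theorem~\ref{thm:a-priori-Aziz-Monk} (taking $r=p$, $s=q$) and the projection error handled via the splitting $\Id - \Piht = (\Id - \PtAM) + \PtAM(\Id - \Rh)$ together with Lemmas~\ref{lemma:estimates-Ptau}, \ref{lemma:stab-PtAM}, and~\ref{lemma:Estimates-Rh}. Your detour $\PtAM w = w - (\Id - \PtAM)w$ to recover an $L^2$-in-time bound for $\PtAM$ from its $L^\infty$-in-time stability is a clean way to close the gradient estimate and is consistent with the tools provided in Section~\ref{sec:preliminaries}.
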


%%%
\begin{remark}[Inf--sup approach]
The classical inf--sup analysis for parabolic problems has been
extended to the fully discrete setting with CG time discretizations in~\cite[\S71.4]{Ern_Guermond:2021}.
\eremk
\end{remark}
%%%
\begin{remark}[Relation between the DG and CG time discretizations]
In the recent work~\cite{Cockburn:2025}, it was shown that the DG and the CG discretizations of the time derivative are ``exactly the same".
There is a very subtle but important difference with respect to saying that the DG and CG time discretizations of the whole problem are the same, as that only happens if a suitable nontrivial interpolant operator is used (see~\cite[Cor. 3.1]{Cockburn:2025}).
\eremk
\end{remark}
% ----------------------------------------------------------------
%                   WAVE EQUATION
% ----------------------------------------------------------------

% ----------------------------------------------------------------
%                  WHEN AND WHY DG TIME DISCRETIZATIONS FAIL
% ----------------------------------------------------------------
\section{The ``plain vanilla" DG time discretization for the second-order formulation of the wave equation (Hughes--Hulbert, 1988)}
\label{sec:plain-vanilla-wave}
The most natural DG time discretization for the second-order formulation of the wave equation~\eqref{eq:second-order-wave} would be: 
find~$\uht \in \oVdisc{q}$ (with~$q \geq 2$) such that
\begin{equation}
\label{eq:plain-DG-wave}
    \Bht(\uht, \wht) = \ell(\wht) \qquad \forall \wht \in \oVdisc{q},
\end{equation}
where the bilinear form~$\Bht : \oVdisc{q} \times \oVdisc{q} \to \R$ and the linear functional~$\ell : \oVdisc{q} \to \R$ are given by
\begin{alignat*}{3}
\nonumber
\Bht(\uht, \wht)& :=  \sum_{n = 1}^{N} (\dptt \uht, \dpt \wht)_{\Qn} + \sum_{n = 1}^{N - 1} (\jump{\dpt \uht}_{n}, \dpt \wht(\cdot, \tn^+))_{\Omega}
 + (\dpt \uht, \dpt\wht)_{\SO} \\
 \nonumber
& \quad + c^2 \sum_{n = 1}^{N}  (\nabla \uht, \nabla \dpt \wht)_{\Qn} + c^2  \sum_{n = 1}^{N-1} (\jump{\nabla \uht}_n, \nabla \wht(\cdot, \tn^+))_{\Omega}, \\
& \quad + c^2 (\nabla \uht, \nabla \wht)_{\SO}, \\
%%%
\ell(\wht) &  := \sum_{n = 1}^N (f, \dpt \wht)_{\Qn} + (v_0, \dpt \wht(\cdot, 0))_{\Omega} + c^2 (\nabla u_0, \nabla \wht (\cdot, 0))_{\Omega}.
\end{alignat*}
This method was presented for the first time in~\cite[\S5]{Hughes_Hulbert:1988} as the \emph{single-field Galerkin formulation}, in the context of elastodynamics. It can be derived by multiplying the wave equation by a test function~$\dpt \wht$, and using standard upwind DG numerical fluxes in time. 

The definition of the time reconstruction operator~$\Rt$ in Section~\ref{sec:time-reconstruction} allows us to rewrite the bilinear form~$\Bht(\cdot, \cdot)$ in compact form as follows:
\begin{equation*}
\begin{split}
\Bht(\uht, \wht) & = (\dpt \Rtq \dptau \uht, \dptau \wht)_{\QT} + c^2 (\nabla \uht, \nabla \dptau \wht)_{\QT} \\
& \quad + c^2(\nabla \dptau(\Rt \uht - \uht), \nabla \wht)_{\QT},
\end{split}
\end{equation*}
where~$\dptau$ denotes the broken first-order time derivative in~$\Tt$.

The forthcoming stability and convergence analysis follows the structure described in Section~\ref{sec:structure-theory}.
%%%%%
\subsection{Stability analysis}
\label{sec:stab-plain-DG-wave}
%%%
\begin{lemma}[Weak partial bound on the discrete solution]
\label{lemma:weak-bound-plain-vanilla-DG}
Any solution to~\eqref{eq:plain-DG-wave} satisfies the following bound for~$n = 1, \ldots, N$:
\begin{alignat*}{3}
\frac12  \Norm{\dptau \uht(\cdot, \tn^-)}{L^2(\Omega)}^2  +\frac{c^2}{2} \Norm{\nabla \uht(\cdot, \tn^-)}{L^2(\Omega)^d}^2 & + \frac12 \sum_{m = 1}^{n - 1} \Big(\Norm{\jump{\dptau \uht}_m}{L^2(\Omega)}^2 + c^2 \Norm{\jump{\nabla \uht}_m}{L^2(\Omega)^d}^2 \Big) \\
+ \frac14 \Norm{\dptau \uht}{L^2(\SO)}^2 + \frac{c^2}{4} \Norm{\nabla \uht}{L^2(\SO)^d}^2 & \le \Norm{f}{L^1(0, T; L^2(\Omega))} \Norm{\dptau \uht}{L^{\infty}(0, T; L^2(\Omega))} \\
& \quad + \Norm{v_0}{L^2(\Omega)}^2 + c^2 \Norm{\nabla u_0}{L^2(\Omega)^d}^2. 
\end{alignat*}
\end{lemma}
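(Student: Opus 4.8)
The plan is to follow step~\emph{i)} of the strategy in Section~\ref{sec:structure-theory}: a plain energy argument obtained by testing~\eqref{eq:plain-DG-wave} against the ``natural'' test function, localized up to time~$\tn$. Fix $n \in \{1, \ldots, N\}$ and take $\wht := \chi_{(0, \tn)}\uht \in \oVdisc{q}$, which is admissible since the test space imposes no continuity across time slabs; note that $\dptau\wht = \chi_{(0, \tn)}\dptau\uht$, $\nabla\dptau\wht = \chi_{(0, \tn)}\nabla\dptau\uht$, $\wht(\cdot, 0) = \uht(\cdot, 0)$, and $\wht(\cdot, t_m^+) = \uht(\cdot, t_m^+)$ for $m < n$, so that every sum appearing in $\Bht(\uht, \wht)$ and in $\ell(\wht)$ truncates at the index~$n$.

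Using the compact form of $\Bht$, the first term $(\dpt\Rtq\dptau\uht, \dptau\wht)_{\QT}$ equals $(\dpt\Rtq\dptau\uht, \chi_{(0, \tn)}\dptau\uht)_{\QT}$, which is exactly identity~\eqref{eq:Rt-vht-jump} (with $q$ replaced by $q-1$, so that its reconstruction operator is $\Rtq$) applied with $\HH = L^2(\Omega)$ and $v_\tau = \dptau\uht \in \Pp{q-1}{\Tt}\otimes L^2(\Omega)$; it therefore equals $\tfrac12\Norm{\dptau\uht(\cdot, \tn^-)}{L^2(\Omega)}^2 + \tfrac12\sum_{m=1}^{n-1}\Norm{\jump{\dptau\uht}_m}{L^2(\Omega)}^2 + \tfrac12\Norm{\dptau\uht}{L^2(\SO)}^2$. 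For the remaining terms $c^2(\nabla\uht, \nabla\dptau\wht)_{\QT} + c^2(\nabla\dptau(\Rt\uht - \uht), \nabla\wht)_{\QT}$, I would use that $\dptau\Rt\uht = \dpt\Rt\uht$ (continuity of $\Rt\uht$) and that $\Rt$ commutes with $\nabla$; writing $\nabla\dptau(\Rt\uht - \uht) = \dpt\Rt(\nabla\uht) - \nabla\dptau\uht$, the copy $-c^2(\nabla\dptau\uht, \chi_{(0, \tn)}\nabla\uht)_{\QT}$ cancels $c^2(\nabla\uht, \chi_{(0, \tn)}\nabla\dptau\uht)_{\QT}$, leaving $c^2(\dpt\Rt(\nabla\uht), \chi_{(0, \tn)}\nabla\uht)_{\QT}$, which is~\eqref{eq:Rt-vht-jump} with $\HH = L^2(\Omega)^d$ and $v_\tau = \nabla\uht \in \Pp{q}{\Tt}\otimes L^2(\Omega)^d$, hence equals $c^2\big(\tfrac12\Norm{\nabla\uht(\cdot, \tn^-)}{L^2(\Omega)^d}^2 + \tfrac12\sum_{m=1}^{n-1}\Norm{\jump{\nabla\uht}_m}{L^2(\Omega)^d}^2 + \tfrac12\Norm{\nabla\uht}{L^2(\SO)^d}^2\big)$. (Equivalently, one may expand $\Bht$ term by term and telescope via integration by parts on each slab together with identity~\eqref{eq:jumps-in-time-identity-2}, as in the proof of Lemma~\ref{lemma:weak-bound}; the essential point is that the initial surface terms $(\dptau\uht, \dptau\wht)_{\SO}$ and $c^2(\nabla\uht, \nabla\wht)_{\SO}$ enter $\Bht$ with coefficient~$1$, so after telescoping they survive with coefficient~$\tfrac12$.)

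Finally, I would bound $\ell(\wht) = \sum_{m=1}^n (f, \dptau\uht)_{\Qm} + (v_0, \dptau\uht(\cdot, 0))_\Omega + c^2(\nabla u_0, \nabla\uht(\cdot, 0))_\Omega$ using the H\"older inequality in time ($L^1$--$L^\infty$) on the first term, which gives $\Norm{f}{L^1(0, \tn; L^2(\Omega))}\Norm{\dptau\uht}{L^\infty(0, \tn; L^2(\Omega))}$, and the Cauchy--Schwarz inequality in space followed by the Young inequality $ab \le \tfrac14 a^2 + b^2$ on the two initial-data terms, which yields $\Norm{v_0}{L^2(\Omega)}^2 + c^2\Norm{\nabla u_0}{L^2(\Omega)^d}^2$ together with $\tfrac14\Norm{\dptau\uht}{L^2(\SO)}^2 + \tfrac{c^2}{4}\Norm{\nabla\uht}{L^2(\SO)^d}^2$. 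Setting $\Bht(\uht, \wht) = \ell(\wht)$, moving the quarter-norm surface contributions to the left-hand side (so that the $\tfrac14$-coefficients stated in the lemma remain there), and finally using $\Norm{f}{L^1(0, \tn; L^2(\Omega))} \le \Norm{f}{L^1(0, T; L^2(\Omega))}$ and $\Norm{\dptau\uht}{L^\infty(0, \tn; L^2(\Omega))} \le \Norm{\dptau\uht}{L^\infty(0, T; L^2(\Omega))}$ gives the asserted inequality. The step that requires the most care is the bookkeeping in the middle paragraph — pairing each upwind jump term and each initial surface term with the correct block and tracking their coefficients through the telescoping — together with checking that for $n = 1$, where the jump sums are empty, no modification is needed.
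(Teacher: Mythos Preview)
Your proposal is correct and follows essentially the same approach as the paper: test with $\wht = \chi_{(0,\tn)}\uht$ (the paper phrases this as ``without loss of generality $n=N$'' and takes $\wht=\uht$), recognize that the left-hand side collapses via identity~\eqref{eq:Rt-vht-jump} to the sum of squares (your cancellation is exactly what the paper compresses into ``the symmetry of the $L^2(\QT)^d$-inner product''), and finish with H\"older and Young on the right-hand side.
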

\begin{proof}
Without loss of generality, we prove the result for~$n = N$. Taking~$\wht = \uht$ in~\eqref{eq:plain-DG-wave} gives
\begin{equation}
\label{eq:identity-weak-partial-bound-plain-vanilla-DG}
\begin{split}
(\dpt \Rtq \dptau \uht, \dptau \uht)_{\QT} & + c^2 (\nabla \uht, \nabla \dptau \uht)_{\QT} + c^2(\nabla \dptau (\Rt \uht - \uht), \nabla \uht)_{\QT} \\
& = (f, \dptau \uht)_{\QT} + (v_0, \dptau \uht(\cdot, 0))_{\Omega} + c^2 (\nabla u_0, \nabla \uht(\cdot, 0))_{\Omega}.
\end{split}
\end{equation}
Using the symmetry of the~$L^2(\QT)^d$-inner product and identity~\eqref{eq:Rt-vht-jump} on the left-hand side of~\eqref{eq:identity-weak-partial-bound-plain-vanilla-DG} yields
\begin{alignat*}{3}
    (\dpt \Rtq \dptau \uht, \dptau \uht)_{\QT} & + c^2 (\nabla \uht, \nabla \dptau \uht)_{\QT} + c^2(\nabla \dptau (\Rt \uht - \uht), \nabla \uht)_{\QT} \\
    & = (\dpt \Rtq \dptau \uht, \dptau \uht)_{\QT} + c^2 (\nabla \dpt \Rt \uht, \nabla \uht)_{\QT} \\
    & = \frac12 \Seminorm{\dptau \uht}{\sf J}^2 + \frac12 \Seminorm{\nabla \uht}{\sf J}^2,
\end{alignat*}
which, combined with~\eqref{eq:identity-weak-partial-bound-plain-vanilla-DG} and the H\"older and the Young inequalities, completes the proof. 
\end{proof}
The ``unconditional stability" shown in~\cite{Costanzo_Huang:2005} simply refers to the fact that, for~$f = 0$, Lemma~\ref{lemma:weak-bound-plain-vanilla-DG} gives 
\begin{equation*}
\frac12 \Norm{\dpt \uht}{L^2(\ST)}^2 + \frac{c^2}{2} \Norm{\nabla \uht}{L^2(\ST)^d}^2 \le \frac12 \Norm{v_0}{L^2(\Omega)}^2 + \frac{c^2}{2} \Norm{\nabla u_0}{L^2(\Omega)^d}^2,
\end{equation*}
but this is not enough to prove the existence of discrete solutions. 

%%%
Proposition~\ref{prop:continuous-dependence-DG-wave} states that, assuming a Courant--Friedrichs--Lewy (CFL) condition of the form
\begin{equation}
\label{eq:CFL-DG-wave}
    \tau \le \frac{\CCFL h_{\min}}{c},
\end{equation}
the space--time formulation~\eqref{eq:plain-DG-wave} is stable, which also guarantees the existence of a unique solution. To the best of our knowledge, such a result is not available in the literature. 

The proof of Proposition~\ref{prop:continuous-dependence-DG-wave} involves the following nonstandard test function:
\begin{equation}
\label{eq:test-function-plain-DG}
\wht(\cdot, t)  = \uht(\cdot, \tnmo^+) + \int_{\tnmo}^t \Ptdqmo(\varphi_n \dps \uht) \ds \quad \text{ in~$\Omega$,}
\end{equation}
where~$\Ptdqmo$ is the left-sided Thom\'ee projection of degree~$q - 1$, which interpolates at~$t = \tnmo^+$ instead of~$t = \tn^-$ (see Definition~\ref{def:Pt}). This test function satisfies the following three important identities (for~$q \geq 2)$:
\begin{subequations}
\label{eq:identities-test-function-DG-wave}
\begin{alignat}{3}
\wht(\cdot, \tnmo^+) & = \uht(\cdot, \tnmo^+), \\
\dpt \wht(\cdot, \tnmo^+) & = \Ptdqmo (\varphi_n \dpt \uht)(\cdot, \tnmo^+) = \dpt \uht(\cdot, \tnmo^+), \\
(\dptt \uht, \dpt \wht)_{\Qn} & = (\dptt \uht, \Ptdqmo (\varphi_n \dpt \uht))_{\Qn} = (\dptt \uht, \varphi_n \dpt\uht)_{\Qn}.
\end{alignat}
\end{subequations}

%%%
\begin{proposition}[Continuous dependence on the data]
\label{prop:continuous-dependence-DG-wave}
There exists a positive constant~$\CCFL$ depending only on~$p$ and~$q$ such that, if~$\tau \le \frac{\CCFL h_{\min}}{c}$, then any solution~$\uht \in \oVdisc{q}$ to the discrete space--time formulation~\eqref{eq:plain-DG-wave} satisfies
\begin{alignat*}{3}
\Norm{\dptau \uht}{L^{\infty}(0, T; L^2(\Omega))}^2 + c^2 \Norm{\nabla \uht}{L^{\infty}(0, T; L^2(\Omega)^d)}^2 + \Seminorm{\dptau \uht}{\sf J}^2  + c^2 \Seminorm{\nabla \uht}{\sf J}^2 \\
\lesssim \Norm{f}{L^1(0, T; L^2(\Omega))}^2 + \Norm{v_0}{L^2(\Omega)}^2 + c^2 \Norm{\nabla \uht}{L^2(\Omega)^d}^2,
\end{alignat*}
where the hidden constant depends only on~$q$.
\end{proposition}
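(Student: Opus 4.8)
The plan is to follow the two-step strategy of Section~\ref{sec:structure-theory}: first prove a local stability estimate on each time slab~$\Qn$ using the nonstandard test function~\eqref{eq:test-function-plain-DG}, then combine it with the polynomial inverse estimate of Lemma~\ref{lemma:L2-Linfty} and the weak partial bound of Lemma~\ref{lemma:weak-bound-plain-vanilla-DG} to upgrade to~$L^\infty(0,T;\cdot)$ control; uniqueness (hence existence) of the discrete solution then follows from the homogeneous case.

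For a fixed~$n\in\{1,\dots,N\}$ I would take as test function in~\eqref{eq:plain-DG-wave} the function~$\wht\in\oVdisc{q}$ equal to~\eqref{eq:test-function-plain-DG} on~$\Qn$ and zero elsewhere (a legitimate element of~$\oVdisc{q}$, since the integrand lies in~$\Pp{q-1}{\In}\otimes\oVhp$), treating~$n=1$ via the modification of Lemma~\ref{lemma:identities-varphi-integration} recorded in footnote~\ref{footnote:n=1}. By the three identities in~\eqref{eq:identities-test-function-DG-wave}, only the slab-$n$ terms and the jump at~$\tnmo$ survive in~$\Bht(\uht,\wht)$: the ``velocity'' part collapses to~$(\dptt\uht,\varphi_n\dpt\uht)_{\Qn}+(\jump{\dpt\uht}_{n-1},\dpt\uht(\cdot,\tnmo^+))_\Omega$, which identity~\eqref{eq:identities-varphi-integration-1} (with~$\HH=L^2(\Omega)$, $v_\tau=\dpt\uht$) turns into~$\tfrac14\Norm{\dpt\uht(\cdot,\tn^-)}{L^2(\Omega)}^2+\tfrac12\Norm{\jump{\dpt\uht}_{n-1}}{L^2(\Omega)}^2-\tfrac12\Norm{\dpt\uht(\cdot,\tnmo^-)}{L^2(\Omega)}^2+\tfrac{\lambda_n}{2}\Norm{\dpt\uht}{L^2(\Qn)}^2$. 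For the ``gradient'' part I would split~$\nabla\dpt\wht=\Ptdqmo(\varphi_n\nabla\dpt\uht)=\varphi_n\nabla\dpt\uht-(\Id-\Ptdqmo)(\varphi_n\nabla\dpt\uht)$: the first piece together with the gradient jump term again falls under~\eqref{eq:identities-varphi-integration-1} (now~$\HH=L^2(\Omega)^d$, $v_\tau=\nabla\uht$) and produces the same structure multiplied by~$c^2$, while the remainder~$R_n:=-c^2(\nabla\uht,(\Id-\Ptdqmo)(\varphi_n\nabla\dpt\uht))_{\Qn}$ is the only delicate term; the right-hand side reduces to~$(f,\Ptdqmo(\varphi_n\dpt\uht))_{\Qn}$ (plus, for~$n=1$, the prescribed initial contributions, absorbed as in footnote~\ref{footnote:n=1}).

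The main obstacle is~$R_n$, which—unlike the sign-definite quantity in Lemma~\ref{lemma:bound-Ptdqmo}—is \emph{not} sign-definite, since its two slots carry~$\uht$ and~$\dpt\uht$ rather than a single function. The key observation is that~$(\Id-\Ptdqmo)(\varphi_n\nabla\dpt\uht)$ lies in~$\Span\{\Leg_{q-1},\Leg_q\}\otimes L^2(\Omega)^d$ (being~$L^2(\In)$-orthogonal to~$\Pp{q-2}{\In}$ and vanishing at~$\tnmo^+$), so that only the top two Legendre-in-time modes of~$\uht|_{\Qn}$ enter; writing~$(\uht)_j$ for the $j$-th Legendre coefficient of~$\uht|_{\Qn}$ and using~$\dpt\Leg_q=\tfrac{2(2q-1)}{\tau_n}\Leg_{q-1}+(\text{lower modes})$, a short computation of the type in the proof of Lemma~\ref{lemma:bound-Ptdqmo} gives
\begin{equation*}
R_n = c^2\,\tfrac{q}{2(2q+1)}\Norm{\nabla(\uht)_q}{L^2(\Omega)^d}^2 + c^2\,\tfrac{q}{2(2q-1)}\big(\nabla(\uht)_{q-1},\nabla(\uht)_q\big)_\Omega .
\end{equation*}
The first term is harmless (it only reinforces the energy), and the cross term is split by Young's inequality: its~$\Norm{\nabla(\uht)_{q-1}}{L^2(\Omega)^d}^2$ part is absorbed into~$\tfrac{c^2\lambda_n}{2}\Norm{\nabla\uht}{L^2(\Qn)^d}^2\ge\tfrac{c^2}{4(2q-1)}\Norm{\nabla(\uht)_{q-1}}{L^2(\Omega)^d}^2$ (from the orthogonality~\eqref{eq:orthogonality-Legendre}), while its~$\Norm{\nabla(\uht)_q}{L^2(\Omega)^d}^2$ part is controlled by the spatial inverse inequality~$\Norm{\nabla v_h}{L^2(\Omega)^d}\lesssim h_{\min}^{-1}\Norm{v_h}{L^2(\Omega)}$ and~$\Norm{(\uht)_q}{L^2(\Omega)}^2\lesssim\tau_n\Norm{\dpt\uht}{L^2(\Qn)}^2$ (again from~\eqref{eq:orthogonality-Legendre} and the identity for~$\dpt\Leg_q$), giving a bound by~$C\,c^2\tau_n h_{\min}^{-2}\Norm{\dpt\uht}{L^2(\Qn)}^2$; this is absorbed into~$\tfrac{\lambda_n}{2}\Norm{\dpt\uht}{L^2(\Qn)}^2=\tfrac{1}{4\tau_n}\Norm{\dpt\uht}{L^2(\Qn)}^2$ precisely when~$\tau_n\le\CCFL h_{\min}/c$ with~$\CCFL$ depending only on~$p$ (through the spatial inverse constant and shape-regularity) and~$q$. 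This is the sole place where the CFL condition~\eqref{eq:CFL-DG-wave} is needed, and the bookkeeping that keeps~$\CCFL$—and the final stability constant—independent of~$h$, $\tau$, $T$, and~$c$ is the subtle point.

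After these absorptions, for each~$n$ one is left on the left-hand side with~$\Norm{\dpt\uht(\cdot,\tn^-)}{L^2(\Omega)}^2+c^2\Norm{\nabla\uht(\cdot,\tn^-)}{L^2(\Omega)^d}^2+\Norm{\jump{\dpt\uht}_{n-1}}{L^2(\Omega)}^2+c^2\Norm{\jump{\nabla\uht}_{n-1}}{L^2(\Omega)^d}^2+\lambda_n\big(\Norm{\dpt\uht}{L^2(\Qn)}^2+c^2\Norm{\nabla\uht}{L^2(\Qn)^d}^2\big)$, bounded up to a~$q$-dependent constant by~$(f,\Ptdqmo(\varphi_n\dpt\uht))_{\Qn}+\Norm{\dpt\uht(\cdot,\tnmo^-)}{L^2(\Omega)}^2+c^2\Norm{\nabla\uht(\cdot,\tnmo^-)}{L^2(\Omega)^d}^2$. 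The last two terms are bounded by Lemma~\ref{lemma:weak-bound-plain-vanilla-DG} (applied with~$n-1$), and~$(f,\Ptdqmo(\varphi_n\dpt\uht))_{\Qn}\lesssim\Norm{f}{L^1(\In;L^2(\Omega))}\Norm{\dpt\uht}{L^\infty(\In;L^2(\Omega))}$ using~$\Norm{\varphi_n}{L^\infty(\In)}\le1$ and the~$L^\infty$-stability of~$\Ptdqmo$ (Lemma~\ref{lemma:stab-Pt}). Applying Lemma~\ref{lemma:L2-Linfty} to the~$\lambda_n\Norm{\cdot}{L^2(\Qn)}^2$ terms converts them into~$L^\infty(\In;\cdot)$ norms of~$\dpt\uht$ and~$\nabla\uht$; a Young inequality absorbs the~$\Norm{\dpt\uht}{L^\infty(\In;L^2(\Omega))}$ coming from the source term, and then picking the index~$n$ at which~$\Norm{\dpt\uht}{L^\infty(\In;L^2(\Omega))}^2+c^2\Norm{\nabla\uht}{L^\infty(\In;L^2(\Omega)^d)}^2$ is maximal, followed by a final Young inequality to absorb~$\Norm{f}{L^1(0,T;L^2(\Omega))}\Norm{\dptau\uht}{L^\infty(0,T;L^2(\Omega))}$, yields the claimed bound on~$\Norm{\dptau\uht}{L^\infty(0,T;L^2(\Omega))}^2+c^2\Norm{\nabla\uht}{L^\infty(0,T;L^2(\Omega)^d)}^2$. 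The two jump seminorms~$\Seminorm{\dptau\uht}{\sf J}^2+c^2\Seminorm{\nabla\uht}{\sf J}^2$ then follow directly from Lemma~\ref{lemma:weak-bound-plain-vanilla-DG} with~$n=N$ once the~$L^\infty$ bound on~$\dptau\uht$ is available. Finally, setting~$f=v_0=u_0=0$ forces~$\dptau\uht\equiv0$ and~$\nabla\uht\equiv0$, hence~$\uht\equiv0$ (as~$\uht\in L^2(0,T;H_0^1(\Omega))$), so~\eqref{eq:plain-DG-wave} is a nonsingular linear system.
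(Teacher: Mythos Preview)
Your proposal is correct and follows the paper's overall structure exactly: same test function~\eqref{eq:test-function-plain-DG}, same use of the identities~\eqref{eq:identities-test-function-DG-wave} and~\eqref{eq:identities-varphi-integration-1}, same appeal to Lemma~\ref{lemma:weak-bound-plain-vanilla-DG} for the~$\tnmo^-$ terms, and the same combination of Lemma~\ref{lemma:L2-Linfty} with a maximisation over~$n$ to close the argument.

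The one genuine difference is in the treatment of the remainder~$R_n$ (the paper's~$J_2$). The paper does not compute~$J_2$ explicitly: it simply applies Young's inequality to the full term, giving
\[
J_2 \;\ge\; -\tfrac{c^2}{8\tau_n}\Norm{\nabla\uht}{L^2(\Qn)^d}^2 \;-\; 2c^2\tau_n\Norm{\nabla(\Id-\Ptdqmo)(\varphi_n\dpt\uht)}{L^2(\Qn)^d}^2,
\]
and then bounds the second piece via the spatial inverse estimate and the~$L^\infty(\In;L^2(\Omega))$ stability of~$\Ptdqmo$ (Lemma~\ref{lemma:stab-Pt}), producing a term~$\lesssim c^2\tau_n^2 h_{\min}^{-2}\Norm{\dpt\uht}{L^\infty(\In;L^2(\Omega))}^2$ that the CFL condition absorbs. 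You instead expand~$R_n$ in the Legendre basis, identify the favourable diagonal contribution~$c^2\tfrac{q}{2(2q+1)}\Norm{\nabla(\uht)_q}{L^2(\Omega)^d}^2$ (which the paper's blunt estimate discards), and apply Young only to the genuine cross term; your explicit formula is correct, and the absorption you describe works provided you use only a fraction of the~$(q-1)$-mode in~$\tfrac{c^2\lambda_n}{2}\Norm{\nabla\uht}{L^2(\Qn)^d}^2$ so that a positive multiple of the full~$L^2(\Qn)^d$ norm survives. Your route is slightly sharper in principle (potentially a better~$\CCFL$) but requires the Legendre bookkeeping; the paper's route is shorter and avoids any mode-by-mode computation. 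Both lead to the same CFL restriction~$\tau\lesssim h_{\min}/c$ and the same final bound.
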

%%%
\begin{proof}
For~$n = 1, \ldots, N$, we choose~$\wht$ in~\eqref{eq:plain-DG-wave} as
\begin{equation*}
\wht{}_{|_{\Qm}} := \begin{cases}
\uht(\cdot, \tnmo^+) + \int_{\tnmo}^t \Ptdqmo (\varphi_n \dps \uht) \ds & \text{if~$m = n$},\\
0 & \text{otherwise}.
\end{cases}
\end{equation*}
As usual, we focus on the case~$n > 1$, as the argument for~$n = 1$ requires only minor modifications. Using the identities in~\eqref{eq:identities-test-function-DG-wave}, we get
\begin{equation}
\label{eq:aux-id-1}
\begin{split}
(\dptt \uht, \varphi_n \dpt \uht)_{\Qn}  + (\jump{\dptau \uht}_{n-1}, &  \dptau \uht(\cdot, \tnmo^+))_{\Omega}  + c^2(\nabla \uht, \nabla \Ptdqmo (\varphi_n \dpt \uht))_{\Qn} \\
+ c^2 (\jump{\nabla \uht}_{n-1}, \nabla \uht(\cdot, \tnmo^+))_{\Omega} & = (f, \Ptdqmo(\varphi_n \dpt \uht))_{\Qn}.
\end{split}
\end{equation}

Identity~\eqref{eq:identities-varphi-integration-1} from Lemma~\ref{lemma:identities-varphi-integration} gives
\begin{equation*}
\begin{split}
(\dptt \uht, \varphi_n \dpt \uht)_{\Qn} + (\jump{\dptau \uht}_{n-1}, \dptau \uht(\cdot, \tnmo^+))_{\Omega} & = \frac14 \Norm{\dptau \uht(\cdot, \tn^-)}{L^2(\Omega)}^2 + \frac12 \Norm{\jump{\dptau \uht}_{n-1}}{L^2(\Omega)}^2 \\
& \quad - \frac12 \Norm{\dptau \uht(\cdot, \tnmo^-)}{L^2(\Omega)}^2 + \frac{\lambda_n}{2} \Norm{\dpt \uht}{L^2(\Qn)}^2. 
\end{split}
\end{equation*}

As for the third term on the left-hand side of~\eqref{eq:aux-id-1}, we have
\begin{alignat*}{3}
c^2 (\nabla \uht, \nabla \Ptdqmo (\varphi_n \dpt \uht))_{\Qn} & = c^2 (\nabla \uht, \varphi_n \nabla \dpt \uht)_{\Qn} \\
& \quad - c^2(\nabla \uht, \nabla (\Id - \Ptdqmo) (\varphi_n \dpt \uht))_{\Qn} \\
& =: J_1 + J_2.
\end{alignat*}

As before, identity~\eqref{eq:identities-varphi-integration-1} leads to
\begin{equation*}
J_1 = \frac{c^2}{4} \Norm{\nabla \uht(\cdot, \tn^-)}{L^2(\Omega)^d}^2 + \frac{c^2}{2} \Norm{\jump{\nabla \uht}_{n-1}}{L^2(\Omega)^d}^2 - \frac{c^2}{2} \Norm{\nabla \uht(\cdot, \tnmo^-)}{L^2(\Omega)^d}^2 + \frac{c^2 \lambda_n}{2} \Norm{\nabla \uht}{L^2(\Qn)^d}^2. 
\end{equation*}

Using the Young inequality, an inverse estimate in space with constant~$\Cinv(p)$, the H\"older inequality, and the stability of~$\Ptdqmo$ in the~$L^{\infty}(\In; L^2(\Omega))$ norm with constant~$C_{\mathcal{P}_{\tau}}(q)$, we get
\begin{alignat}{3}
\nonumber
J_2 & \geq -\frac{c^2}{8\tau_n} \Norm{\nabla \uht}{L^2(\Qn)^d}^2 - 2 c^2 \tau_n  \Norm{\nabla (\Id - \Ptdqmo) (\varphi_n \dpt \uht)}{L^2(\Qn)^d}^2 \\
\label{eq:J2-plain-DG-wave}
& \geq - \frac{c^2}{8\tau_n} \Norm{\nabla \uht}{L^2(\Qn)^d}^2 - \frac{4 c^2 \tau_n^2 (1+ C_{\mathcal{P}_{\tau}}^2(q)) \Cinv^2(p)}{h_{\min}^2} \Norm{\dpt \uht}{L^{\infty}(\In; L^2(\Omega))}^2. 
\end{alignat}
The CFL condition~\eqref{eq:CFL-DG-wave} is then requested to get
\begin{alignat*}{3}
\frac{4 c^2 \tau_n^2 (1+ C_{\mathcal{P}_{\tau}}^2(q)) \Cinv^2(p)}{h_{\min}^2} \le \frac{1}{8 \Cinv(q)},
\end{alignat*}
where~$\Cinv(q)$ is the constant in Lemma~\ref{lemma:L2-Linfty}.

The following bound is then obtained from the inverse estimate in Lemma~\ref{lemma:L2-Linfty}, the weak partial bound in Lemma~\ref{lemma:weak-bound-plain-vanilla-DG}, and the stability properties of~$\Ptdqmo$:
\begin{alignat*}{3}
\frac14 & \Norm{\dpt \uht(\cdot, \tn^-)}{L^2(\Omega)}^2  + \frac{c^2}{4} \Norm{\nabla \uht(\cdot, \tn^-)}{L^2(\Omega)^d}^2 + \frac12 \Norm{\jump{\dptau \uht}_{n-1}}{L^2(\Omega)}^2 + \frac{c^2}{2} \Norm{\jump{\nabla \uht}_{n-1}}{L^2(\Omega)^d}^2 \\
& \quad + \frac{1}{8 \Cinv(q)} \Norm{\dpt \uht}{L^{\infty}(\In; L^2(\Omega))}^2 + \frac{c^2}{8\Cinv(q)} \Norm{\nabla \uht}{L^{\infty}(\In; L^2(\Omega)^d)}^2 \\
%%%
& \le \frac12 \Norm{\dptau \uht(\cdot, \tnmo^-)}{L^2(\Omega)}^2 + \frac{c^2}{2} \Norm{\nabla \uht(\cdot, \tnmo^-)}{L^2(\Omega)^d}^2 \\
& \quad + \Norm{f}{L^1(\In; L^2(\Omega))} \Norm{\Ptdqmo(\varphi_n \dpt \uht)}{L^{\infty}(\In; L^2(\Omega))} \\
%%%
& \lesssim \Norm{v_0}{L^2(\Omega)}^2 + c^2 \Norm{\nabla u_0}{L^2(\Omega)^d}^2 + \Norm{f}{L^1(0, \tnmo; L^2(\Omega))} \Norm{\dptau \uht}{L^{\infty}(0, \tnmo; L^2(\Omega))}  \\
& \quad + \Norm{f}{L^1(\In; L^2(\Omega))} \Norm{\dpt \uht}{L^{\infty}(\In; L^2(\Omega))} \\
%%%
& \lesssim \Norm{v_0}{L^2(\Omega)}^2 + c^2 \Norm{\nabla u_0}{L^2(\Omega)^d}^2 + \Norm{f}{L^1(0, T; L^2(\Omega))} \Norm{\dptau \uht}{L^{\infty}(0, T; L^2(\Omega))},
\end{alignat*}
which implies
\begin{alignat*}{3}
\Norm{\dpt \uht}{L^{\infty}(\In; L^2(\Omega))}^2 + c^2 \Norm{\nabla \uht}{L^{\infty}(\In; L^2(\Omega)^d)}^2 & \lesssim \Norm{v_0}{L^2(\Omega)}^2 + c^2 \Norm{\nabla u_0}{L^2(\Omega)^d}^2 \\
& \quad + \Norm{f}{L^1(0, T; L^2(\Omega))} \Norm{\dptau \uht}{L^{\infty}(0, T; L^2(\Omega))}.
\end{alignat*}
For each of the terms on the left-hand side, one can choose an index~$n$ where it achieves its maximum value. This, combined with  Lemma~\ref{lemma:weak-bound-plain-vanilla-DG}, gives the desired result. 
\end{proof}
The existence of a unique solution to~\eqref{eq:plain-DG-wave} is an immediate consequence of Proposition~\ref{prop:continuous-dependence-DG-wave}, as it corresponds to a linear system with a nonsingular matrix provided that the CFL condition~\eqref{eq:CFL-DG-wave} holds.

%%%%
\subsection{Convergence analysis}
\label{sec:convergence-plain-DG-wave}
Recalling Definition~\ref{def:PtW} of the projection operator~$\PtW$, we define the space--time projection~$\Piht = \PtW \circ \Rh$ and the error functions
\begin{equation*}
    \eu = u - \uht = (\Id - \Piht) u + \Piht \eu.
\end{equation*}
In the proof of the following theorem, we rely on the fact that~$\Piht v$ is continuous in time provided that~$v$ is continuous in time.
%%%
\begin{theorem}[Estimates for the discrete error]
\label{thm:a-priori-plain-DG-wave}
Let the assumptions of Proposition~\ref{prop:continuous-dependence-DG-wave} hold. Let~$u$ be the continuous weak solution to~\eqref{eq:second-order-weak-formulation-wave}, and let~$\uht \in \oVdisc{q}$ be the solution to~\eqref{eq:plain-DG-wave}. Assume that~$\Omega$ satisfies the elliptic regularity assumption~\eqref{eq:elliptic-regularity-Omega}, and that~$u$ satisfies
\begin{equation*}
u \in H^1(0, T; H_0^1(\Omega)) \cap W^{2, 1}(0, T; H^{r+1}(\Omega)) \quad \text{ with } \quad \Delta u \in W^{s+1, 1}(0, T; L^2(\Omega)), 
\end{equation*}
for some~$0 \le r \le p$ and~$1 \le s \le q$. 
Then, the following estimate holds:
\begin{alignat*}{3}
&\Norm{\dptau \Piht \eu}{L^{\infty}(0, T; L^2(\Omega))}  + c\Norm{\nabla \Piht \eu}{L^{\infty}(0, T; L^2(\Omega)^d)} + \Seminorm{\dptau \Piht \eu}{\sf J} + c \Seminorm{\nabla \Piht \eu}{\sf J} \\
%%%
& \qquad \lesssim h^{r+1} \big(\Seminorm{\dptt u}{L^1(0, T; H^{r+1}(\Omega))} + \Seminorm{v_0}{H^{r+1}(\Omega)}\big) + c^2 \tau^{s+1} \Norm{\Delta \dpt^{(s+1)} u}{L^1(0, T; L^2(\Omega))}. 
\end{alignat*}
\end{theorem}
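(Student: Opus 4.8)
The plan is to follow the convergence template of Section~\ref{sec:structure-theory}: establish Galerkin orthogonality, recast the equation for the discrete error $\Piht\eu=\Piht u-\uht$ as a problem of the form~\eqref{eq:plain-DG-wave} with perturbed data, and then conclude with the continuous dependence on the data of Proposition~\ref{prop:continuous-dependence-DG-wave} and the approximation properties of $\Rh$ and $\PtW$. The first step is consistency: under the stated regularity both $u$ and $\dpt u$ are continuous in time (the latter because $u\in W^{2,1}(0,T;H^{r+1}(\Omega))$), so every time-jump term in $\Bht(u,\cdot)$ vanishes; testing the identity satisfied pointwise in time by the energy-class weak solution of~\eqref{eq:second-order-weak-formulation-wave} against $\dptau\wht(\cdot,t)\in H^1_0(\Omega)$ on each slab $\In$ and summing then gives $\Bht(u,\wht)=\ell(\wht)$ for all $\wht\in\oVdisc{q}$. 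Since $\Piht\eu\in\oVdisc{q}$, the error equation $\Bht(\Piht\eu,\wht)=-\Bht(\eta,\wht)$ for all $\wht\in\oVdisc{q}$ follows, with $\eta:=(\Id-\Piht)u=u-\PtW\Rh u$.

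The heart of the proof is to rewrite $-\Bht(\eta,\wht)$ in the canonical form $\ell(\cdot)$ of~\eqref{eq:plain-DG-wave}. Because $\Piht u=\PtW\Rh u$ is continuous in time, $\jump{\nabla\eta}_n=0$, so in the elliptic part of $\Bht(\eta,\wht)$ only slab integrals and the surface term at $t=0$ survive. The ``$\dptau$-part'' of $\Bht$ has exactly the structure of the heat-equation DG bilinear form applied to $\dptau(\cdot)$, so I would use Lemma~\ref{lemma:orthogonality-DG-PtW} with $v=\Rh u$ (together with $\dptt\Rh u=\Rh\dptt u$ and $\dpt\Rh u(0)=\Rh v_0$) to identify this part with $((\Id-\Rh)\dptt u,\dptau\wht)_{\QT}+((\Id-\Rh)v_0,\dptau\wht(\cdot,0))_{\Omega}$. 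For the elliptic part, pointwise-in-time Ritz orthogonality reduces $(\nabla\eta(\cdot,t),\nabla\wht(\cdot,t))_{\Omega}$ to $(\nabla(\Id-\PtW)u(\cdot,t),\nabla\wht(\cdot,t))_{\Omega}$; integration by parts in space --- valid since $\Delta u(\cdot,t)\in L^2(\Omega)$ and the elliptic regularity~\eqref{eq:elliptic-regularity-Omega} give $u(\cdot,t)\in H^2(\Omega)\cap H_0^1(\Omega)$, while $(\Id-\PtW)u(\cdot,t)\in H_0^1(\Omega)$ --- turns it into $-c^2((\Id-\PtW)\Delta u,\dptau\wht)_{\QT}$, and the surface term at $t=0$ vanishes because $(\Id-\PtW)u(\cdot,0)=0$. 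Collecting these, $\Piht\eu$ solves~\eqref{eq:plain-DG-wave} with source $\widehat f=-(\Id-\Rh)\dptt u+c^2(\Id-\PtW)\Delta u$ and initial data $\widehat v_0=-(\Id-\Rh)v_0$, $\widehat u_0=0$.

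It then remains to invoke Proposition~\ref{prop:continuous-dependence-DG-wave}, whose hypotheses --- in particular the CFL condition~\eqref{eq:CFL-DG-wave} --- are assumed: it bounds the left-hand side of the claimed inequality by $\Norm{\widehat f}{L^1(0,T;L^2(\Omega))}+\Norm{\widehat v_0}{L^2(\Omega)}$. The $L^2(\Omega)$-error bound~\eqref{eq:err-Rh-L2} of Lemma~\ref{lemma:Estimates-Rh} applied to $\dptt u$ and $v_0$ produces $h^{r+1}\big(\Seminorm{\dptt u}{L^1(0,T;H^{r+1}(\Omega))}+\Seminorm{v_0}{H^{r+1}(\Omega)}\big)$, and estimate~\eqref{eq:estimate-Walkington-1} of Lemma~\ref{lemma:estimates-PtW} applied to $\Delta u$ with regularity index $s+1\in\{2,\ldots,q+1\}$ (which is why the hypothesis reads $1\le s\le q$) produces $c^2\tau^{s+1}\Norm{\Delta\dpt^{(s+1)}u}{L^1(0,T;L^2(\Omega))}$; adding these gives precisely the right-hand side of the theorem. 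I expect the main obstacle to be the rewriting step of the middle paragraph: tracking which jump and surface terms survive, applying the $\PtW$-orthogonality of Lemma~\ref{lemma:orthogonality-DG-PtW} to the Ritz-projected datum $\Rh u$ rather than to $u$ itself, and justifying the spatial integration by parts under the stated regularity. The rest is bookkeeping and the invocation of previously established lemmas.
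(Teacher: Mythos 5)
Your proposal is correct and follows essentially the same route as the paper: consistency gives the error equation $\Bht(\Piht\eu,\wht)=-\Bht((\Id-\Piht)u,\wht)$ with $\Piht=\PtW\circ\Rh$, the orthogonality of $\PtW$ (Lemma~\ref{lemma:orthogonality-DG-PtW}) and of $\Rh$ together with spatial integration by parts recast $\Piht\eu$ as the solution of~\eqref{eq:plain-DG-wave} with data $\widetilde f=-(\Id-\Rh)\dptt u+c^2(\Id-\PtW)\Delta u$, $\widetilde v_0=-(\Id-\Rh)v_0$, $\widetilde u_0=0$, and the conclusion follows from Proposition~\ref{prop:continuous-dependence-DG-wave} and Lemmas~\ref{lemma:estimates-PtW} and~\ref{lemma:Estimates-Rh}. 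Your additional justifications (continuity in time of $\Piht u$ killing the jump terms, elliptic regularity legitimizing the integration by parts, and the range $2\le s+1\le q+1$ explaining the hypothesis $1\le s\le q$) are consistent with, and slightly more explicit than, the paper's argument.
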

\begin{proof}
The space--time formulation~\eqref{eq:plain-DG-wave} is consistent for the regularity assumptions made on~$u$. Therefore, we have the following error equation:
\begin{equation}
\label{eq:error-identity-plain-DG}
\Bht(\Piht \eu, \wht) = - \Bht((\Id - \Piht) \eu, \wht) \qquad \forall \wht \in \oVdisc{q}. 
\end{equation}

Using the orthogonality property of~$\PtW$ from Lemma~\ref{lemma:orthogonality-DG-PtW}, the definition of the Ritz projection~$\Rh$, the continuity in time of~$\Piht u$, and the fact that~$\PtW v(0) = v(0)$, we can simplify the terms on the right-hand side of~\eqref{eq:error-identity-plain-DG} as follows:
\begin{alignat*}{3}
- \big(\dpt \Rtq \dptau (\Id - \Piht ) u, \dptau \wht)_{\QT} & = - ((\Id - \Rh) \dptt u, \dptau \wht)_{\QT} \\
& \quad - ((\Id - \Rh) v_0, \dpt \wht(\cdot, 0))_{\Omega}, \\
%%%
- c^2 (\nabla (\Id - \Piht) u, \nabla \dptau \wht)_{\QT} & = -c^2 ((\Id - \PtW) \nabla u, \nabla \dptau \wht)_{\QT} \\
&  = c^2 ((\Id - \PtW) \Delta u, \dptau \wht)_{\QT}, \\
%%%
- c^2 \sum_{n = 1}^{N - 1} (\jump{\nabla (\Id - \Piht) u}_n, \nabla \wht(\cdot, \tn^+))_{\Omega} 
& = 0, \\
%%%
- c^2 (\nabla (\Id - \Piht) u, \nabla \wht)_{\SO} & = 0.
\end{alignat*}
Therefore, the discrete error~$\Piht \eu$ satisfies a problem of the form~\eqref{eq:plain-DG-wave} with data
\begin{equation*}
\widetilde{f} = - (\Id - \Rh) \dptt u + c^2 (\Id - \PtW) \Delta u, \quad \widetilde{v}_0 = - (\Id - \Rh) v_0, \quad \widetilde{u}_0 = 0. 
\end{equation*}
The estimate in the statement of this theorem then follows by Proposition~\ref{prop:continuous-dependence-DG-wave}, and the approximation properties of~$\PtW$ and~$\Rh$ in Lemma~\ref{lemma:estimates-PtW} and~\ref{lemma:Estimates-Rh}, respectively. 
\end{proof}

\begin{corollary}[\emph{A priori} error estimates]
Under the assumptions of Theorem~\ref{thm:a-priori-plain-DG-wave}, and assuming that the continuous solution~$u$ to~\eqref{eq:second-order-weak-formulation-wave} is sufficiently regular, there hold
\begin{alignat*}{3}
\Norm{\dptau \eu }{L^{\infty}(0, T; L^2(\Omega))} & \lesssim h^{p+1} \Big(\Seminorm{\dptt u}{L^1(0, T; H^{p+1}(\Omega))} + \Seminorm{v_0}{H^{p+1}(\Omega)} + \Seminorm{\dpt u}{L^{\infty}(0, T; H^{p+1}(\Omega))} \Big) \\
& \quad + \tau^{q} \big( c^2 \Norm{\Delta \dpt^{(q)} u}{L^1(0, T; L^2(\Omega))} + \Norm{\dpt^{(q + 1)} u}{L^{\infty}(0, T; L^2(\Omega))}\big), \\
%%%
c \Norm{\nabla \eu}{L^{\infty}(0, T; L^2(\Omega)^d)} & \lesssim  h^{p} \big(\Seminorm{\dptt u}{L^1(0, T; H^p(\Omega))} + \Seminorm{v_0}{H^p(\Omega)} + c \Seminorm{u}{L^{\infty}(0, T; H^{p+1}(\Omega))} \big) \\
& \quad + \tau^{q+1} \big(c^2 \Norm{\Delta \dpt^{(q+1)} u}{L^1(0, T; L^2(\Omega))} + c \Norm{\nabla \dpt^{(q+1)} u}{L^{\infty}(0, T; L^2(\Omega)^d)} \big).
\end{alignat*}
\end{corollary}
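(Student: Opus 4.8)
The plan is to deduce the bounds for the full error $\eu = u - \uht$ from the estimates on the discrete error $\Piht\eu$ in Theorem~\ref{thm:a-priori-plain-DG-wave} by adding the projection error $(\Id-\Piht)u$ and invoking the approximation properties of the time projection $\PtW$ and of the Ritz projection $\Rh$. Recalling that $\Piht = \PtW\circ\Rh$, I would use the decomposition
\begin{equation*}
\eu = (\Id - \PtW) u + \PtW(\Id - \Rh)u + \Piht\eu,
\end{equation*}
and apply the triangle inequality to $\Norm{\dptau\,\cdot\,}{L^{\infty}(0,T;L^2(\Omega))}$ and to $c\Norm{\nabla\,\cdot\,}{L^{\infty}(0,T;L^2(\Omega)^d)}$, simply discarding the $\Seminorm{\cdot}{\sf J}$ contributions (they only reinforce the left-hand side of the theorem's estimate and are not needed for the corollary). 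The choice of splitting off $(\Id-\PtW)u$ first, rather than $(\Id-\Rh)u$, is deliberate: it is what lets the temporal part of the error appear in terms of $u$ itself rather than of $\Rh u$.

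For the term $\Piht\eu$, Theorem~\ref{thm:a-priori-plain-DG-wave} with $r = p$ and $s = q$ gives the $h^{p+1}$ contributions involving $\dptt u$ and $v_0$ together with $c^2\tau^{q+1}\Norm{\Delta\dpt^{(q+1)}u}{L^1(0,T;L^2(\Omega))}$; for the first stated estimate one also uses $s = q-1$ to produce the sharper $c^2\tau^{q}\Norm{\Delta\dpt^{(q)}u}{L^1(0,T;L^2(\Omega))}$, and for the second stated estimate one uses $r = p-1$ (legitimate since $p\ge 1$) to obtain the $h^p$ terms with $H^p(\Omega)$-seminorms. For the term $(\Id-\PtW)u$, both $\dptau$ and $\nabla$ commute with the time operator $\PtW$, so estimates~\eqref{eq:estimate-Walkington-2} and~\eqref{eq:estimate-Walkington-1} of Lemma~\ref{lemma:estimates-PtW} (with $r=\infty$ and $s=q+1$) yield $\Norm{\dptau(\Id-\PtW)u}{L^{\infty}(0,T;L^2(\Omega))}\lesssim \tau^{q}\Norm{\dpt^{(q+1)}u}{L^{\infty}(0,T;L^2(\Omega))}$ and $c\Norm{\nabla(\Id-\PtW)u}{L^{\infty}(0,T;L^2(\Omega)^d)}\lesssim c\tau^{q+1}\Norm{\nabla\dpt^{(q+1)}u}{L^{\infty}(0,T;L^2(\Omega)^d)}$.

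The remaining term $\PtW(\Id-\Rh)u$ is handled through the stability of the time projections. For the $\dptau$-estimate I would invoke the identity $\dpt\PtW v = \PtThqmo\dpt v$ and the commutation of $\Rh$ with $\dpt$, so that $\dptau\PtW(\Id-\Rh)u = \PtThqmo(\Id-\Rh)\dpt u$; the $L^{\infty}(\In;L^2(\Omega))$-stability of $\PtThqmo$ from Lemma~\ref{lemma:stab-Pt} and the Ritz estimate~\eqref{eq:err-Rh-L2} then bound this by $h^{p+1}\Seminorm{\dpt u}{L^{\infty}(0,T;H^{p+1}(\Omega))}$, which is the source of that term in the first estimate. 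For the $c\nabla$-estimate, since $\nabla$ commutes with $\PtW$, the stability bound for $\PtW$ in Lemma~\ref{lemma:stab-PtW} gives $c\Norm{\PtW\nabla(\Id-\Rh)u}{L^{\infty}(0,T;L^2(\Omega)^d)} \lesssim c\Norm{\nabla(\Id-\Rh)u}{L^{\infty}(0,T;L^2(\Omega)^d)} + c\tau\Norm{\nabla(\Id-\Rh)\dpt u}{L^{\infty}(0,T;L^2(\Omega)^d)}$; the first summand is $ch^p\Seminorm{u}{L^{\infty}(0,T;H^{p+1}(\Omega))}$ by Lemma~\ref{lemma:Estimates-Rh}, and the second is of higher order, since the CFL condition $\tau\le \CCFL h_{\min}/c$ gives $c\tau h^p\le \CCFL h^{p+1}$, so it is absorbed into the $h^p$-terms. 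Summing the three groups of contributions yields the two claimed bounds.

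I expect the only (and modest) difficulty to be bookkeeping: selecting the decomposition of $(\Id-\Piht)u$ and the parameters $r,s$ in the theorem so that the projection-error terms appear in precisely the norms stated, and keeping track of the extra $\tau\Norm{\dpt\,\cdot\,}$ correction in the $L^{\infty}$-stability of $\PtW$ from Lemma~\ref{lemma:stab-PtW}, which is the one place where the CFL assumption is genuinely invoked in this corollary.
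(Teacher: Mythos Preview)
Your overall strategy---triangle inequality against $\Piht\eu$ plus projection-error estimates for $(\Id-\Piht)u$, with the parameters $r,s$ in Theorem~\ref{thm:a-priori-plain-DG-wave} chosen separately for each of the two estimates---is correct and is what the paper (which leaves this corollary unproved) intends. The treatment of the $\dptau$-estimate via $\dpt\PtW = \PtThqmo\dpt$ and Lemma~\ref{lemma:stab-Pt} is exactly right.

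The one place where your argument is suboptimal is the gradient estimate. Invoking the CFL condition to absorb the term $c\tau\Norm{\nabla(\Id-\Rh)\dpt u}{L^\infty(0,T;L^2(\Omega)^d)}$ leaves you with an $h^{p+1}\Seminorm{\dpt u}{L^\infty(0,T;H^{p+1}(\Omega))}$ contribution that does not appear in the stated bound; calling it ``absorbed into the $h^p$-terms'' conflates a higher-order term with a term involving a different norm of $u$. The cleaner route---and the one the paper must intend, since the identical corollary in Section~\ref{sec:Walkington-wave} uses the same projection $\PtW$ but has \emph{no} CFL hypothesis---is to use the other splitting $(\Id-\Piht)u = (\Id-\Rh)u + (\Id-\PtW)\Rh u$ for the gradient term. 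Then $c\Norm{\nabla(\Id-\Rh)u}{L^\infty(0,T;L^2(\Omega)^d)} \lesssim ch^p\Seminorm{u}{L^\infty(0,T;H^{p+1}(\Omega))}$ directly, and for the second piece one has $\nabla(\Id-\PtW)\Rh u = (\Id-\PtW)\nabla\Rh u$, so estimate~\eqref{eq:estimate-Walkington-1} and the $H_0^1$-stability $\Norm{\nabla\Rh v}{L^2(\Omega)^d}\le\Norm{\nabla v}{L^2(\Omega)^d}$ give $c\tau^{q+1}\Norm{\nabla\dpt^{(q+1)}u}{L^\infty(0,T;L^2(\Omega)^d)}$ with no residual term and no appeal to CFL.
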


%%%%%%%%%
\subsection{A least-squares stabilization (Hulbert--Hughes, 1988)}
Some oscillations were observed in the numerical experiments in~\cite{Hulbert_Hughes:1990} for the space--time method~\eqref{eq:plain-DG-wave} applied to the elastodynamics equation. 
In~\cite{Hughes_Hulbert:1988}, the authors introduced a space--time method for the Hamiltonian formulation~\eqref{eq:first-order-wave} with additional least-squares stabilization terms. For a particular combination of the degrees of approximation of the two fields, such a method reduces to a single-field formulation with a least-squares stabilization term. For the wave equation~\eqref{eq:second-order-wave}, denoting by~$\mathcal{F}_h^{\mathcal{I}}$ the set of the interior facets of~$\Th$, this method corresponds to adding the following term to the left-hand side of~\eqref{eq:plain-DG-wave}:
$$ \sum_{n = 1}^{N} \int_{\In}  \Big(\sum_{K \in \Th} \int_{K} \Upsilon(h) (\mathcal{L} (\uht) - f) \mathcal{L}(\wht) \dx  +\sum_{F \in \mathcal{F}_h^{\mathcal{I}}} \zeta \int_F \jump{\nabla \uht}_{\sf N} \jump{\nabla \wht}_{\sf N} \dS \Big)\dt, $$
where~$\mathcal{L}(\varphi) := \dptt \varphi - c^2 \Delta \varphi$, $\Upsilon(h) := h/(2c)$, $\zeta := c/2$, and~$\jump{\cdot}_{\sf N}$ denotes the spatial DG normal jump operator of~$d$-vector-valued functions; see the remark at the end of~\cite[\S4]{Hughes_Hulbert:1988}.

In this setting, the resulting bilinear form becomes coercive with respect to the norm
\begin{equation*}
\frac12 \Seminorm{\dptau \uht}{\sf J}^2  + \frac{c^2}{2} \Seminorm{\nabla \uht}{\sf J}^2 + \Upsilon(h) \sum_{n = 1}^{N} \sum_{K \in \Th} \Norm{\mathcal{L}(\uht)}{L^2(\In; L^2(K))}^2 + \zeta \Norm{\jump{\nabla \uht}_{\sf N}}{L^2(\mathcal{F}_h^{\mathcal{I}})}^2. 
\end{equation*}
The estimate in~\cite[Thm.~4.7]{Hughes_Hulbert:1988} predicts suboptimal convergence rates of order~$\mathcal{O}(\widetilde{h}^{2p-1})$ (where~$\widetilde{h} = \max\{h, c\tau\}$ and~$p = q$) for the discrete error in this energy norm.

% --------------------------------------------
\subsection{Inner products with exponential weights (French, 1993)}
The use of test functions involving (approximated) exponential weights are useful to derive robust stability and \emph{a priori} error estimates. This is related to the fact that the change of variables~$u = e^{\delta t} \rho$ in the second-order formulation of the wave equation~\eqref{eq:second-order-wave} leads to the following problem:
\begin{alignat*}{3}
\dptt \rho + 2 \delta \dpt \rho + \delta^2 \rho - c^2 \Delta \rho & = f e^{-\delta t} & & \quad \text{ in } \QT, \\
\rho & = 0 &  & \quad \text{ on } \partial \Omega \times (0, T), \\
\rho = u_0  \quad \text{ and } \quad \dpt \rho & = v_0 - \delta u_0  & &\quad \text{ on } \SO.
\end{alignat*}
This transformation introduces ``nice" damping terms for~$\delta > 0$, which simplify the analysis (see Section~\ref{sec:damped-wave} below). This approach has been used to motivate the space--time method~\eqref{eq:plain-DG-wave} for the damped wave equation (see, e.g., \cite[Rem.~2.3]{Shao:2022}).
However, returning to the original unknowns reveals that the errors suffer from exponential amplification in time. More precisely, setting~$\widetilde{u}_{h\tau} = e^{\delta t} \rho_{h\tau}$, it holds
\begin{alignat*}{3}
\Norm{u - \widetilde{u}_{h\tau}}{} = \Norm{e^{\delta t} (\rho - \rho_{h\tau})}{},
\end{alignat*}
for any norm~$\Norm{\cdot}{}$.

This makes it clear that it is better to avoid such a change of variable and rather exploit the ``intrinsic" structure of the method. An alternative approach was proposed in~\cite{French:1993}, which consists in introducing exponential weight functions explicitly. 
More precisely, defining 
$$\phi_n = e^{-\delta(t - \tnmo)},$$ 
for~$n = 1, \ldots, N$, the method reads: find~$\uht \in \oVdisc{q}$ such that
\begin{equation*}
    \Bht(\uht, \wht) = \ell(\wht) \quad \forall \wht \in \oVdisc{q},
\end{equation*}
where
\begin{alignat*}{3}
\Bht(\uht, \wht) & = \sum_{n = 1}^N (\dptt \uht, \phi_n \dpt \wht)_{\Qn} + \sum_{n = 1}^{N - 1} (\jump{\dpt \uht}_n, \dpt \wht(\cdot, \tn))_{\Omega} + (\dpt \uht, \dpt \wht)_{\SO} \\
& \quad + c^2 \sum_{n = 1}^N (\nabla \uht, \varphi_n \nabla \dpt \wht)_{\Qn} + \sum_{n = 1}^{N - 1} c^2 (\jump{\nabla \uht}_n, \nabla \wht(\cdot, \tn^+))_{\Omega} \\
& \quad + c^2 (\nabla \uht, \nabla \wht)_{\SO}, \\
\ell(\wht) &  = \sum_{n = 1}^N (f, \phi_n \dpt \wht)_{\Qn} + (v_0, \dpt \wht(\cdot, 0))_{\Omega} + c^2 (\nabla u_0, \nabla \wht(\cdot, 0))_{\Omega}. 
\end{alignat*}

Choosing~$\wht = \chi_{(\tnmo, \tn)} \uht$ and proceeding as usual, it holds
\begin{equation*}
\begin{split}
& \frac{e^{-\delta \tau_n}}{2} \Norm{\dptau \uht(\cdot, \tn^-)}{L^2(\Omega)}^2 + \frac12 \Norm{\jump{\dptau \uht}_{n-1}}{L^2(\Omega)}^2 + \frac{\delta}{2} \Norm{\sqrt{\varphi_n} \dpt \uht}{L^2(\Qn)}^2 \\
& + \frac{c^2 e^{-\delta \tau_n}}{2} \Norm{\nabla \uht(\cdot, \tn^-)}{L^2(\Omega)^d}^2  + \frac{c^2}{2} \Norm{\jump{\nabla \uht}_{n-1}}{L^2(\Omega)^d}^2 + \frac{\delta  c^2}{2} \Norm{\sqrt{\varphi_n} \nabla \uht}{L^2(\Qn)^d}^2 \\
%%%
& \qquad \le \Norm{f}{L^2(\Qn)} \Norm{\sqrt{\varphi_n} \dpt \uht}{L^2(\Qn)} + \frac12 \Norm{\dptau \uht(\cdot, \tnmo^-)}{L^2(\Omega)}^2 + \frac{c^2}{2} \Norm{\nabla \uht(\cdot, \tnmo^-)}{L^2(\Omega)^d}^2,
\end{split}
\end{equation*}
which can be used to show the existence of a unique solution for the homogeneous problem~$(f = 0,\ u_0 = 0, \ v_0 = 0)$ using a recursive argument. 

The parameter~$\delta$ is taken fixed independent of~$\tau$ and~$h$. The following continuous dependence on the data can be obtained using a Gr\"onwall-like argument:
\begin{alignat*}{3}
& \Norm{\dpt \uht(\cdot, \tn^-)}{L^2(\Omega)}^2 + {e^{\delta \tau_n}} \Norm{\jump{\dpt \uht}_{n-1}}{L^2(\Omega)}^2 + \frac{\delta e^{\delta \tau_n}}{2} \Norm{\sqrt{\varphi_n} \dpt \uht}{L^2(\Qn)}^2 \\
& + \Norm{\nabla \uht(\cdot, \tn^-)}{L^2(\Omega)^d}^2 + c^2 e^{\delta \tau_n} \Norm{\jump{\nabla \uht}_{n-1}}{L^2(\Omega)^d}^2 + \frac{\delta e^{\delta \tau_n} c^2}{2} \Norm{\sqrt{\varphi_n} \nabla \uht}{L^2(\Qn)^d}^2 \\
%%%
& \qquad \le e^{\delta \tn}  \Big(\Norm{v_0}{L^2(\Omega)}^2 + c^2 \Norm{\nabla u_0}{L^2(\Omega)^d}^2 + \delta^{-1} \Norm{f}{L^2(0, \tn; L^2(\Omega))}^2 \Big),
\end{alignat*}
which, for~$\delta = 1/T$, implies
\begin{alignat*}{3}
\Norm{\dpt \uht(\cdot, \tn^-)}{L^2(\Omega)}^2 + \Norm{\nabla \uht(\cdot, \tn^-)}{L^2(\Omega)^d}^2 \lesssim \Norm{v_0}{L^2(\Omega)}^2 + c^2 \Norm{\nabla u_0}{L^2(\Omega)^d}^2 + T \Norm{f}{L^2(0, \tn; L^2(\Omega))}^2.
\end{alignat*}
However, this does not seem to lead to stability in stronger norms. 

\begin{remark}[Suboptimal convergence]
The a priori error estimates in~\cite[Thm.~2]{French:1993} assume that~$\tau \simeq h^{\gamma}$ and have a term~$\mathcal{O}(\tau^{-1} h^{p})$, which leads to a degradation of the rates for~$\gamma > 0$. 
\eremk
\end{remark}

\begin{remark}[$B$-splines version]
In the B-splines version in~\cite{Ferrari_Perugia:2025}, the following phenomenon was observed: suboptimal convergence rates are obtained for odd degrees when using~$C^1$-regular splines, whereas optimal convergence rates are numerically obtained for any degree when using splines of maximal regularity. 
\eremk
\end{remark}

%%%%
\subsection{The damped wave equation (Antonietti--Mazzieri--Migliorini, 2020)} 
\label{sec:damped-wave}
The well-posedness of~\eqref{eq:plain-DG-wave} is easily recovered in the presence of a positive damping term without requiring a CFL condition, i.e., 
\begin{subequations}
\label{eq:damped-wave-equation}
\begin{alignat}{3}
\dptt u  + \delta \dpt u - c^2 \Delta u  & = f & & \quad \text{ in } \QT,\\
u & = 0 & &\quad \text{ on }~\partial \Omega \times (0, T),\\
u = u_0 \quad \text{and} \quad \dpt u & = v_0 & & \quad \text{ on } \SO,
\end{alignat}
\end{subequations}
for some~$\delta > 0$. 

In this setting, the corresponding space--time method reads: find~$\uht \in \oVdisc{q}$ (with~$q \geq 2)$ such that
\begin{equation}
\label{eq:damped-plain-DG-wave}
\Bht(\uht, \wht) = \ell(\wht) \quad \forall \wht \in \oVdisc{q},
\end{equation}
where~$\ell(\cdot)$ is as in~\eqref{eq:plain-DG-wave}, whereas the 
bilinear form~$\Bht(\cdot, \cdot)$ is given by
\begin{alignat*}{3}
\Bht(\uht, \wht)& :=  \sum_{n = 1}^{N} (\dptt \uht, \dpt \wht)_{\Qn} + \sum_{n = 1}^{N - 1} (\jump{\dpt \uht}_{n}, \dpt \wht(\cdot, \tn^+))_{\Omega}
 + (\dpt \uht, \dpt\wht)_{\SO} \\
 \nonumber
 & \quad + \delta \sum_{n = 1}^N (\dpt \uht, \dpt \wht)_{\Qn} + c^2 (\nabla \uht, \nabla \dpt \wht)_{\QT}
 \\
 \nonumber
& \quad  + c^2 \sum_{n = 1}^{N-1} (\jump{\nabla \uht}_n, \nabla \wht(\cdot, \tn^+))_{\Omega}  + c^2 (\nabla \uht, \nabla \wht)_{\SO},
\end{alignat*}
which is coercive with respect to the norm
\begin{equation*}
\frac12 \Seminorm{\dpt \uht}{\sf J}^2 + \frac{c^2}{2} \Seminorm{\nabla \uht}{\sf J}^2 + \delta \Norm{\dptau \uht}{L^2(\QT)}^2,
\end{equation*}
but coercivity is numerically lost when~$\delta \ll 1$. The analysis in this setting was carried out in~\cite{Antonietti_Mazzieri_Migliorini:2020} (see also~\cite{Antonietti_Mazzieri_DalSanto_Quarteroni:2018}).

\begin{remark}[Suboptimal convergence]
The estimate in~\cite[Thm.~7]{Antonietti_Mazzieri_Migliorini:2020} for the discrete error~$\Piht \eu$ has a suboptimal term~$\mathcal{O}(\delta \tau^q)$; cf. Theorem~\ref{thm:a-priori-plain-DG-wave} above. This issue is related to the suboptimal approximation of~$\dpt u$ in second-order formulations (cf. \cite[Rem.~4.2]{Gomez-Nikolic:2025}).
\eremk
\end{remark}

One can extend the results in Sections~\ref{sec:stab-plain-DG-wave} and~\ref{sec:convergence-plain-DG-wave} to show stability bounds and \emph{a priori} error estimates in~$L^{\infty}(0, T; X)$ norms. However, despite the presence of the damping term, we still require the CFL condition~\eqref{eq:CFL-DG-wave} that results from the treatment of~$J_2$ in~\eqref{eq:J2-plain-DG-wave}. 

\begin{proposition}[Continuous dependence on the data]
\label{prop:new-continuous-dependence-DG-wave}
There exists a positive constant~$\CCFL$ depending only on~$p$ and~$q$ such that, if~$\tau \le \frac{\CCFL h_{\min}}{c}$, then any solution~$\uht \in \oVdisc{q}$ to the discrete space--time formulation~\eqref{eq:damped-plain-DG-wave} satisfies
\begin{alignat*}{3}
\Norm{\dptau \uht}{L^{\infty}(0, T; L^2(\Omega))}^2 + c^2 \Norm{\nabla \uht}{L^{\infty}(0, T; L^2(\Omega)^d)}^2 + \Seminorm{\dptau \uht}{\sf J}^2  + c^2 \Seminorm{\nabla \uht}{\sf J}^2 + \delta \Norm{\dptau \uht}{L^2(\QT)}^2  \\
\lesssim \Norm{f}{L^1(0, T; L^2(\Omega))}^2 + \Norm{v_0}{L^2(\Omega)}^2 + c^2 \Norm{\nabla \uht}{L^2(\Omega)^d}^2,
\end{alignat*}
where the hidden constant depends only on~$q$.
\end{proposition}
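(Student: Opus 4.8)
The plan is to follow the same two-step template as in the proof of Proposition~\ref{prop:continuous-dependence-DG-wave}, checking that the extra damping term $\delta\sum_{n=1}^{N}(\dptau\uht,\dptau\wht)_{\Qn}$ is benign at both steps. \textbf{Step 1 (weak partial bound).} Take $\wht=\chi_{(0,\tn)}\uht$ in~\eqref{eq:damped-plain-DG-wave}. As in Lemma~\ref{lemma:weak-bound-plain-vanilla-DG}, the wave part of the bilinear form contributes $\tfrac12\Seminorm{\dptau\uht}{\sf J}^2+\tfrac{c^2}{2}\Seminorm{\nabla\uht}{\sf J}^2$ through identity~\eqref{eq:Rt-vht-jump}, while the new damping term adds the nonnegative quantity $\delta\Norm{\dptau\uht}{L^2(0,\tn;L^2(\Omega))}^2$ on the left-hand side; applying the H\"older and Young inequalities to the data terms gives a weak partial bound identical to that of Lemma~\ref{lemma:weak-bound-plain-vanilla-DG}, now with the extra term $\delta\Norm{\dptau\uht}{L^2(0,\tn;L^2(\Omega))}^2$ on the left.

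\textbf{Step 2 (local stability).} For each $n$, use the nonstandard test function of~\eqref{eq:test-function-plain-DG}, i.e. $\wht|_{\Qn}(t)=\uht(\cdot,\tnmo^+)+\int_{\tnmo}^t\Ptdqmo(\varphi_n\dps\uht)\ds$ and $\wht|_{\Qm}=0$ for $m\neq n$. Every term except the damping one is treated verbatim as in Proposition~\ref{prop:continuous-dependence-DG-wave}: the second-order term plus its jump give, via identity~\eqref{eq:identities-varphi-integration-1}, the contribution $\tfrac14\Norm{\dptau\uht(\cdot,\tn^-)}{L^2(\Omega)}^2+\tfrac12\Norm{\jump{\dptau\uht}_{n-1}}{L^2(\Omega)}^2-\tfrac12\Norm{\dptau\uht(\cdot,\tnmo^-)}{L^2(\Omega)}^2+\tfrac{\lambda_n}{2}\Norm{\dpt\uht}{L^2(\Qn)}^2$; the $c^2$-terms split as $J_1+J_2$ with $J_1$ producing the analogous gradient contribution and $J_2=-c^2(\nabla\uht,\nabla(\Id-\Ptdqmo)(\varphi_n\dpt\uht))_{\Qn}$ bounded below as in~\eqref{eq:J2-plain-DG-wave} using a spatial inverse estimate and the $L^\infty$-stability of $\Ptdqmo$, which is exactly what forces the CFL condition~\eqref{eq:CFL-DG-wave}; and the right-hand side $(f,\Ptdqmo(\varphi_n\dpt\uht))_{\Qn}$ is controlled by $\Norm{f}{L^1(\In;L^2(\Omega))}\Norm{\dptau\uht}{L^\infty(\In;L^2(\Omega))}$. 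For the damping term, note that on $\Qn$ one has $\dptau\wht=\Ptdqmo(\varphi_n\dpt\uht)$, so $\delta(\dptau\uht,\dptau\wht)_{\Qn}=\delta(\dpt\uht,\varphi_n\dpt\uht)_{\Qn}-\delta(\dpt\uht,(\Id-\Ptdqmo)(\varphi_n\dpt\uht))_{\Qn}$; since $\varphi_n\ge\tfrac12$ the first term is $\geq\tfrac{\delta}{2}\Norm{\dpt\uht}{L^2(\Qn)}^2$, and since $\dptau\uht\in\oVdisc{q-1}$ the second term is nonnegative by Lemma~\ref{lemma:bound-Ptdqmo}. Hence the damping term contributes only the good quantity $\tfrac{\delta}{2}\Norm{\dpt\uht}{L^2(\Qn)}^2$ to the left-hand side.

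\textbf{Step 3 (conclusion).} Combining Steps~1 and~2, converting $\tfrac{\lambda_n}{2}\Norm{\dpt\uht}{L^2(\Qn)}^2$ and its $c^2$-counterpart into $L^\infty(\In;L^2(\Omega))$-control via the inverse estimate of Lemma~\ref{lemma:L2-Linfty}, absorbing the cross terms by means of the weak partial bound of Step~1, and picking for each left-hand side quantity the index $n$ where it is maximal, one recovers the asserted estimate; the term $\delta\Norm{\dptau\uht}{L^2(\QT)}^2$ is already supplied by Step~1. Existence and uniqueness then follow since~\eqref{eq:damped-plain-DG-wave} is a square linear system whose matrix is nonsingular under~\eqref{eq:CFL-DG-wave}.

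\textbf{Main obstacle.} The only delicate point is the term $J_2$: the presence of the damping term does \emph{not} help absorb it, because $\delta\sum_n(\dptau\uht,\dptau\wht)_{\Qn}$ controls $\Norm{\dptau\uht}{L^2(\QT)}$, whereas after the spatial inverse estimate the dangerous quantity is $c^2\tau_n^2 h_{\min}^{-2}\Norm{\dpt\uht}{L^\infty(\In;L^2(\Omega))}^2$, which can be absorbed into the $L^\infty$ term produced by the \emph{temporal} inverse estimate only when $\tau\lesssim h_{\min}/c$. This is precisely why the CFL condition~\eqref{eq:CFL-DG-wave} persists in the damped case, as anticipated in the discussion preceding the statement.
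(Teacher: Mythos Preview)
Your proposal is correct and follows essentially the same approach as the paper: the paper's proof simply states that everything proceeds as in Proposition~\ref{prop:continuous-dependence-DG-wave}, then handles the extra damping term $\delta(\dpt\uht,\Ptdqmo(\varphi_n\dpt\uht))_{\Qn}$ by splitting it into $L_1+L_2$ exactly as you do, bounding $L_1\ge\tfrac{\delta}{2}\Norm{\dpt\uht}{L^2(\Qn)}^2$ via $\varphi_n\ge\tfrac12$ and $L_2\ge 0$ via Lemma~\ref{lemma:bound-Ptdqmo}. Your Step~1 and your remark on why the CFL condition persists are more explicit than the paper's terse treatment, but the argument is the same.
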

\begin{proof}
The proof follows exactly as in Proposition~\ref{prop:continuous-dependence-DG-wave}, except for the bound of the additional term
\begin{equation*}
\begin{split}
\delta (\dpt \uht, \Ptdqmo (\varphi_n \dpt \uht))_{\Qn} & = \delta (\dpt \uht, \varphi_n \dpt \uht)_{\Qn} - \delta (\dpt \uht, (\Id - \Ptdqmo) (\varphi_n \dpt \uht))_{\Qn} \\
& =: L_1 + L_2. 
\end{split}
\end{equation*}
The uniform bound~$\varphi_n \geq 1/2$ immediately gives
\begin{equation*}
    L_1 \geq \frac{\delta}{2} \Norm{\dpt \uht}{L^2(\Qn)}^2. 
\end{equation*}
As for~$L_2$, it is enough to apply Lemma~\ref{lemma:bound-Ptdqmo} to get~$L_2 \geq 0$.
\end{proof}

\begin{theorem}[Estimates for the discrete error]
\label{thm:new-a-priori-plain-DG-wave}
Let the assumptions of Proposition~\ref{prop:new-continuous-dependence-DG-wave} hold. Let~$u$ be the continuous weak solution to~\eqref{eq:damped-wave-equation}, and let~$\uht \in \oVdisc{q}$ be the solution to~\eqref{eq:damped-plain-DG-wave}. Assume that~$\Omega$ satisfies the elliptic regularity assumption~\eqref{eq:elliptic-regularity-Omega}, and that~$u$ satisfies
\begin{equation*}
u \in H^1(0, T; H_0^1(\Omega)) \cap W^{2, 1}(0, T; H^{r+1}(\Omega)) \quad \text{ with } \quad \Delta u \in W^{s+1, 1}(0, T; L^2(\Omega)), 
\end{equation*}
for some~$0 \le r \le p$ and~$1 \le s \le q$. 
Then, the following estimate holds:
\begin{alignat*}{3}
&\Norm{\dptau \Piht \eu}{L^{\infty}(0, T; L^2(\Omega))}  + c\Norm{\nabla \Piht \eu}{L^{\infty}(0, T; L^2(\Omega)^d)} \\
& \quad + \Seminorm{\dptau \Piht \eu}{\sf J} + c \Seminorm{\nabla \Piht \eu}{\sf J} + \sqrt{\delta} \Norm{\dptau \Piht \eu}{L^2(\QT)} \\
%%%
& \qquad \lesssim h^{r+1} \big(\Seminorm{\dptt u}{L^1(0, T; H^{r+1}(\Omega))} + \Seminorm{v_0}{H^{r+1}(\Omega)} + \delta T \Seminorm{\dpt u}{L^{\infty}(0, T; H^{r+1}(\Omega))} \big) \\
& \quad \qquad + \tau^{s} \big( c^2 \tau \Norm{\Delta \dpt^{(s+1)} u}{L^1(0, T; L^2(\Omega))} + \delta \Norm{\dpt^{(s+1)} u}{L^1(0, T; L^2(\Omega))}\big). 
\end{alignat*}
\end{theorem}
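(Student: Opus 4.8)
The plan is to follow the standard roadmap described in Section~\ref{sec:structure-theory}: derive an error equation for the discrete error~$\Piht \eu = \uht - \Piht u$, identify it as an instance of the perturbed problem governed by the bilinear form in~\eqref{eq:damped-plain-DG-wave}, and then invoke the continuous dependence on the data from Proposition~\ref{prop:new-continuous-dependence-DG-wave} together with the approximation properties of~$\PtW$ and~$\Rh$. Since the method~\eqref{eq:damped-plain-DG-wave} is consistent for the stated regularity of~$u$ (the jump terms vanish because~$u$ and~$\dpt u$ are continuous in time), we obtain
\begin{equation*}
\Bht(\Piht \eu, \wht) = -\Bht((\Id - \Piht)\eu, \wht) \qquad \forall \wht \in \oVdisc{q}.
\end{equation*}

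Next I would simplify the right-hand side term by term, exactly as in the proof of Theorem~\ref{thm:a-priori-plain-DG-wave}, but now tracking the extra damping contribution. Writing~$\Id - \Piht = (\Id - \PtW) + \PtW(\Id - \Rh)$ and using: (i) the orthogonality property of~$\PtW$ from Lemma~\ref{lemma:orthogonality-DG-PtW} to rewrite the terms involving~$\dpt \Rtq \dptau(\Id - \Piht)u$ as~$((\Id - \Rh)\dptt u, \dptau \wht)_{\QT} + ((\Id - \Rh)v_0, \dpt\wht(\cdot,0))_{\Omega}$; (ii) the definition of the Ritz projection~$\Rh$ together with integration by parts in space to turn~$c^2(\nabla(\Id - \PtW)\nabla u, \nabla \dptau \wht)_{\QT}$ into~$-c^2((\Id - \PtW)\Delta u, \dptau \wht)_{\QT}$ and the corresponding jump terms into zero; and (iii) the continuity in time of~$\Piht u$ and~$\PtW v(0) = v(0)$ to kill the remaining boundary/jump terms. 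The only genuinely new term is the damping contribution~$\delta \sum_n (\dpt(\Id - \Piht)u, \dpt \wht)_{\Qn}$; since~$\dpt \PtW u = \PtThqmo \dpt u$, this equals~$\delta((\Id - \Rh)\dpt u, \dptau \wht)_{\QT} + \delta((\Id - \PtThqmo)\dpt u, \dptau \wht)_{\QT}$, i.e.\ it contributes to the effective source~$\widetilde f$ a term of the form~$-\delta(\Id - \Rh)\dpt u - \delta(\Id - \PtThqmo)\dpt u$. Thus~$\Piht \eu$ solves~\eqref{eq:damped-plain-DG-wave} with data
\begin{equation*}
\widetilde f = -(\Id - \Rh)\dptt u + c^2(\Id - \PtW)\Delta u - \delta(\Id - \Rh)\dpt u - \delta(\Id - \PtThqmo)\dpt u, \quad \widetilde v_0 = -(\Id - \Rh)v_0, \quad \widetilde u_0 = 0.
\end{equation*}

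Applying Proposition~\ref{prop:new-continuous-dependence-DG-wave} then bounds the left-hand side of the claimed estimate by
\begin{equation*}
\Norm{\widetilde f}{L^1(0,T;L^2(\Omega))} + \Norm{\widetilde v_0}{L^2(\Omega)},
\end{equation*}
and I would close the argument by estimating each piece of~$\widetilde f$: the~$\PtW$ term via Lemma~\ref{lemma:estimates-PtW} (or, to avoid assuming~$H^1$-in-space regularity of~$\Delta u$, the alternative estimate in the ``Alternative estimate'' remark after Theorem~\ref{thm:a-priori-Thomee}, reading off~$c^2\tau^{s+1}\Seminorm{\Delta \dpt^{(s+1)}u}{L^1(0,T;L^2(\Omega))}$); the two Ritz terms via Lemma~\ref{lemma:Estimates-Rh}, yielding~$h^{r+1}\Seminorm{\dptt u}{L^1(0,T;H^{r+1})} + h^{r+1}\Seminorm{v_0}{H^{r+1}}$ and~$\delta h^{r+1}\Seminorm{\dpt u}{L^1(0,T;H^{r+1})} \lesssim \delta T\, h^{r+1}\Seminorm{\dpt u}{L^{\infty}(0,T;H^{r+1})}$; and the~$\PtThqmo$ term via Lemma~\ref{lemma:estimates-Ptau} (valid since~$\PtThqmo$ is~$L^{\infty}$-stable by Lemma~\ref{lemma:stab-Pt}), giving~$\delta \tau^{s}\Norm{\dpt^{(s+1)}u}{L^1(0,T;L^2(\Omega))}$. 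Summing these reproduces the stated bound.

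The main obstacle I anticipate is purely bookkeeping rather than conceptual: one has to be careful that the damping term is discretized consistently with~$\PtW$ (using~$\dpt \PtW = \PtThqmo \dpt$) so that its projection error is controlled by~$(\Id - \PtThqmo)\dpt u$ rather than by a derivative of~$(\Id - \PtW)u$, which would give a worse rate; and one must verify that Proposition~\ref{prop:new-continuous-dependence-DG-wave} indeed applies with~$\widetilde f \in L^1(0,T;L^2(\Omega))$, which in turn forces the regularity assumptions~$u \in W^{2,1}(0,T;H^{r+1})$ and~$\Delta u \in W^{s+1,1}(0,T;L^2)$ stated in the theorem. The role of the CFL condition is inherited unchanged from Proposition~\ref{prop:new-continuous-dependence-DG-wave} (it is still needed to absorb the~$J_2$-type term involving the spatial inverse estimate), so no new smallness condition on~$\tau$ arises.
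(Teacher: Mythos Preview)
Your proposal is correct and takes exactly the paper's approach. One minor bookkeeping slip: the damping contribution to~$\widetilde f$ should read $-\delta\,\PtThqmo(\Id - \Rh)\dpt u - \delta(\Id - \PtThqmo)\dpt u$ (your own decomposition $\Id - \Piht = (\Id-\PtW)+\PtW(\Id-\Rh)$ together with $\dpt\PtW = \PtThqmo\dpt$ yields the extra $\PtThqmo$ in front of the Ritz error), but since $\PtThqmo$ is $L^\infty$-stable by Lemma~\ref{lemma:stab-Pt} this does not change the final estimate and matches exactly the bound the paper records.
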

\begin{proof}
The result follows as in Theorem~\ref{thm:a-priori-plain-DG-wave}, and the additional terms involving~$\delta$ arise from the estimate
\begin{alignat*}{3}
\delta \Norm{\dptau (\Id - \Piht) u}{L^1(0, T; L^2(\Omega))} & \le \delta \Norm{(\Id - \PtThqmo) \dpt u}{L^1(0, T; L^2(\Omega))} \\
& \quad + \delta T \Norm{\Ptdqmo (\Id - \Rh) \dpt u}{L^{\infty}(0, T; L^2(\Omega))},
\end{alignat*}
and applying the approximation properties of~$\PtThqmo$ and~$\Rh$. 
\end{proof}

%%%
\subsection*{An ultraweak formulation (Adjerid--Temimi, 2011)}
Another DG time discretization for the second-order formulation~\eqref{eq:second-order-wave} was studied in~\cite{Adjerid_Temimi:2011}. The method reads: find~$\uht \in \oVdisc{q}$ such that
\begin{alignat*}{3}
\Bht(\uht, \wht) = \ell(\wht) \qquad \forall \wht \in \oVdisc{q},
\end{alignat*}
where
\begin{alignat*}{3}
\Bht(\uht, \wht) &:= \sum_{n = 1}^{N - 1} (\uht, \dptt \wht)_{\Qn}  - \sum_{n = 1}^{N - 1} (\dptau \uht (\cdot, \tn^-), \jump{\wht}_{n})_{\Omega} \\
& \quad + \sum_{n = 1}^{N - 1} (\uht(\cdot, \tn^-), \jump{\dptau \wht}_n)_{\Omega} + (\dpt \uht, \wht)_{\ST} - (\uht, \dpt\wht)_{\ST}, \\
& \quad + c^2 (\nabla \uht, \nabla \wht)_{\QT}\\
%%%
\ell(\wht) &= (f, \wht)_{\QT} + (v_0, \wht(\cdot, 0))_{\Omega} - (u_0, \dpt \wht(\cdot, 0))_{\Omega} .
\end{alignat*}

Integrating by parts twice in time and using identity~\eqref{eq:jumps-in-time-identity-1}, we can rewrite the bilinear form~$\Bht(\cdot, \cdot)$ as
\begin{alignat*}{3}
\Bht(\uht, \wht) & = \sum_{n = 1}^{N} (\dptt \uht, \wht)_{\Qn} + \sum_{n = 1}^{N - 1} \Big[ (\jump{\dptau \uht}_n,  \wht(\cdot, \tn^+))_{\Omega} 
-  (\jump{\uht}_n, \dpt \wht(\cdot, \tn^+))_{\Omega}\Big] \\
& \quad + (\dpt \uht, \wht)_{\SO} - (\uht, \dpt \wht)_{\SO} + c^2(\nabla \uht, \nabla \wht)_{\QT}.
\end{alignat*}

We limit ourselves to highlight that:
\begin{itemize}[noitemsep]
\item the analysis in~\cite{Adjerid_Temimi:2011} does not address the existence of discrete solutions nor the stability of the scheme;
\item a matrix-based argument is used to get error estimates;
\item no obvious coercivity is obtained for this formulation. 
\end{itemize}

The next three chapters address the stability and \emph{a priori} error analysis for some unconditionally stable space--time formulations.

% ----------------------------------------------------------------
%               CG for the Hamiltonian formulation
% ----------------------------------------------------------------
\section{CG time discretization for the Hamiltonian formulation of the wave equation (French--Peterson, 1991)}
\label{sec:French-Peterson-wave}
The following space--time CG method for the Hamiltonian formulation~\eqref{eq:first-order-wave} of the wave equation was proposed and analyzed for the first time in~\cite{French_Peterson:1991}. The method reads as follows: find~$(\uht, \vht) \in \oVcont{q} \times \oVcont{q}$ (with~$q \geq 1$) such that~$\uht(\cdot, 0) = \Rh u_0$, $\vht(\cdot, 0) = \Pih v_0$, and 
\begin{subequations}
\label{eq:French-Peterson}
\begin{alignat}{3}
\label{eq:French-Peterson-1}
c^2 (\nabla \vht, \nabla \zht)_{\QT} & = c^2 (\nabla \dpt \uht, \nabla \zht)_{\QT} & & \quad \forall \zht \in \oVdisc{q-1}, \\
(\dpt \vht, \wht)_{\QT} + c^2 (\nabla \uht, \nabla \wht)_{\QT} & = (f, \wht)_{\QT} & & \quad \forall \wht \in \oVdisc{q-1}. 
\end{alignat}
\end{subequations}

The space--time formulation~\eqref{eq:French-Peterson} lies on the class of Petrov--Galerkin methods (i.e., with different test and trial spaces). Using the fact that~$\dpt \oVcont{q}= \oVdisc{q-1}$, it is possible to rewrite~\eqref{eq:French-Peterson} as follows: find~$(\uht, \vht) \in \oVcont{q} \times \oVcont{q}$ (with~$q \geq 1$) such that
\begin{alignat*}{3}
(c^2 \nabla \vht, \nabla \dpt \zht)_{\QT} - (c^2 \nabla \dpt \uht, \nabla \dpt  \zht)_{\QT} & = 0  & & \qquad \forall \zht \in \oVcont{q},\\
(\dpt \vht, \dpt \wht)_{\QT} + (c^2 \nabla \uht, \nabla \dpt \wht)_{\QT} & = (f, \dpt \wht)_{\QT}
& & \qquad \forall \wht \in \oVcont{q},
\end{alignat*}
thus avoiding the use of different test and trial spaces. 
However, this equivalent formulation is misleading for the analysis, as the nonstandard test functions from the space~$\oVcont{q-1}$ that we use in the stability analysis cannot be easily written as the time derivative of some functions from the space~$\oVcont{q}$.

For homogeneous Dirichlet boundary conditions, equation~\eqref{eq:French-Peterson-1} reduces to
\begin{equation*}
    \Pi_{q-1}^t \vht = \dpt \uht.
\end{equation*}
This is no longer true for nonhomogeneous Dirichlet boundary conditions (see~\cite{Gomez:2025} for more details).

Different analyses of the method in~\cite{French_Peterson:1991} can be found in the literature. In Table~\ref{tab:comparison-CG}, we compare such results in terms of \emph{(i)} the absence of a CFL condition, \emph{(ii)} the treatment of nonhomogeneous Dirichlet boundary conditions, \emph{(iii)} changing spatial meshes, \emph{(iv)} variable time steps, \emph{(v)} the dependence of the stability (and error) constant on the final time~$T$, \emph{(vi)} and the regularity required in time on~$f$ (which influences the regularity required on~$u$ in the error estimate). The remaining of this section is based on~\cite{Gomez:2025}, but considers the simpler situation of homogeneous Dirichlet boundary conditions.

\begin{table}[ht]
\centering
\caption{Comparison of the theoretical analyses in the literature for the space--time continuous finite element method in~\cite{French_Peterson:1991}.}
\label{tab:comparison-CG}
\begin{tabular}{ccccccccc}
\hline 
\multirow{2}{*}{Ref.} & Uncond. & Variable time & Changing & Nonhomogeneous & Time  & Reg. $f$\\
& stab. & steps & meshes & Dirichlet & dependence & in time \\
\hline\\[-0.4em]
\cite{French_Peterson:1991} & \xmark & \xmark & \xmark & -- & -- & -- \\[0.5em]
\cite{Bales_Lasiecka:1994} & \cmark & \xmark & \xmark & \xmark & $e^T$ & $L^2$ \\[0.5em]
\cite{Bales_Lasiecka:1995} & \cmark & \xmark & \xmark & \cmark & $e^T$ & $L^2$ \\[0.5em]
\cite{French_Peterson:1996} & \cmark & \cmark & \xmark &  \xmark & $T$ & $L^{\infty}$ \\[0.5em]
\cite{Karakashian_Makridakis:2005} & \cmark & \cmark & \cmark & \xmark & $e^T$ & $L^2$ \\[0.5em]
\cite{Zhao_Li:2016} & \cmark & \cmark & \xmark & \xmark & $e^T$ & $L^2$ \\[0.5em]
\cite{Gomez:2025} & \cmark & \cmark & \xmark & \cmark & $\mathcal{O}(1)$ & $L^1$ \\[0.5em]
\hline
\hline 
\end{tabular}
\end{table}

For the convergence analysis, it is useful to analyze the stability of the following perturbed space--time variational problem: find~$(\uht, \vht) \in \oVcont{q} \times \oVcont{q}$ (with~$q \geq 1)$ such that~$\uht(\cdot, 0) = \Rh u_0$, $\vht(\cdot, 0) = \Pih v_0$, and
\begin{subequations}
\label{eq:French-Peterson-perturbed}
\begin{alignat}{3}
\label{eq:French-Peterson-perturbed-1}
c^2 (\nabla \vht, \nabla \zht)_{\QT} - c^2 (\nabla \dpt \uht, \nabla \zht)_{\QT} & = c^2(\nabla \Upsilon , \nabla \zht)_{\QT} & & \quad \forall \zht \in \oVdisc{q-1}, \\
\label{eq:French-Peterson-perturbed-2}
(\dpt \vht, \wht)_{\QT} + c^2 (\nabla \uht, \nabla \wht)_{\QT} & = (f, \wht)_{\QT} & & \quad \forall \wht \in \oVdisc{q-1},
\end{alignat}
\end{subequations}
where the perturbation function~$\Upsilon \in L^2(0, T; H_0^1(\Omega))$ will represent a projection error in the convergence analysis.

The forthcoming stability and convergence analysis follows the structure described in Section~\ref{sec:structure-theory}.
% ---------------------------------------------------------------------------------
%               STABILITY
% ---------------------------------------------------------------------------------
\subsection{Stability analysis}

%%%%
\begin{lemma}[Weak partial bound on the discrete solution]
\label{lemma:weak-continuous-CG-wave}
Any solution~$(\uht, \vht) \in \oVcont{q} \times \oVcont{q}$ to the discrete space--time formulation~\eqref{eq:French-Peterson-perturbed} satisfies the following bound for~$n = 1, \ldots, N$:
\begin{equation}
\label{eq:weak-partial-bound}
\begin{split}
\frac12 \big( \Norm{\vht}{L^2(\Sn)}^2 + c^2 \Norm{\nabla \uht}{L^2(\Sn)^d}^2\big) & \le \frac12 \big(\Norm{v_0}{L^2(\Omega)}^2 + c^2 \Norm{ \nabla u_0}{L^2(\Omega)^d}^2 \big) \\
& \quad + \CS \Norm{f}{L^1(0, \tn; L^2(\Omega))} \Norm{\vht}{L^{\infty}(0, \tn; L^2(\Omega))} \\
& \quad + c^2 \CS \Norm{\nabla \Upsilon}{L^1(0, \tn; L^2(\Omega)^d)} \Norm{\nabla \uht}{L^{\infty}(0, \tn; L^2(\Omega)^d)},
\end{split}
\end{equation}
where~$\CS$ is the constant in Lemma~\ref{lemma:stab-pi-time}, which depends only on~$q$.
\end{lemma}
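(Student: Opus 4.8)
The plan is to carry out the \textbf{weak partial bound} step of the general strategy in Section~\ref{sec:structure-theory}: test the two discrete equations in~\eqref{eq:French-Peterson-perturbed} with natural Galerkin test functions built from the time projection~$\Pi_{q-1}^t$, integrate by parts in time, and add the resulting identities. It suffices to prove the bound for~$n = N$; the general case follows verbatim after replacing each test function below by its product with~$\chi_{(0,\tn)}$, which still belongs to~$\oVdisc{q-1}$ since~$\tn$ is a mesh node, so that all boundary-in-time contributions land at~$\tn$ rather than at~$T$ (using the continuity in time of~$\uht$ and~$\vht$).

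First I would take~$\wht = \Pi_{q-1}^t \vht \in \oVdisc{q-1}$ in~\eqref{eq:French-Peterson-perturbed-2} and~$\zht = \Pi_{q-1}^t \uht \in \oVdisc{q-1}$ in~\eqref{eq:French-Peterson-perturbed-1}. Since~$\dpt \vht, \dpt \uht \in \oVdisc{q-1}$ (because $\dpt \oVcont{q} = \oVdisc{q-1}$, so that $\Pi_{q-1}^t \dpt \uht = \dpt \uht$) and~$\Pi_{q-1}^t$ commutes with~$\nabla$, is self-adjoint, and is idempotent, these choices give
\begin{equation*}
(\dpt \vht, \vht)_{\QT} + c^2 (\nabla \Pi_{q-1}^t \uht, \nabla \Pi_{q-1}^t \vht)_{\QT} = (f, \Pi_{q-1}^t \vht)_{\QT}
\end{equation*}
and
\begin{equation*}
c^2 (\nabla \Pi_{q-1}^t \vht, \nabla \Pi_{q-1}^t \uht)_{\QT} - c^2 (\nabla \dpt \uht, \nabla \uht)_{\QT} = c^2 (\nabla \Upsilon, \nabla \Pi_{q-1}^t \uht)_{\QT}.
\end{equation*}
Integration by parts in time, together with~$\vht(\cdot,0) = \Pih v_0$ and~$\uht(\cdot,0) = \Rh u_0$, turns~$(\dpt \vht, \vht)_{\QT}$ into~$\tfrac12 \Norm{\vht}{L^2(\ST)}^2 - \tfrac12 \Norm{\Pih v_0}{L^2(\Omega)}^2$ and~$(\nabla \dpt \uht, \nabla \uht)_{\QT}$ into~$\tfrac{c^2}{2}\Norm{\nabla \uht}{L^2(\ST)^d}^2 - \tfrac{c^2}{2}\Norm{\nabla \Rh u_0}{L^2(\Omega)^d}^2$. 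Subtracting the second identity from the first cancels the cross term~$c^2(\nabla \Pi_{q-1}^t \uht, \nabla \Pi_{q-1}^t \vht)_{\QT}$ and leaves
\begin{equation*}
\tfrac12 \Norm{\vht}{L^2(\ST)}^2 + \tfrac{c^2}{2}\Norm{\nabla \uht}{L^2(\ST)^d}^2 = \tfrac12 \Norm{\Pih v_0}{L^2(\Omega)}^2 + \tfrac{c^2}{2}\Norm{\nabla \Rh u_0}{L^2(\Omega)^d}^2 + (f, \Pi_{q-1}^t \vht)_{\QT} - c^2 (\nabla \Upsilon, \nabla \Pi_{q-1}^t \uht)_{\QT}.
\end{equation*}

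Finally I would bound the right-hand side: by the~$L^2(\Omega)$-stability of~$\Pih$ and the~$H_0^1(\Omega)$-stability of~$\Rh$, the two initial terms are controlled by~$\tfrac12\Norm{v_0}{L^2(\Omega)}^2$ and~$\tfrac{c^2}{2}\Norm{\nabla u_0}{L^2(\Omega)^d}^2$; for the two space--time terms I would use the H\"older inequality in time followed by the~$L^\infty(\In;L^2(\Omega))$-stability of~$\Pi_{q-1}^t$ from Lemma~\ref{lemma:stab-pi-time} (with constant~$\CS$), yielding~$(f,\Pi_{q-1}^t\vht)_{\QT} \le \CS \Norm{f}{L^1(0,T;L^2(\Omega))}\Norm{\vht}{L^\infty(0,T;L^2(\Omega))}$ and, analogously, $-c^2 (\nabla \Upsilon, \nabla \Pi_{q-1}^t \uht)_{\QT} \le c^2 \CS \Norm{\nabla\Upsilon}{L^1(0,T;L^2(\Omega)^d)}\Norm{\nabla\uht}{L^\infty(0,T;L^2(\Omega)^d)}$. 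This is precisely~\eqref{eq:weak-partial-bound} for~$n=N$. The steps that need care are the projection bookkeeping (commutation with~$\nabla$, self-adjointness, idempotency, and~$\Pi_{q-1}^t\dpt\uht = \dpt\uht$) and the clean cancellation of the cross term, which forces testing~\eqref{eq:French-Peterson-perturbed-1} with~$\Pi_{q-1}^t\uht$ rather than with~$\uht$; the truncation argument for~$n < N$ is routine once one checks that~$\chi_{(0,\tn)}\Pi_{q-1}^t\uht$ and~$\chi_{(0,\tn)}\Pi_{q-1}^t\vht$ are admissible test functions.
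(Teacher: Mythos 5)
Your proof is correct and follows essentially the same route as the paper: the paper tests \eqref{eq:French-Peterson-perturbed-2} with $\Pi_{q-1}^t \vht$ and then substitutes \eqref{eq:French-Peterson-perturbed-1} (evaluated at the test function $\Pi_{q-1}^t \uht$) into the resulting stiffness cross term, which is algebraically identical to your test-both-and-subtract organization. The projection bookkeeping, the integration by parts in time, and the final H\"older/stability bounds via Lemma~\ref{lemma:stab-pi-time} all coincide with the paper's argument.
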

%%%
\begin{proof}
Without loss of generality, we only prove the result for~$n = N$.

We choose~$\wht = \Pi_{q-1}^t \vht \in \oVdisc{q-1}$ in~\eqref{eq:French-Peterson-perturbed-2} and obtain the following identity:
\begin{alignat}{3}
\label{eq:aux-identity-weak-bound}
(\dpt \vht, \Pi_{q-1}^t \vht)_{\QT} + (c^2 \nabla \uht, \nabla \Pi_{q-1}^t \vht)_{\QT} & = (f, \Pi_{q-1}^t \vht)_{\QT}.
\end{alignat}

Using the orthogonality properties of~$\Pi_{q-1}^t$, the continuity in time of~$\vht$, and the choice of the discrete initial conditions, we have
\begin{equation}
\label{eq:identity-dpt}
(\dpt \vht, \Pi_{q-1}^t \vht)_{\QT} = (\dpt \vht, \vht)_{\QT} = \frac12 \big(\Norm{\vht}{L^2(\ST)}^2 - \Norm{\Pih v_0}{L^2(\Omega)}^2 \big).
\end{equation}

As for the second term on the left-hand side of~\eqref{eq:aux-identity-weak-bound}, we use~\eqref{eq:French-Peterson-perturbed-2}, the continuity in time of~$\uht$, and the choice of the discrete initial conditions to obtain
\begin{alignat}{3}
\nonumber
c^2 (\nabla \uht, \nabla \Pi_{q-1}^t \vht)_{\QT} 
& = c^2 (\nabla \Pi_{q-1}^t \uht,  \nabla \vht)_{\QT} \\
\nonumber
& = c^2 (\nabla \Pi_{q-1}^t \uht,  \nabla \dpt \uht)_{\QT} + c^2(\nabla \Pi_{q-1}^t \uht,  \nabla \Upsilon)_{\QT} \\
\nonumber
& = c^2(\nabla \uht, \nabla \dpt \uht)_{\QT} + c^2(\nabla \Pi_{q-1}^t  \uht, \nabla \Upsilon)_{\QT} \\
%%%
\label{eq:identity-grad-grad}
& = \frac{c^2}{2} \big(\Norm{ \nabla \uht}{L^2(\ST)^d}^2 - \Norm{ \nabla \Rh u_0}{L^2(\Omega)^d}^2 \big) + c^2 (\nabla \Upsilon,  \nabla \Pi_{q-1}^t \uht)_{\QT} .
\end{alignat}

Inserting identities~\eqref{eq:identity-dpt} and~\eqref{eq:identity-grad-grad} in~\eqref{eq:aux-identity-weak-bound}, and using the stability properties of~$\Pih$ and~$\Rh$, the H\"older inequality, and Lemma~\ref{lemma:stab-pi-time}, we get
\begin{alignat*}{3}
\frac12 \big( & \Norm{\vht}{L^2(\ST)}^2  + c^2\Norm{ \nabla \uht}{L^2(\ST)^d}^2\big)\\
%%%
& = \frac12 \big(\Norm{\Pih v_0}{L^2(\Omega)}^2 + c^2 \Norm{\nabla \Rh u_0}{L^2(\Omega)^d}^2 \big)  + (f, \Pi_{q-1}^t \vht)_{\QT} - c^2 (\nabla \Upsilon, \Pi_{q-1}^t \nabla \uht)_{\QT} \\
%%%
& \le \frac12 \big(\Norm{v_0}{L^2(\Omega)}^2 + c^2 \Norm{\nabla u_0}{L^2(\Omega)^d}^2 \big) + \Norm{f}{L^1(0, T; L^2(\Omega))} \Norm{\Pi_{q-1}^t \vht}{L^{\infty}(0, T; L^2(\Omega))} \\
& \quad + c^2 \Norm{ \nabla \Upsilon }{L^1(0, T; L^2(\Omega)^d)} \Norm{ \Pi_{q-1}^t \nabla \uht}{L^{\infty}(0, T; L^2(\Omega)^d)} \\
%%%
& \le \frac12 \big(\Norm{v_0}{L^2(\Omega)}^2 + c^2\Norm{ \nabla u_0}{L^2(\Omega)^d}^2 \big) + \CS \Norm{f}{L^1(0, T: L^2(\Omega))} \Norm{\vht}{L^{\infty}(0, T; L^2(\Omega))} \\
& \quad + c^2 \CS  \Norm{ \nabla \Upsilon}{L^1(0, T; L^2(\Omega)^d)} \Norm{\nabla \uht}{L^{\infty}(0, T; L^2(\Omega)^d)},
\end{alignat*}
where~$\CS$ is the constant in Lemma~\ref{lemma:stab-pi-time}. This completes the proof of~\eqref{eq:weak-partial-bound}.
\end{proof}

%%%%
\begin{proposition}[Continuous dependence on the data]
\label{prop:continuous-dependence-CG-wave}
Any solution~$(\uht, \vht) \in \oVcont{q} \times \oVcont{q}$ to the discrete space--time formulation~\eqref{eq:French-Peterson-perturbed} satisfies
\begin{equation*}
\begin{split}
& \frac12 \Big(\Norm{\vht}{C^0([0, T]; L^2(\Omega))}^2 + c^2 \Norm{ \nabla \uht}{C^0([0, T]; L^2(\Omega)^d)}^2 \Big) \\
& \qquad \qquad \lesssim  \frac12 \big(\Norm{v_0}{L^2(\Omega)}^2 + c^2 \Norm{ \nabla u_0}{L^2(\Omega)^d}^2 \big) + \Norm{f}{L^1(0, T; L^2(\Omega))}^2 + c^2 \Norm{\nabla \Upsilon}{L^1(0, T; L^2(\Omega)^d)}^2,
\end{split}
\end{equation*}
where the hidden constant depends only on~$q$.
\end{proposition}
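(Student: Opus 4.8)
The plan is to follow the two‑step scheme of Section~\ref{sec:structure-theory}: Lemma~\ref{lemma:weak-continuous-CG-wave} already provides step~\ref{step-i} (the weak partial bound), so it remains to carry out step~\emph{ii)} with the slab‑localized test functions from Table~\ref{tab:special-test-functions}, namely $\Pi_{q-1}^t(\varphi_n \Pi_{q-1}^t \vht)$ in~\eqref{eq:French-Peterson-perturbed-2} and $\Pi_{q-1}^t(\varphi_n \Pi_{q-1}^t \uht)$ in~\eqref{eq:French-Peterson-perturbed-1}. Fix $n \in \{1,\dots,N\}$ and choose $\wht \in \oVdisc{q-1}$ equal to $\Pi_{q-1}^t(\varphi_n \Pi_{q-1}^t \vht)$ on $\Qn$ and $0$ elsewhere in~\eqref{eq:French-Peterson-perturbed-2}. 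In the first term, $\dpt \vht|_{\Qn} \in \Pp{q-1}{\In}\otimes\oVhp$, so self‑adjointness and orthogonality of $\Pi_{q-1}^t$ reduce it to $(\varphi_n \dpt \vht, \vht)_{\Qn} - (\varphi_n \dpt \vht,(\Id - \Pi_{q-1}^t)\vht)_{\Qn}$; identity~\eqref{eq:identities-varphi-integration-2} (with $\HH = L^2(\Omega)$) evaluates the first piece, giving $\tfrac14\Norm{\vht}{L^2(\Sn)}^2 - \tfrac12\Norm{\vht}{L^2(\Snmo)}^2 + \tfrac{\lambda_n}{2}\Norm{\vht}{L^2(\Qn)}^2$, and Lemma~\ref{lemma:bound-pi-time-varphi_n} shows the second piece is $\geq 0$.

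The coupling step is the crux. Commuting $\nabla$ with $\Pi_{q-1}^t$ and using self‑adjointness turns the mixed term in~\eqref{eq:French-Peterson-perturbed-2} into $c^2(\varphi_n \nabla \Pi_{q-1}^t \uht, \nabla \Pi_{q-1}^t \vht)_{\Qn}$, which is precisely the term produced by inserting $\zht = \Pi_{q-1}^t(\varphi_n \Pi_{q-1}^t \uht)$ (supported on $\Qn$) into~\eqref{eq:French-Peterson-perturbed-1}. Hence
\begin{equation*}
c^2\big(\nabla \uht, \nabla \Pi_{q-1}^t(\varphi_n \Pi_{q-1}^t \vht)\big)_{\Qn}
= c^2\big(\nabla \dpt \uht, \nabla \Pi_{q-1}^t(\varphi_n \Pi_{q-1}^t \uht)\big)_{\Qn}
+ c^2\big(\nabla \Upsilon, \nabla \Pi_{q-1}^t(\varphi_n \Pi_{q-1}^t \uht)\big)_{\Qn}.
\end{equation*}
The first term on the right is treated exactly like the first term above, but now with $\HH = L^2(\Omega)^d$ and $v_\tau = \nabla \uht$ (again using $\nabla \dpt \uht|_{\Qn} \in \Pp{q-1}{\In}\otimes L^2(\Omega)^d$): identity~\eqref{eq:identities-varphi-integration-2} and Lemma~\ref{lemma:bound-pi-time-varphi_n} bound it below by $\tfrac{c^2}{4}\Norm{\nabla\uht}{L^2(\Sn)^d}^2 - \tfrac{c^2}{2}\Norm{\nabla\uht}{L^2(\Snmo)^d}^2 + \tfrac{c^2\lambda_n}{2}\Norm{\nabla\uht}{L^2(\Qn)^d}^2$. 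The perturbation term is bounded below by $-c^2\CS^2 \Norm{\nabla\Upsilon}{L^1(\In;L^2(\Omega)^d)}\Norm{\nabla\uht}{L^{\infty}(\In;L^2(\Omega)^d)}$, and the right‑hand‑side $f$‑term by $\CS^2\Norm{f}{L^1(\In;L^2(\Omega))}\Norm{\vht}{L^{\infty}(\In;L^2(\Omega))}$, in both cases using $0\le\varphi_n\le1$ and the $L^\infty$‑stability of $\Pi_{q-1}^t$ (Lemma~\ref{lemma:stab-pi-time}) twice. Keeping these as $L^1$–$L^\infty$ products in time (rather than $L^2$–$L^2$) is what makes the eventual bound robust in $c$.

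Collecting terms yields a slab‑local inequality controlling $\tfrac14\Norm{\vht}{L^2(\Sn)}^2 + \tfrac{c^2}{4}\Norm{\nabla\uht}{L^2(\Sn)^d}^2 + \tfrac{\lambda_n}{2}(\Norm{\vht}{L^2(\Qn)}^2 + c^2\Norm{\nabla\uht}{L^2(\Qn)^d}^2)$ by the $\Sigma_{n-1}$ terms plus the two $L^1$–$L^\infty$ products over $\In$. For $n\geq 2$ I would then insert Lemma~\ref{lemma:weak-continuous-CG-wave} at $\tnmo$ to absorb the $\Sigma_{n-1}$ terms; for $n=1$ those terms equal $\Norm{\Pih v_0}{L^2(\Omega)}^2$ and $c^2\Norm{\nabla\Rh u_0}{L^2(\Omega)^d}^2$, directly controlled by the data via the stability of $\Pih$ and $\Rh$. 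This produces the same inequality with $0,\tn$ replacing $\tnmo,\tn$. Next, the inverse estimate of Lemma~\ref{lemma:L2-Linfty} converts $\tfrac{\lambda_n}{2}\Norm{\cdot}{L^2(\Qn)}^2$ into $\tfrac{1}{4\Cinv}\Norm{\cdot}{L^{\infty}(\In;\cdot)}^2$; choosing the index $n$ maximizing $\Norm{\vht}{L^{\infty}(\In;L^2(\Omega))}^2 + c^2\Norm{\nabla\uht}{L^{\infty}(\In;L^2(\Omega)^d)}^2$ and invoking the continuity in time of $\vht$ and $\nabla\uht$ (so that $\Norm{\cdot}{C^0([0,T];\cdot)} = \max_n\Norm{\cdot}{L^{\infty}(\In;\cdot)}$), a final Young inequality applied to the two right‑hand products — absorbing the ensuing $\Norm{\vht}{C^0}^2$ and $c^2\Norm{\nabla\uht}{C^0}^2$ into the left with a small weight — gives the stated estimate, with all hidden constants depending only on $q$.

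Beyond the coupling step, the argument is mechanical; the main obstacle is precisely that one: the test functions for the two equations must be matched so that the mixed gradient term of~\eqref{eq:French-Peterson-perturbed-2} is reproduced verbatim by~\eqref{eq:French-Peterson-perturbed-1} and can be eliminated, after which the weighted integration‑by‑parts identities decouple, acting on $\vht$ in $L^2(\Omega)$ and on $\nabla\uht$ in $L^2(\Omega)^d$ independently.
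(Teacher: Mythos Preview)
Your proposal is correct and follows essentially the same route as the paper: the same slab-localized test functions, the same coupling via~\eqref{eq:French-Peterson-perturbed-1}, the same use of identity~\eqref{eq:identities-varphi-integration-2} and Lemma~\ref{lemma:bound-pi-time-varphi_n} applied separately to $\vht$ and $\nabla\uht$, and the same closing argument via Lemma~\ref{lemma:weak-continuous-CG-wave}, the inverse estimate, and a maximizing index. The only cosmetic difference is that the paper rewrites the coupled gradient term as $c^2(\nabla\uht,(\Id-\Pi_{q-1}^t)(\varphi_n\nabla\dpt\uht))_{\Qn}$ before invoking Lemma~\ref{lemma:bound-pi-time-varphi_n}, whereas you keep it in the equivalent form $c^2(\varphi_n\nabla\dpt\uht,(\Id-\Pi_{q-1}^t)\nabla\uht)_{\Qn}$; both expressions coincide (pair the highest Legendre modes) and the lemma applies directly in your formulation.
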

\begin{proof}

Let~$n \in \{1, \ldots, N\}$. We choose the following test function in~\eqref{eq:French-Peterson-perturbed-2}:
\begin{equation*}
\wht {}_{|_{Q_m}} := \begin{cases}
\Pi_{q-1}^t (\varphi_n \Pi_{q-1}^t \vht) & \text{ if } m = n, \\
0 & \text{ otherwise}.
\end{cases} 
\end{equation*}
We focus on the case~$n > 1$, as the argument for~$n = 1$ follows with minor modifications.

The following identity is obtained:
\begin{alignat}{3}
\label{eq:aux-identity-linear-wave}
(\dpt \vht, \Pi_{q-1}^t (\varphi_n \Pi_{q-1}^t \vht) )_{\Qn} + c^2 (\nabla \uht, \nabla \Pi_{q-1}^t (\varphi_n \Pi_{q-1}^t \vht))_{\Qn} = (f, \Pi_{q-1}^t (\varphi_n \Pi_{q-1}^t \vht))_{\Qn}.
\end{alignat}

Using the orthogonality properties of~$\Pi_{q-1}^t$, the first term on the left-hand side of~\eqref{eq:aux-identity-linear-wave} can be split as follows:
\begin{alignat*}{3}
(\dpt \vht, \Pi_{q-1}^t (\varphi_n \Pi_{q-1}^t \vht))_{\Qn} 
&  = (\dpt \vht, \varphi_n \Pi_{q-1}^t \vht)_{\Qn}  \\
& = (\varphi_n \dpt \vht,  \vht)_{\Qn} - (\varphi_n \dpt \vht, (\Id - \Pi_{q-1}^t ) \vht)_{\Qn} \\
& =: J_1 + J_2.
\end{alignat*}
Identity~\eqref{eq:identities-varphi-integration-2} from Lemma~\ref{lemma:identities-varphi-integration} gives
\begin{alignat*}{3}
J_1 = (\varphi_n \dpt \vht, \vht)_{\Qn} 
\label{eq:J1}
& = \frac{1}{4} \Norm{\vht}{L^2(\Sn)}^2 - \frac12 \Norm{\vht}{L^2(\Sigma_{n - 1})}^2 + \frac{\lambda_n}{2} \Norm{\vht}{L^2(\Qn)}^2.
\end{alignat*}
Moreover, Lemma~\ref{lemma:bound-pi-time-varphi_n} implies that~$J_2 \geq 0$.

Similarly, using~\eqref{eq:French-Peterson-perturbed-1} in the definition of the perturbed problem and the orthogonality properties of~$\Pi_{q-1}^t$, the second term on the left-hand side of~\eqref{eq:aux-identity-linear-wave} can be split as follows:
\begin{alignat*}{3}
c^2 ( \nabla \uht, \nabla \Pi_{q-1}^t(\varphi_n \Pi_{q-1}^t \vht))_{\Qn} 
%%%
& = c^2  (\Pi_{q-1}^t(\varphi_n \nabla \Pi_{q-1}^t \uht), \nabla \vht)_{\Qn} \\
%%%
& = c^2 (\varphi_n \nabla \Pi_{q-1}^t \uht,  \nabla \dpt \uht)_{\Qn} + (\Pi_{q-1}^t(\varphi_n \nabla \Pi_{q-1}^t \uht),  \nabla \Upsilon)_{\Qn} \\
%%%
& = c^2 ( \nabla \uht, \nabla \Pi_{q-1}^t (\varphi_n \dpt \uht) )_{\Qn} + (\Pi_{q-1}^t(\varphi_n \nabla \Pi_{q-1}^t \uht),  \nabla \Upsilon)_{\Qn}  \\
%%%
& = c^2 ( \varphi_n \nabla \uht, \nabla \dpt \uht)_{\Qn} - c^2  (\nabla \uht, (\Id - \Pi_{q-1}^t) (\varphi_n \nabla \dpt \uht))_{\Qn}  \\
& \quad + c^2 (\Pi_{q-1}^t(\varphi_n \nabla \Pi_{q-1}^t \uht),  \nabla \Upsilon)_{\Qn} \\
%%%
& =: L_1 + L_2 + L_3.
\end{alignat*}

Using again identity~\eqref{eq:identities-varphi-integration-2} and Lemma~\ref{lemma:bound-pi-time-varphi_n}, we get~$L_2 \geq 0$ and
\begin{alignat*}{3}
L_1 = c^2 ( \varphi_n \nabla \uht, \nabla \dpt \uht)_{\Qn} = \frac{c^2}{4} \Norm{ \nabla \uht}{L^2(\Sn)^d}^2 - \frac{c^2}{2} \Norm{\nabla \uht}{L^2(\Sigma_{n - 1})^d}^2 + \frac{\lambda_n c^2}{2} \Norm{ \nabla \uht}{L^2(\Qn)^d}^2.
\end{alignat*}

Using the H\"older inequality, the stability bound in Lemma~\ref{lemma:stab-pi-time} for~$\Pi_{q-1}^t$, and the uniform bound for~$\varphi_n$, we get
\begin{alignat*}{3}
L_3 = c^2 (\Pi_{q-1}^t(\varphi_n \nabla \Pi_{q-1}^t \uht),  \nabla \Upsilon)_{\Qn}
%%%
& \le c^2 \CS^2 \Norm{\nabla \Upsilon}{L^1(\In; L^2(\Omega)^d)} \Norm{\nabla \uht}{L^{\infty}(\In; L^2(\Omega)^d)}, \\
%%%%%%%%
(f, \Pi_{q-1}^t(\varphi_n \Pi_{q-1}^t \vht))_{\Qn} & \le \CS^2 \Norm{f}{L^1(\In; L^2(\Omega))} \Norm{\vht}{L^{\infty}(\In; L^2(\Omega))}.
\end{alignat*}

Using the inverse estimate in Lemma~\ref{lemma:L2-Linfty}, we have
\begin{alignat*}{3}
\frac{1}{4\Cinv} \Norm{\vht}{L^{\infty}(\In; L^2(\Omega))}^2 & \le \frac{\lambda_n}{2}\Norm{\vht}{L^2(\Qn)}^2, \\
\frac{c^2}{4\Cinv} \Norm{ \nabla \uht}{L^{\infty}(\In; L^2(\Omega)^d)}^2 & \le \frac{\lambda_n c^2}{2} \Norm{\nabla \uht}{L^2(\Qn)^d}^2.
\end{alignat*}
Therefore, combining the above estimates, and using Lemma~\ref{lemma:weak-continuous-CG-wave} to bound the energy terms at~$\tnmo$, we obtain
\begin{alignat*}{3}
\nonumber
\frac{1}{4} & \big(\Norm{\vht}{L^2(\Sn)}^2 + c^2 \Norm{ \nabla \uht}{L^2(\Sn)^d}^2 \big) + \frac{1}{4 \Cinv^2} \big(\Norm{\vht}{L^{\infty}(\In; L^2(\Omega))}^2 + c^2 \Norm{ \nabla \uht}{L^{\infty}(\In; L^2(\Omega)^d)}^2 \big) \\
%%%
\nonumber
& \le \frac12 \big(\Norm{\vht}{L^2(\Sigma_{n - 1})}^2 + c^2 \Norm{\nabla \uht}{L^2(\Sigma_{n - 1})^d}^2 \big) +  \CS^2 \Norm{f}{L^1(\In; L^2(\Omega))} \Norm{\vht}{L^{\infty}(\In; L^2(\Omega))} \\
\nonumber 
& \quad + c^2 \CS^2 \Norm{ \nabla \Upsilon}{L^1(\In; L^2(\Omega)^d)} \Norm{ \nabla \uht}{L^{\infty}(\In; L^2(\Omega)^d)} \\
%%%
\nonumber
& \le \frac12\big(\Norm{v_0}{L^2(\Omega)}^2 + c^2\Norm{\nabla u_0}{L^2(\Omega)^d}^2 \big) \\
\nonumber
& \quad + \CS \Norm{f}{L^1(0, \tnmo; L^2(\Omega))} \Norm{\vht}{L^{\infty}(0, \tnmo; L^2(\Omega))} 
+ \CS^2 \Norm{f}{L^1(\In; L^2(\Omega))} \Norm{\vht}{L^{\infty}(\In; L^2(\Omega))}\\
\nonumber
& \quad + c^2 \CS \Norm{\nabla \Upsilon}{L^1(0, \tnmo; L^2(\Omega)^d))} \Norm{\nabla \uht }{L^{\infty}(0, \tnmo; L^2(\Omega)^d)} \\
\nonumber 
& \quad + c^2 \CS^2 \Norm{\nabla \Upsilon}{L^1(\In; L^2(\Omega)^d)} \Norm{\nabla \uht }{L^{\infty}(\In; L^2(\Omega)^d)}\\ 
%%%
\nonumber
& \lesssim \frac12 \big(\Norm{v_0}{L^2(\Omega)}^2 + c^2 \Norm{\nabla u_0}{L^2(\Omega)^d}^2  \big) + \Norm{f}{L^1(0, T; L^2(\Omega))} \Norm{\vht}{L^{\infty}(0, T; L^2(\Omega))} \\
& \quad + c^2 \Norm{\nabla \Upsilon}{L^1(0, T; L^2(\Omega)^d)} \Norm{\nabla \uht}{L^{\infty}(0, T; L^2(\Omega)^d)},
\end{alignat*}
where the hidden constant depends only on~$q$.

For each of the terms on the left-hand side, one can choose an index~$n$ where it achieves its
maximum value. This, combined with Lemma~\ref{lemma:weak-continuous-CG-wave}, gives the desired result.
\end{proof}
The existence of a unique solution to~\eqref{eq:French-Peterson} is an immediate consequence of Proposition~\ref{prop:continuous-dependence-CG-wave}, as it corresponds to a linear system with a nonsingular matrix.

% ---------------------------------------------------------------------------------
%               Convergence
% ---------------------------------------------------------------------------------
\subsection{Convergence analysis}
Recalling Definition~\ref{def:Pt-AM} of the Aziz--Monk projection operator~$\PtAM$, we define the space--time projection~$\Piht = \PtAM \circ \Rh$ and the error functions
\begin{equation*}
\eu = u - \uht = u - \Piht u + \Piht \eu \qquad \text{ and } \qquad \ev = v - \vht = v - \Piht v + \Piht \ev. 
\end{equation*}

%%%%
\begin{theorem}[Estimates for the discrete error]
\label{thm:a-priori-CG-wave}
Let~$(u, v)$ be the continuous weak solution to~\eqref{eq:first-order-wave}, and~$(\uht, \vht) \in \oVcont{q} \times \oVcont{q}$ be the solution to the discrete space--time formulation~\eqref{eq:French-Peterson}.
Let also~$\Omega$ satisfy the elliptic regularity condition~\eqref{eq:elliptic-regularity-Omega}, and~$u$ satisfies
\begin{equation*}
\begin{split}
u \in H^1(0, T; & H_0^1(\Omega)) \cap W^{2, 1}(0, T; H^{r+1}(\Omega)) \\
& \text{ with } \Delta u \in W^{s+1, 1}(0, T; L^2(\Omega)) \text{ and } \nabla u \in W^{s+2, 1}(0, T; L^2(\Omega)^d),
\end{split}
\end{equation*}
for some~$0 \le r \le p$ and~$1 \le s \le q$. 
Then, the following estimate holds:
\begin{equation*}
\begin{split}
 \Norm{\Piht \ev}{C^0([0, T]; L^2(\Omega))} 
&  + c \Norm{ \nabla \Piht \eu}{C^0([0, T]; L^2(\Omega)^d)} \\
&  \lesssim  h^{r + 1} \big(\Seminorm{\dptt u}{L^1(0, T; H^{r + 1}(\Omega))} + \Seminorm{v_0}{H^{r+1}(\Omega)} \big)\\
& \quad + \tau^{s + 1} \big(c^2 \Norm{\Delta \dpt^{(s + 1)} u}{L^1(0, T; L^2(\Omega))} + c \Norm{\nabla \dpt^{(s + 2)} u}{L^1(0, T; L^2(\Omega)^d)}\big).
\end{split}
\end{equation*}
\end{theorem}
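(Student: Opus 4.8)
The plan is to follow the template of Section~\ref{sec:structure-theory}: use consistency to obtain an error equation, split each error into a projection part plus a discrete part, show that the discrete parts solve a perturbed problem of the form~\eqref{eq:French-Peterson-perturbed}, and then invoke the continuous dependence on the data of Proposition~\ref{prop:continuous-dependence-CG-wave} together with the approximation properties of $\Rh$ and $\PtAM$. First I would record consistency: since the exact pair is $(u,v)=(u,\dpt u)$, equation~\eqref{eq:French-Peterson-1} holds trivially; and under the stated regularity $\dptt u=f+c^2\Delta u\in L^1(0,T;L^2(\Omega))$, so testing the strong equation against $\wht\in\oVdisc{q-1}$ and integrating by parts in space gives the second equation of~\eqref{eq:French-Peterson}. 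Subtracting the discrete equations then yields, for all $\zht,\wht\in\oVdisc{q-1}$,
\[
c^2(\nabla\ev,\nabla\zht)_{\QT}=c^2(\nabla\dpt\eu,\nabla\zht)_{\QT},\qquad (\dpt\ev,\wht)_{\QT}+c^2(\nabla\eu,\nabla\wht)_{\QT}=0.
\]

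Next I would insert $\eu=(\Id-\Piht)u+\Piht\eu$ and $\ev=(\Id-\Piht)v+\Piht\ev$ (with $\Piht\eu=\Piht u-\uht$, $\Piht\ev=\Piht v-\vht$) and simplify the projection-error contributions using three facts: (a) the Ritz orthogonality kills every term $(\nabla(\Id-\Rh)w,\nabla\zht)_{\QT}$ because $\zht(\cdot,t)\in\oVhp$ for a.e.\ $t$; (b) the $L^2(0,T)$-orthogonality of $\Pi_{q-1}^t$, which (after noting that $\Pi_{q-1}^t$ commutes with $\nabla$) kills $(\nabla(\Id-\Pi_{q-1}^t)\Rh w,\nabla\zht)_{\QT}$ and $((\Id-\Pi_{q-1}^t)\Rh w,\wht)_{\QT}$; and (c) the identities $\dpt\PtAM=\Pi_{q-1}^t\dpt$ (Alternative definition~III of $\PtAM$) and $\Delta\PtAM=\PtAM\Delta$, valid since $\PtAM$ acts only in time. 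Using $v=\dpt u$ one finds $\dpt(\Id-\Piht)u=(\Id-\Rh)v+(\Id-\Pi_{q-1}^t)\Rh v$ and $(\Id-\Piht)v=(\Id-\Rh)v+(\Id-\PtAM)\Rh v$, so that after (a)--(b) all projection terms in the first error equation collapse to $-c^2(\nabla(\Id-\PtAM)\Rh v,\nabla\zht)_{\QT}$; in the second equation the $\Pi_{q-1}^t$-orthogonal part of $\dpt(\Id-\Piht)v$ and the Ritz-orthogonal part of $\nabla(\Id-\Piht)u$ both drop out, and the remaining term $c^2(\nabla(\Id-\PtAM)u,\nabla\wht)_{\QT}$ is integrated by parts in space to $-c^2((\Id-\PtAM)\Delta u,\wht)_{\QT}$ --- legitimate because $(\Id-\PtAM)u(\cdot,t)\in H_0^1(\Omega)$ and $\Delta(\Id-\PtAM)u=(\Id-\PtAM)\Delta u\in L^1(0,T;L^2(\Omega))$ under the stated regularity. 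Hence $(\Piht\eu,\Piht\ev)$ solves a problem of the form~\eqref{eq:French-Peterson-perturbed} with $\widetilde f=-(\Id-\Rh)\dptt u+c^2(\Id-\PtAM)\Delta u$, $\widetilde\Upsilon=-(\Id-\PtAM)\Rh v$, and discrete initial data $\Rh\widetilde u_0=0$, $\Pih\widetilde v_0=-\Pih(\Id-\Rh)v_0$ (using $\PtAM w(0)=w(0)$ and that $\Pih$ is the identity on $\oVhp$).

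Finally I would apply Proposition~\ref{prop:continuous-dependence-CG-wave} to $(\Piht\eu,\Piht\ev)$ and take square roots, so that the weights $c^2\Norm{\cdot}{}^2$ turn into $c\Norm{\cdot}{}$. It then remains to bound the data: $\Norm{(\Id-\Rh)v_0}{L^2(\Omega)}$ and $\Norm{(\Id-\Rh)\dptt u}{L^1(0,T;L^2(\Omega))}$ via the $L^2$-estimate~\eqref{eq:err-Rh-L2} for $\Rh$ (which uses the elliptic regularity assumption~\eqref{eq:elliptic-regularity-Omega}); $c^2\Norm{(\Id-\PtAM)\Delta u}{L^1(0,T;L^2(\Omega))}$ via estimate~\eqref{eq:estimate-ItR} of Lemma~\ref{lemma:estimates-Ptau} applied to $\PtAM$, which is $L^\infty$-stable by Lemma~\ref{lemma:stab-PtAM}; and $c\Norm{\nabla(\Id-\PtAM)\Rh v}{L^1(0,T;L^2(\Omega)^d)}=c\Norm{(\Id-\PtAM)\nabla\Rh v}{L^1(0,T;L^2(\Omega)^d)}$ by the same estimate combined with $\dpt^{(s+1)}\nabla\Rh v=\nabla\Rh\dpt^{(s+2)}u$ and the $H^1$-stability $\Norm{\nabla\Rh w}{L^2(\Omega)}\le\Norm{\nabla w}{L^2(\Omega)}$ of the Ritz projection. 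Collecting the terms (with approximation order $s+1\le q+1$, which needs $s\le q$) yields exactly the asserted bound. The genuinely delicate point --- the only place where this argument departs from the Aziz--Monk and ``plain vanilla'' ones --- is the bookkeeping in equation~\eqref{eq:French-Peterson-1}: one must check that the mismatch between $\PtAM$ applied to $v$ and $\Pi_{q-1}^t\dpt$ (the time derivative of $\PtAM$ applied to $u$) collapses, after invoking $v=\dpt u$ and the orthogonalities above, to the single estimable projection error $(\Id-\PtAM)\Rh v$ rather than to a worse, non-estimable quantity; the rest is routine.
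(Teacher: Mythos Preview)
Your proof is correct and follows the same route as the paper: derive the error equations from consistency, simplify the projection-error contributions via the orthogonality of~$\PtAM$ and~$\Rh$, identify the result as a perturbed problem of the form~\eqref{eq:French-Peterson-perturbed}, and invoke Proposition~\ref{prop:continuous-dependence-CG-wave} together with the approximation properties of~$\PtAM$ and~$\Rh$. The only cosmetic difference is that the paper writes the perturbation as~$\Upsilon=-(\Id-\PtAM)v$ rather than your~$-(\Id-\PtAM)\Rh v$; the two coincide in the weak form by Ritz orthogonality and lead to the same final bound.
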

\begin{proof}
Since the space--time formulation~\eqref{eq:French-Peterson} is consistent  for the regularity assumptions made on~$u$, the following error equations hold:
\begin{subequations}
\label{eq:error-equations-original}
\begin{alignat}{3}
\nonumber
(c^2 \nabla \Piht \ev, \nabla \zht)_{\QT} & - (c^2 \nabla \dpt \Piht \eu, \nabla \zht) \\
\label{eq:error-equations-original-1}
& = (c^2 \nabla \Piht v, \nabla \zht)_{\QT} - (c^2 \nabla \dpt \Piht u, \nabla \zht)_{\QT}, \\
%%%
\nonumber
(\dpt \Piht \ev, \wht)_{\QT} & + (c^2 \nabla \Piht \eu, \wht)_{\QT} \\
\label{eq:error-equations-original-2}
& = -(\dpt (\Id - \Piht)v , \wht) - (c^2 \nabla (\Id - \Piht)u , \nabla \wht)_{\QT}
\end{alignat}
\end{subequations}
for all~$(\zht, \wht) \in \oVdisc{q-1} \times \oVdisc{q-1}$.

We now simplify the terms on the right-hand side of~\eqref{eq:error-equations-original} using the orthogonality properties of~$\PtAM$ and~$\Rh$ as follows:
\begin{subequations}
\begin{alignat}{3}
\nonumber
c^2( \nabla \Piht v, \nabla \zht)_{\QT} & - c^2( \nabla \dpt \Piht u, \nabla \zht)_{\QT}  \\
%%%
\nonumber
& = c^2 (\nabla \PtAM v, \nabla \zht)_{\QT} - c^2( \nabla \dpt  u, \nabla \zht)_{\QT} \\
%%%
\label{eq:projection-error-term-1}
& = - c^2( \nabla (\Id - \PtAM) v, \nabla \zht)_{\QT}, \\
%%%%%%%%%%%%%%%%%%%%%%%%%%%%%
\nonumber
-(\dpt (\Id - \Piht) v, \wht)_{\QT} & - c^2( \nabla (\Id - \Piht) u, \nabla \wht)_{\QT} \\
%%%
\label{eq:projection-error-term-2}
& = -( (\Id - \Rh) \dpt v, \wht)_{\QT} + c^2 ((\Id - \PtAM)\Delta u, \wht)_{\QT},
\end{alignat}
\end{subequations}
where, in the last step of~\eqref{eq:projection-error-term-2}, we have also integrated by parts in space.

We deduce that the discrete error pair~$(\Piht \eu, \Piht \ev)$ solves a perturbed variational problem of the form~\eqref{eq:French-Peterson-perturbed} with data
\begin{equation*}
\begin{split}
& \widetilde{f} = - (\Id - \Rh) \dpt v + c^2 (\Id - \PtAM) \Delta u, \quad \Upsilon  = - (\Id - \PtAM) v,\quad  
 \\
%%%
& \Piht \eu(\cdot, 0)  = 0, \quad 
\Piht \ev(\cdot, 0) = \Rh(v_0 - \Pih v_0) = - \Pih(\Id - \Rh) v_0,
\end{split}
\end{equation*}
as~$\PtAM v(\cdot, 0) = v(\cdot, 0)$.

The estimate in the statement of this theorem then follows by the continuous dependence on the data in Proposition~\ref{prop:continuous-dependence-CG-wave}, and the approximation properties
of~$\PtAM$ and~$\Rh$ in Lemmas~\ref{lemma:estimates-Ptau} (relying on Lemma~\ref{lemma:stab-PtAM}) and~\ref{lemma:Estimates-Rh}, respectively.
\end{proof}

%%%
\begin{corollary}[\emph{A priori} error estimates]
Under the assumptions of Theorem~\ref{thm:a-priori-CG-wave}, and assuming that the continuous solution~$u$ to~\eqref{eq:second-order-weak-formulation-wave} is sufficiently regular, there hold
\begin{alignat*}{3}
\Norm{\ev}{C^{0}([0, T]; L^2(\Omega))} & \lesssim h^{p+1} \Big(\Seminorm{\dptt u}{L^1(0, T; H^{p+1}(\Omega))} + \Seminorm{v_0}{H^{p+1}(\Omega)} + \Seminorm{\dpt u}{L^{\infty}(0, T; H^{p+1}(\Omega))} \Big) \\
& \quad + \tau^{q+1} \big( c^2 \Norm{\Delta \dpt^{(q+1)} u}{L^1(0, T; L^2(\Omega))} + c \Norm{\nabla \dpt^{(q+2)} u}{L^1(0, T; L^2(\Omega)^d)} \\
& \qquad \qquad + \Norm{\dpt^{(q + 2)} u}{L^{\infty}(0, T; L^2(\Omega))}\big), \\
%%%
c \Norm{\nabla \eu}{C^0([0, T]; L^2(\Omega)^d)} & \lesssim  h^{p} \big(\Seminorm{\dptt u}{L^1(0, T; H^p(\Omega))} + \Seminorm{v_0}{H^p(\Omega)} + c \Seminorm{u}{L^{\infty}(0, T; H^{p+1}(\Omega))} \big) \\
& \quad + \tau^{q+1} \big(c^2 \Norm{\Delta \dpt^{(q+1)} u}{L^1(0, T; L^2(\Omega))} + c \Norm{\nabla \dpt^{(q+2)} u}{L^1(0, T; L^2(\Omega)^d)} \\
& \qquad \qquad + c \Norm{\nabla \dpt^{(q+1)} u}{L^{\infty}(0, T; L^2(\Omega)^d)}\big).
\end{alignat*}
\end{corollary}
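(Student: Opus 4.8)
The plan is to obtain both estimates from Theorem~\ref{thm:a-priori-CG-wave} by a triangle inequality, following the template of the convergence-analysis paragraph in Section~\ref{sec:structure-theory}. Writing $\eu = (\Id - \Piht) u + \Piht \eu$ and, since $v = \dpt u$, $\ev = (\Id - \Piht) v + \Piht \ev$, the discrete parts $\Norm{\Piht \ev}{C^0([0,T];L^2(\Omega))}$ and $c\Norm{\nabla \Piht \eu}{C^0([0,T];L^2(\Omega)^d)}$ are already controlled by Theorem~\ref{thm:a-priori-CG-wave}, so the remaining task is to bound the projection errors $(\Id - \Piht) v$ and $\nabla(\Id - \Piht) u$ in the $C^0([0,T];L^2(\Omega))$ norm. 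Here I would use the splitting $\Id - \Piht = (\Id - \PtAM) + \PtAM(\Id - \Rh)$ together with the fact that $\PtAM$ acts only in time, hence commutes with $\nabla$, so that $\nabla(\Id - \Piht) u = (\Id - \PtAM)\nabla u + \PtAM\nabla(\Id - \Rh) u$.

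For the velocity error, I would apply Theorem~\ref{thm:a-priori-CG-wave} with $r = p$ and $s = q$, which gives the $h^{p+1}$ and $\tau^{q+1}$ terms with $\dptt u$, $v_0$, $\Delta\dpt^{(q+1)}u$ and $\nabla\dpt^{(q+2)}u$ of the statement. The term $\Norm{(\Id - \PtAM) v}{C^0}$ is bounded, on each $\In$, by Lemma~\ref{lemma:estimates-Ptau} with exponent $q+1$ (whose stability hypothesis holds by Lemma~\ref{lemma:stab-PtAM}), yielding $\tau_n^{q+1}\Norm{\dpt^{(q+2)}u}{L^\infty(\In;L^2(\Omega))}$, and then taking the maximum over the intervals. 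The term $\Norm{\PtAM(\Id - \Rh) v}{C^0}$ is bounded by $\Norm{(\Id - \Rh) v}{L^\infty(0,T;L^2(\Omega))}$ via Lemma~\ref{lemma:stab-PtAM}, and then by $h^{p+1}\Seminorm{\dpt u}{L^\infty(0,T;H^{p+1}(\Omega))}$ via the $L^2$-estimate~\eqref{eq:err-Rh-L2} for $\Rh$, which is available because $\Omega$ satisfies~\eqref{eq:elliptic-regularity-Omega}. Summing the three contributions gives the first inequality.

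For the gradient error, I would instead apply Theorem~\ref{thm:a-priori-CG-wave} with $r = p - 1$ (legitimate since $p \ge 1$) and $s = q$, which produces the $h^{p}$ terms $\Seminorm{\dptt u}{L^1(0,T;H^p(\Omega))}$, $\Seminorm{v_0}{H^p(\Omega)}$ and the $\tau^{q+1}$ terms of the statement; then $c\Norm{(\Id - \PtAM)\nabla u}{C^0}$ is controlled by Lemma~\ref{lemma:estimates-Ptau} (again via Lemma~\ref{lemma:stab-PtAM}) with exponent $q+1$, giving $c\tau^{q+1}\Norm{\nabla\dpt^{(q+1)}u}{L^\infty(0,T;L^2(\Omega)^d)}$, and $c\Norm{\PtAM\nabla(\Id - \Rh) u}{C^0}$ is controlled by Lemma~\ref{lemma:stab-PtAM} together with the gradient estimate for $\Rh$ in Lemma~\ref{lemma:Estimates-Rh} (with $r = p$), giving $c h^p\Seminorm{u}{L^\infty(0,T;H^{p+1}(\Omega))}$. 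Collecting terms yields the second inequality. The argument is routine; the only points needing care are the commutation $\nabla\PtAM = \PtAM\nabla$, the two different uses of the Ritz bounds --- the $L^2$-estimate for $\ev$ against the $H^1$-seminorm estimate for $\nabla\eu$, which is exactly what makes the $h$-rate drop from $p+1$ to $p$ between the two lines --- and spelling out ``sufficiently regular'' (it suffices to add to the hypotheses of Theorem~\ref{thm:a-priori-CG-wave} with $r=p$, $s=q$ the regularity $\dpt^{(q+2)}u \in L^\infty(0,T;L^2(\Omega))$, the remaining $L^\infty$-in-time norms following from those hypotheses by a Sobolev embedding in time).
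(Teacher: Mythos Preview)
Your proposal is correct and follows exactly the route the paper has in mind: the corollary is stated without proof because it is the routine combination of the discrete-error bound from Theorem~\ref{thm:a-priori-CG-wave} with the triangle inequality and the approximation properties of~$\Piht = \PtAM \circ \Rh$ (Lemmas~\ref{lemma:estimates-Ptau}, \ref{lemma:stab-PtAM}, and~\ref{lemma:Estimates-Rh}), precisely as outlined in the convergence-analysis paragraph of Section~\ref{sec:structure-theory}. Your handling of the two different Ritz estimates (the~$L^2$ bound for~$\ev$ versus the gradient bound for~$\nabla \eu$, with~$r=p$ and~$r=p-1$ respectively in Theorem~\ref{thm:a-priori-CG-wave}) is exactly what produces the advertised~$h^{p+1}$ versus~$h^p$ rates.
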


\begin{remark}[Connection to a stabilized conforming method]
Recently, in~\cite{Ferrari_Perugia_Zampa:2026}, the relationship between 
the method in this section and the stabilized conforming method proposed in~\cite{Zank:2021} was established.
    \eremk
\end{remark}
% ----------------------------------------------------------------
%               DG for the Hamiltonian formulation
% ----------------------------------------------------------------

\section{DG time discretization for the Hamiltonian formulation of the wave equation (Johnson, 1993)}
\label{sec:Johnson-wave}
The following scheme was proposed by Johnson in~\cite{Johnson:1993}, but it was formulated and analyzed only for linear approximations in space and time (i.e., for $p = 1$ and~$q = 1$). Moreover, \emph{a priori} error estimates are obtained only at the discrete times~$\{\tn\}_{n =1}^{N}$ and require a coupling between the meshsize~$h$ and the time step~$\tau$ (see~\cite[Thm.~3.1 and Rem. 3.1]{Johnson:1993}); see also the analysis in~\cite{Rezaei_Sadpanah:2023}. In~\cite{Antonietti_Mazzieri_Migliorini:2023}, the method was applied to an elastodynamic equation, and the analysis was presented for high-order approximations under the assumption of a positive damping term. 

The space--time method reads: find~$(\uht, \vht) \in \oVdisc{q} \times \oVdisc{q}$ (with~$q \geq 1$) such that
\begin{subequations}
\label{eq:original-Johnson}
\begin{alignat}{3}
\nonumber
c^2 (\nabla \vht, \nabla \zht)_{\QT} & = c^2 \sum_{n = 1}^{N} (\dpt \nabla \uht, \nabla \zht)_{\Qn}  + c^2 \sum_{n = 1}^{N - 1} (\jump{\nabla \uht}_n, \nabla \zht(\cdot, \tn^+))_{\Omega}   \\
& \quad + c^2 (\nabla \uht, \nabla \zht)_{\SO} - c^2(\nabla u_0, \nabla \zht(\cdot, 0))_{\Omega}  & & \quad \forall \zht \in \oVdisc{q}, \\
%%%
\nonumber
\sum_{n = 1}^{N} (\dpt \vht, \wht)_{\Qn} & + \sum_{n = 1}^{N - 1} (\jump{\vht}_n, \wht(\cdot, \tn^+))_{\Omega}  + (\vht, \wht)_{\SO} \\
& + c^2(\nabla \uht, \nabla \wht)_{\QT} = (f, \wht)_{\QT} + (v_0, \wht(\cdot,  0))_{\Omega} & & \quad \forall \wht \in \oVdisc{q}.
\end{alignat}
\end{subequations}

For the convergence analysis, it is useful to study the stability of the following perturbed space--time formulation, which we write in compact form using the time reconstruction operator~$\Rt$ in Section~\ref{sec:time-reconstruction}: find~$(\uht, \vht) \in \oVdisc{q} \times \oVdisc{q}$ (with~$q \geq 1$) such that
\begin{subequations}
\label{eq:Johnson}
\begin{alignat}{3}
\nonumber
c^2 (\nabla \vht, \nabla \zht)_{\QT} & = c^2 (\dpt \Rt \nabla \uht, \nabla \zht)_{\QT} - c^2(\nabla u_0, \nabla \zht(\cdot, 0))_{\Omega} \\
\label{eq:Johnson-1}
& \quad + c^2(\nabla \Upsilon, \nabla \zht)_{\QT} & & \quad \forall \zht \in \oVdisc{q}, \\
%%%
\label{eq:Johnson-2}
(\dpt \Rt \vht, \wht)_{\QT} & + c^2(\nabla \uht, \nabla \wht)_{\QT} = (f, \wht)_{\QT} + (v_0, \wht(\cdot,  0))_{\Omega} & & \quad \forall \wht \in \oVdisc{q},
\end{alignat}
\end{subequations}
where~$\Upsilon \in L^2(0, T; H_0^1(\Omega))$ is a perturbation function that will represent a projection error in the convergence analysis. 

The forthcoming stability and convergence analysis follows the structure described in Section~\ref{sec:structure-theory}.
% -------------------------------------------------------------------------------
%                       STABILITY
% -------------------------------------------------------------------------------
\subsection{Stability analysis}
%%%
\begin{lemma}[Weak partial bound]
\label{lemma:weak-bound-Johnson}
Any solution~$(\uht, \vht) \in \oVdisc{q} \times \oVdisc{q}$ to~\eqref{eq:Johnson} satisfies the following bound for~$n = 1, \ldots, N$: 
\begin{alignat*}{3}
\frac12 & \Norm{\vht(\cdot, \tn^-)}{L^2(\Omega)}^2 + \frac12 \sum_{m = 1}^{n - 1} \Norm{\jump{\vht}_m}{L^2(\Omega)}^2 + \frac14 \Norm{\vht}{L^2(\SO)}^2 \\
& + \frac{c^2}{2} \Norm{\nabla \uht(\cdot, \tn^-)}{L^2(\Omega)^d}^2 + \frac{c^2}{2} \sum_{m = 1}^{n - 1} \Norm{\jump{\nabla \uht}_m}{L^2(\Omega)^d}^2 + \frac{c^2}{4} \Norm{\nabla \uht}{L^2(\SO)^d}^2 \\
& \quad \le \Norm{f}{L^1(0, \tn; L^2(\Omega))} \Norm{\vht}{L^{\infty}(0, \tn; L^2(\Omega))} + c^2 \Norm{\nabla \Upsilon}{L^1(0, \tn; L^2(\Omega)^d)} \Norm{\nabla \uht}{L^{\infty}(0, \tn; L^2(\Omega)^d)} \\
& \quad \quad + \Norm{v_0}{L^2(\Omega)}^2 + c^2 \Norm{\nabla u_0}{L^2(\Omega)^d}^2.
\end{alignat*}
\end{lemma}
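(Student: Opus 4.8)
The plan is to mimic the energy arguments of the previous sections, now exploiting the block structure of the mixed system~\eqref{eq:Johnson}. Fix~$n \in \{1, \ldots, N\}$ and note that, since~$\tn$ is a node of~$\Tt$, the cut-off functions~$\chi_{(0, \tn)} \uht$ and~$\chi_{(0, \tn)} \vht$ belong to~$\oVdisc{q}$ and are admissible test functions, so no ``without loss of generality, $n = N$'' reduction is needed. First I would test~\eqref{eq:Johnson-2} with~$\wht = \chi_{(0, \tn)} \vht$ and~\eqref{eq:Johnson-1} with~$\zht = \chi_{(0, \tn)} \uht$, using that~$\wht(\cdot, 0) = \vht(\cdot, 0)$ and~$\zht(\cdot, 0) = \uht(\cdot, 0)$. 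Both choices produce, after invoking the symmetry of the~$L^2(\Omega)^d$-inner product, the common cross term~$c^2 \int_0^{\tn} (\nabla \uht, \nabla \vht)_{\Omega} \dt$: in the equation coming from~\eqref{eq:Johnson-1} it sits alone on the left, so that equation is an explicit expression for this cross term (in terms of the~$\dpt \Rt$-contribution of~$\nabla \uht$, the initial term~$-c^2(\nabla u_0, \nabla \uht(\cdot, 0))_{\Omega}$, and~$c^2(\nabla \Upsilon, \chi_{(0, \tn)} \nabla \uht)_{\QT}$), whereas in the equation coming from~\eqref{eq:Johnson-2} it accompanies the~$\dpt \Rt$-contribution of~$\vht$. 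Substituting the former identity into the latter cancels the cross term.

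It then remains to evaluate the two~$\dpt \Rt$-contributions via identity~\eqref{eq:Rt-vht-jump} of Lemma~\ref{lemma:properties-Rt}. Applied with~${\HH} = L^2(\Omega)$ and~$v_{\tau} = \vht$, it gives~$\tfrac12\big(\Norm{\vht(\cdot,\tn^-)}{L^2(\Omega)}^2 + \sum_{m=1}^{n-1}\Norm{\jump{\vht}_m}{L^2(\Omega)}^2 + \Norm{\vht}{L^2(\SO)}^2\big)$. For the~$\uht$-term, observe that~$\Rt$ acts only in time and hence commutes with~$\nabla$, and that~$\jump{\nabla \uht}_m = \nabla \jump{\uht}_m$; thus the same identity~\eqref{eq:Rt-vht-jump}, now with~${\HH} = L^2(\Omega)^d$ and~$v_{\tau} = \nabla \uht$, yields~$\tfrac12\big(\Norm{\nabla\uht(\cdot,\tn^-)}{L^2(\Omega)^d}^2 + \sum_{m=1}^{n-1}\Norm{\jump{\nabla\uht}_m}{L^2(\Omega)^d}^2 + \Norm{\nabla\uht}{L^2(\SO)^d}^2\big)$, to be multiplied by~$c^2$. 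After these substitutions the left-hand side of the resulting identity is exactly the claimed left-hand side, except that the two~$\SO$-terms temporarily carry the coefficient~$\tfrac12$ instead of~$\tfrac14$, and the right-hand side equals~$(f, \chi_{(0,\tn)}\vht)_{\QT} + (v_0, \vht(\cdot,0))_{\Omega} + c^2(\nabla u_0, \nabla\uht(\cdot,0))_{\Omega} - c^2(\nabla\Upsilon, \chi_{(0,\tn)}\nabla\uht)_{\QT}$.

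To conclude, I would bound the~$f$- and~$\Upsilon$-terms by the H\"older inequality in the duality between~$L^1(0,\tn;L^2(\Omega))$ and~$L^\infty(0,\tn;L^2(\Omega))$, and the two initial terms by Cauchy--Schwarz followed by Young's inequality in the sharp form~$ab \le b^2 + \tfrac14 a^2$, which gives~$(v_0,\vht(\cdot,0))_\Omega \le \Norm{v_0}{L^2(\Omega)}^2 + \tfrac14\Norm{\vht}{L^2(\SO)}^2$ and~$c^2(\nabla u_0,\nabla\uht(\cdot,0))_\Omega \le c^2\Norm{\nabla u_0}{L^2(\Omega)^d}^2 + \tfrac{c^2}{4}\Norm{\nabla\uht}{L^2(\SO)^d}^2$. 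Absorbing~$\tfrac14\Norm{\vht}{L^2(\SO)}^2$ and~$\tfrac{c^2}{4}\Norm{\nabla\uht}{L^2(\SO)^d}^2$ into the left-hand side turns the~$\tfrac12$-coefficients of the~$\SO$-terms into~$\tfrac14$, yielding precisely the stated inequality. The only genuinely delicate step is the algebraic elimination of the cross term~$c^2(\nabla\uht,\nabla\vht)$ between the two equations of the mixed system, which is what makes a single combined inequality possible; everything else is a routine application of~\eqref{eq:Rt-vht-jump} (once in~$L^2(\Omega)$ and once in the vector-valued space~$L^2(\Omega)^d$) together with the H\"older and Young inequalities, and for~$n = 1$ the jump sums are empty so no modification is needed.
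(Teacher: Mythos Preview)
Your argument is correct and essentially identical to the paper's: the paper also tests~\eqref{eq:Johnson-2} with~$\vht$ and then uses~\eqref{eq:Johnson-1} with~$z_{h\tau}=\uht$ to rewrite the cross term~$c^2(\nabla\uht,\nabla\vht)$ via identity~\eqref{eq:Rt-vht-jump}, followed by the same H\"older/Young bounds. The only difference is cosmetic---the paper writes ``without loss of generality~$n=N$'' and works on~$\QT$, whereas you insert the cut-offs~$\chi_{(0,\tn)}$ explicitly; both routes are equivalent since~$\chi_{(0,\tn)}\uht,\chi_{(0,\tn)}\vht\in\oVdisc{q}$.
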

\begin{proof}
Without loss of generality, we prove the result for~$n = N$. 
Choosing~$\wht = \vht$ in~\eqref{eq:Johnson-2}, and using identity~\eqref{eq:identity-Rt-v} from Lemma~\ref{lemma:properties-Rt}, and the H\"older and the Young inequalities, we have
\begin{alignat*}{3}
\frac12  \Norm{\vht}{L^2(\ST)}^2 & + \frac12 \sum_{n = 1}^{N - 1} \Norm{\jump{\vht}_m}{L^2(\Omega)}^2 + \frac12 \Norm{\vht}{L^2(\SO)}^2   + c^2 (\nabla \uht, \nabla \vht)_{\QT} \\
& = (f, \vht)_{\QT} + (v_0, \vht(\cdot, 0))_{\Omega} \\
& \le \Norm{f}{L^1(0, T; L^2(\Omega))} \Norm{\vht}{L^{\infty}(0, T; L^2(\Omega))} + \Norm{v_0}{L^2(\Omega)}^2 + \frac14 \Norm{\vht}{L^2(\SO)}^2.
\end{alignat*}
As for the fourth term on the left-hand side, we can use~\eqref{eq:Johnson-1} and identity~\eqref{eq:identity-Rt-v} to obtain
\begin{alignat*}{3}
c^2 (\nabla \uht, \nabla \vht)_{\QT} 
& = \frac{c^2}{2} \Norm{\nabla \uht}{L^2(\ST)^d}^2 + \frac{c^2}{2} \sum_{n = 1}^{N  - 1} \Norm{\jump{\nabla \uht}_n}{L^2(\Omega)^d}^2 + \frac{c^2}{2} \Norm{\nabla \uht}{L^2(\SO)^d}^2 \\
& \quad  - c^2 (\nabla u_0, \nabla \uht(\cdot, 0))_{\Omega} + c^2(\nabla \Upsilon, \nabla \uht)_{\QT} \\
%%%
& \geq \frac{c^2}{2} \Norm{\nabla \uht}{L^2(\ST)^d}^2 + \frac{c^2}{2} \sum_{n = 1}^{N  - 1} \Norm{\jump{\nabla \uht}_n}{L^2(\Omega)^d}^2 + \frac{c^2}{4} \Norm{\nabla \uht}{L^2(\SO)^d}^2 \\
& \quad  - c^2 \Norm{\nabla u_0}{L^2(\Omega)^d}^2 - c^2\Norm{\nabla \Upsilon}{L^1(0, T; L^2(\Omega)^d)} \Norm{\nabla \uht}{L^{\infty}(0, T; L^2(\Omega)^d)},
\end{alignat*}
which leads to the desired result by using again the Young inequality. 
\end{proof}

We recall the functions~$\{\varphi_n\}_{n = 1}^{N}$ defined in~\eqref{lambdadef}, and the Lagrange interpolant~$\ItR$ associated with the left-sided Gauss-Radau points. 

%%%
\begin{proposition}[Continuous dependence on the data]
\label{prop:continuous-dependence-DG-Johnson}
Any solution~$(\uht, \vht) \in \oVdisc{q} \times \oVdisc{q}$ to~\eqref{eq:Johnson} satisfies
\begin{alignat*}{3}
\Norm{\vht}{L^{\infty}(0, T; L^2(\Omega))}^2 + c^2 \Norm{\nabla \uht}{L^{\infty}(0, T; L^2(\Omega)^d)}^2 & \lesssim \Norm{f}{L^1(0, T; L^2(\Omega))}^2 + c^2 \Norm{\nabla \Upsilon}{L^1(0, T; L^2(\Omega)^d)}^2 \\
& \quad + \Norm{v_0}{L^2(\Omega)}^2 + c^2\Norm{\nabla u_0}{L^2(\Omega)^d}^2,
\end{alignat*}
where the hidden constant depends only on~$q$.
\end{proposition}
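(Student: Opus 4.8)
The plan is to follow the same two-step strategy used for the other schemes, pairing the weak partial bound of Lemma~\ref{lemma:weak-bound-Johnson} with a localized estimate driven by the auxiliary weight functions~$\varphi_n$. Fix $n\in\{1,\dots,N\}$. In the second equation~\eqref{eq:Johnson-2} I would take the test function $\wht$ equal to $\ItR(\varphi_n\vht)$ on $\Qn$ and zero on the other slabs, and in the first equation~\eqref{eq:Johnson-1} I would take $\zht$ equal to $\ItR(\varphi_n\uht)$ on $\Qn$ and zero elsewhere. Adding the two resulting identities should produce a discrete energy identity supported on the single time slab $\Qn$ whose left-hand side controls $\Norm{\vht(\cdot,\tn^-)}{L^2(\Omega)}^2$, $c^2\Norm{\nabla\uht(\cdot,\tn^-)}{L^2(\Omega)^d}^2$, the jump terms $\Norm{\jump{\vht}_{n-1}}{L^2(\Omega)}^2$ and $c^2\Norm{\jump{\nabla\uht}_{n-1}}{L^2(\Omega)^d}^2$, and, crucially, the $\tau_n^{-1}$-weighted quantities $\lambda_n\Norm{\vht}{L^2(\Qn)}^2$ and $c^2\lambda_n\Norm{\nabla\uht}{L^2(\Qn)^d}^2$. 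As usual, the case $n=1$ requires only minor modifications, in which the jump terms $\jump{\vht}_0$ and $\jump{\nabla\uht}_0$ are reinterpreted through the initial data $v_0$ and $\nabla u_0$ (cf. the footnote to Lemma~\ref{lemma:identities-varphi-integration}).

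Three ingredients must be assembled on the left-hand side. First, since the chosen test functions are supported on a single slab, the global terms $(\dpt\Rt\vht,\,\cdot\,)_{\QT}$ and $c^2(\dpt\Rt\nabla\uht,\,\cdot\,)_{\QT}$ localize, by Definition~\ref{def:time-reconstruction}, to $\int_{\In}(\dpt\,\cdot\,,\,\cdot\,)_\Omega\dt+(\jump{\cdot}_{n-1},\,\cdot\,(\cdot,\tnmo^+))_\Omega$ (with the jump term replaced by an initial-data contribution for $n=1$). Second, because $\tnmo$ is a left Gauss--Radau node and $\varphi_n(\tnmo)=1$, one has $\ItR(\varphi_n\vht)(\cdot,\tnmo^+)=\vht(\cdot,\tnmo^+)$, and by the exactness of the Gauss--Radau rule (equivalently, property~\eqref{eq:ItR-exact}) $(\dpt\vht,\ItR(\varphi_n\vht))_{\Qn}=(\dpt\vht,\varphi_n\vht)_{\Qn}$, with the analogous identities for $\nabla\uht$; identity~\eqref{eq:identities-varphi-integration-1} of Lemma~\ref{lemma:identities-varphi-integration} then turns each of these into the desired $\tfrac14\Norm{\cdot(\cdot,\tn^-)}{}^2+\tfrac12\Norm{\jump{\cdot}_{n-1}}{}^2-\tfrac12\Norm{\cdot(\cdot,\tnmo^-)}{}^2+\tfrac{\lambda_n}{2}\Norm{\cdot}{L^2(\Qn)}^2$. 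Third --- and this is why the two equations must be added rather than treated separately --- the two cross terms $c^2(\nabla\vht,\nabla\ItR(\varphi_n\uht))_{\Qn}$ and $c^2(\nabla\uht,\nabla\ItR(\varphi_n\vht))_{\Qn}$ cancel: evaluating both by the Gauss--Radau quadrature (exact on the relevant degree) reduces each to $c^2\sum_i\omega_i^{(n)}\varphi_n(s_i^{(n)})(\nabla\uht(s_i^{(n)}),\nabla\vht(s_i^{(n)}))_\Omega$, and these coincide by the symmetry of the $L^2(\Omega)^d$ inner product. After this cancellation the only genuinely new data terms remaining on the right-hand side are $(f,\ItR(\varphi_n\vht))_{\Qn}$ and $-c^2(\nabla\Upsilon,\nabla\ItR(\varphi_n\uht))_{\Qn}$.

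These are estimated by the H\"older inequality, the $L^\infty$-stability~\eqref{eq:stab-ItR-Linfty} of $\ItR$, the commutation $\nabla\ItR(\varphi_n\uht)=\ItR(\varphi_n\nabla\uht)$, and $0\le\varphi_n\le1$, giving $\CSI\Norm{f}{L^1(\In;L^2(\Omega))}\Norm{\vht}{L^\infty(\In;L^2(\Omega))}$ and $c^2\CSI\Norm{\nabla\Upsilon}{L^1(\In;L^2(\Omega)^d)}\Norm{\nabla\uht}{L^\infty(\In;L^2(\Omega)^d)}$. Bounding the surviving energy terms at $\tnmo$ by Lemma~\ref{lemma:weak-bound-Johnson} (with a hidden constant independent of $n$), converting $\lambda_n\Norm{\vht}{L^2(\Qn)}^2$ and $c^2\lambda_n\Norm{\nabla\uht}{L^2(\Qn)^d}^2$ into $L^\infty(\In;\cdot)$ control via the inverse estimate of Lemma~\ref{lemma:L2-Linfty}, and then choosing the index $n$ maximizing the left-hand side, one reaches a bound of the form
\[
Y^2 \lesssim \Norm{v_0}{L^2(\Omega)}^2+c^2\Norm{\nabla u_0}{L^2(\Omega)^d}^2+\big(\Norm{f}{L^1(0,T;L^2(\Omega))}+c\Norm{\nabla\Upsilon}{L^1(0,T;L^2(\Omega)^d)}\big)\,Y,
\]
where $Y^2:=\Norm{\vht}{L^\infty(0,T;L^2(\Omega))}^2+c^2\Norm{\nabla\uht}{L^\infty(0,T;L^2(\Omega)^d)}^2$; a final application of the Young inequality absorbs the $Y$-linear terms and yields exactly the asserted estimate.

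The step I expect to be the main obstacle is the bookkeeping around the first equation~\eqref{eq:Johnson-1}: one must check that, after testing with the localized $\ItR(\varphi_n\uht)$, the term $c^2(\dpt\Rt\nabla\uht,\nabla\ItR(\varphi_n\uht))_{\Qn}$ --- together with the boundary contribution $-c^2(\nabla u_0,\nabla\zht(\cdot,0))_\Omega$ when $n=1$ --- reduces precisely to $c^2(\dpt\nabla\uht,\varphi_n\nabla\uht)_{\Qn}$ plus the correct jump (or initial-data) term, and that the two cross terms cancel identically; both facts rest on the exact degree count for which the Gauss--Radau quadrature is exact. Once these algebraic identities are verified, the remainder of the argument is entirely parallel to the proof of Proposition~\ref{prop:continuous-dependence-DG-heat}.
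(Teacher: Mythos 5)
Your proposal is correct and follows essentially the same route as the paper: test the second equation with the localized $\ItR(\varphi_n\vht)$, use the exactness of the Gauss--Radau rule to identify $c^2(\nabla\uht,\nabla\ItR(\varphi_n\vht))_{\Qn}$ with $c^2(\nabla\vht,\nabla\ItR(\varphi_n\uht))_{\Qn}$, eliminate this cross term via the first equation tested with $\ItR(\varphi_n\uht)$ (your ``add and cancel'' is the same algebra as the paper's substitution), apply identity~\eqref{eq:identities-varphi-integration-1} to both resulting time-derivative terms, and close with Lemma~\ref{lemma:weak-bound-Johnson}, the inverse estimate of Lemma~\ref{lemma:L2-Linfty}, and a maximizing index. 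The bounds on the $f$ and $\Upsilon$ terms via the $L^\infty$-stability of $\ItR$ also match the paper's treatment.
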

\begin{proof}
For each~$n = 1, \ldots, N$, we define the test function
\begin{equation*}
\wht{}_{|_{Q_m}} : = \begin{cases}
\ItR(\varphi_n \vht) & \text{ if } m = n, \\
0 & \text{ otherwise}.
\end{cases}
\end{equation*}
We focus on the case~$n > 1$, as the argument works for~$n = 1$ with only minor modifications. 

Choosing~$\wht$ in~\eqref{eq:Johnson-2} as above  gives
\begin{equation}
\label{eq:aux-identity-Johnson}
\begin{split}
(\dpt \vht, \ItR(\varphi_n \vht))_{\Qn} + (\jump{\vht}_n, \ItR(\varphi_n \vht)(\cdot, \tn^+))_{\Omega}  + c^2 (\nabla \uht, \nabla \ItR(\varphi_n \vht))_{\Qn} \\
= (f, \ItR(\varphi_n \vht))_{\Qn}.
\end{split}
\end{equation}

Using identities~\eqref{eq:ItR-exact} and~\eqref{eq:identities-varphi-integration-1}, we can show that
\begin{alignat*}{3}
\nonumber
(\dpt \vht, & \ItR (\varphi_n \vht))_{\Qn} + (\jump{\vht}_{n - 1}, \ItR(\varphi_n \vht)(\cdot, \tnmo^+))_{\Omega} \\
& = \frac14 \Norm{\vht(\cdot, \tn^-)}{L^2(\Omega)}^2 + \frac12 \Norm{\jump{\vht}_{n - 1}}{L^2(\Omega)}^2 - \frac12 \Norm{\vht(\cdot, \tnmo^-)}{L^2(\Omega)}^2 + \frac{\lambda_n}{2} \Norm{\vht}{L^2(\Qn)}^2.
\end{alignat*}

As for the third term on the left-hand side of~\eqref{eq:aux-identity-Johnson}, using the exactness of the Gauss-Radau quadrature rule, we get
\begin{alignat*}{3}
c^2 (\nabla \uht, \nabla \ItR (\varphi_n \vht))_{\Qn} & = \sum_{i = 1}^{q+1} \omega_i^{(n)} \varphi_n(s_i^{(n)}) (\nabla \uht(\cdot, s_i^{(n)}), \nabla \vht(\cdot, s_i^{(n)}))_{\Omega} \\
& = c^2 (\nabla \ItR(\varphi_n \uht), \nabla \vht)_{\Qn},
\end{alignat*}
which, together with equation~\eqref{eq:Johnson-1}, identity~\eqref{eq:identities-varphi-integration-1} from Lemma~\ref{lemma:identities-varphi-integration}, and the Young inequality, gives
\begin{alignat*}{3}
c^2(\nabla \uht, \nabla \ItR(\varphi_n \vht))_{\Qn} 
& = (\dpt \nabla \uht, \nabla \ItR (\varphi_n \uht))_{\Qn} + c^2 (\jump{\nabla \uht}_{n-1}, \nabla \ItR (\varphi_n \uht)(\cdot, \tnmo^+))_{\Omega} \\
\nonumber
& \quad + c^2 (\nabla \Upsilon, \nabla \ItR (\varphi_n \uht))_{\Qn}\\
%%%
& = (\dpt \nabla \uht, \nabla \varphi_n \uht)_{\Qn} + c^2 (\jump{\nabla \uht}_{n-1}, \nabla \uht(\cdot, \tnmo^+))_{\Omega} \\
\nonumber
& \quad + c^2 (\nabla \Upsilon, \nabla \ItR (\varphi_n \uht))_{\Qn}\\
%%%
& = \frac{c^2}{4} \Norm{\nabla \uht(\cdot, \tn^-)}{L^2(\Omega)^d}^2 + \frac{c^2}{2} \Norm{\jump{\nabla \uht}_{n - 1}}{L^2(\Omega)^d}^2 \\
& \quad - \frac{c^2}{2} \Norm{\nabla \uht(\cdot, \tnmo^-)}{L^2(\Omega)^d}^2 + \frac{\lambda_n c^2}{2} \Norm{\nabla \uht}{L^2(\Qn)^d}^2 \\
& \quad + c^2 (\nabla \Upsilon, \nabla \ItR (\varphi_n \uht))_{\Qn}.
\end{alignat*}

Combining these two identities with~\eqref{eq:aux-identity-Johnson}, and using the H\"older inequality, the stability bound in Lemma~\ref{lemma:stab-ItR} of~$\ItR$, and the weak partial bound in Lemma~\ref{lemma:weak-bound-Johnson}, we get
\begin{alignat}{3}
\nonumber
\frac14 \Norm{\vht(\cdot, \tn^-)}{L^2(\Omega)}^2 & + \frac12 \Norm{\jump{\vht}_{n - 1}}{L^2(\Omega)}^2 + \frac{\lambda_n}{2} \Norm{\vht}{L^2(\Qn)}^2 \\
\nonumber
+ \frac{c^2}{4} \Norm{\nabla \uht(\cdot, \tn^-)}{L^2(\Omega)^d}^2 & + \frac12 \Norm{\jump{\nabla \uht}_{n - 1}}{L^2(\Omega)^d}^2 + \frac{\lambda_n c^2}{2} \Norm{\nabla \uht}{L^2(\Qn)^d}^2 \\
%%%
\nonumber
& \le \frac12 \Norm{\vht(\cdot, \tnmo^-)}{L^2(\Omega)}^2 + \frac{c^2}{2} \Norm{\nabla \uht(\cdot, \tnmo^-)}{L^2(\Omega)^d}^2 + (f, \ItR (\varphi_n \vht) )_{\Qn} \\
\nonumber
& \quad + c^2 (\nabla \Upsilon, \nabla \ItR(\varphi_n \uht))_{\Qn} \\
%%%
\nonumber
& \le \frac12 \Norm{\vht(\cdot, \tnmo^-)}{L^2(\Omega)}^2 + \frac{c^2}{2} \Norm{\nabla \uht(\cdot, \tnmo^-)}{L^2(\Omega)^d}^2 \\
\nonumber
& \quad + \CSI \Norm{f}{L^1(\In; L^2(\Omega))} \Norm{\vht}{L^{\infty}(\In; L^2(\Omega))} \\
\nonumber
& \quad + c^2 \CSI \Norm{\nabla \Upsilon}{L^1(\In; L^2(\Omega)^d)} \Norm{\nabla \uht}{L^{\infty}(\In; L^2(\Omega)^d)} \\
%%%
\nonumber
& \le \Norm{v_0}{L^2(\Omega)}^2 + c^2 \Norm{\nabla u_0}{L^2(\Omega)}^2 + \Norm{f}{L^1(0, \tnmo; L^2(\Omega))} \Norm{\vht}{L^{\infty}(0, \tnmo; L^2(\Omega))} \\
\nonumber
& \quad + \CSI \Norm{f}{L^1(\In; L^2(\Omega))} \Norm{\vht}{L^{\infty}(\In; L^2(\Omega))} \\
\nonumber
& \quad + c^2 \Norm{\nabla \Upsilon}{L^1(0, \tnmo; L^2(\Omega)^d)} \Norm{\nabla \uht}{L^{\infty}(0, \tnmo; L^2(\Omega)^d)} \\
\nonumber
& \quad + c^2 \CSI \Norm{\nabla \Upsilon}{L^1(\In; L^2(\Omega)^d)} \Norm{\nabla \uht}{L^{\infty}(\In; L^2(\Omega)^d)} \\
%%%
\nonumber
& \lesssim \Norm{v_0}{L^2(\Omega)}^2 + c^2 \Norm{\nabla u_0}{L^2(\Omega)^d}^2 + \Norm{f}{L^1(0, \tn; L^2(\Omega))} \Norm{\vht}{L^{\infty}(0, \tn; L^2(\Omega))} \\
\nonumber
& \quad + c^2 \Norm{\nabla \Upsilon}{L^1(0, \tn; L^2(\Omega)^d)}\Norm{\nabla \uht}{L^{\infty}(0, \tn; L^2(\Omega)^d)} ,
\end{alignat}
where it is crucial that the hidden constant is independent of the time interval~$\In$ and depends only on the degree of approximation~$q$.

From the inverse estimate in Lemma~\ref{lemma:L2-Linfty}, we deduce that
\begin{alignat*}{3}
\frac{1}{4 \Cinv} \Norm{\vht}{L^{\infty}(\In; L^2(\Omega))}^2 & \le \frac{\lambda_n}{2} \Norm{\vht}{L^2(\Qn)}^2, \\
\frac{c^2}{4 \Cinv} \Norm{\nabla \uht}{L^{\infty}(\In; L^2(\Omega)^d)}^2&  \le \frac{\lambda_n c^2}{2} \Norm{\nabla \uht}{L^2(\Qn)^d}^2,
\end{alignat*}
which allows us to show that
\begin{alignat*}{3}
\Norm{\vht}{L^{\infty}(\In; L^2(\Omega))}^2 + c^2 \Norm{\nabla \uht}{L^{\infty}(\In; L^2(\Omega)^d)}^2 
& \lesssim \Norm{v_0}{L^2(\Omega)}^2 + c^2 \Norm{\nabla u_0}{L^2(\Omega)^d}^2 \\
& \quad + \Norm{f}{L^1(0, T; L^2(\Omega))} \Norm{\vht}{L^{\infty}(0, T; L^2(\Omega))}\\
& \quad + c^2 \Norm{\nabla \Upsilon}{L^1(0, T; L^2(\Omega)^d)}\Norm{\nabla \uht}{L^{\infty}(0, T; L^2(\Omega)^d)}.
\end{alignat*}
For each of the terms on the left-hand side, one can choose an index~$n$ where it achieves its
maximum value. This, combined with Lemma~\ref{lemma:weak-bound-Johnson}, gives the desired result.
\end{proof}
The existence of a unique solution to~\eqref{eq:Johnson} is an immediate consequence of Proposition~\ref{prop:continuous-dependence-DG-Johnson}, as it corresponds to a linear system with a nonsingular matrix.

% -------------------------------------------------------------------------------
%                       CONVERGENCE
% -------------------------------------------------------------------------------
\subsection{Convergence analysis}
Recalling Definition~\ref{def:Pt} of the Thom\'ee projection operator~$\PtTh$, we define the space--time projection~$\Piht := \PtTh \circ \Rh$ and the error functions
\begin{alignat*}{3}
\ev := v - \vht = (v - \Piht v) + \Piht \ev \quad \text{ and } \quad \eu := u - \uht = (u - \Piht u) + \Piht \eu.
\end{alignat*}

%%%
\begin{theorem}[\emph{A priori} error estimate]
\label{thm:a-priori-Johnson}
Let~$u$ be the solution to the continuous weak formulation~\eqref{eq:second-order-weak-formulation-wave} and~$v = \dpt u$, and let~$(\uht, \vht) \in \oVdisc{q} \times \oVdisc{q}$ be the solution to the discrete space--time formulation~\eqref{eq:original-Johnson}. Assume that~$\Omega$ satisfies the elliptic regularity assumption~\eqref{eq:elliptic-regularity-Omega}, and that the continuous solution~$u$ satisfies
\begin{equation*}
\begin{split}
u \in H^1(0, T; & H_0^1(\Omega)) \cap W^{2, 1}(0, T; H^{r+1}(\Omega)) \\
& \text{ with } \Delta u \in W^{s+1, 1}(0, T; L^2(\Omega)) \text{ and } \nabla u \in W^{s+2, 1}(0, T; L^2(\Omega)^d),
\end{split}
\end{equation*}
for some~$0 \le r \le p$ and~$1 \le s \le q$. Then, the following estimate holds:
\begin{alignat}{3}
\nonumber
\Norm{\Piht \ev}{L^{\infty}(0, T; L^2(\Omega))} & + c^2 \Norm{\nabla \Piht \eu}{L^{\infty}(0, T; L^2(\Omega)^d)}  + \Seminorm{\Piht \ev}{\sf J} + c^2 \Seminorm{\nabla \Piht \eu}{\sf J} \\
\nonumber
& \quad \lesssim h^{p+1} \big( \Norm{\dptt u}{L^1(0, T; H^{p+1}(\Omega))}  + \Seminorm{v_0}{H^{r+1}(\Omega)} \big)\\
\nonumber
& \qquad + \tau^{q+1} \big( c^2 \Norm{\Delta \dpt^{(q+1)} u}{L^1(0, T; L^2(\Omega))}  + c \Norm{\nabla \dpt^{(q+2)} u}{L^1(0, T; L^2(\Omega)^d)} \big).
\end{alignat}
\end{theorem}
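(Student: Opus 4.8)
The plan is to follow the general scheme of Section~\ref{sec:structure-theory}, mirroring the French--Peterson argument with the projection~$\PtAM$ replaced by the Thom\'ee projection~$\PtTh$ of degree~$q$ and with the time reconstruction~$\Rt$ built in. First I would establish consistency of~\eqref{eq:original-Johnson}: under the stated regularity, $v=\dpt u\in C^0([0,T];L^2(\Omega))$ and $\nabla u\in C^0([0,T];L^2(\Omega)^d)$, so all time-jump terms evaluated at the exact solution vanish, and identity~\eqref{eq:identity-Rt-v} shows that~$(u,v)$ satisfies the compact system~\eqref{eq:Johnson} with~$\Upsilon=0$ (the terms carrying~$u_0$ and~$v_0$ being reproduced exactly). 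Subtracting the identities satisfied by~$(u,v)$ and~$(\uht,\vht)$, writing $\eu=(\Id-\Piht)u+\Piht\eu$ and $\ev=(\Id-\Piht)v+\Piht\ev$, and moving the projection errors to the right-hand side, I obtain a system of the form~\eqref{eq:Johnson} for the discrete error pair~$(\Piht\eu,\Piht\ev)$.

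The key step is simplifying that right-hand side, using three orthogonalities: (a) the Galerkin orthogonality of~$\Rh$, which annihilates any factor~$(\Id-\Rh)(\cdot)$ tested against~$\nabla\oVhp$; (b) the fact that $\nabla(\Id-\Rh)w(t)\perp\nabla\oVhp$ for a.e.~$t$ is preserved by~$\PtTh$, which acts only in time, so that $(\nabla(\Id-\PtTh)\Rh w,\nabla\zht)_{\QT}=(\nabla(\Id-\PtTh)w,\nabla\zht)_{\QT}$; and (c) the orthogonality~\eqref{eq:orthogonality-PtTh} of~$\dpt\Rt\PtTh$ against~$\Pp{q}{\Tt}\otimes\HH$ (with $\HH=L^2(\Omega)^d$ in the first equation and $\HH=L^2(\Omega)$ in the second), which replaces~$\dpt\Rt\Piht(\cdot)$ by~$\dpt(\cdot)$ plus an initial term. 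Commuting~$\nabla$ past~$\PtTh$ and~$\Rt$, the first equation collapses to the perturbation~$-c^2(\nabla(\Id-\PtTh)\dpt u,\nabla\zht)_{\QT}$, giving~$\widetilde{\Upsilon}=-(\Id-\PtTh)\dpt u$ and~$\widetilde{u}_0=0$; in the second equation an integration by parts in space (legitimate since~\eqref{eq:elliptic-regularity-Omega} together with $\Delta u\in W^{s+1,1}(0,T;L^2(\Omega))$ gives $u(t)\in H^2(\Omega)$) turns the~$\nabla(\Id-\Piht)u$ term into~$c^2((\Id-\PtTh)\Delta u,\wht)_{\QT}$, while the~$\dpt\Rt(\Id-\Piht)v$ term becomes~$-((\Id-\Rh)\dptt u,\wht)_{\QT}-((\Id-\Rh)v_0,\wht(\cdot,0))_{\Omega}$. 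Thus~$(\Piht\eu,\Piht\ev)$ solves~\eqref{eq:Johnson} with data~$\widetilde{f}=-(\Id-\Rh)\dptt u+c^2(\Id-\PtTh)\Delta u$, $\widetilde{\Upsilon}=-(\Id-\PtTh)\dpt u$, $\widetilde{u}_0=0$, and $\widetilde{v}_0=-(\Id-\Rh)v_0$.

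Finally I would invoke Proposition~\ref{prop:continuous-dependence-DG-Johnson} for this perturbed problem to control $\Norm{\Piht\ev}{L^{\infty}(0,T;L^2(\Omega))}$ and $c\Norm{\nabla\Piht\eu}{L^{\infty}(0,T;L^2(\Omega)^d)}$, and then use the weak partial bound of Lemma~\ref{lemma:weak-bound-Johnson} at~$n=N$, together with these~$L^\infty$ estimates and the Young inequality, to also control the jump seminorms $\Seminorm{\Piht\ev}{\sf J}$ and $c\Seminorm{\nabla\Piht\eu}{\sf J}$. It then remains to estimate $\Norm{\widetilde{f}}{L^1(0,T;L^2(\Omega))}$, $\Norm{\nabla\widetilde{\Upsilon}}{L^1(0,T;L^2(\Omega)^d)}$, and $\Norm{\widetilde{v}_0}{L^2(\Omega)}$: the~$(\Id-\Rh)$ factors via the~$L^2$-estimate~\eqref{eq:err-Rh-L2} for~$\Rh$ applied pointwise in time and integrated (using~\eqref{eq:elliptic-regularity-Omega}), and the~$(\Id-\PtTh)$ factors via Lemma~\ref{lemma:estimates-Ptau} (relying on the~$L^\infty$-stability of~$\PtTh$ from Lemma~\ref{lemma:stab-Pt}), with~$s+1\le q+1$ and~$r\le p$; this produces the claimed convergence rates.

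The step I expect to be the main obstacle is the simplification of the error equation: correctly commuting~$\PtTh$ and~$\Rt$ past~$\nabla$ and~$\Delta$, verifying that every~$(\Id-\Rh)$ contribution tested against a gradient vanishes, and keeping track of which pieces end up in~$\widetilde{f}$ and which in~$\widetilde{\Upsilon}$. Once the perturbed problem has been correctly identified, the remainder is a direct application of the stability result in Proposition~\ref{prop:continuous-dependence-DG-Johnson} and of the approximation estimates already established.
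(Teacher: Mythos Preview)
Your proposal is correct and follows essentially the same approach as the paper: you derive the same perturbed problem for~$(\Piht\eu,\Piht\ev)$ with data $\widetilde{f}=-(\Id-\Rh)\dptt u+c^2(\Id-\PtTh)\Delta u$, $\widetilde{\Upsilon}=-(\Id-\PtTh)v$, $\widetilde{u}_0=0$, $\widetilde{v}_0=-(\Id-\Rh)v_0$, and then apply Proposition~\ref{prop:continuous-dependence-DG-Johnson} together with the approximation properties of~$\PtTh$ and~$\Rh$. Your explicit use of Lemma~\ref{lemma:weak-bound-Johnson} to recover the jump seminorms~$\Seminorm{\cdot}{\sf J}$ is a helpful clarification that the paper's proof leaves implicit.
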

\begin{proof}
Adding and subtracting suitable terms in~\eqref{eq:Johnson-1} and using the orthogonality properties of~$\PtTh$ and~$\Rh$, Lemma~\ref{lemma:orthogonality-DG-PtTh}, and the fact that~$v = \dpt u$, we get
\begin{alignat}{3}
\nonumber
c^2 (\nabla  \Piht \ev, \nabla \zht)_{\QT} & - c^2 (\nabla \dpt \Rt \Piht \eu, \nabla \zht)_{\QT} \\
%%%
\nonumber
& = 
c^2 ( \nabla \Piht v, \nabla \zht)_{\QT} - c^2(\nabla \dpt \Rt \Piht u, \nabla \zht)_{\QT} \\
\nonumber 
& \quad + c^2 (\nabla u_0, \nabla \zht)_{\Omega} \\
%%% 
\nonumber
& = 
c^2 ( \nabla \PtTh v, \nabla \zht)_{\QT} -c^2 (\nabla \dpt \Rt \PtTh u, \nabla \zht)_{\QT} \\
\nonumber
& \quad + c^2 (\nabla u_0, \nabla \zht)_{\Omega} \\
%%%
\label{eq:aux-error-Johnson-1}
& = - c^2 (\nabla (\Id - \PtTh) v, \nabla \zht)_{\QT}.
\end{alignat}

We can proceed similarly to get the following identity from~\eqref{eq:Johnson-2} and the consistency of the method:
\begin{alignat}{3}
\nonumber
(\dpt \Rt \Piht \ev, \wht)_{\QT} & + c^2 (\nabla \Piht \eu, \nabla \wht)_{\QT} \\
\nonumber
& = - (f, \wht)_{\QT} - (v_0, \wht(\cdot, 0))_{\Omega} + (\dpt \Rt \Piht v, \wht)_{\QT} \\
\nonumber
& \quad + c^2(\nabla \Piht u, \nabla \wht)_{\QT} \\
%%%
\nonumber
& = -(\dpt v, \wht)_{\QT} - c^2 (\nabla u, \nabla \wht)_{\QT} + (\dpt \Rh v, \wht)_{\QT} \\
\nonumber
& \quad - ((\Id - \Rh) v_0, \wht(\cdot, 0))_{\Omega}  + c^2(\nabla \PtTh u, \nabla \wht)_{\QT} \\
%%%
\nonumber
& = - ((\Id - \Rh) \dpt v, \wht)_{\QT} + c^2((\Id - \PtTh) \Delta u, \wht)_{\QT}  \\
\label{eq:aux-error-Johnson-2}
& \quad - ((\Id - \Rh) v_0, \wht(\cdot,0))_{\Omega}.
\end{alignat}

Putting together~\eqref{eq:aux-error-Johnson-1} and~\eqref{eq:aux-error-Johnson-2}, it follows that the discrete error pair~$(\Piht \eu, \Piht \ev) \in \oVdisc{q} \times \oVdisc{q}$ solves a problem of the form~\eqref{eq:Johnson} with data
\begin{equation*}
\widetilde{f} = - (\Id - \Rh) \dpt v + c^2 (\Id - \PtTh) \Delta u, \quad \Upsilon = \PtTh v - v, \quad \widetilde{u}_0 = 0, \quad \widetilde{v}_0 = - (\Id - \Rh)v_0. 
\end{equation*}

The error estimate then follows by using the continuous dependence on the data in Proposition~\ref{prop:continuous-dependence-DG-Johnson}, and the approximation properties of~$\PtTh$ and~$\Rh$ in Lemma~\ref{lemma:estimates-Ptau} (relying on Lemma~\ref{lemma:stab-Pt}) and~\ref{lemma:Estimates-Rh}, respectively. 
\end{proof}

%%%
\begin{corollary}[\emph{A priori} error estimates]
Under the assumptions of Theorem~\ref{thm:a-priori-Johnson}, and assuming that the continuous solution~$u$ to~\eqref{eq:second-order-weak-formulation-wave} is sufficiently regular, there hold
\begin{alignat*}{3}
\Norm{\ev}{L^{\infty}(0, T; L^2(\Omega))} & \lesssim h^{p+1} \Big(\Seminorm{\dptt u}{L^1(0, T; H^{p+1}(\Omega))} + \Seminorm{v_0}{H^{p+1}(\Omega)} + \Seminorm{\dpt u}{L^{\infty}(0, T; H^{p+1}(\Omega))} \Big) \\
& \quad + \tau^{q+1} \big( c^2 \Norm{\Delta \dpt^{(q+1)} u}{L^1(0, T; L^2(\Omega))} + c \Norm{\nabla \dpt^{(q+2)} u}{L^1(0, T; L^2(\Omega)^d)} \\
& \qquad \qquad + \Norm{\dpt^{(q + 2)} u}{L^{\infty}(0, T; L^2(\Omega))}\big), \\
%%%
c \Norm{\nabla \eu}{L^{\infty}(0, T; L^2(\Omega)^d)} & \lesssim  h^{p} \big(\Seminorm{\dptt u}{L^1(0, T; H^p(\Omega))} + \Seminorm{v_0}{H^p(\Omega)} + c \Seminorm{u}{L^{\infty}(0, T; H^{p+1}(\Omega))} \big) \\
& \quad + \tau^{q+1} \big(c^2 \Norm{\Delta \dpt^{(q+1)} u}{L^1(0, T; L^2(\Omega))} + c \Norm{\nabla \dpt^{(q+2)} u}{L^1(0, T; L^2(\Omega)^d)} \\
& \qquad \qquad + c \Norm{\nabla \dpt^{(q+1)} u}{L^{\infty}(0, T; L^2(\Omega)^d)}\big).
\end{alignat*}
\end{corollary}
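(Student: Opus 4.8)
The plan is to deduce the asserted bounds on the full errors $\ev = v - \vht$ and $\eu = u - \uht$ from the estimate on the discrete error functions $\Piht \ev$ and $\Piht \eu$ established in Theorem~\ref{thm:a-priori-Johnson}, by applying the triangle inequality to the splittings $\ev = (\Id - \Piht) v + \Piht \ev$ and $\nabla \eu = \nabla (\Id - \Piht) u + \nabla \Piht \eu$. It then only remains to control the projection-error contributions $\Norm{(\Id - \Piht) v}{L^\infty(0, T; L^2(\Omega))}$ and $c \Norm{\nabla (\Id - \Piht) u}{L^\infty(0, T; L^2(\Omega)^d)}$, which I would do by separating the temporal factor $\PtTh$ and the spatial factor $\Rh$ of $\Piht = \PtTh \circ \Rh$ and invoking their approximation properties: Lemma~\ref{lemma:estimates-Ptau} (applicable thanks to the $L^\infty$-stability of $\PtTh$ from Lemma~\ref{lemma:stab-Pt}) for the time projection, and Lemma~\ref{lemma:Estimates-Rh} for the Ritz projection, all combined with the identity $v = \dpt u$.

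For the error in $v$, I would write $(\Id - \Piht) v = (\Id - \Rh) v + (\Id - \PtTh) \Rh v$, which is legitimate since the temporal operator $\PtTh$ and the spatial operator $\Rh$ act on different variables. Applying Lemma~\ref{lemma:Estimates-Rh} pointwise in time and then taking the supremum (using the elliptic regularity~\eqref{eq:elliptic-regularity-Omega}) bounds the first summand by $h^{r+1} \Seminorm{v}{L^\infty(0, T; H^{r+1}(\Omega))} = h^{r+1} \Seminorm{\dpt u}{L^\infty(0, T; H^{r+1}(\Omega))}$; for the second, I would first use the $L^\infty$-stability of $\PtTh$ to replace $\Rh v$ by $v$ up to $(\Id - \PtTh)(\Id - \Rh) v$, whose $L^\infty(0, T; L^2(\Omega))$ norm is again controlled by the previous bound, and then apply Lemma~\ref{lemma:estimates-Ptau} with $r = \infty$ and $s = q+1$ to obtain $\Norm{(\Id - \PtTh) v}{L^\infty(0, T; L^2(\Omega))} \lesssim \tau^{q+1} \Norm{\dpt^{(q+1)} v}{L^\infty(0, T; L^2(\Omega))} = \tau^{q+1} \Norm{\dpt^{(q+2)} u}{L^\infty(0, T; L^2(\Omega))}$. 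Adding the bound on $\Norm{\Piht \ev}{L^\infty(0, T; L^2(\Omega))}$ furnished by Theorem~\ref{thm:a-priori-Johnson} gives the first estimate of the corollary (with $r = p$).

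The estimate for $\nabla \eu$ follows the same pattern: I would split $\nabla (\Id - \Piht) u = \nabla (\Id - \Rh) u + (\Id - \PtTh) \nabla \Rh u$, using that $\PtTh$ commutes with the spatial gradient. Lemma~\ref{lemma:Estimates-Rh} then bounds the first term by $h^{p} \Seminorm{u}{L^\infty(0, T; H^{p+1}(\Omega))}$ (which is the origin of the $h^p$, rather than $h^{p+1}$, factor in the statement), while Lemma~\ref{lemma:estimates-Ptau}, together with the elementary $H^1$-stability of the Ritz projection $\Norm{\nabla \Rh w}{L^2(\Omega)} \le \Norm{\nabla w}{L^2(\Omega)}$ (immediate from its definition) and the commutation of $\dpt$ with $\Rh$, bounds the second term by $\tau^{q+1} \Norm{\nabla \dpt^{(q+1)} u}{L^\infty(0, T; L^2(\Omega)^d)}$. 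Multiplying by $c$ and adding the $\nabla \Piht \eu$-contribution from Theorem~\ref{thm:a-priori-Johnson} yields the second estimate.

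I do not expect any genuine obstacle: the corollary is a routine repackaging of the discrete-error bound of Theorem~\ref{thm:a-priori-Johnson} together with standard approximation theory for $\PtTh$ and $\Rh$. The points that require some care are (i) checking that every estimate can be taken in the $L^\infty$-in-time topology, which is admissible because Lemma~\ref{lemma:estimates-Ptau} covers the endpoint $r = \infty$ and Lemma~\ref{lemma:Estimates-Rh} is a pointwise-in-time statement, and (ii) the bookkeeping of the powers of the propagation speed $c$ and of the extra regularity of $u$ implicitly needed for each term, namely $\dpt u \in L^\infty(0, T; H^{p+1}(\Omega))$, $\dpt^{(q+2)} u \in L^\infty(0, T; L^2(\Omega))$, and $\nabla \dpt^{(q+1)} u \in L^\infty(0, T; L^2(\Omega)^d)$, on top of the hypotheses already imposed in Theorem~\ref{thm:a-priori-Johnson}.
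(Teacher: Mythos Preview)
Your proposal is correct and matches the paper's intended (unwritten) argument: the corollary is stated without proof because it follows from Theorem~\ref{thm:a-priori-Johnson} by the triangle inequality and the approximation properties of~$\PtTh$ and~$\Rh$, exactly as you describe. One small simplification: instead of splitting $(\Id-\Piht)v=(\Id-\Rh)v+(\Id-\PtTh)\Rh v$ and then passing through $(\Id-\PtTh)(\Id-\Rh)v$, you can use the equivalent decomposition $(\Id-\Piht)v=(\Id-\PtTh)v+\PtTh(\Id-\Rh)v$ and apply directly the $L^\infty$-stability of~$\PtTh$ (Lemma~\ref{lemma:stab-Pt}) to the second term, avoiding the detour.
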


%%%
\begin{remark}[Improved results]
Theorem~\ref{thm:a-priori-Johnson} does not require any constraint on the relation of~$\tau$ and~$h$, which improves the corresponding result in~\cite[Thm.~3.1 and Rem.~3.1]{Johnson:1993}. Moreover, compared to~\cite{Antonietti_Mazzieri_Migliorini:2023}, our stability and convergence analysis show that the presence of a damping term is not necessary. 
\eremk
\end{remark}

%%%
\begin{remark}[Computational aspects]
A numerical assessment comparing the method in Section~\ref{sec:French-Peterson-wave} with the present method is detailed in~\cite{Kocher_Bause:2014}. Furthermore, some theoretical and computational aspects of the scheme in Section~\ref{sec:plain-vanilla-wave} and the one described here have been recently compared in~\cite{Antonietti_Artoni_Ciaramella_Mazzieri:2025}. A technique for the solution to the linear system stemming from the space--time method~\eqref{eq:Johnson} was presented in~\cite{Banks_etal:2014}.
\eremk
\end{remark}

% ----------------------------------------------------------------
%               DG--CG for the Hamiltonian formulation
% ----------------------------------------------------------------
\section{DG--CG time discretization for the second-order formulation of the wave equation (Walkington, 2014)}
\label{sec:Walkington-wave}
We now study the DG--CG method proposed by Walkington in~\cite{Walkington:2014}, where the main idea for the analysis in the previous sections was introduced.

The discrete space--time formulation reads: find~$\uht \in \oVcont{q}$ (with~$q \geq 2$) such that~$\uht(\cdot, 0) = \Rh u_0$  and
\begin{equation}
\label{eq:Walkington}
\Bht(\uht, \wht) = \ell(\wht) \quad \forall \wht \in \oVdisc{q-1},
\end{equation}
where the bilinear form~$\Bht : \oVcont{q} \times \oVdisc{q-1} \to \R$ and the linear functional~$\ell : \oVdisc{q-1} \to \R$ are given by
\begin{alignat*}{3}
\nonumber
\Bht(\uht, \wht) & := \sum_{n = 1}^N (\dptt \uht, \wht)_{\Qn} + \sum_{n = 1}^{N - 1} (\jump{\dpt \uht}_n, \wht(\cdot, \tn^+))_{\Omega} + (\dpt \uht, \wht)_{\SO} \\
& \quad + c^2 (\nabla \uht, \nabla \wht)_{\QT}, \\
\ell(\wht) & :=  (f, \wht)_{\QT} + (v_0, \wht(\cdot, 0))_{\Omega}.
\end{alignat*}
The bilinear form~$\Bht(\cdot, \cdot)$ can be written in compact form using the definition of the time reconstruction operator~$\Rt$ in Section~\ref{sec:time-reconstruction} as
\begin{equation*}
\Bht(\uht, \vht) = (\dpt\Rtq \dpt \uht, \wht)_{\QT} + c^2 (\nabla \uht, \nabla \wht)_{\QT}. 
\end{equation*}

The name DG--CG refers to the fact that one can recover method~\eqref{eq:Walkington} as a combination of the DG method in Section~\ref{sec:Johnson-wave} and the CG method in Section~\ref{sec:French-Peterson-wave}. More precisely, this method can be written as: find~$(\uht, \vht) \in \oVcont{q} \times \oVdisc{q-1}$ (with~$q \geq 2$) such that~$\uht(\cdot, 0) = \Rh u_0$ and
\begin{subequations}
\begin{alignat}{3}
\label{eq:Walkington-mixed-1}
c^2 (\nabla \vht, \nabla \zht)_{\QT} & = c^2 ( \nabla \dpt \uht, \nabla \zht) & & \qquad \forall \zht \in \oVdisc{q-1}, \\
%%%
\nonumber
\sum_{n = 1}^{N} (\dpt \vht, \wht)_{\Qn}  + \sum_{n = 1}^{N - 1} (\jump{\vht}_n, & \wht(\cdot, \tn^+))_{\Omega} + (\vht, \wht)_{\SO}\\
\label{eq:Walkington-mixed-2}
 + c^2 (\nabla \uht, \nabla \wht)_{\QT} &  = (f, \wht)_{\QT} + (v_0, \wht(\cdot, 0))_{\Omega} & & \qquad \forall \wht \in \oVdisc{q-1}.
\end{alignat}
\end{subequations}
Due to the choice of the discrete spaces, equation~\eqref{eq:Walkington-mixed-1} leads to the identity~$\vht = \dpt \uht$. Method~\eqref{eq:Walkington} can then be recovered by substituting such an identity in~\eqref{eq:Walkington-mixed-2}.

The forthcoming stability and convergence analysis follows the structure described in Section~\ref{sec:structure-theory}.
% -----------------------------------------------------------------------------------------------
%                       STABILITY
% -----------------------------------------------------------------------------------------------
\subsection{Stability analysis}
\begin{lemma}[Weak partial bound on~$\uht$]
\label{lemma:partial-bound-Walkington}
Any solution~$\uht \in \oVcont{q}$ to~\eqref{eq:Walkington} satisfies the following bound for~$n = 1, \ldots, N$:
\begin{alignat*}{3}
\nonumber
\frac12 & \Norm{\dpt \uht(\cdot, \tn^-)}{L^2(\Omega)}^2 + \frac12 \sum_{m = 1}^{n-1} \Norm{\jump{\dpt \uht}_m}{L^2(\Omega)}^2 + \frac14 \Norm{\dpt \uht}{L^2(\SO)}^2 \\
\nonumber
& + \frac{c^2}{2}\Norm{\nabla \uht}{L^2(\Sn)^d}^2 + \frac{c^2}{2} \Norm{\nabla \uht}{L^2(\SO)^d}^2 \\
%%%
& \quad \le \Norm{f}{L^1(0, \tn; L^2(\Omega))} \Norm{\dpt \uht}{L^{\infty}(0, \tn; L^2(\Omega))} + \Norm{v_0}{L^2(\Omega)}^2 + c^2 \Norm{\nabla u_0}{L^2(\Omega)^d}^2.
\end{alignat*}
\end{lemma}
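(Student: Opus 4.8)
The plan is to follow the energy-argument template used for the other \emph{weak partial bound} lemmas, exploiting the compact form $\Bht(\uht,\wht) = (\dpt\Rtq\dpt\uht,\wht)_{\QT} + c^2(\nabla\uht,\nabla\wht)_{\QT}$ and the observation that, since $\uht\in\oVcont{q}$ is continuous but not $C^1$ in time, its broken time derivative satisfies $\dpt\uht\in\oVdisc{q-1}$ (so that the time jumps $\jump{\dpt\uht}_m$ genuinely appear). Fix $n\in\{1,\dots,N\}$ and test~\eqref{eq:Walkington} with the admissible choice $\wht=\chi_{(0,\tn)}\dpt\uht\in\oVdisc{q-1}$.

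For the first term I would invoke identity~\eqref{eq:Rt-vht-jump} of Lemma~\ref{lemma:properties-Rt} with $\HH=L^2(\Omega)$ and the degree shifted by one (so that $\Rtq$ plays the role of $\Rt$ and $\dpt\uht\in\Pp{q-1}{\Tt}\otimes\oVhp$ plays the role of $v_\tau$), which gives
\begin{equation*}
(\dpt\Rtq\dpt\uht,\chi_{(0,\tn)}\dpt\uht)_{\QT} = \frac12\Big(\Norm{\dpt\uht(\cdot,\tn^-)}{L^2(\Omega)}^2 + \sum_{m=1}^{n-1}\Norm{\jump{\dpt\uht}_m}{L^2(\Omega)}^2 + \Norm{\dpt\uht}{L^2(\SO)}^2\Big).
\end{equation*}
For the elliptic term, since $\uht$ is continuous in time and $t\mapsto\Norm{\nabla\uht(\cdot,t)}{L^2(\Omega)^d}^2$ is piecewise polynomial and globally continuous, the fundamental theorem of calculus on each time slab together with telescoping yields $c^2(\nabla\uht,\chi_{(0,\tn)}\nabla\dpt\uht)_{\QT} = \tfrac{c^2}{2}\big(\Norm{\nabla\uht}{L^2(\Sn)^d}^2-\Norm{\nabla\uht}{L^2(\SO)^d}^2\big)$. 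On the right-hand side, $\ell(\chi_{(0,\tn)}\dpt\uht) = \int_0^{\tn}(f,\dpt\uht)_\Omega\dt + (v_0,\dpt\uht(\cdot,0))_\Omega$, which the H\"older inequality bounds by $\Norm{f}{L^1(0,\tn;L^2(\Omega))}\Norm{\dpt\uht}{L^\infty(0,\tn;L^2(\Omega))} + \Norm{v_0}{L^2(\Omega)}\Norm{\dpt\uht}{L^2(\SO)}$.

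Collecting these three identities turns the variational equation into an energy identity with the $\dpt\uht$-terms and $\tfrac{c^2}{2}\Norm{\nabla\uht}{L^2(\Sn)^d}^2$ on the left and $\tfrac{c^2}{2}\Norm{\nabla\uht}{L^2(\SO)^d}^2$ together with the $f$- and $v_0$-terms on the right. To conclude I would apply the Young inequality, $(v_0,\dpt\uht(\cdot,0))_\Omega\le\Norm{v_0}{L^2(\Omega)}^2+\tfrac14\Norm{\dpt\uht}{L^2(\SO)}^2$, absorbing the last term into the $\tfrac12\Norm{\dpt\uht}{L^2(\SO)}^2$ produced above and leaving $\tfrac14\Norm{\dpt\uht}{L^2(\SO)}^2$; and then use that $\Rh$ is a projection in the $H^1$-seminorm, so $\Norm{\nabla\uht}{L^2(\SO)^d}=\Norm{\nabla\Rh u_0}{L^2(\Omega)^d}\le\Norm{\nabla u_0}{L^2(\Omega)^d}$. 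The main (and essentially only) delicate point is the $\nabla u_0$ bookkeeping: the term $\tfrac{c^2}{2}\Norm{\nabla\uht}{L^2(\SO)^d}^2$ is first bounded on the right by $\tfrac{c^2}{2}\Norm{\nabla u_0}{L^2(\Omega)^d}^2$, after which one \emph{adds} a copy of $\tfrac{c^2}{2}\Norm{\nabla\uht}{L^2(\SO)^d}^2$ to the left at the cost of a matching $\tfrac{c^2}{2}\Norm{\nabla u_0}{L^2(\Omega)^d}^2$ on the right, the two contributions combining into the $c^2\Norm{\nabla u_0}{L^2(\Omega)^d}^2$ of the statement. Since nothing in the argument depends on $n$, the bound holds for all $n\in\{1,\dots,N\}$ with no recursion.
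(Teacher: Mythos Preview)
Your proposal is correct and follows essentially the same approach as the paper's proof: test with $\dpt\uht$ (restricted to $(0,\tn)$), invoke identity~\eqref{eq:Rt-vht-jump} for the DG time-derivative term, telescope the gradient term using the continuity of $\uht$, apply H\"older and Young for the $f$- and $v_0$-terms, and then add the auxiliary inequality $\tfrac{c^2}{2}\Norm{\nabla\uht}{L^2(\SO)^d}^2\le\tfrac{c^2}{2}\Norm{\nabla u_0}{L^2(\Omega)^d}^2$ to recover the $\Norm{\nabla\uht}{L^2(\SO)^d}^2$ term on the left. The only cosmetic difference is that the paper argues ``without loss of generality'' for $n=N$ while you handle general $n$ directly via the characteristic function, which is immaterial.
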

\begin{proof}
Without loss of generality, we show the result for~$n = N$. We choose~$\wht = \dpt \uht \in \oVdisc{q-1}$ in~\eqref{eq:Walkington}. 

The following identity can be obtained using Lemma~\ref{lemma:properties-Rt}, and the fact that~$\uht (\cdot, 0) = \Rh u_0$: 
\begin{alignat}{3}
\nonumber
\Bht(\uht, \dpt \uht) 
%%%
& = \frac12 \Seminorm{\dpt \uht}{\sf J}^2 + \frac{c^2}{2} \Norm{\nabla \uht}{L^2(\ST)^d}^2 - \frac{c^2}{2} \Norm{\nabla \Rh u_0}{L^2(\Omega)^d}^2.
\end{alignat}

Consequently, using the H\"older and the Young inequalities, we get
\begin{alignat}{3}
\nonumber
\frac12 & \Seminorm{\dpt \uht}{\sf J}^2  + \frac{c^2}{2} \Norm{\nabla \uht}{L^2(\ST)^d}^2 \\
\nonumber
& = (f, \dpt \uht)_{\QT} + (v_0, \dpt \uht(\cdot, 0))_{\Omega} + \frac{c^2}{2} \Norm{\nabla \Rh u_0}{L^2(\Omega)^d} \\
%%%
\label{eq:aux-partial-bound-Walkington}
& \le \Norm{f}{L^1(0, T; L^2(\Omega))} \Norm{\dpt \uht}{L^{\infty}(0, T; L^2(\Omega))} + \Norm{v_0}{L^2(\Omega)}^2 + \frac14 \Norm{\dpt \uht}{L^2(\SO)}^2 + \frac{c^2}{2} \Norm{\nabla u_0}{L^2(\Omega)^d}^2.
\end{alignat}
The result then follows by adding~\eqref{eq:aux-partial-bound-Walkington} and the bound
\begin{equation*}
    \frac{c^2}{2} \Norm{\nabla \uht}{L^2(\SO)^d}^2 \le \frac{c^2}{2} \Norm{\nabla u_0}{L^2(\Omega)^d}^2.
\end{equation*}
\end{proof}

%%%
In this setting, one has to modify the definition of the weight functions in Section~\ref{sec:auxiliary-weight} as follows (see~\cite[Thm.~4.5]{Walkington:2014}): for~$n = 1, \ldots, N$,
\begin{equation*}
    \tvarphi_n (t) = 1 - \tlambda_n(t - \tnmo) \quad \text{ with~$\tlambda_n := \frac{\xi_q}{\tau_n}$ \ \ and\ \  $\xi_q := \frac{1}{4(2q +1)}$,}
\end{equation*}
which corresponds to the Taylor polynomial of degree~$1$ centered at~$\tnmo$ of~$\exp((t - \tnmo) \tlambda_n)$. 
Moreover, the following lemma is required (see~\cite[Cor. 4.4]{Walkington:2014} and~\cite[Lemma 3.3]{Gomez-Nikolic:2025}). 

\begin{lemma} \label{lemma: weight function}
For~$n = 1, \ldots, N$, the following bound holds:
\begin{alignat*}{3}
\Norm{(\Id - \Pi_{q - 1}^t)(\tvarphi_n \wht)(\cdot, \tnmo^+)}{L^2(\Omega)} & \le \frac{\sqrt{\tlambda_n}}{2} \Norm{\wht}{L^2(\Qn)} \qquad  \forall \wht \in \oVdisc{q-1}.
\end{alignat*}
Moreover, for any Hilbert space~${\HH}$ with inner product~$(\cdot, \cdot)_{\HH}$, there hold (see also Footnote~\ref{footnote:n=1})
\begin{subequations}
\begin{alignat}{3}
\nonumber
\int_{\In} (\dpt v_{\tau}, & \tvarphi_n v_{\tau})_{{\HH}} \dt  + (\jump{v_{\tau}}_{n-1}, \tvarphi_n(\tnmo) v_{\tau}(\tnmo^+))_{\Omega} \\
\nonumber
& = \frac{1-\xi_q}{4} \Norm{v_{\tau}(\tn^-)}{{\HH}}^2 + \frac12 \Norm{\jump{v_{\tau}}_{n-1}}{{\HH}}^2 - \frac12 \Norm{v_{\tau}(\tnmo^-)}{{\HH}}^2 \\
\label{eq:identities-tvarphi-integration-1}
& \quad + \frac{\tlambda_n}{2} \Norm{v_{\tau}}{L^2(\In; {\HH})}^2 & & \quad \forall v_{\tau} \in \Pp{q}{\Tt} \otimes {\HH}, \\
%%%%
\nonumber
\int_{\In} (\dpt v_{\tau}, & \tvarphi_n v_{\tau})_{{\HH}} \dt \\
\label{eq:identities-tvarphi-integration-2}
& = \frac{1- \xi_q}{4} \Norm{v_{\tau}(\tn)}{{\HH}}^2 - \frac12 \Norm{v_{\tau}(\tnmo)}{{\HH}}^2 + \frac{\tlambda_n}{2} \Norm{v_{\tau}}{L^2(\In; {\HH})}^2 & & \quad \forall v_{\tau} \in \Pp{q}{\Tt} \cap C^0[0, T] \otimes {\HH}.
\end{alignat}
\end{subequations}
\end{lemma}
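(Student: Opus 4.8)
The plan is as follows. Identities~\eqref{eq:identities-tvarphi-integration-1} and~\eqref{eq:identities-tvarphi-integration-2} are proven along exactly the same lines as Lemma~\ref{lemma:identities-varphi-integration}, the only change being that one tracks the endpoint values of $\tvarphi_n$ on $\In$ instead of those of $\varphi_n$: writing
$\int_{\In}(\dpt v_{\tau},\tvarphi_n v_{\tau})_{\HH}\dt = \tfrac12\int_{\In}\tfrac{\dd}{\dd t}(\tvarphi_n v_{\tau},v_{\tau})_{\HH}\dt - \tfrac12\int_{\In}\tvarphi_n'\,\Norm{v_{\tau}}{\HH}^2\dt$,
integrating the first term, and using the constant value $\tvarphi_n'\equiv-\tlambda_n$; for~\eqref{eq:identities-tvarphi-integration-1} one then combines the boundary contribution at $\tnmo$ with the jump term $(\jump{v_{\tau}}_{n-1},\tvarphi_n(\tnmo)v_{\tau}(\tnmo^+))_{\Omega}$ via identity~\eqref{eq:jumps-in-time-identity-3} (with the convention of Footnote~\ref{footnote:n=1} when $n=1$), while for~\eqref{eq:identities-tvarphi-integration-2} the function $v_{\tau}$ is continuous in time, so no jump terms arise. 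This part requires no new idea beyond replacing $\varphi_n$ by $\tvarphi_n$ and bookkeeping the different terminal value.

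The genuinely new point is the first bound. First I would observe that, on $\In$, $\wht{}_{|_{\Qn}}\in\Pp{q-1}{\In}\otimes L^2(\Omega)$ while $\tvarphi_n\wht\in\Pp{q}{\In}\otimes L^2(\Omega)$, so by orthogonality of the Legendre basis $(\Id-\Pi_{q-1}^t)(\tvarphi_n\wht) = \alpha\,\Leg_q^{(n)}$, where $\alpha\in L^2(\Omega)$ is the degree-$q$ Legendre coefficient of $\tvarphi_n\wht$ on $\In$. Writing $\wht{}_{|_{\Qn}} = \sum_{i=0}^{q-1} z_i\,\Leg_i^{(n)}$ with $z_i\in L^2(\Omega)$, and using $(t-\tnmo) = \tfrac{\tau_n}{2}(\Leg_0^{(n)}+\Leg_1^{(n)})$ together with $\tlambda_n\tau_n = \xi_q$, one gets $\tvarphi_n\wht = (1-\tfrac{\xi_q}{2})\wht - \tfrac{\xi_q}{2}\Leg_1^{(n)}\wht$; since $\wht$ has degree $q-1$, only the top mode $z_{q-1}\Leg_{q-1}^{(n)}$ contributes a $\Leg_q^{(n)}$-component, via the three-term recurrence $x\Leg_{q-1} = \tfrac{q}{2q-1}\Leg_q + \tfrac{q-1}{2q-1}\Leg_{q-2}$, whence $\alpha = -\tfrac{\xi_q q}{2(2q-1)}z_{q-1}$. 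Evaluating at $t=\tnmo$ and using $\Leg_q^{(n)}(\tnmo) = (-1)^q$ gives $\Norm{(\Id-\Pi_{q-1}^t)(\tvarphi_n\wht)(\cdot,\tnmo^+)}{L^2(\Omega)} = \Norm{\alpha}{L^2(\Omega)}$.

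Finally I would bound the top mode by the full norm of $\wht$: from~\eqref{eq:orthogonality-Legendre}, $\Norm{\wht}{L^2(\Qn)}^2 = \sum_{i=0}^{q-1}\tfrac{\tau_n}{2i+1}\Norm{z_i}{L^2(\Omega)}^2 \ge \tfrac{\tau_n}{2q-1}\Norm{z_{q-1}}{L^2(\Omega)}^2$, so that $\Norm{\alpha}{L^2(\Omega)} \le \tfrac{\xi_q q}{2\sqrt{(2q-1)\tau_n}}\Norm{\wht}{L^2(\Qn)}$. Since $\tfrac{\sqrt{\tlambda_n}}{2} = \tfrac12\sqrt{\xi_q/\tau_n}$, the asserted inequality reduces to $\xi_q q^2 \le 2q-1$, which holds for the chosen $\xi_q = \tfrac{1}{4(2q+1)}$ because $4(2q+1)(2q-1) = 16q^2-4 \ge q^2$ for all $q\ge 1$. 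The main obstacle is thus precisely this explicit Legendre-coefficient computation — getting the constant $\tfrac{\xi_q q}{2(2q-1)}$ right and checking that the specific value of $\xi_q$ leaves room for the factor $\tfrac12$ on the right-hand side; the rest is routine and entirely parallel to the proof of Lemma~\ref{lemma:identities-varphi-integration}.
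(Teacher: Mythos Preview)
Your proposal is correct. The paper does not give its own proof of this lemma, deferring instead to \cite[Cor.~4.4]{Walkington:2014} and \cite[Lemma~3.3]{Gomez-Nikolic:2025}, so there is nothing to compare against directly; your argument is a clean, self-contained version of what those references establish.

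Two minor observations. First, your direct computation of the identities yields the coefficient $\tfrac{1-\xi_q}{2}$ for the $\Norm{v_\tau(\tn^-)}{\HH}^2$ term (since $\tvarphi_n(\tn)=1-\xi_q$), not $\tfrac{1-\xi_q}{4}$ as stated in the lemma; this is consistent with how the identity is actually used in the proof of Proposition~\ref{prop:continuous-dependence-DG-Walkington} (where $\tfrac{1-\xi_q}{2}$ appears), so the $/4$ in the lemma statement is presumably a typo in the paper rather than an error on your part. Second, your Legendre-coefficient computation for the first bound is exactly the right mechanism: the key point is that $\tvarphi_n\wht$ exceeds degree $q-1$ only through the single contribution $-\tfrac{\xi_q}{2}\Leg_1^{(n)}z_{q-1}\Leg_{q-1}^{(n)}$, and the three-term recurrence then gives the explicit constant $\tfrac{\xi_q q}{2(2q-1)}$. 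Your reduction to the scalar inequality $\xi_q q^2\le 2q-1$ and the verification that $\xi_q=\tfrac{1}{4(2q+1)}$ satisfies it (with room to spare) completes the argument cleanly.
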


%%%%
\begin{proposition}[Continuous dependence on the data]
\label{prop:continuous-dependence-DG-Walkington}
Any solution~$\uht \in \oVcont{q}$ to~\eqref{eq:Walkington} satisfies
\begin{alignat*}{3}
\nonumber
\Norm{\dpt \uht}{L^{\infty}(0, T; L^2(\Omega))}^2 & + c^2 \Norm{\nabla \uht}{C^0([0, T]; L^2(\Omega)^d)}^2 + \Seminorm{\dpt \uht}{\sf J}^2 + c^2 \Norm{\nabla \uht}{L^2(\ST)^d}^2 \\
& \lesssim \Norm{f}{L^1(0, T; L^2(\Omega))}^2 + \Norm{v_0}{L^2(\Omega)}^2 + c^2\Norm{\nabla u_0}{L^2(\Omega)^d}^2,
\end{alignat*}
\end{proposition}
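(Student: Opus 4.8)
The plan is to follow the two--step strategy of Section~\ref{sec:structure-theory}. The weak partial bound of Lemma~\ref{lemma:partial-bound-Walkington} is already available, so it only remains to prove a local stability estimate. For a fixed index~$n \in \{1,\dots,N\}$ I would choose in~\eqref{eq:Walkington} the test function
\begin{equation*}
\wht{}_{|_{\Qm}} := \begin{cases} \Pi_{q-1}^t(\tvarphi_n \dpt \uht) & \text{if } m = n, \\ 0 & \text{otherwise}, \end{cases}
\end{equation*}
which lies in~$\oVdisc{q-1}$ because~$\dpt\uht{}_{|_{\Qn}} \in \Pp{q-1}{\In}\otimes\oVhp$ implies~$\tvarphi_n\dpt\uht{}_{|_{\Qn}} \in \Pp{q}{\In}\otimes\oVhp$. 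Since~$\wht$ is supported on~$\Qn$ and~$\wht(\cdot,0) = 0$, evaluating~$\Bht(\uht,\wht) = \ell(\wht)$ directly (for~$n > 1$; the case~$n = 1$ needs only the minor modification of Footnote~\ref{footnote:n=1}) leaves the identity
\begin{equation*}
(\dptt\uht, \Pi_{q-1}^t(\tvarphi_n\dpt\uht))_{\Qn} + (\jump{\dpt\uht}_{n-1}, \Pi_{q-1}^t(\tvarphi_n\dpt\uht)(\cdot,\tnmo^+))_{\Omega} + c^2(\nabla\uht, \nabla\Pi_{q-1}^t(\tvarphi_n\dpt\uht))_{\Qn} = (f, \Pi_{q-1}^t(\tvarphi_n\dpt\uht))_{\Qn}.
\end{equation*}

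For the first two terms, since~$\dptt\uht{}_{|_{\Qn}} \in \Pp{q-2}{\In}\otimes\oVhp$, the orthogonality of~$\Pi_{q-1}^t$ lets me replace~$\Pi_{q-1}^t(\tvarphi_n\dpt\uht)$ by~$\tvarphi_n\dpt\uht$ in the first one; in the second I write~$\Pi_{q-1}^t(\tvarphi_n\dpt\uht)(\cdot,\tnmo^+) = \dpt\uht(\cdot,\tnmo^+) - (\Id-\Pi_{q-1}^t)(\tvarphi_n\dpt\uht)(\cdot,\tnmo^+)$, using~$\tvarphi_n(\tnmo)=1$. Identity~\eqref{eq:identities-tvarphi-integration-1} of Lemma~\ref{lemma: weight function} (with~$\HH = L^2(\Omega)$,~$v_\tau = \dpt\uht$) handles the main part, while the remaining~$(\jump{\dpt\uht}_{n-1}, (\Id-\Pi_{q-1}^t)(\tvarphi_n\dpt\uht)(\cdot,\tnmo^+))_{\Omega}$ is controlled by the first estimate of Lemma~\ref{lemma: weight function} and the Young inequality; choosing the Young weight so that this term is absorbed by the~$\tfrac12\Norm{\jump{\dpt\uht}_{n-1}}{L^2(\Omega)}^2$ produced by~\eqref{eq:identities-tvarphi-integration-1}, the net contribution of the first two terms is bounded below by
\begin{equation*}
\frac{1-\xi_q}{4}\Norm{\dpt\uht(\cdot,\tn^-)}{L^2(\Omega)}^2 - \frac12\Norm{\dpt\uht(\cdot,\tnmo^-)}{L^2(\Omega)}^2 + \frac{3\tlambda_n}{8}\Norm{\dpt\uht}{L^2(\Qn)}^2.
\end{equation*}

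For the third term I would use that~$\Pi_{q-1}^t$ is an orthogonal projection in the~$L^2$-in-time inner product that commutes with~$\nabla$, to rewrite it as~$(\nabla\Pi_{q-1}^t\uht, \tvarphi_n\nabla\dpt\uht)_{\Qn}$ and then split
\begin{equation*}
c^2(\nabla\uht, \nabla\Pi_{q-1}^t(\tvarphi_n\dpt\uht))_{\Qn} = c^2(\tvarphi_n\,\dpt\nabla\uht, \nabla\uht)_{\Qn} - c^2(\tvarphi_n\,\dpt\nabla\uht, (\Id-\Pi_{q-1}^t)\nabla\uht)_{\Qn}.
\end{equation*}
Since~$\nabla\uht{}_{|_{\Qn}}$ is a continuous polynomial of degree~$q$ in time with values in~$L^2(\Omega)^d$, identity~\eqref{eq:identities-tvarphi-integration-2} evaluates the first summand, and the second is nonnegative by the~$\tvarphi_n$-analogue of Lemma~\ref{lemma:bound-pi-time-varphi_n} — whose proof applies verbatim, since the integral computed there is a nonnegative multiple of~$\Norm{\alpha}{\HH}^2$ for any~$\tlambda_n > 0$. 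The right--hand side is bounded by~$\CS\Norm{f}{L^1(\In;L^2(\Omega))}\Norm{\dpt\uht}{L^{\infty}(\In;L^2(\Omega))}$ via the H\"older inequality, the~$L^{\infty}(\In;L^2(\Omega))$-stability of~$\Pi_{q-1}^t$ (Lemma~\ref{lemma:stab-pi-time}), and~$0\le\tvarphi_n\le1$. Collecting these bounds, using Lemma~\ref{lemma:partial-bound-Walkington} for index~$n-1$ to control the energy at~$\tnmo$ by the data, invoking the inverse estimate of Lemma~\ref{lemma:L2-Linfty} to turn~$\tlambda_n\Norm{\dpt\uht}{L^2(\Qn)}^2$ and~$c^2\tlambda_n\Norm{\nabla\uht}{L^2(\Qn)^d}^2$ into~$\Norm{\dpt\uht}{L^{\infty}(\In;L^2(\Omega))}^2$ and~$c^2\Norm{\nabla\uht}{L^{\infty}(\In;L^2(\Omega)^d)}^2$, then taking~$n$ where the left--hand side is maximal and absorbing~$\Norm{f}{L^1(0,T;L^2(\Omega))}\Norm{\dpt\uht}{L^{\infty}(0,T;L^2(\Omega))}$ by the Young inequality, yields the asserted bound for~$\Norm{\dpt\uht}{L^{\infty}(0,T;L^2(\Omega))}$ and~$c\Norm{\nabla\uht}{C^0([0,T];L^2(\Omega)^d)}$; adding back Lemma~\ref{lemma:partial-bound-Walkington} recovers~$\Seminorm{\dpt\uht}{\sf J}^2$ and~$c^2\Norm{\nabla\uht}{L^2(\ST)^d}^2$, and the nonsingularity of the resulting linear system gives existence and uniqueness.

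The main obstacle I anticipate is the jump term~$(\jump{\dpt\uht}_{n-1}, \Pi_{q-1}^t(\tvarphi_n\dpt\uht)(\cdot,\tnmo^+))_{\Omega}$: unlike the Gauss--Radau interpolant~$\ItR$ used for the purely DG schemes of Sections~\ref{sec:Jamet-heat} and~\ref{sec:Johnson-wave}, the projection~$\Pi_{q-1}^t$ does not reproduce the left endpoint value, so this term does not telescope on its own. This is exactly why the weight~$\tvarphi_n$ with the small slope~$\tlambda_n = \xi_q/\tau_n$ and the sharp endpoint estimate of Lemma~\ref{lemma: weight function} are needed, and the delicate point is to verify that the~$\sqrt{\tlambda_n}$ gain there, the Young inequality, and the factor~$\tfrac12$ from~\eqref{eq:identities-tvarphi-integration-1} combine to leave a strictly positive multiple of~$\tlambda_n\Norm{\dpt\uht}{L^2(\Qn)}^2$. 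By contrast — and this is why, unlike in Proposition~\ref{prop:continuous-dependence-DG-wave}, no CFL condition is required here — the analogue of the term~$J_2$ in~\eqref{eq:J2-plain-DG-wave} is automatically nonnegative by Lemma~\ref{lemma:bound-pi-time-varphi_n}, because~$\nabla\uht$ is continuous in time and of degree~$q$.
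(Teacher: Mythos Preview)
Your proposal is correct and follows essentially the same route as the paper: the same test function~$\Pi_{q-1}^t(\tvarphi_n\dpt\uht)$, the same decomposition into the pieces handled by identities~\eqref{eq:identities-tvarphi-integration-1}--\eqref{eq:identities-tvarphi-integration-2}, the endpoint estimate of Lemma~\ref{lemma: weight function} for the jump correction, and the adapted Lemma~\ref{lemma:bound-pi-time-varphi_n} for the gradient remainder. The only cosmetic differences are your self-adjointness rewriting of the gradient term (equivalent to the paper's direct split, and yielding the same~$J_5$) and a different Young weight for~$J_3$ that annihilates rather than halves the jump contribution; neither affects the argument.
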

\begin{proof}
For~$n = 1, \ldots, N$, we choose the following test function in~\eqref{eq:Walkington} as
\begin{equation*}
\wht{}_{|_{Q_m}}  := \begin{cases}
\Pi_{q-1}^t (\tvarphi_n \dpt \uht) & \text{ if } m = n, \\
0 & \text{ otherwise.}
\end{cases}
\end{equation*}
We focus on the case~$n > 1$, as the argument for~$n = 1$ only requires minor modifications. 

From the definition of~$\Bht(\cdot, \cdot)$, we have
\begin{alignat*}{3}
\Bht(\uht, \wht) &  = (\dptt \uht, \Pi_{q-1}^t (\tvarphi_n \dpt \uht))_{\Qn} + (\jump{\dpt \uht}_{n - 1}, \Pi_{q-1}^t (\tvarphi_n \dpt \uht(\cdot, \tnmo^+)))_{\Omega} \\
& \quad + c^2 (\nabla \uht, \nabla \Pi_{q-1}^t(\tvarphi_n, \dpt \uht))_{\Qn} \\
%%%
& = (\dptt \uht, \tvarphi_n \dpt \uht)_{\Qn} + (\jump{\dpt \uht}_{n-1}, \dpt \uht(\cdot, \tnmo^+))_{\Omega} \\
& \quad - (\jump{\dpt \uht}_{n-1}, (\Id - \Pi_{q-1}^t)(\tvarphi_n \dpt \uht)(\cdot, \tnmo^+))_{\Omega} \\
& \quad + c^2 (\nabla \uht, \tvarphi_n \nabla \dpt \uht)_{\Qn} - c^2 (\nabla \uht, \nabla (\Id - \Pi_{q-1}^t)(\tvarphi_n \dpt \uht))_{\Qn} \\
%%%
& =: J_1 + J_2 + J_3 + J_4 + J_5. 
\end{alignat*}

Identity~\eqref{eq:identities-tvarphi-integration-1} from Lemma~\ref{lemma: weight function} gives
\begin{equation}
\label{eq:J1-Walkington}
\begin{split}
J_1 + J_2 & = \frac{1-\xi_q}{2} \Norm{\dpt \uht(\cdot, \tn^-)}{L^2(\Omega)}^2 + \frac12 \Norm{\jump{\dpt \uht}_{n-1}}{L^2(\Omega)}^2 \\
& \quad - \frac12 \Norm{\dpt \uht(\cdot, \tnmo^-)}{L^2(\Omega)}^2 + \frac{\tlambda_n}{2} \Norm{\dpt \uht}{L^2(\Qn)}^2,
\end{split}
\end{equation}
where the second and third terms on the right-hand side become~$(1/4) \Norm{\dpt \uht}{L^2(\SO)}^2 - \Norm{v_0}{L^2(\Omega)}^2$ for~$n = 1$. 

As for~$J_3$, we use the Young inequality and Lemma~\ref{lemma: weight function} to obtain
\begin{alignat}{3}
\nonumber
J_3 &  = (\jump{\dpt \uht}_{n-1}, (\Id - \Pi_{q-1}^t) (\tvarphi_n \dpt \uht)(\cdot, \tnmo^+))_{\Omega}  \\
%%%
\nonumber
& \geq - \frac14 \Norm{\jump{\dpt \uht}_{n-1}}{L^2(\Omega)}^2 - \Norm{(\Id - \Pi_{q-1}^t)(\tvarphi_n \dpt \uht)(\cdot, \tnmo^+)}{L^2(\Omega)}^2  \\
%%% 
\label{eq:J3-Walkington}
& \geq - \frac14 \Norm{\jump{\dpt \uht}_{n-1}}{L^2(\Omega)}^2 - \frac{\tlambda_n}{4} \Norm{\dpt \uht}{L^2(\Qn)}^2. 
\end{alignat}

Using identity~\eqref{eq:identities-tvarphi-integration-2} for~$J_4$, we have
\begin{alignat}{3}
\nonumber
J_4 & = c^2(\nabla \uht, \tvarphi_n \nabla \dpt \uht)_{\Qn} \\
\label{eq:J4-Walkington}
& = \frac{(1 - \xi_q)c^2}{2} \Norm{\nabla \uht}{L^2(\Sn)^d}^2 - \frac{c^2}{2} \Norm{\nabla \uht}{L^2(\Snmo)^d}^2 + \frac{\tlambda_n}{2} \Norm{\nabla \uht}{L^2(\Qn)^d}^2.
\end{alignat}

One can easily adapt Lemma~\ref{lemma:bound-pi-time-varphi_n} to deduce that~$J_5 \geq 0$.  Combining bounds \eqref{eq:J1-Walkington}--\eqref{eq:J4-Walkington}, using the weak partial bound in Lemma~\ref{lemma:partial-bound-Walkington} and the stability in Lemma~\ref{lemma:stab-pi-time} of~$\Pi_{q-1}^t$, we get
\begin{alignat*}{3}
\frac{1 - \xi_q}{2} \Norm{\dpt \uht(\cdot, \tn^-)}{L^2(\Omega)}^2 & + \frac14 \Norm{\jump{\dpt \uht}_{n-1}}{L^2(\Omega)}^2 + \frac{\tlambda_n}{4} \Norm{\dpt \uht}{L^2(\Qn)}^2 \\
+ \frac{c^2(1 - \xi_q)}{2} & \Norm{\nabla \uht}{L^2(\Sn)^d}^2  + \frac{\tlambda_n}{2} \Norm{\nabla \uht}{L^2(\Qn)^d}^2 \\
%%%
& \le \frac12 \Norm{\dpt \uht(\cdot, \tnmo^-)}{L^2(\Omega)}^2 + \frac{c^2}{2} \Norm{\nabla \uht}{L^2(\Snmo)^d}^2 \\
& \quad + \Norm{f}{L^1(\In; L^2(\Omega))} \Norm{\Pi_{q-1}^t(\tvarphi_n \dpt \uht)}{L^{\infty}(\In; L^2(\Omega))} \\
%%%
& \le  \Norm{v_0}{L^2(\Omega)}^2 + c^2 \Norm{\nabla u_0}{L^2(\Omega)^d}^2 + \Norm{f}{L^1(0, \tnmo; L^2(\Omega))} \Norm{\dpt \uht}{L^{\infty}(0, \tnmo; L^2(\Omega))} \\
& \quad + \CS \Norm{f}{L^1(\In; L^2(\Omega))} \Norm{\dpt \uht}{L^{\infty}(\In; L^2(\Omega))} \\ 
%%%
& \lesssim \Norm{v_0}{L^2(\Omega)^2}^2 + c^2 \Norm{\nabla u_0}{L^2(\Omega)^d}^2 + \Norm{f}{L^1(0, T; L^2(\Omega))} \Norm{\dpt \uht}{L^{\infty}(0, T; L^2(\Omega))} ,
\end{alignat*}
where the hidden constant depends only on~$q$. Using the polynomial inverse estimate in Lemma~\ref{lemma:L2-Linfty}, we get
\begin{equation*}
\begin{split}
    \Norm{\dpt \uht}{L^{\infty}(\In; L^2(\Omega))}^2 & + c^2 \Norm{\nabla \uht}{L^{\infty}(\In; L^2(\Omega)^d)}^2 \\
    & \lesssim \Norm{v_0}{L^2(\Omega)^2}^2 + c^2 \Norm{\nabla u_0}{L^2(\Omega)^d}^2 + \Norm{f}{L^1(0, T; L^2(\Omega))} \Norm{\dpt \uht}{L^{\infty}(0, T; L^2(\Omega))}.
\end{split}
\end{equation*}
For each of the terms on the left-hand side, one
 can choose an index~$n$ where it achieves its maximum value. This, combined with Lemma~\ref{lemma:partial-bound-Walkington}, gives the desired result.
\end{proof}
The existence of a unique solution to~\eqref{eq:Walkington} is an immediate consequence of Proposition~\ref{prop:continuous-dependence-DG-Walkington}, as it corresponds to a linear system with a nonsingular matrix.

% -----------------------------------------------------------------------------------------------
%                       CONVERGENCE
% -----------------------------------------------------------------------------------------------
\subsection{Convergence analysis}
Recalling Definition~\ref{def:PtW} of the Walkington projection~$\PtW$, we define the space--time projection~$\Piht = \PtW \circ \Rh$ and the error functions
\begin{equation*}
\eu := u - \uht = (\Id - \Piht) u + \Piht \eu. 
\end{equation*}
%%%
\begin{theorem}[Estimates for the discrete error]
\label{thm:estimate-discrete-error-Walkington}
Let the assumptions of Proposition~\ref{prop:continuous-dependence-DG-Walkington} hold. Let~$u$ be the continuous weak solution to~\eqref{eq:second-order-weak-formulation-wave}, and~$\uht \in \oVcont{q}$ be the solution to~\eqref{eq:Walkington}.
Assume that~$\Omega$ satisfies the elliptic regularity assumption~\eqref{eq:elliptic-regularity-Omega}, and that the continuous solution~$u$ satisfies
\begin{equation*}
\begin{split}
u \in H^1(0, T; & H_0^1(\Omega)) \cap W^{2, 1}(0, T; H^{r+1}(\Omega)) \ \text{ with } \Delta u \in W^{s+1, 1}(0, T; L^2(\Omega)),
\end{split}
\end{equation*}
for some~$0 \le r \le p$ and~$1 \le s \le q$. Then, the following estimate holds:
\begin{equation*}
\begin{split}
\Norm{\dpt \Piht \eu}{L^{\infty}(0, T; L^2(\Omega))} & + c \Norm{\nabla \Piht \eu}{C^0([0, T]; L^2(\Omega)^d)} + \Seminorm{\dpt \Piht \eu}{\sf J} + \Norm{\nabla \Piht \eu}{L^2(\ST)^d} \\
& \lesssim h^{r+1} \big(\Seminorm{\dptt u}{L^1(0, T; H^{r+1}(\Omega))} + \Seminorm{v_0}{H^{r+1}(\Omega)} \big) + c^2 \tau^{s+1} \Norm{\Delta \dpt^{(s+1)}u}{L^1(0, T; L^2(\Omega))}.
\end{split}
\end{equation*}
where the hidden constant depends only on~$q$. 
\end{theorem}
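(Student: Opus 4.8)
The plan is to follow the standard template of Section~\ref{sec:structure-theory}: establish consistency, derive the error equation for the discrete error $\Piht\eu$, recognise it as an instance of the scheme~\eqref{eq:Walkington} with perturbed data, and then close the estimate by invoking Proposition~\ref{prop:continuous-dependence-DG-Walkington} together with the approximation properties of $\Rh$ and $\PtW$.

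First I would verify consistency. Under the stated hypotheses one has $\dptt u\in L^{1}(0,T;L^{2}(\Omega))$ and $\Delta u\in L^{1}(0,T;L^{2}(\Omega))$, so $\dptt u-c^{2}\Delta u=f$ holds in $L^{1}(0,T;L^{2}(\Omega))$ and $\dpt u(\cdot,0)=v_0$ makes sense; applying identity~\eqref{eq:identity-Rt-v} (with $\HH=L^{2}(\Omega)$ and $v=\dpt u$) to the compact form of $\Bht(\cdot,\cdot)$ gives $(\dpt\Rtq\dpt u,\wht)_{\QT}=(\dptt u,\wht)_{\QT}+(v_0,\wht(\cdot,0))_{\Omega}$, whence $\Bht(u,\wht)=\ell(\wht)$ for all $\wht\in\oVdisc{q-1}$. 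Subtracting the discrete equation yields the error equation $\Bht(\Piht\eu,\wht)=-\Bht((\Id-\Piht)u,\wht)$ for all $\wht\in\oVdisc{q-1}$. Next I would simplify the right-hand side. Since $\Rh$ and $\PtW$ act on separate variables they commute, so $(\Id-\Piht)u=(\Id-\Rh)u+\Rh(\Id-\PtW)u$. For the elliptic term this gives $c^{2}(\nabla(\Id-\Piht)u,\nabla\wht)_{\QT}=c^{2}(\nabla(\Id-\Rh)u,\nabla\wht)_{\QT}+c^{2}(\nabla(\Id-\PtW)u,\nabla\wht)_{\QT}$; the first summand vanishes by the definition of the Ritz projection, and an integration by parts in space turns the second into $-c^{2}((\Id-\PtW)\Delta u,\wht)_{\QT}$. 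For the time-derivative term, identity~\eqref{eq:identity-Rt-v} handles $(\dpt\Rtq\dpt u,\wht)_{\QT}$, while Lemma~\ref{lemma:orthogonality-DG-PtW} applied to $v=\Rh u$ (using $\dptt\Rh u=\Rh\dptt u$ and $\dpt\Rh u(0)=\Rh v_0$) gives $(\dpt\Rtq\dpt\Piht u,\wht)_{\QT}=(\Rh\dptt u,\wht)_{\QT}+(\Rh v_0,\wht(\cdot,0))_{\Omega}$; subtracting yields $(\dpt\Rtq\dpt(\Id-\Piht)u,\wht)_{\QT}=((\Id-\Rh)\dptt u,\wht)_{\QT}+((\Id-\Rh)v_0,\wht(\cdot,0))_{\Omega}$. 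Collecting, $\Piht\eu$ solves a problem of the form~\eqref{eq:Walkington} with data $\widetilde f=-(\Id-\Rh)\dptt u+c^{2}(\Id-\PtW)\Delta u$, $\widetilde v_0=-(\Id-\Rh)v_0$, and $\widetilde u_0=0$ (the last because $\Piht u(\cdot,0)=\Rh u_0=\uht(\cdot,0)$, and $\PtW$ preserves continuity in time so $\Piht u\in\oVcont{q}$).

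Finally I would apply Proposition~\ref{prop:continuous-dependence-DG-Walkington} to this perturbed problem and bound the data. One has $\Norm{\widetilde f}{L^{1}(0,T;L^{2}(\Omega))}\le\Norm{(\Id-\Rh)\dptt u}{L^{1}(0,T;L^{2}(\Omega))}+c^{2}\Norm{(\Id-\PtW)\Delta u}{L^{1}(0,T;L^{2}(\Omega))}$, estimating the first term by $h^{r+1}\Seminorm{\dptt u}{L^{1}(0,T;H^{r+1}(\Omega))}$ via Lemma~\ref{lemma:Estimates-Rh} and the second by $\tau^{s+1}\Norm{\Delta\dpt^{(s+1)}u}{L^{1}(0,T;L^{2}(\Omega))}$ via estimate~\eqref{eq:estimate-Walkington-1} for $\PtW$ with $s+1\in\{2,\dots,q+1\}$; likewise $\Norm{\widetilde v_0}{L^{2}(\Omega)}\lesssim h^{r+1}\Seminorm{v_0}{H^{r+1}(\Omega)}$, while the $\nabla\widetilde u_0$ contribution vanishes. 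Summing these and taking square roots gives the asserted bound, with the last left-hand term $\Norm{\nabla\Piht\eu}{L^{2}(\ST)^d}$ extracted from the $c^{2}\Norm{\nabla\Piht\eu}{L^{2}(\ST)^d}^{2}$ term of Proposition~\ref{prop:continuous-dependence-DG-Walkington} (the spare power of $c$ being absorbed by the $c^{2}\tau^{s+1}$ in $\widetilde f$).

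The analytic heavy lifting has already been done in Proposition~\ref{prop:continuous-dependence-DG-Walkington} and in the projection lemmas, so the only real care needed is bookkeeping: correctly transferring the error equation into the perturbed-scheme form through the orthogonality identity for $\PtW$ and the commutation $\PtW\Rh=\Rh\PtW$, checking that these projection identities remain valid in the low-regularity setting (time regularity $W^{2,1}$, and $L^{1}$ rather than $L^{2}$ in $t$) permitted by the hypotheses, and tracking the powers of $c$ so that the stated $c$-robust norms appear on both sides. I expect no essential obstacle beyond this.
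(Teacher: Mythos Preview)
Your proposal is correct and follows essentially the same route as the paper's proof: consistency, the error equation, simplification via the orthogonality of~$\Rh$ and Lemma~\ref{lemma:orthogonality-DG-PtW}, identification of the perturbed data $(\widetilde f,\widetilde v_0,\widetilde u_0)$, and then Proposition~\ref{prop:continuous-dependence-DG-Walkington} together with Lemmas~\ref{lemma:estimates-PtW} and~\ref{lemma:Estimates-Rh}. The only quibble is your closing remark about the ``spare power of~$c$'' being absorbed: Proposition~\ref{prop:continuous-dependence-DG-Walkington} yields $c\Norm{\nabla\Piht\eu}{L^2(\ST)^d}$, and your absorption argument does not cover the $h^{r+1}$ terms on the right-hand side, so the missing~$c$ on the last left-hand term of the theorem statement is almost certainly a typo rather than something you need to argue away.
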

%%%
\begin{proof}
Since the space--time variational formulation~\eqref{eq:Walkington} is consistent, we have the identity
\begin{alignat}{3}
\nonumber
\Bht(\Piht \eu, \wht) & = -\Bht((\Id - \Piht)u, \wht) \\
\nonumber
& = - (\dpt\Rt \dpt (\Id - \Piht) u, \wht)_{\QT} - c^2 (\nabla (\Id - \Piht) u, \nabla \wht)_{\QT}.
\end{alignat}

The identity in Lemma~\ref{lemma:orthogonality-DG-PtW} immediately gives
\begin{alignat*}{3}
- (\dpt\Rt \dpt (\Id - \Piht) u, \wht)_{\QT} 
& = - ((\Id - \Rh) \dptt u, \wht)_{\QT} - ((\Id - \Rh) v_0, \wht(\cdot, 0))_{\Omega}.
\end{alignat*}

Moreover, using the orthogonality properties of~$\Rh$ and integration by parts in space, we obtain
\begin{alignat*}{3}
-c^2 (\nabla (\Id - \Piht) u, \nabla \wht)_{\QT} = - c^2 ((\Id - \PtW) \nabla u, \nabla \wht)_{\QT} = c^2 ((\Id - \PtW) \Delta u, \wht)_{\QT}.
\end{alignat*}
Therefore, the discrete error~$\Piht \eu$ solves a problem of the form of~\eqref{eq:Walkington} with data
\begin{equation*}
\widetilde{f} = -(\Id - \Rh) \dptt u + c^2 (\Id - \PtW) \Delta u, \quad \Piht \eu (\cdot, 0) = 0, \quad  \text{ and } \quad \widetilde{v}_0 = - (\Id - \Rh) v_0.
\end{equation*}
The estimate then follows by using the approximation properties of~$\PtW$ and~$\Rh$ in Lemmas~\ref{lemma:estimates-PtW} and~\ref{lemma:Estimates-Rh}, respectively. 
\end{proof}

%%%
\begin{corollary}[\emph{A priori} error estimates]
Under the assumptions of Theorem~\ref{thm:estimate-discrete-error-Walkington}, and assuming that the continuous solution~$u$ to~\eqref{eq:second-order-weak-formulation-wave} is sufficiently regular, there hold
\begin{alignat*}{3}
\Norm{\dpt \eu}{L^{\infty}(0, T; L^2(\Omega))} & \lesssim h^{p+1} \Big(\Seminorm{\dptt u}{L^1(0, T; H^{p+1}(\Omega))} + \Seminorm{v_0}{H^{p+1}(\Omega)} + \Seminorm{\dpt u}{L^{\infty}(0, T; H^{p+1}(\Omega))} \Big) \\
& \quad + \tau^{q} \big( c^2 \Norm{\Delta \dpt^{(q)} u}{L^1(0, T; L^2(\Omega))} + \Norm{\dpt^{(q + 1)} u}{L^{\infty}(0, T; L^2(\Omega))}\big), \\
%%%
c \Norm{\nabla \eu}{L^{\infty}(0, T; L^2(\Omega)^d)} & \lesssim  h^{p} \big(\Seminorm{\dptt u}{L^1(0, T; H^p(\Omega))} + \Seminorm{v_0}{H^p(\Omega)} + c \Seminorm{u}{L^{\infty}(0, T; H^{p+1}(\Omega))} \big) \\
& \quad + \tau^{q+1} \big(c^2 \Norm{\Delta \dpt^{(q+1)} u}{L^1(0, T; L^2(\Omega))} + c \Norm{\nabla \dpt^{(q+1)} u}{L^{\infty}(0, T; L^2(\Omega)^d)}\big).
\end{alignat*}
\end{corollary}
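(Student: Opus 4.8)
The plan is to combine the discrete-error bound of Theorem~\ref{thm:estimate-discrete-error-Walkington} with approximation estimates for the space--time projection $\Piht = \PtW\circ\Rh$ through the splitting $\eu = (\Id-\Piht)u + \Piht\eu$. Differentiating in time, $\dpt\eu = \dpt(\Id-\Piht)u + \dpt\Piht\eu$, so it suffices to estimate the projection errors $\Norm{\dpt(\Id-\Piht)u}{L^\infty(0,T;L^2(\Omega))}$ and $c\Norm{\nabla(\Id-\Piht)u}{L^\infty(0,T;L^2(\Omega)^d)}$ with the claimed rates, since the contributions of $\dpt\Piht\eu$ and $\nabla\Piht\eu$ are already furnished by Theorem~\ref{thm:estimate-discrete-error-Walkington} for suitable choices of $r$ and $s$.

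First I would handle the time derivative. Because $\Rh$ commutes with $\dpt$ and $\PtW$ satisfies $\dpt\PtW v = \PtThqmo\dpt v$,
\[
\dpt(\Id-\Piht)u = \dpt(\Id-\PtW)u + \PtThqmo(\Id-\Rh)\dpt u .
\]
The first term is bounded by estimate~\eqref{eq:estimate-Walkington-2} of Lemma~\ref{lemma:estimates-PtW} (with $r=\infty$, $s=q+1$) by $\tau^{q}\Norm{\dpt^{(q+1)}u}{L^\infty(0,T;L^2(\Omega))}$; the second term is bounded by the $L^\infty$-stability of $\PtThqmo$ (Lemma~\ref{lemma:stab-Pt}) together with the Ritz estimate~\eqref{eq:err-Rh-L2} by $h^{p+1}\Seminorm{\dpt u}{L^\infty(0,T;H^{p+1}(\Omega))}$. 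Adding the bound on $\Norm{\dpt\Piht\eu}{L^\infty(0,T;L^2(\Omega))}$ from Theorem~\ref{thm:estimate-discrete-error-Walkington} taken with $r=p$ and $s=q-1$ (admissible since $q\ge 2$) produces the first asserted inequality.

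For the gradient I would use that $\PtW$ commutes with spatial differentiation to write
\[
\nabla(\Id-\Piht)u = (\Id-\PtW)\nabla u + \PtW\bigl(\nabla(\Id-\Rh)u\bigr).
\]
The first term is controlled by estimate~\eqref{eq:estimate-Walkington-1} of Lemma~\ref{lemma:estimates-PtW} (with $r=\infty$, $s=q+1$) by $\tau^{q+1}\Norm{\nabla\dpt^{(q+1)}u}{L^\infty(0,T;L^2(\Omega)^d)}$. The delicate point is the second term: $\PtW$ is \emph{not} stable in $L^\infty(\In;\HH)$, so I would instead invoke the weaker bound of Lemma~\ref{lemma:stab-PtW}, $\Norm{\PtW w}{L^\infty(\In;L^2(\Omega)^d)} \lesssim \Norm{w}{L^\infty(\In;L^2(\Omega)^d)} + \tau_n\Norm{\dpt w}{L^\infty(\In;L^2(\Omega)^d)}$, with $w=\nabla(\Id-\Rh)u$; by Lemma~\ref{lemma:Estimates-Rh} the first summand is $\lesssim h^{p}\Seminorm{u}{L^\infty(0,T;H^{p+1}(\Omega))}$, and the second, equal to $\tau_n\Norm{\nabla(\Id-\Rh)\dpt u}{L^\infty(\In;L^2(\Omega)^d)}$, is $\lesssim \tau h^{p}\Seminorm{\dpt u}{L^\infty(0,T;H^{p+1}(\Omega))}$ and thus of higher order. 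Multiplying by $c$ and adding the bound on $c\Norm{\nabla\Piht\eu}{C^0([0,T];L^2(\Omega)^d)}$ from Theorem~\ref{thm:estimate-discrete-error-Walkington} taken with $r=p-1$ and $s=q$, and using $h^{p+1}\le h^{p}$ to fold the discrete $h^{p+1}$-contribution into the leading $h^{p}$-term, gives the second asserted inequality.

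Most of the argument is routine bookkeeping: checking the commutation identities and picking the exponents $r,s$ in Theorem~\ref{thm:estimate-discrete-error-Walkington} so that projection-error and discrete-error contributions carry matching powers of $h$ and $\tau$. The only genuinely delicate step---the main obstacle---is the treatment of $\PtW\bigl(\nabla(\Id-\Rh)u\bigr)$: since $\PtW$ lacks $L^\infty$-stability it cannot simply be discarded via the identity, and one must exploit that $\nabla(\Id-\Rh)u$ and $\nabla(\Id-\Rh)\dpt u$ are themselves already $\mathcal{O}(h^{p})$, so that the $\tau_n$-defect supplied by Lemma~\ref{lemma:stab-PtW} is of higher order and harmless.
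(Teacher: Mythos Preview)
Your proposal is correct and follows the natural triangle-inequality strategy the paper implicitly expects (the corollary carries no proof). The choices $r=p$, $s=q-1$ for the first estimate and $r=p-1$, $s=q$ for the second are exactly right, and the projection-error bounds you derive match the stated rates.

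The one place you work harder than necessary is the ``delicate'' step for $\PtW\bigl(\nabla(\Id-\Rh)u\bigr)$. If instead you split the projection error as
\[
\nabla(\Id-\Piht)u = \nabla(\Id-\Rh)u + (\Id-\PtW)\nabla\Rh u,
\]
then the first term is directly $\lesssim h^{p}\Seminorm{u}{L^\infty(0,T;H^{p+1}(\Omega))}$ by Lemma~\ref{lemma:Estimates-Rh}, and the second is $\lesssim \tau^{q+1}\Norm{\nabla\Rh\dpt^{(q+1)}u}{L^\infty(0,T;L^2(\Omega)^d)} \lesssim \tau^{q+1}\Norm{\nabla\dpt^{(q+1)}u}{L^\infty(0,T;L^2(\Omega)^d)}$ by Lemma~\ref{lemma:estimates-PtW} and the $H^1$-stability of~$\Rh$. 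This avoids invoking the weaker stability bound of Lemma~\ref{lemma:stab-PtW} and the resulting higher-order $\tau h^p$ term altogether. Your remark about folding an $h^{p+1}$ contribution into $h^p$ is also unnecessary: with $r=p-1$ the discrete error already comes out at order $h^p$.
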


\begin{remark}[$q$-explicit estimates]
Stability bounds and a priori error estimates with~$q$-explicit constants were derived for the space--time method~\eqref{eq:Walkington} in~\cite{Dong-Mascotto-Wang:2025}. These results have been also extended to a damped wave equation in~\cite{Zhang_Dong_Yan:2025}.
\eremk
\end{remark}
% ------------------------------------------
%               CONCLUSIONS
% ------------------------------------------
\section{Concluding remarks}
In this work, we presented a unified variational framework for the analysis of Galerkin-type time discretizations applied to parabolic and hyperbolic partial differential equations. By employing nonstandard test functions, we established stability bounds and \emph{a priori} error estimates in~$L^{\infty}(0, T; X)$ norms. This approach successfully circumvents the limitations of standard energy arguments, particularly for high-order approximations of wave propagation problems.

The proposed analysis closes several theoretical gaps and improves upon existing results in the literature. Key features of this framework include the absence of exponential Gr\"onwall factors, applicability to arbitrary polynomial degrees in space and time, the lack of artificial constraints between the time step~$\tau$ and the mesh size~$h$, and robustness with respect to model parameters.

Future research directions include the extension of these results to nonlinear problems, equations with varying coefficients, and varying meshes and polynomial degrees. Finally, a numerical comparison of these methods, focusing on convergence rates in different norms, computational efficiency, and conditioning of the system matrices, would provide valuable insight into their practical advantages.

% ------------------------------------------
%               ACKNOWLEDGEMENTS
% ------------------------------------------
\section*{Acknowledgements}
The author is member of the Gruppo Nazionale Calcolo
Scientifico-Istituto Nazionale di Alta Matematica (GNCS-INdAM).
% ------------------------------------------
%               BIBLIOGRAPHY
% ------------------------------------------

\end{document}